\numberwithin{equation}{section}
\newtheorem{thm}{Theorem}[section]
\newtheorem{pro}{Proposition}[section]
\newtheorem{Lemma}[pro]{Lemma}
\date{}
\begin{document}

\title{Boundary layer expansions of the steady MHD equations in a bounded domain}

\author{Dongfen Bian \footnote{ Email: {\tt biandongfen@bit.edu.cn}.}\ \ \ \  and \ \ \ \ Zhenjie Si \footnote{ Email: {\tt sizhenjie@bit.edu.cn, sizhenje@163.com}.}\\[1.26ex]
	\textit{\normalsize School of Mathematics and Statistics, Beijing Institute of Technology,}\\[0.32ex]
	\textit{\normalsize Beijing 100081, China}\\[0.32ex]
	\textit{\normalsize Beijing Key Laboratory on MCAACI, Beijing Institute of Technology,}\\[0.32ex]	
	\textit{\normalsize Beijing 102488, China}\\[0.32ex]
	}
\maketitle{}
\begin{abstract}
In this paper, we investigate the validity of boundary layer expansions for the MHD system in a rectangle. We describe the solution up to any order when the tangential magnetic field is much smaller than the tangential velocity field,
thereby extending \cite{DLX2021}.
\end{abstract}

~~~~~~{ \small {\bf MSC:} 76N10; 35Q30; 35R35}
\begin{center}
{ \small {\bf Key words:} the steady MHD equations, boundary layer expansions, any order,  bounded domain.}
\end{center}


\section{Introduction}



\subsection{The model}


We consider the following steady, incompressible magnetohydrodynamic system
\begin{align}\label{1.1}
\left\{\begin{array}{l}
U U_X+V U_Y-H H_X-G H_Y+P_X=\nu_{1}\varepsilon ( U_{X X}+U_{Y Y}),\\
U V_X+V V_Y-H G_X-G G_Y+P_Y=\nu_{2}\varepsilon ( V_{X X}+V_{Y Y}),\\
U H_X+V H_Y-H U_X-G U_Y=\nu_{3}\varepsilon ( H_{X X}+H_{Y Y}),\\
U G_X+V G_Y-H V_X-G V_Y=\nu_{4}\varepsilon ( G_{X X}+G_{Y Y}),\\
U_X+V_Y=H_X+G_Y=0,
\end{array}\right.
\end{align}
in the domain
$$
\Omega=\{(X, Y) \mid 0 \leq X \leq L, 0 \leq Y \leq 2\},
$$
where $\left(U,V\right)$ is the velocity field, $\left(H,G\right)$  the magnetic field, $P$  the pressure, $\nu_1 \varepsilon$, $\nu_2 \varepsilon$
the horizontal and vertical viscosities and $\nu_{3} \varepsilon$,  $\nu_{4}\varepsilon$ the horizontal and vertical magnetic resistivity coefficients.
We assume that $\Omega$ is limited by two plates moving with a velocity $u_b$, namely we consider the following boundary conditions
$$
U(X, 0)=U(X, 2)=u_b>0, \quad V(X, 0)=V(X, 2)=0,
$$
and
\begin{equation} \label{cond1}
(\partial_Y H, G) (X,0)=(\partial_Y H, G) (X,2)=0.
\end{equation}
The boundary conditions at $X = 0$ and $X = L$ will be made precise later.

When $\varepsilon = 0$,  $(\ref{1.1})$ reduces to the ideal magnetohydrodynamic system
\begin{align}\label{1.2}
\left\{\begin{array}{l}
U_{0} \partial_{X}U_{0}+V_{0} \partial_{Y}U_{0}-H_{0} \partial_{X}H_{0}-G_{0} \partial_{Y}H_{0}+\partial_{X}P_{0}=0,\\
U_{0} \partial_{X}V_{0}+V_{0} \partial_{Y}V_{0}-H_{0} \partial_{X}G_{0}-G_{0} \partial_{Y}G_{0}+\partial_{Y}P_{0}=0,\\
U_{0} \partial_{X}H_{0}+V_{0} \partial_{Y}H_{0}-H_{0} \partial_{X}U_{0}-G_{0} \partial_{Y}U_{0}=0,\\
U_{0} \partial_{X}G_{0}+V_{0} \partial_{Y}G_{0}-H_{0} \partial_{X}V_{0}-G_{0} \partial_{Y}V_{0}=0,\\
\partial_{X}U_{0}+\partial_{Y}V_{0}=\partial_{X}H_{0}+\partial_{Y}G_{0}=0,
\end{array}\right.
\end{align}
together with the boundary conditions
\begin{equation} \label{cond2}
V_0(X,0) = V_0(X,2) = 0, \qquad G_0(X,0) = G_0(X,2) = 0.
\end{equation}
We will consider particular solutions of this system, of the form
$$
(U_0,V_0,H_0,G_0) = \left(u_{e}^{0}(Y), 0, h_{e}^{0}(Y), 0\right),
$$
in which $u_{e}^{0}(\cdot),  h_{e}^{0}(\cdot)$ are smooth given functions, satisfying $u_e^0(0) = u_e^0(2) \ne u_b$, and
 symmetric with respect to $Y=1$, that is, for any $0 \le Y \le 1$
$$
u_{e}^{0}(1-Y)=u_{e}^{0}(1+Y),
\qquad h_{e}^{0}(1-Y)=h_{e}^{0}(1+Y) .
$$
The aim of this article is to construct a sequence of solution to (\ref{1.1}) which converges to $(U_0,V_0,H_0,G_0)$ as $\varepsilon \to 0$.
As (\ref{cond1}) and (\ref{cond2}) are different, we expect that the solution of (\ref{1.1}) has a boundary layer behavior at $Y = 0$ and $Y = 2$ \cite{Prandtl1904}.

When the magnetic field $(H,G)$  identically vanishes, (\ref{1.1}) reduces to the classical time independent Navier-Stokes equations.
For these equations, in $\Omega = [0,L] \times \mathbb{R}_+$, using energy estimates,  Guo and Nguyen \cite{GN2017} have proved the validity of the Prandtl boundary layer expansion, together with $L^\infty$ bounds on the error. This pioneering work has then be extended to the case $\Omega = [0,L] \times [0,2]$ in \cite{LD2020}. {{For more results on moving boundaries, please refer to \cite{Iyer2017,Iyer2019,Iyer2019-1,Iyer2020,Iyer2019-2}.
For the no-slip boundary condition, namely in the case $u_b = 0$, we in particular refer to the works of Guo and Iyer \cite{GI2023CPAM,GI2023QAM,GI2021}.}}

For the unsteady MHD equations, Xie, Luo and Li \cite{XLL2014} proved the existence of the boundary layer under the non-characteristic boundary conditions in the three dimensional case, and obtained that the solution of the viscous MHD equations converges to the solution of ideal MHD equations when the viscosity goes to zero.
With the no-slip boundary condition,  Wang and Xin \cite{WX2017} studied the initial boundary data problem of the boundary layer in the two and three dimensional cases.
Liu, Xie and Yang \cite{LXY2017} proved the validity of the MHD boundary layer expansion under non-degenerate boundary conditions on the tangential magnetic field. For more results, we refer to the works \cite{CRWZ2020,DLN2021,GVP2017,GGH2017,GJ2021,HLY2019,LLZ2015,LZ2018,LXY2019,WM2018,WW2019AMS,WW2019MMAS,XY2018SJMA} and the references therein.

For the steady MHD equations,  Gao,  Li and Yao \cite{GLY2024} proved  the high regularity and the asymptotic behavior of the Prandtl-Hartmann equations
using energy methods in Sobolev spaces. Liu, Yang and Zhang \cite{LYZ2023} proved the validity of the MHD  boundary layer expansion in the case of non-degenerate tangential magnetic field on the half plane. Under an assumption of  degeneracy on the tangential magnetic field, Ding, Ji and Lin \cite{DJL2022} obtained the stability of the Prandtl layer expansion when the outer ideal MHD flow is of the form $(1, 0, \sigma, 0) (\sigma \geq 0)$. 
When the outer ideal MHD flow is a shear flow, Ding, Lin and Xie  \cite{DLX2021} established the validity of the Prandtl boundary layer expansion and gave  $L^{\infty}$  estimates of the error terms. Later, Ding, Ji and Lin \cite{DJL2021} extended these results to the case of non-shear flows. In addition, Ding and Wang \cite{DW2023} generalized the results of \cite{Iyer2017} to  MHD flows.

In this paper, inspired by the methods used in \cite{LD2020} for the zeroth and first order expansions of the Navier-Stokes equations, we consider the validity of boundary layer expansions for the MHD system on $[0, L] \times [0,2]$ when the tangential magnetic field is much smaller than the tangential velocity field. 
We also precise the expansions of \cite{DLX2021,LD2020} by going up to any order.

We restrict ourselves to symmetric solutions, which satisfy, for $Y \in \left[0,1\right]$,
\begin{align}\label{1.3}
\begin{aligned}
U\left(1+Y\right)=U\left(1-Y\right), \quad V\left(1+Y\right)=-V\left(1-Y\right), \\
H\left(1+Y\right)=H\left(1-Y\right), \quad G\left(1+Y\right)=-G\left(1-Y\right).
\end{aligned}
\end{align}
Using this symmetry, it is sufficient to study the solutions in
$$
\Omega_{1}=\left\{\left.(X,Y)\right| 0 \leq X \leq L, 0\leq Y \leq 1\right\},
$$
with  boundary conditions
$$
(U, V, \partial_Y H, G)(X, 0)=(u_b, 0, 0, 0), \quad (\partial_Y U, V, \partial_Y H,G)(X,1)=(0,0,0,0).
$$
The first step is to introduce the rescaled variable
$$
x=X, \quad y=\frac{Y}{\sqrt{\varepsilon}},
$$
and the related unknown functions
\begin{align}\label{1.4}
\begin{array}{l}
U^{\varepsilon}(x,y)=U(X,Y), \quad V^{\varepsilon}(x,y)= \varepsilon^{-1/2} V(X,Y),\\
H^{\varepsilon}(x,y)=H(X,Y), \quad G^{\varepsilon}(x,y)=\varepsilon^{-1/2} G(X,Y), \quad P^{\varepsilon}(x,y)=P(X,Y).
\end{array}
\end{align}
This leads to
\begin{align}\label{1.5}
\left\{\begin{array}{l}
U^{\varepsilon} U^{\varepsilon}_x+V^{\varepsilon} U^{\varepsilon}_y-H^{\varepsilon} H^{\varepsilon}_x-G^{\varepsilon} H^{\varepsilon}_y+P^{\varepsilon}_x=\nu_{1}(\varepsilon U^{\varepsilon}_{x x}+U^{\varepsilon}_{y y}),\\
U^{\varepsilon} V^{\varepsilon}_x+V^{\varepsilon} V^{\varepsilon}_y-H^{\varepsilon} G^{\varepsilon}_x-G^{\varepsilon} G^{\varepsilon}_y+\varepsilon^{-1} P^{\varepsilon}_y=\nu_{2}(\varepsilon V^{\varepsilon}_{x x}+V^{\varepsilon}_{y y}),\\
U^{\varepsilon} H^{\varepsilon}_x+V^{\varepsilon} H^{\varepsilon}_y-H^{\varepsilon} U^{\varepsilon}_x-G^{\varepsilon} U^{\varepsilon}_y=\nu_{3}(\varepsilon H^{\varepsilon}_{x x}+H^{\varepsilon}_{y y}),\\
U^{\varepsilon} G^{\varepsilon}_x+V^{\varepsilon} G^{\varepsilon}_y-H^{\varepsilon} V^{\varepsilon}_x-G^{\varepsilon} V^{\varepsilon}_y=\nu_{4}(\varepsilon G^{\varepsilon}_{xx}+G^{\varepsilon}_{yy}),\\
U^{\varepsilon}_x+V^{\varepsilon}_y=H^{\varepsilon}_x+G^{\varepsilon}_y=0,
\end{array}\right.
\end{align}
 in the domain
 $$
 \Omega_{\varepsilon}=\left\{(x,y) \left| \right. 0\leq x \leq L, 0\leq y \leq \varepsilon^{-1/2} \right\},
 $$
together with the boundary conditions
\begin{align}\label{1.6}
(U^{\varepsilon}, V^{\varepsilon}, \partial_{y}H^{\varepsilon}, G^{\varepsilon})(x,0)=(u_{b},0, 0, 0),
\quad (\partial_{y}U^{\varepsilon}, V^{\varepsilon}, \partial_{y}H^{\varepsilon}, G^{\varepsilon})(x, \varepsilon^{-1/2}) =(0,0,0,0).
\end{align}
We now expand the solutions $(U^{\varepsilon}, V^{\varepsilon},H^{\varepsilon}, G^{\varepsilon},P^{\varepsilon})$ in $\varepsilon$
\begin{align}\label{1.7}
U^{\varepsilon}(x, y)=&u_e^0(\sqrt{\varepsilon}y)+u_p^0(x,y)+\sum_{i=1}^{n}\sqrt{\varepsilon}^{i}(u_e^i(x,\sqrt{\varepsilon} y)+u_p^i(x,y))+\varepsilon^{\gamma+\frac{n}{2}} u^{\varepsilon}(x, y), \nonumber\\
V^{\varepsilon}(x, y)=&\sum_{i=0}^{n-1}\sqrt{\varepsilon}^{i}(v_e^{i+1}(x,\sqrt{\varepsilon} y)+v_p^i(x,y))+\varepsilon^{\frac{n}{2}} v_p^n(x,y)
+\varepsilon^{\gamma+\frac{n}{2}} v^{\varepsilon}(x, y), \nonumber\\
H^{\varepsilon}(x, y)=&h_e^0(\sqrt{\varepsilon}y)+h_p^0(x,y)+\sum_{i=1}^{n}\sqrt{\varepsilon}^{i}(h_e^i(x,\sqrt{\varepsilon} y)+h_p^i(x,y))+\varepsilon^{\gamma+\frac{n}{2}} h^{\varepsilon}(x, y), \\
G^{\varepsilon}(x, y)=&\sum_{i=0}^{n-1}\sqrt{\varepsilon}^{i}(g_e^{i+1}(x,\sqrt{\varepsilon} y)+g_p^i(x,y))+\varepsilon^{\frac{n}{2}} g_p^n(x,y)
+\varepsilon^{\gamma+\frac{n}{2}} g^{\varepsilon}(x, y), \nonumber\\
P^{\varepsilon}(x, y)=&\sum_{i=1}^{n}\sqrt{\varepsilon}^i (p_e^i(x,\sqrt{\varepsilon}y)+ p_p^i(x, y))
+\varepsilon^{\frac{n+1}{2}} p_p^{n+1}(x, y)
+\varepsilon^{\gamma+\frac{n}{2}} p^{\varepsilon}(x, y),\nonumber
\end{align}
where $(u_e^i, v_e^i, h_e^i, g_e^i, p_e^i)$ and $(u_p^i, v_p^i, h_p^i, g_p^i, p_p^i) $ $(i=0,1,2,\ldots,n)$ denote the interior and
boundary layer parts, and where $(u^{\varepsilon}, v^{\varepsilon}, h^{\varepsilon}, g^{\varepsilon}, p^{\varepsilon})$ are error terms.
These approximate solutions are constructed in Sections $2$ to $3$.


\subsection{Boundary conditions and compatibility conditions}


Let us now detail the boundary conditions and compatibility conditions of the various orders of the asymptotic expansion.
First,  we choose the zeroth order ideal inner MHD profile to be $(u_e^0(Y), 0, h_e^0(Y), 0)$.
At $y = 0$, the zeroth order interior flow does not satisfy the boundary condition since, by assumption, $u_e^0(0):=u_e\neq u_b$.
Similarly, $h_e^0(0):=h_e \neq 0$. We thus add a boundary layer $u_p^0$ on the velocity, and a boundary layer $h_p^0$ on the magnetic field.
We have to prescribe the values of $u_p^0$ and $h_p^0$ at $x = 0$. For this we choose two smooth and decaying functions $\tilde u_0$ and
$\tilde h_0$.

The boundary conditions on the zeroth order MHD boundary layer profile are thus
\begin{align}\label{1.11}
\begin{array}{lll}
u_p^0(x, 0)+u_e=u_b, \quad \partial_{y} h_p^0(x,0)=0, \\
\partial_{y} u_p^0(x,\varepsilon^{-1/2})
=v_p^0(x,\varepsilon^{-1/2})=\partial_{y} h_p^0(x,\varepsilon^{-1/2})
=g_p^0(x,\varepsilon^{-1/2})=0,\\
u_p^0(0,y)=\tilde{u}_{0}(y), \quad h_p^0(0,y)=\tilde{h}_{0}(y).
\end{array}
\end{align}
For the $i$-th order ideal inner MHD profiles, the boundary conditions are, for $i = 1,2,\ldots,n$,
\begin{align}\label{1.12}
\begin{array}{llll}
v_e^i(x,0)+v_p^{i-1}(x,0)=0, \quad g_e^i(x,0)+g_p^{i-1}(x,0)=0,
\quad u_e^i(0,Y)=u_b^i(Y),\\
v_e^i(0,Y)=V_{b 0}^i(Y), \quad h_e^i(0,Y)=h_b^i(Y),
\quad g_e^i(0,Y)=G_{b 0}^i(Y),\\
v_e^i(L,Y)=V_{b L}^i(Y), \quad g_e^i(L,Y)=G_{b L}^i(Y),\\
u_{e Y}^i(x,1)=v_e^i(x,1)=h_{e Y}^i(x,1)=g_e^i(x,1)=0,
\end{array}
\end{align}
where $u_b^i(Y)$, $h_b^i(Y)$, $V_{b0}^i(Y)$, $G_{b0}^i(Y)$, $V_{bL}^i(Y)$ and $G_{bL}^i(Y)$ are given functions.

For the $i$-th order MHD  boundary layer profiles, we have, for $i = 1,2,\ldots,n$,
\begin{align}\label{1.11.1}
\begin{array}{lll}
u_p^i(x, 0)+u_e^i(x,0)=0, \quad \partial_{y} h_p^i(x,0)+\partial_{Y}h_e^{i-1}(0)=0, \\
\partial_{y} u_p^i(x,\varepsilon^{-1/2})
=v_p^i(x,\varepsilon^{-1/2})
=\partial_{y} h_p^i(x,\varepsilon^{-1/2})
=g_p^i(x,\varepsilon^{-1/2})=0,\\
u_p^i(0,y)=\tilde{u}_{i}(y), \quad h_p^i(0,y)=\tilde{h}_{i}(y),
\end{array}
\end{align}
where $\tilde u_i$ and $\tilde h_i$ are given smooth and rapidly decreasing functions.

Morevoer, we also have $v_p^n(x,0)=g_p^n(x,0)=0$ for the $n$-th order boundary layer.
Finally, the boundary conditions of the remainder terms $\left(u^{\varepsilon}, v^{\varepsilon}, h^{\varepsilon}, g^{\varepsilon}, p^{\varepsilon}\right)$ are
\begin{align}\label{1.14}
& u^{\varepsilon}(x,0)=0, \quad v^{\varepsilon}(x,0)=0,
\quad \partial_{y}h^{\varepsilon}(x,0)=0, \quad g^{\varepsilon}(x,0)=0,\nonumber\\
&\partial_{y}u^{\varepsilon}(x,\varepsilon^{-1/2})=0, \quad v^{\varepsilon}(x,\varepsilon^{-1/2})=0, \quad\partial_{y}h^{\varepsilon}(x,\varepsilon^{-1/2})=0,
\quad g^{\varepsilon}(x,\varepsilon^{-1/2})=0,
\nonumber\\
& u^{\varepsilon}(0,y)=0, \quad v^{\varepsilon}(0,y)=0, \quad h^{\varepsilon}(0,y)=0, \quad g^{\varepsilon}(0,y)=0,\\
& (p^{\varepsilon}-2\nu_{1}\varepsilon u^{\varepsilon}_{x})(L,y)=0,
\quad (u^{\varepsilon}_y + \nu_{2}\varepsilon v^{\varepsilon}_x)(L,y)=h^{\varepsilon}(L,y)
=\partial_{x}g^{\varepsilon}(L,y)=0.\nonumber
\end{align}
To solve this system $(\ref{1.5})$, we need the following  compatibility conditions
\begin{align}\label{1.15}
&(V_{b 0}^{i},G_{b 0}^{i})(0)=-(v_p^{i-1}, g_p^{i-1})(0,0), \quad (V_{b L}^{i},G_{b L}^{i})(0)=-(v_p^{i-1}, g_p^{i-1})(L, 0), \nonumber\\
& V_{b 0}^{i}(1)=V_{b L}^{i}(1)=0, \quad
 G_{b 0}^{i}(1)=G_{b L}^{i}(1)=0.
\end{align}


\subsection{Main result}


We now present the main result in this paper. We will assume that the tangential velocity dominates the horizontal magnetic field. 
More precisely, we assume the following assumption, called (H).

\begin{itemize}

\item (H) There exists $C_0 > 1$, such that, for any $0 \leq Y \leq 1$,
 $$
 u_{e}^{0}(Y) \geq C_0  h_{e}^{0}(Y),
 $$
 $$
 {{u_{e}+\tilde{u}_{0}(y) > h_{e} + \tilde{h}_{0}(y) \geq \theta_{0} > 0}}
 $$
 and
\begin{align}\label{1.16}
|u_{e}^{0}(\sqrt{\varepsilon}y)+\tilde{u}_{0}(y)| \geq C_0 |h_{e}^{0}(\sqrt{\varepsilon}y)+\tilde{h}_{0}(y)| .
\end{align}
\end{itemize}

\begin{thm}\label{b1.1}
{{
Let $u_{b} > 0$ and let $u_{e}^{0}(Y), h_{e}^{0}(Y)$ be given smooth positive functions satisfying $\partial_{Y}u_{e}^{0}(1)=\partial_{Y}h_{e}^{0}(1)=0$,
together with the boundary conditions. Suppose that
$$\|\langle Y \rangle \partial_{Y}(u_e^0, h_e^0)\|_{L^{\infty}}\lesssim \sigma_{1},$$
for some sufficiently small $\sigma_{1}>0$.
Assume that (H) holds true for some $C_0$ large enough.
 Assume moreover that, for any $0 \le Y \le 1$, }}
\begin{align}\label{1.17.1}
& |\langle y \rangle^{l+1} \partial_{y}(u_{e}+\tilde{u}_{0}, h_{e}+ \tilde{h}_{0})(y)| \leq \frac{1}{2} \sigma_{0},\nonumber\\
& |\langle y \rangle^{l+1} \partial_{y}^{2}(u_{e}+\tilde{u}_{0}, h_{e}+ \tilde{h}_{0})(y)| \leq \frac{1}{2} \theta_{0}^{-1},
\end{align}
where $\sigma_0$ and $\theta_0$ are small enough.
Then there exists a constant $L_{0}>0$, which depends only on the prescribed data, such that for $0 < L \leq L_{0}$ and $\gamma \in (0, \frac{1}{16})$,
provided $\varepsilon$ is small enough, there exists a solution $(U^{\varepsilon}, V^{\varepsilon}, H^{\varepsilon}, G^{\varepsilon}, P^{\varepsilon})$  to the system $(\ref{1.5})$ on  $\Omega_{\varepsilon}$.
Moreover, the error terms $\left(u^{\varepsilon}, v^{\varepsilon}, h^{\varepsilon}, g^{\varepsilon}\right)$ satisfy
\begin{align}\label{1.18}
& \|\nabla_\varepsilon u^\varepsilon\|_{L^2(\Omega_{\varepsilon})}+\|\nabla_\varepsilon v^\varepsilon\|_{L^2(\Omega_{\varepsilon})}
+\|\nabla_\varepsilon
h^\varepsilon\|_{L^2(\Omega_{\varepsilon})}
+\|\nabla_\varepsilon g^\varepsilon\|_{L^2(\Omega_{\varepsilon})}\nonumber\\
&+\varepsilon^{\frac{\gamma}{8}}\|u^\varepsilon\|_{L^{\infty}(\Omega_{\varepsilon})}
+\varepsilon^{\frac{1}{2}+\frac{\gamma}{8}}\|v^\varepsilon\|_{L^{\infty}(\Omega_{\varepsilon})}
+\varepsilon^{\frac{\gamma}{8}}\|h^\varepsilon\|_{L^{\infty}(\Omega_{\varepsilon})}
+\varepsilon^{\frac{1}{2}+\frac{\gamma}{8}}\|g^\varepsilon\|_{L^{\infty}(\Omega_{\varepsilon})} \leq C_1,
\end{align}
where the positive constant $C_1$ depends only on norms of $u_e^0$ and $h_e^0$.
\end{thm}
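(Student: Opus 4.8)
The plan is to regard the expansion (\ref{1.7}) as an approximate solution and to solve the nonlinear system governing the remainders $(u^\varepsilon,v^\varepsilon,h^\varepsilon,g^\varepsilon,p^\varepsilon)$. Once the interior and boundary layer profiles of Sections 2--3 are built so as to cancel (\ref{1.5}) order by order, inserting (\ref{1.7}) into (\ref{1.5}) and dividing by $\varepsilon^{\gamma+n/2}$ gives, schematically,
$$
\mathcal{L}_\varepsilon(u^\varepsilon,v^\varepsilon,h^\varepsilon,g^\varepsilon)+\nabla_\varepsilon p^\varepsilon = R^\varepsilon + \varepsilon^{\gamma+\frac{n}{2}}\,\mathcal{N}(u^\varepsilon,v^\varepsilon,h^\varepsilon,g^\varepsilon),
$$
together with $\nabla_\varepsilon\cdot(u^\varepsilon,v^\varepsilon)=\nabla_\varepsilon\cdot(h^\varepsilon,g^\varepsilon)=0$ and the boundary conditions (\ref{1.14}); here $\nabla_\varepsilon=(\sqrt\varepsilon\,\partial_x,\partial_y)$. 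The operator $\mathcal{L}_\varepsilon$ is the linearization of (\ref{1.5}) about the approximate fields $(U_{\mathrm{app}},V_{\mathrm{app}},H_{\mathrm{app}},G_{\mathrm{app}})$ read off from (\ref{1.7}), with principal part $-\nu_i\Delta_\varepsilon$, $\Delta_\varepsilon:=\varepsilon\partial_{xx}+\partial_{yy}$; the forcing $R^\varepsilon$ is the rescaled consistency error, which the order-by-order construction renders uniformly bounded, and in fact small, once $n$ is large and $\gamma<\tfrac12$; and $\mathcal{N}$ gathers the quadratic self-interactions of the remainders, whose prefactor $\varepsilon^{\gamma+n/2}$ is exactly the scaling of the last terms in (\ref{1.7}). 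The theorem thus reduces to a linear a priori estimate for $\mathcal{L}_\varepsilon$ in the norm of (\ref{1.18}) plus a fixed-point argument to absorb $\mathcal{N}$.

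For the linear estimate I would test the two momentum equations against $(u^\varepsilon,v^\varepsilon)$ and the two magnetic equations against $(h^\varepsilon,g^\varepsilon)$ and sum over $\Omega_\varepsilon$. The dissipation and resistivity furnish the coercive quantity $\sum_i\nu_i\|\nabla_\varepsilon(\cdot)\|_{L^2(\Omega_\varepsilon)}^2$, the pressure is eliminated by the divergence constraints and the outflow conditions in (\ref{1.14}), and the remaining first-order terms are the transport by the approximate flow and the Lorentz/induction couplings. Integrating the transport terms by parts turns them into zeroth-order terms weighted by $\partial_yV_{\mathrm{app}}$, $\partial_xU_{\mathrm{app}}$ and their analogues; these are controlled by the smallness hypotheses $\|\langle Y\rangle\partial_Y(u_e^0,h_e^0)\|_{L^\infty}\lesssim\sigma_1$ and (\ref{1.17.1}), combined with a Poincaré inequality in $x$ (legitimate since $u^\varepsilon|_{x=0}=0$), which is where $L\le L_0$ enters to absorb them into the dissipation. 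The magnetic couplings cancel only partially through the MHD energy identity, and the residual terms are precisely where hypothesis (H) is used: because $H_{\mathrm{app}}\le C_0^{-1}U_{\mathrm{app}}$, the magnetic contributions are dominated by the coercive kinetic ones once $C_0$ is large.

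The main obstacle is the transport term carrying the boundary layer derivative, namely $\int_{\Omega_\varepsilon} v^\varepsilon\,\partial_yU_{\mathrm{app}}\,u^\varepsilon$, which is borderline and cannot be absorbed by dissipation and Poincaré alone. As in \cite{GN2017,LD2020}, I expect the plain energy estimate to be insufficient and to require a positivity/weighted-multiplier estimate --- testing against a multiplier built from $U_{\mathrm{app}}$ --- to tame this term; the point is that the Navier--Stokes positivity structure survives the magnetic perturbation exactly because (H) keeps $H_{\mathrm{app}}$ a controlled fraction of $U_{\mathrm{app}}$, while the smallness of $\sigma_0,\sigma_1,\theta_0$ and of $L$ makes the positivity constant uniform in $\varepsilon$. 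Feeding the resulting $\nabla_\varepsilon$-bounds into anisotropic embeddings of Agmon/Ladyzhenskaya type, with the $\varepsilon$-weights dictated by the rescaling $y=Y/\sqrt\varepsilon$, then yields the weighted $L^\infty$ bounds, the losses being absorbed into the factors $\varepsilon^{\gamma/8}$ and $\varepsilon^{1/2+\gamma/8}$ in (\ref{1.18}).

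Finally, for existence I would close the nonlinearity by a fixed-point scheme: map $(\bar u,\bar v,\bar h,\bar g)$ to the solution of the linear system with right-hand side $R^\varepsilon+\varepsilon^{\gamma+n/2}\mathcal{N}(\bar u,\bar v,\bar h,\bar g)$, whose solvability and a priori bound come from the previous step. Since the nonlinear term carries the prefactor $\varepsilon^{\gamma+n/2}$ and $\gamma\in(0,\tfrac1{16})$, for $\varepsilon$ small this map is a contraction on a ball of the energy space of radius comparable to $C_1$, so the Banach fixed-point theorem produces a solution of (\ref{1.5}) satisfying (\ref{1.18}). The constant $C_1$ depends only on norms of the profiles because both $R^\varepsilon$ and the coefficients of $\mathcal{L}_\varepsilon$ are built solely from the approximate fields $(U_{\mathrm{app}},V_{\mathrm{app}},H_{\mathrm{app}},G_{\mathrm{app}})$.
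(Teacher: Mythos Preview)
Your outline matches the paper's proof almost exactly: the paper writes the remainder system (\ref{9.2}) with source terms $R_i$ built from the consistency error $R_{app}^i$ (bounded in Proposition~\ref{b8.4} by $C\varepsilon^{n/2+1/16-\xi}$) and the quadratic self-interactions carrying the prefactor $\varepsilon^{n/2}$; it then invokes the linear estimate of Proposition~\ref{b9.1} (whose two ingredients, Lemmas~\ref{b9.1.1} and~\ref{b9.1.2} from \cite{DLX2021}, are precisely the basic energy bound and the positivity/weighted-multiplier bound you anticipate, with the smallness of $\|h_s/u_s\|_{L^\infty}$ and $\|y\partial_y(u_s,h_s)\|_{L^\infty}$ coming respectively from (H) and from $\sigma_0,\sigma_1$), and closes by a Banach contraction on the ball $\mathbb{X}_K$ in the norm (\ref{9.9}).

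The one place where the paper differs in detail from your sketch is the $L^\infty$ step: rather than a direct anisotropic embedding, the paper applies Proposition~\ref{b9.2}, an $L^\infty$ estimate for the auxiliary Stokes-type problem $-\nu_i\Delta_\varepsilon(\cdot)+\nabla p=F_i$, with a loss of $\varepsilon^{-\gamma/16}$; the convective and coupling terms are then moved to the right-hand side $F_i$ and bounded via the $\nabla_\varepsilon$ estimates already obtained (see (\ref{9.15})--(\ref{9.19})). Functionally this plays the same role as your embedding and produces the same weights $\varepsilon^{\gamma/8}$, $\varepsilon^{1/2+\gamma/8}$, but it is an elliptic regularity argument rather than a pure interpolation inequality.
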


This theorem extends \cite{DLX2021,LD2020} in two directions. First we consider the system $(\ref{1.1})$ in a bounded domain, which is different from that in \cite{LD2020}.  Additionally, in contrast to \cite{DLX2021} , we take into account the influence of the magnetic field. 
Second we justify the asymptotic expansion at any order, whereas \cite{DLX2021,LD2020}  just investigate the leading order.

%
%
%


\subsection{Notations}

{{
Throughout this paper, we use the notations $\langle y\rangle= \sqrt{1+y^2}$
and $I_{\varepsilon}= \left[0, \varepsilon^{-1/2}\right]$. Moreover, $\bar{f}=f(x,0)$ denotes the boundary value of a function $f$ at $y=0$ in Sections 2 to 4.}}
For $l \in \mathbb{R}$, we introduce the following $L^2$ weighted space
$$
L_l^2:=\left\{f(x, y):[0, L] \times[0, \infty) \rightarrow \mathbb{R},\|f\|_{L_l^2}^2=\int_0^{\infty}\langle y\rangle^{2 l}|f(y)|^2 \mathrm{~d} y<\infty\right\}.
$$
For $\alpha=(k_1, k_2) \in \mathbb{N}^2$, we define the Sobolev spaces
$$
H_l^m:=\left\{f(x, y):[0, L] \times[0, \infty) \rightarrow \mathbb{R},\|f\|_{H_l^m}^2<\infty\right\}
$$
and  denote its norm by
$$
\|f\|_{H_l^m}^2=\sum_{|\alpha| \leq m}\|\langle y\rangle^{l+k_2} D^\alpha f\|_{L_{y}^2(0, \infty)}^2,
$$
where $D^\alpha=\partial_x^{k_1} \partial_y^{k_2}$.

\subsection{Organization of the paper}

 The rest of this article is as follows. In Section $2$, we study the leading order.
 In Section $3$, we construct ideal inner MHD profiles and MHD boundary layer profiles at any order in domains $\Omega_1$ and $\Omega_{\varepsilon}$, respectively. 
 In Section $4$, we establish  energy  and $L^{\infty}$ estimates of the remainder $(u^{\varepsilon},v^{\varepsilon},h^{\varepsilon},g^{\varepsilon},p^{\varepsilon})$
 and prove Theorem $\ref{b1.1}$. Finally, in Appendix A, we derive the equations of $(u_e^i, v_e^i, h_e^i, g_e^i, p_e^i)$ and $(u_p^i, v_p^i, h_p^i, g_p^i, p_p^i), (0\leq i\leq n)$.


\section{The zeroth order MHD boundary layer profile}


In this section, we consider the zeroth order MHD boundary layer profile $(u_p^0, v_p^0, h_p^0, g_p^0,0)$, 
which satisfies (see Appendix)
\begin{align}\label{2.6.1}
\left\{\begin{array}{l}
(u_e+u_p^0) \partial_x u_p^0+(v_p^0+\overline{v_e^1}) \partial_y u_p^0-(h_e+h_p^0) \partial_x h_p^0-(g_p^0+\overline{g_e^1}) \partial_y h_p^0= \nu_{1}\partial_y^2 u_p^0, \\
(u_e+u_p^0) \partial_x h_p^0+(v_p^0+\overline{v_e^1}) \partial_y h_p^0-(h_e+h_p^0) \partial_x u_p^0-(g_p^0+\overline{g_e^1}) \partial_y u_p^0= \nu_{3}\partial_y^2 h_p^0, \\
(v_p^0, g_p^0)(x, y)=\int_y^{\varepsilon^{-1/2}} \partial_x (u_p^0, h_p^0)(x, z) \mathrm{d} z, \\
(v_e^1, g_e^1)(x, 0)=-\int_0^{\varepsilon^{-1/2}} \partial_x(u_p^0, h_p^0)(x, z) \mathrm{d} z, \\
(u_p^0, \partial_y h_p^0)(x, 0)=(u_b-u_e, 0), \\
(v_p^0, g_p^0)(x, 0)=-(v_e^1, g_e^1)(x, 0),
(u_p^0, h_p^0)(0, y)=(\tilde{u}_0(y), \tilde{h}_0(y)).
\end{array}\right.
\end{align}

\subsubsection*{Step 1:}

We first solve $(\ref{2.6.1})$ on $[0,L] \times \mathbb{R}_+$ and denote  by $(u_p^{0,\infty},v_p^{0,\infty}, h_p^{0,\infty}, g_p^{0,\infty})$
the corresponding solution.

\begin{pro}\label{b2.1}
Let $m \geq 2n+3, (n\geq 3)$ and $l \geq 0$ and let $u_e^0$ and $h_e^0$ be smooth functions satisfying the assumptions of Theorem $\ref{b1.1}$.
Assume  moreover that $\partial_{Y}u_e^0, \partial_{Y}h_e^0$ and their derivatives decay fast at infinity. Then, the solutions $u_p^{0,\infty}, v_p^{0,\infty}, h_p^{0,\infty} ~\text{and}~ g_p^{0,\infty}$ to the system $(\ref{2.6.1})$ satisfy
\begin{align}\label{2.9}
 \sup _{0 \leq x \leq L}\|\langle y\rangle^l D^\alpha (u_p^{0,\infty}, v_p^{0,\infty}, h_p^{0,\infty}, g_p^{0,\infty})\|_{L^2(0, \infty)} \leq C (l, m, \tilde{u}_{0}, \tilde{h}_{0}),
\end{align}
for any $\alpha, l ~\text{and}~ m$ with $|\alpha|\leq m$.
\end{pro}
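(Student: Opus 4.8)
The plan is to establish global-in-$y$ weighted Sobolev estimates for the boundary layer system $(\ref{2.6.1})$ on the half-strip $[0,L]\times\mathbb{R}_+$ by combining a quasilinearization/iteration scheme with weighted energy estimates, crucially exploiting the smallness of the horizontal interval length $L$. First I would recast the system in a more symmetric form. Because of the MHD structure, it is natural to introduce the Els\"asser-type combinations $w^\pm = u_p^0 \pm h_p^0$ (and correspondingly for $v_p^0 \pm g_p^0$); adding and subtracting the two evolution equations in $(\ref{2.6.1})$ turns the coupled parabolic system into a pair of transport-diffusion equations that are diagonally dominant, with the nonlinear transport coefficients $(u_e+u_p^0)$ playing the role of a positive drift in $x$. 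Assumption (H), which guarantees $u_e+\tilde u_0 > h_e+\tilde h_0 \ge \theta_0 > 0$, ensures that this $x$-drift coefficient stays uniformly positive, so that $x$ behaves like a time variable and the system is genuinely parabolic in the tangential-propagation sense. This positivity is what lets the weighted energy method close.

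The core is a weighted energy estimate. For each multi-index $\alpha$ with $|\alpha|\le m$, I would differentiate the equations, multiply by $\langle y\rangle^{2l}D^\alpha(\cdot)$, and integrate over $y\in(0,\infty)$. The diffusion terms $\nu_1\partial_y^2 u_p^0$ and $\nu_3\partial_y^2 h_p^0$ produce, after integration by parts, a good dissipation term $\|\langle y\rangle^l \partial_y D^\alpha(\cdot)\|_{L^2}^2$ plus boundary terms at $y=0$ and commutator terms coming from the weight $\langle y\rangle^{2l}$ hitting $\partial_y^2$; the boundary terms are controlled by the conditions $(u_p^0,\partial_y h_p^0)(x,0)=(u_b-u_e,0)$ and the decay at $y=\varepsilon^{-1/2}$ built into $(\ref{2.6.1})$. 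The transport terms are handled by writing $\tfrac{d}{dx}$ of the weighted norm and using the positive drift; the vertical velocities $(v_p^0,g_p^0)$, reconstructed from the divergence-free conditions as $\int_y^{\varepsilon^{-1/2}}\partial_x(u_p^0,h_p^0)\,dz$, are estimated in terms of the already-controlled tangential quantities via a Hardy-type inequality in the weighted space $L^2_l$. The key structural point is that the magnetic coupling terms, which would otherwise obstruct positivity, are absorbed because the $\mathrm{Els\ddot asser}$ symmetrization makes the principal quadratic form sign-definite under (H). I would then set up a Gr\"onwall argument in $x$: the resulting differential inequality has the form $\tfrac{d}{dx}E_m(x) + \mathcal{D}_m(x) \lesssim E_m(x) + (\text{lower-order}\ E_{<m})$, and choosing $L\le L_0$ small enough keeps $E_m$ bounded by its initial data $\|(\tilde u_0,\tilde h_0)\|$ in the appropriate $H^m_l$ norm, proving $(\ref{2.9})$ by induction on $|\alpha|$.

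I expect the main obstacle to be the closure of the weighted estimate at the top order $|\alpha|=m$ against the nonlinear transport coefficients. Differentiating $(u_e+u_p^0)\partial_x u_p^0$ and $(v_p^0+\overline{v_e^1})\partial_y u_p^0$ produces top-order terms in which a full derivative lands on the transport coefficient itself, e.g.\ $D^\alpha u_p^0\,\partial_x u_p^0$ and $D^\alpha v_p^0\,\partial_y u_p^0$; the second is dangerous because $v_p^0$ carries one more $x$-derivative than $u_p^0$, so controlling it requires the nondegeneracy of the leading profile and the smallness hypothesis $\|\langle Y\rangle\partial_Y(u_e^0,h_e^0)\|_{L^\infty}\lesssim\sigma_1$ together with $(\ref{1.17.1})$ on $\partial_y$ and $\partial_y^2$ of $(u_e+\tilde u_0,h_e+\tilde h_0)$. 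These smallness conditions are precisely what allow the problematic terms to be absorbed into the dissipation $\mathcal{D}_m$ rather than treated perturbatively. The existence part (as opposed to the estimates) would be obtained by a standard fixed-point/continuation argument on the linearized problem, with the uniform a priori bounds just described guaranteeing that the iteration converges on $[0,L_0]$; I would relegate the routine construction of the iteration map and its contraction property to a lemma, focusing the proof on the weighted energy inequality and the induction on $|\alpha|$.
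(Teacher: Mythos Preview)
Your proposal has a genuine gap at the top tangential order $|\alpha|=m$. You correctly identify the dangerous term $\partial_x^m v_p^0\cdot\partial_y u_p^0$ (and its magnetic analogue), since $v_p^0=\int_y^{\infty}\partial_x u_p^0$ carries one more $x$-derivative than $u_p^0$. But your proposed remedy---absorbing it via the smallness of $\partial_y u_p^0$ from $(\ref{1.17.1})$---cannot close, because the obstruction is one of derivative count, not of size: $\partial_x^m v_p^0$ contains $\partial_x^{m+1}u_p^0$, which is absent from both $E_m$ and the dissipation $\mathcal D_m$, and no smallness of the coefficient repairs that. The Els\"asser symmetrization you propose diagonalizes the zeroth-order coupling but does nothing to cure the Prandtl-type loss of one tangential derivative through the divergence-free reconstruction of $(v_p^0,g_p^0)$.

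The paper's mechanism is structurally different. After the cut-off change of variables leading to $(\ref{2.12})$, it exploits the non-degeneracy of the \emph{magnetic} field (the hypothesis $h_e+\tilde h_0\ge\theta_0>0$) to introduce the magnetic stream function $\psi$ with $\partial_y\psi=h$, $\partial_x\psi=-g$; the $h$-equation is rewritten in conservation form and integrated in $y$ to give an evolution equation $(\ref{2.16.1})$ for $\psi$. Following \cite{LXY2019}, one then defines the good unknowns
\[
u^{m}=\partial_x^{m} u-\frac{\partial_y u+(u_e-u_b)\phi'}{h+h_e\phi}\,\partial_x^{m}\psi,
\qquad
h^{m}=\partial_x^{m} h-\frac{\partial_y h+h_e\phi'}{h+h_e\phi}\,\partial_x^{m}\psi,
\]
and the MHD structure forces the $\partial_x^m v$ and $\partial_x^m g$ contributions in the equations for $(u^m,h^m)$ to cancel \emph{exactly}. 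This cancellation---not smallness---is what closes the top-order estimate; the bound $h+h_e\phi\ge\theta_0/2$ is needed precisely to divide by $h+h_e\phi$ in the definition above, and the smallness conditions are used only to propagate this lower bound and to control lower-order commutators. You should replace the Els\"asser route by this stream-function/good-unknown argument.
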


\begin{proof}

Let us introduce
\begin{align}\label{2.11}
\left\{\begin{array}{l}
u=u_p^{0,\infty}+u_e-u_e \phi(y)-u_b(1-\phi(y)), \\
v=v_e^1(x, 0)+v_p^{0,\infty}, \\
h=h_p^{0,\infty}+h_e-h_e \phi(y), \\
g=g_e^1(x, 0)+g_p^{0,\infty},
\end{array}\right.
\end{align}
in which $\phi (y)$ is the following  cut-off function
\begin{align}\label{2.10}
\phi (y)= \left\{
\begin{aligned}
&1, \quad y \geq 2R_{0},\\
&0,  \quad  y \in [0, R_0],
\end{aligned}\right.
\end{align}
where $R_0 > 0$.
Putting $(\ref{2.11})$ into $(\ref{2.6.1})$,  we obtain
\begin{align}\label{2.12}
\left\{\begin{array}{l}
{[(u+u_e \phi(y)+u_b(1-\phi(y))) \partial_x+v \partial_y] u-[(h+h_e \phi(y)) \partial_x+g \partial_y] h} \\
\quad \qquad \qquad - \nu_{1}\partial_y^2 u-g h_e \phi^{\prime}(y)+v(u_e-u_b) \phi^{\prime}(y)=a_1, \\
{[(u+u_e \phi(y)+u_b(1-\phi(y))) \partial_x+v \partial_y] h-[(h+h_e \phi(y)) \partial_x+g \partial_y] u} \\
\qquad \qquad \quad - \nu_{3}\partial_y^2 h-g(u_e-u_b) \phi^{\prime}(y)+v h_e \phi^{\prime}(y)=a_2, \\
\partial_x u+\partial_y v=\partial_x h+\partial_y g=0, \\
\left.(u, v, \partial_y h, g)\right|_{y=0}=(0,0,0,0), \\
(u, h) \rightarrow(0,0) \text { as } y \rightarrow \infty, \\
\left.(u,h)\right|_{x=0}=(\tilde{u}_0(y)+(u_e-u_b)(1-\phi(y)), \tilde{h}_0(y)+h_e(1-\phi(y))) := (u_0, h_0)(y),
\end{array}\right.
\end{align}
where
$$
a_1=\nu_{1}(u_b-u_e) \phi^{\prime \prime}(y), \quad a_2= \nu_{3}h_e \phi^{\prime \prime}(y) .
$$
The proposition is a direct consequence of the following lemma.
\end{proof}

{{
\begin{Lemma}\label{b2.2}
Under the assumptions of Proposition $\ref{b2.1}$ and Theorem $\ref{b1.1}$, there exists a smooth solution $\left(u, v, h, g\right)$ to $(\ref{2.12})$, such that
\begin{align}\label{2.13}
\sup _{0 \leq x \leq L} \|(u, h)\|_{H_l^m(0, \infty)}
+\|\partial_y(u, h)\|_{L^2(0, L; H_l^m(0, \infty))} \leq C(l, m, u_0, h_0),
\end{align}
where $C(l, m, u_0, h_0)$ depends only on $l, m, u_0 ~\text{and}~ h_0$.
Furthermore, for $(x, y) \in [0, L] \times[0, \infty)$,
under assumption (H) and provided $C_0$ is large enough,
$(u_p^{0,\infty},h_p^{0,\infty})$ satisfies the following estimates
$$
h_e+h_p^{0,\infty}(x, y) \geq \frac{\theta_0}{2}>0,
$$
$$
\left|u_e^0(\sqrt{\varepsilon} y)+u_p^{0,\infty}(x, y)\right|\geq C_0\left|h_e^0(\sqrt{\varepsilon} y)+h_p^{0,\infty}(x, y)\right|,
$$
and
$$
 \left|\langle y\rangle^{l+1} \partial_y\left(u_e+u_p^{0,\infty}, h_e+h_p^{0,\infty}\right)(x, y)\right| \leq \sigma_0,
$$
$$
 \left|\langle y\rangle^{l+1} \partial_y^2\left(u_e+u_p^{0,\infty}, h_e+h_p^{0,\infty}\right)(x, y)\right| \leq \theta_0^{-1} .
$$
\end{Lemma}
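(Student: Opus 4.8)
The plan is to regard $(\ref{2.12})$ as a quasilinear, nonlocal parabolic system evolving in the tangential variable $x$, which plays the role of time: the terms $-\nu_{1}\partial_y^2 u$ and $-\nu_{3}\partial_y^2 h$ supply $y$-diffusion, the coefficient $u+u_e\phi+u_b(1-\phi)$ of $\partial_x$ is the effective propagation speed, and $v,g$ are recovered from $u,h$ through the incompressibility constraints together with $v|_{y=0}=g|_{y=0}=0$, namely $v(x,y)=-\int_0^y\partial_x u(x,z)\,\mathrm{d}z$ and likewise for $g$. Under $(\mathrm{H})$, and because for $x$ small the unknowns remain close to their traces at $x=0$, the propagation speed stays bounded below by a positive constant, so the evolution is genuinely forward-parabolic and locally well-posed on a short interval $[0,L_0]$.

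First I would construct solutions by an approximation scheme --- either a Picard iteration that freezes the transport and magnetic coefficients at the previous step and solves the resulting linear parabolic problem for $(u^{(k+1)},h^{(k+1)})$, or a uniform regularization adding $\delta(\partial_x^2 u,\partial_x^2 h)$ followed by $\delta\to0$. The core of the argument is a uniform a priori bound in $H_l^m$. I would obtain it by applying $D^\alpha$ with $|\alpha|\le m$ to the first two equations of $(\ref{2.12})$, testing against $\langle y\rangle^{2(l+k_2)}D^\alpha u$ and $\langle y\rangle^{2(l+k_2)}D^\alpha h$, and integrating in $y$. The term $(u_e\phi+u_b(1-\phi))\partial_x D^\alpha u$ produces $\tfrac12\partial_x$ of a weighted energy (the coefficient depending only on $y$), while the viscous terms, after integration by parts, yield the dissipation $\|\langle y\rangle^{l+k_2}\partial_y D^\alpha(u,h)\|_{L^2}^2$; the boundary terms at $y=0$ vanish thanks to $u|_{y=0}=0$ and $\partial_y h|_{y=0}=0$ (higher-order $y$-derivatives at $y=0$ being fixed by differentiating the equations, i.e. compatibility of the data). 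Integrating in $x$ over $[0,L]$ and closing with Gronwall then gives exactly the $\sup_x$-energy-plus-dissipation bound $(\ref{2.13})$; the cut-off source terms $a_1,a_2$ and the $\phi',\phi''$ contributions are smooth and compactly supported, hence harmless.

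The hard part is closing this estimate in the presence of the magnetic coupling $h\partial_x h,\ h\partial_x u,\ g\partial_y h,\ g\partial_y u$, which threaten a loss of one tangential derivative and which the distinct viscosities $\nu_{1}\ne\nu_{3}$ forbid one to remove by a naive Els\"asser-type diagonalization. The resolution is the velocity-dominated regime of $(\mathrm{H})$: with $C_0$ large and $\sigma_1$ small, every magnetic contribution carries a small prefactor --- either $|h|\le C_0^{-1}|u|$ or the smallness of $\langle Y\rangle\partial_Y h_e^0$ --- so these terms are treated as small perturbations of the scalar Prandtl estimate and absorbed into the velocity dissipation and the Gronwall constant. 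A second technical point is the commutators created by the weight $\langle y\rangle^{l+k_2}$ and by the nonlocal $v,g$: each such commutator exchanges a power of $\langle y\rangle$ for a $y$-derivative, and this trade is matched precisely by the fast decay assumed on $\partial_Y(u_e^0,h_e^0)$ and on $\tilde u_0,\tilde h_0$, together with the weighted control already present in the energy. This is where the restriction to small $L_0$, small $\sigma_1$, and large $C_0$ is genuinely used.

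Finally, the four pointwise estimates are obtained by propagating the corresponding bounds from $x=0$ along $[0,L_0]$. Evaluating $(\ref{2.11})$ at $x=0$ gives $h_e+h_p^{0,\infty}|_{x=0}=h_e+\tilde h_0$, $u_e^0(\sqrt{\varepsilon} y)+u_p^{0,\infty}|_{x=0}=u_e^0(\sqrt{\varepsilon} y)+\tilde u_0$, and $\partial_y(u_e+u_p^{0,\infty})|_{x=0}=\partial_y\tilde u_0$, so at $x=0$ the four bounds hold by $(\mathrm{H})$, $(\ref{1.16})$ and $(\ref{1.17.1})$, the latter two with a factor $\tfrac12$ to spare. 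From $(\ref{2.13})$ and a large enough $m$, Sobolev embedding controls $(u,h)$ and their low-order weighted derivatives in $L^\infty$, and the equations express $\partial_x u,\partial_x h$ in terms of lower-order quantities --- the $2\times2$ coefficient matrix with entries $u+u_e\phi+u_b(1-\phi)$ on the diagonal and $-(h+h_e\phi)$ off-diagonal being invertible by $(\mathrm{H})$ --- so $\partial_x u,\partial_x h$ are bounded in $L^\infty$, whence $\|(u,h)(x,\cdot)-(u,h)(0,\cdot)\|_{L^\infty}=O(L)$ together with the analogous statement for the relevant derivatives. Choosing $L_0$ small makes this $O(L)$ deviation smaller than the slack in the hypotheses, so the positivity $h_e+h_p^{0,\infty}\ge\theta_0/2$, the ratio bound with constant $C_0$, and the weighted first- and second-derivative bounds by $\sigma_0$ and $\theta_0^{-1}$ all persist on $[0,L_0]$. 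Combining the existence, $(\ref{2.13})$, and these pointwise bounds completes the proof.
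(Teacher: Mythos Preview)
Your proposal has a genuine gap at the top tangential order. The loss of one $x$-derivative is not specific to the magnetic coupling: it already occurs in the velocity transport term $v\,\partial_y u$. Since $v(x,y)=-\int_0^y\partial_x u\,\mathrm{d}z$, applying $\partial_x^m$ produces $\partial_x^m v\cdot\partial_y u$ with $\partial_x^m v$ containing $\partial_x^{m+1}u$, and testing against $\langle y\rangle^{2l}\partial_x^m u$ gives a term that the $H_l^m$ energy plus dissipation cannot absorb. Making the magnetic contributions small via large $C_0$ does nothing for this term, and the ``scalar Prandtl estimate'' you invoke as the baseline is precisely the estimate that is known to fail in Sobolev spaces without extra structure (monotonicity, analyticity, or a cancellation mechanism). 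So your closure at order $|\alpha|=m$ with $k_1=m$ does not go through.

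The paper's mechanism is the opposite of magnetic smallness: it uses the \emph{non-degeneracy} $h+h_e\phi\ge\theta_0/2>0$ of the tangential magnetic field. One integrates the $h$-equation in $y$ to obtain a scalar equation for the magnetic stream function $\psi=\int_0^y h$, namely $(\ref{2.16.1})$, and then forms the good unknowns
\[
u^{m}=\partial_x^{m} u-\frac{\partial_y u+(u_e-u_b)\phi'}{h+h_e\phi}\,\partial_x^{m}\psi,
\qquad
h^{m}=\partial_x^{m} h-\frac{\partial_y h+h_e\phi'}{h+h_e\phi}\,\partial_x^{m}\psi.
\]
Applying $\partial_x^m$ to $(\ref{2.12})$ and to $(\ref{2.16.1})$ and combining, the dangerous $\partial_x^m v$ and $\partial_x^m g$ contributions cancel exactly; weighted $L^2$ estimates for $(u^m,h^m)$ then close, and the equivalence $\|(u^m,h^m)\|\sim\|\partial_x^m(u,h)\|$ (which again uses $h+h_e\phi\ge\theta_0/2$) transfers the bound back. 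This is the Liu--Xie--Yang cancellation for MHD boundary layers, and it is the step your argument is missing. Your treatment of the lower-order derivatives, the Picard scheme, and the propagation of the pointwise bounds from $x=0$ via $O(L)$ continuity are all fine and match the paper's outline.
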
}}

\begin{proof}
By using the divergence free conditions, the second equation of $(\ref{2.12})$ may be rewritten
\begin{align}\label{2.15}
\partial_y[v (h+h_e \phi)
-g (u+u_e \phi + u_b (1-\phi))] - \nu_{3}\partial_y^2 h = \nu_{3}\phi^{\prime \prime}(y) h_e.
\end{align}
Integrating  $(\ref{2.15})$ from $0$ to $y$, we obtain
\begin{align}\label{2.16}
v (h+h_e \phi)
-g (u+u_e \phi + u_b (1-\phi))- \nu_{3}\partial_y h = \nu_{3}\phi^{\prime}(y) h_e,
\end{align}
in which we have used the following boundary conditions
$$
\left. v\right|_{y=0}=\left. g\right|_{y=0}
=\left.\partial_{y}h\right|_{y=0}=\left. \phi^{\prime}\right|_{y=0}=0.
$$
We introduce the stream function ${\psi}$, such that
$$
{\psi}=\int_0^y h(x, \theta) \mathrm{d} \theta, \quad \left. \psi\right|_{y=0}=0.
$$
Then we can obtain via the divergence free condition
$$
\partial_x {\psi}=-g, \quad \partial_y {\psi}=h.
$$
The equation (\ref{2.16}) may be rewritten as
\begin{align}\label{2.16.1}
[(u+u_e \phi +u_b (1-\phi))\partial_{x} +v \partial_{y}] \psi + v h_e \phi -\nu_{3}\partial_{y}^{2} \psi =\nu_{3}h_e \phi^{\prime}.
\end{align}
Here we just sketch the outline about the proof of the Lemma under assumption (H). First, by  standard energy methods, we obtain  weighted $L^2$ estimates of $\partial_x^{k_1} \partial_{y}^{k_2}(u,h)$, for $k_1+k_2\leq m$ and $k_1\leq m-1 ~(m \geq 2n+3)$. 
Second, we focus on  weighted $L^2$ estimates of the $m$-order tangential derivatives $\partial_x^{m}(u, h)$. For this we  introduce the new unknowns
$$
u^{m}:=\partial_x^{m} u-\frac{\partial_y u + (u_e-u_b) \phi^{\prime}}{h+h_e \phi} \partial_x^{m} {\psi},
\quad h^{m}:=\partial_x^{m} h-\frac{\partial_y h + h_e \phi^{\prime}}{h+h_e \phi} \partial_x^{m} {\psi}.
$$
By applying $\partial_{x}^{m}$ to $(\ref{2.12})$ and $(\ref{2.16.1})$ respectively and combining them, we obtain  equations of $u^{m} ~\text{and}~ h^{m}$, in which the terms involving $\partial_x^{m}(v, g)$ vanish. We can then obtain  weighted $L^2$ estimates for $u^{m}$ and $h^{m}$, using 
$h+h_e\phi \geq \frac{\theta_0}{2}>0$.
To derive the estimates for $\partial_{x}^{m}(u, h)$, we establish the equivalence of $L^2$ norms between $\partial_x^{m}(u, h)$ and $(u^{m}, h^{m})$. This leads to the desired estimates of $\partial_x^{m}(u, h)$.

Finally, we obtain the existence of the solution $(u, v, h, g)$ to the problem $(\ref{2.12})$ through  the classical Picard iteration and fixed point theorem. For more details, we  refer to \cite{DLX2021} and \cite{LXY2019}.
\end{proof}

\subsubsection*{Step 2:}

We cut  the domain from $\mathbb{R}_+$ to $I_{\varepsilon}$, and obtain  estimates on $[0,L] \times I_{\varepsilon}$.
\begin{pro}\label{b2.4}
(Estimates on $[0,L] \times I_{\varepsilon}$).\\
Under the assumptions in Theorem $\ref{b1.1}$, there exist smooth functions $(u_p^0, v_p^0, h_p^0, g_p^0)$ on $\Omega_{\varepsilon}$, which satisfy the following inhomogeneous system
\begin{align}\label{2.17}
\left\{\begin{array}{l}
(u_e+u_p^0) u_{p x}^0+(v_p^0+v_e^1(x, 0)) u_{p y}^0-(h_e+h_p^0) h_{p x}^0-(g_p^0+g_e^1(x, 0)) h_{p y}^0-\nu_{1}u_{p y y}^0=R_p^{1, 0}, \\
(u_e+u_p^0) h_{p x}^0+(v_p^0+v_e^1(x, 0)) h_{p y}^0-(h_e+h_p^0) u_{p x}^0-(g_p^0+g_e^1(x, 0)) u_{p y}^0-\nu_{3}h_{p y y}^0=R_p^{3, 0}, \\
u_{p x}^0+v_{p y}^0=h_{p x}^0+g_{p y}^0=0, \\
(u_p^0, \partial_y h_p^0)(x, 0)=(u_b-u_e, 0), u_{p y}^0(x, \varepsilon^{-1/2})=h_{p y}^0(x, \varepsilon^{-1/2})=0,\\
(v_p^0+v_e^1)(x, 0)=(g_p^0+g_e^1)(x, 0)=0, v_p^0(x, \varepsilon^{-1/2})=g_p^0(x, \varepsilon^{-1/2})=0,
\end{array}\right.
\end{align}
such that for any given $l, m \in \mathbb{N}$,
\begin{align}\label{2.18}
\sup_{x \in[0, L]}\|\langle y\rangle^l D^{\alpha}(u_p^0, v_p^0, h_p^0, g_p^0)\|_{L^2(I_{\varepsilon})} \leq C (l,m, \tilde{u}_0, \tilde{h}_0),
\end{align}
where $R_p^{1, 0}$ and $R_p^{3, 0}$ are higher order terms of $\sqrt{\varepsilon}$.
\end{pro}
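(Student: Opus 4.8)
The plan is to obtain the truncated profiles directly from the half-line solution $(u_p^{0,\infty}, v_p^{0,\infty}, h_p^{0,\infty}, g_p^{0,\infty})$ furnished by Proposition \ref{b2.1}, rather than to re-solve (\ref{2.17}) on the finite slab. I would introduce a smooth cutoff $\chi_\varepsilon(y):=\chi(\sqrt\varepsilon\,y)$, with $\chi\equiv 1$ near $0$ and $\chi\equiv 0$ once its argument exceeds $\tfrac12$, so that $\chi_\varepsilon\equiv 1$ on $[0,\tfrac12\varepsilon^{-1/2}]$ and $\chi_\varepsilon\equiv 0$ in a neighborhood of $y=\varepsilon^{-1/2}$, and set
$$u_p^0:=\chi_\varepsilon\,u_p^{0,\infty},\qquad h_p^0:=\chi_\varepsilon\,h_p^{0,\infty}.$$
The normal components are then recovered by the divergence-free integrals
$$(v_p^0,g_p^0)(x,y):=\int_y^{\varepsilon^{-1/2}}\partial_x(u_p^0,h_p^0)(x,z)\,\mathrm{d}z,\qquad (v_e^1,g_e^1)(x,0):=-\int_0^{\varepsilon^{-1/2}}\partial_x(u_p^0,h_p^0)(x,z)\,\mathrm{d}z,$$
which enforce $u_{px}^0+v_{py}^0=h_{px}^0+g_{py}^0=0$ exactly and $v_p^0(x,\varepsilon^{-1/2})=g_p^0(x,\varepsilon^{-1/2})=0$.

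Next I would check the boundary conditions of (\ref{2.17}). Since $\chi_\varepsilon\equiv 1$ near $y=0$ one has $\chi_\varepsilon(0)=1$ and $\chi_\varepsilon'(0)=0$, hence $u_p^0(x,0)=u_p^{0,\infty}(x,0)=u_b-u_e$ and $\partial_y h_p^0(x,0)=\partial_y h_p^{0,\infty}(x,0)=0$; as $u_p^0$ and $h_p^0$ vanish identically near the top, $u_{py}^0(x,\varepsilon^{-1/2})=h_{py}^0(x,\varepsilon^{-1/2})=0$; and $(v_p^0+v_e^1)(x,0)=(g_p^0+g_e^1)(x,0)=0$ hold by the very definition of $(v_e^1,g_e^1)(x,0)$. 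The weighted estimate (\ref{2.18}) is then inherited from (\ref{2.9}): multiplication by $\chi_\varepsilon$ and its derivatives (uniformly bounded, with $|\partial_y^k\chi_\varepsilon|\lesssim\varepsilon^{k/2}$) cannot enlarge the weighted $L^2$ norms over $I_\varepsilon\subset\mathbb{R}_+$, while the bounds on $(v_p^0,g_p^0)$ follow from those on $\partial_x(u_p^0,h_p^0)$ through the integral representation and a weighted Hardy-type estimate, exactly as for $(v_p^{0,\infty},g_p^{0,\infty})$.

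The substantive point is the size of the residuals, which I would simply \emph{define} by setting $R_p^{1,0}$ and $R_p^{3,0}$ equal to the left-hand sides of the first two lines of (\ref{2.17}) evaluated on the constructed profiles. The crucial structural observation is that on $[0,\tfrac12\varepsilon^{-1/2}]$ the cut-off fields coincide identically with the half-line fields, so there $R_p^{1,0}=R_p^{3,0}=0$ because $(u_p^{0,\infty},v_p^{0,\infty},h_p^{0,\infty},g_p^{0,\infty})$ solves the homogeneous system (\ref{2.6.1}). Consequently $R_p^{1,0}$ and $R_p^{3,0}$ are supported in the thin shell $\{y\sim\varepsilon^{-1/2}\}$, where they reduce to a finite sum of terms each carrying a factor among $\chi_\varepsilon-1$, $\chi_\varepsilon'$, $\chi_\varepsilon''$ and multiplying the half-line profile or one of its derivatives. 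Invoking the arbitrary-order weighted bounds (\ref{2.9}), which by Sobolev embedding give $|D^\alpha(u_p^{0,\infty},h_p^{0,\infty})(x,y)|\lesssim\langle y\rangle^{-l}\lesssim\varepsilon^{l/2}$ for every $l$ on that shell, each such term is $O(\varepsilon^{N})$ for arbitrarily large $N$, which is the meaning of ``higher order terms of $\sqrt\varepsilon$''. I expect the main obstacle to lie precisely in this bookkeeping: one must confirm that the nonlocal convection coefficient $v_p^0(x,y)+v_e^1(x,0)=-\int_0^y\chi_\varepsilon\,\partial_x u_p^{0,\infty}\,\mathrm{d}z$ agrees with its half-line counterpart on the bulk $[0,\tfrac12\varepsilon^{-1/2}]$, so that the localization of the residual to $\{y\sim\varepsilon^{-1/2}\}$ is genuine and is not spoiled by the nonlocal products $v_e^1(x,0)\,u_{py}^0$ and $g_e^1(x,0)\,h_{py}^0$, and the rapid decay of the half-line profile from Proposition \ref{b2.1} is exactly what guarantees this.
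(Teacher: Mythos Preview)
Your approach is correct and follows essentially the same strategy as the paper: truncate the half-line solution of Proposition~\ref{b2.1} by a cutoff in $\sqrt\varepsilon\,y$, enforce the divergence-free conditions exactly, and observe that the residuals $R_p^{1,0},R_p^{3,0}$ localize to the shell $y\sim\varepsilon^{-1/2}$ where the arbitrary-order weighted decay~(\ref{2.9}) renders them of higher order in $\sqrt\varepsilon$. Your check that the nonlocal coefficient $v_p^0+v_e^1(x,0)=-\int_0^y\chi_\varepsilon\,\partial_x u_p^{0,\infty}\,dz$ agrees with its half-line counterpart on the bulk is exactly the point that makes the localization genuine.

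The one implementation difference is which field is cut off directly. The paper sets $v_p^0:=\chi(\sqrt\varepsilon y)\,v_p^{0,\infty}$ and restores $\partial_x u_p^0+\partial_y v_p^0=0$ by adding the stream-function correction $-\sqrt\varepsilon\,\chi'(\sqrt\varepsilon y)\int_y^\infty u_p^{0,\infty}\,d\theta$ to $u_p^0$ (and similarly for $h_p^0,g_p^0$), whereas you cut off $u_p^{0,\infty}$ and recover $v_p^0$ by integration. The paper's choice lets $(v_p^0,g_p^0)$ inherit the weighted bounds of $(v_p^{0,\infty},g_p^{0,\infty})$ immediately, without your Hardy-type step, and yields explicit algebraic residuals $R_p^{1,0}=\sqrt\varepsilon\,\triangle_0^1+\varepsilon\,\triangle_0^2$ that are reused verbatim in the higher-order boundary-layer equations; but for the present proposition either variant suffices. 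One small slip: your cutoff cannot simultaneously satisfy $\chi_\varepsilon\equiv 1$ on $[0,\tfrac12\varepsilon^{-1/2}]$ and $\chi\equiv 0$ once its argument exceeds $\tfrac12$; you presumably mean $\chi\equiv 1$ on $[0,\tfrac12]$ and $\chi\equiv 0$ near $1$.
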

 \begin{proof}
 $u_p^{0,\infty} ~\text{and}~ h_p^{0,\infty}$ have already been constructed in Proposition $\ref{b2.1}$, and $v_p^{0,\infty}$ and $g_p^{0,\infty}$ are directly obtained through the divergence free conditions. Next, we first introduce the cut-off function $\chi(\cdot)$ with support in $[0,1]$, satisfying $\chi(0)=1, \chi(1)= \chi^{\prime}(0)=\chi^{\prime}(1)=0$, and define
\begin{align}\label{2.19}
\left\{\begin{aligned}
& u_p^0(x, y):=\chi(\sqrt{\varepsilon} y) u_p^{0,\infty}(x, y)-\sqrt{\varepsilon} \chi^{\prime}(\sqrt{\varepsilon} y) \int_y^{\infty} u_p^{0,\infty}(x, \theta) \mathrm{d} \theta, \\
& v_p^0(x, y):=\chi(\sqrt{\varepsilon} y) v_p^{0,\infty}(x, y),\\
&h_p^0(x, y):=\chi(\sqrt{\varepsilon} y) h_p^{0,\infty}(x, y)-\sqrt{\varepsilon} \chi^{\prime}(\sqrt{\varepsilon} y) \int_y^{\infty} h_p^{0,\infty}(x, \theta) \mathrm{d} \theta, \\
& g_p^0(x, y):=\chi(\sqrt{\varepsilon} y) g_p^{0,\infty}(x, y).
\end{aligned}
\right.
\end{align}
It is easy to check that $(u_p^0, v_p^0, h_p^0,g_p^0)$ satisfy $(\ref{2.17})$ with
\begin{align*}
R_p^{1,0}= \sqrt{\varepsilon}\triangle_0^1 +\varepsilon \triangle_0^2, \quad
R_p^{3,0}=\sqrt{\varepsilon}\triangle_0^3 +\varepsilon \triangle_0^4,
\end{align*}
where
\begin{align*}
\triangle_0^{1}  := &   \chi \int_0^y \chi^{\prime} \mathrm{d} \theta (u_p^{0,\infty} u_{p x}^{0,\infty}+v_p^{0,\infty} u_{p y}^{0,\infty}-h_p^{0,\infty} h_{p x}^{0,\infty}-g_p^{0,\infty} h_{p y}^{0,\infty})- \chi^{\prime} \chi ( u_{p x}^{0,\infty} \int_y^{\infty} u_p^{0,\infty} \mathrm{d} \theta \nonumber\\
& -h_{p x}^{0,\infty} \int_y^{\infty} h_p^{0,\infty} \mathrm{d} \theta)- \chi^{\prime} [v_p^{0,\infty}(u_e
+\chi u_p^{0,\infty})
-g_p^{0,\infty}(h_e+\chi h_p^{0,\infty})]-3 \nu_{1}  \chi^{\prime} u_{p y}^{0,\infty}\nonumber\\
& +2 \chi^{\prime} (u_p^{0,\infty} \int_0^y \chi v_{p y}^{0,\infty} \mathrm{d} \theta - h_p^{0,\infty} \int_0^y \chi g_{p y}^{0,\infty} \mathrm{d} \theta),\nonumber\\
\triangle_0^2:= & 2 \chi^{\prime} (u_p^{0,\infty} \int_0^y \chi^{\prime} v_p^{0,\infty} \mathrm{d} \theta-h_p^{0,\infty} \int_0^y \chi^{\prime} g_p^{0,\infty} \mathrm{d} \theta)
-3 \nu_{1}  \chi^{\prime \prime} u_p^{0,\infty}
+ (\chi^{\prime})^2 v_p^{0,\infty} \int_y^{\infty} u_p^{0,\infty} \mathrm{d} \theta\\
&-(\chi^{\prime})^2 g_p^{0,\infty} \int_y^{\infty} h_p^{0,\infty} \mathrm{d} \theta - \chi^{\prime \prime}[(\chi v_p^{0,\infty}-v_p^{0,\infty}(0)) \int_y^{\infty} u_p^{0,\infty} \mathrm{d} \theta -(\chi g_p^{0,\infty}-g_p^{0,\infty}(0)) \int_y^{\infty} h_p^{0,\infty} \mathrm{d} \theta] \\
&+ \nu_{1} \varepsilon^{1/2} \chi^{\prime \prime \prime} \int_y^{\infty} u_p^{0,\infty} \mathrm{d} \theta, \nonumber\\
\triangle_0^3:= &  \chi \int_0^y \chi^{\prime} \mathrm{d} \theta (u_p^{0,\infty} h_{p x}^{0,\infty}+v_p^{0,\infty} h_{p y}^{0,\infty}-h_p^{0,\infty} u_{p x}^{0,\infty}-g_p^{0,\infty} u_{p y}^{0,\infty})- \chi^{\prime} \chi ( h_{p x}^{0,\infty} \int_y^{\infty} u_p^{0,\infty} \mathrm{d} \theta \nonumber\\
& -u_{p x}^{0,\infty} \int_y^{\infty} h_p^{0,\infty} \mathrm{d} \theta)- \chi^{\prime}[g_p^{0,\infty}(u_e+\chi u_p^{0,\infty})
-v_p^{0,\infty}(h_e+\chi h_p^{0,\infty})]-3 \nu_{3} \chi^{\prime} h_{p y}^{0,\infty}\nonumber\\
&+2 \chi^{\prime} (h_p^{0,\infty} \int_0^y \chi v_{p y}^{0,\infty} \mathrm{d} \theta -u_p^{0,\infty} \int_0^y \chi g_{p y}^{0,\infty} \mathrm{d} \theta ),\\
\triangle_0^4:= & 2 \chi^{\prime} (h_p^{0,\infty} \int_0^y \chi^{\prime} v_p^{0,\infty} \mathrm{d} \theta-u_p^{0,\infty} \int_0^y \chi^{\prime} g_p^{0,\infty} \mathrm{d} \theta )-3\nu_{3}  \chi^{\prime \prime} h_p^{0,\infty} + (\chi^{\prime})^2 g_p^{0,\infty} \int_y^{\infty} u_p^{0,\infty} \mathrm{d} \theta\\
&-(\chi^{\prime})^2 v_p^{0,\infty} \int_y^{\infty} h_p^{0,\infty} \mathrm{d} \theta  - \chi^{\prime \prime}[(\chi v_p^{0,\infty}-v_p^{0,\infty}(0)) \int_y^{\infty} h_p^{0,\infty} \mathrm{d}\theta - (\chi g_p^{0,\infty}-g_p^{0,\infty}(0)) \int_y^{\infty} u_p^{0,\infty} \mathrm{d} \theta ]\\
& +\nu_{3}\varepsilon^{1 / 2} \chi^{\prime \prime \prime} \int_y^{\infty} h_p^{0,\infty} \mathrm{d} \theta.
\end{align*}
Finally, using the Proposition $\ref{b2.1}$ and definition $(\ref{2.19})$, we get $(\ref{2.18})$.
\end{proof}


\section{The $\varepsilon^{\frac{i}{2}}$-order ideal inner MHD profiles and MHD boundary layer  profiles $(1\leq i \leq n)$}

In this section, 
we establish estimates of any order ideal MHD profiles $(u_e^i,v_e^i,h_e^i,g_e^i,p_e^i)$
and boundary layer profiles $(u_p^i,v_p^i,h_p^i,g_p^i,p_p^i),(1\leq i \leq n)$.

\subsection{The $\varepsilon^{\frac{i}{2}}$-order ideal inner MHD profiles}

In this subsection, we estimate the $i$-th order ideal MHD profiles  $(u_e^i, v_e^i, h_e^i, g_e^i, p_e^i)(1\leq i \leq n, n\geq 3)$, which satisfy 
\begin{align}\label{3.9}
\left\{\begin{array}{l}
u_e^0 \partial_x u_e^i+v_e^i \partial_Y u_e^0-h_e^0 \partial_x h_e^i-g_e^i \partial_Y h_e^0+\partial_x p_e^i=f_1^i, \\
u_e^0 \partial_x v_e^i-h_e^0 \partial_x g_e^i+\partial_Y p_e^i=f_2^i, \\
u_e^0 \partial_x h_e^i+v_e^i \partial_Y h_e^0-h_e^0 \partial_x u_e^i-g_e^i \partial_Y u_e^0=f_3^i, \\
u_e^0 \partial_x g_e^i-h_e^0 \partial_x v_e^i=f_4^i, \\
\partial_x u_e^i+\partial_Y v_e^i=\partial_x h_e^i+\partial_Y g_e^i=0,
\end{array}\right.
\end{align}
with the following boundary conditions
\begin{align}\label{3.10}
\left\{\begin{array}{l}
(v_e^i, g_e^i)(x, 0)=-(v_p^{i-1}, g_p^{i-1})(x, 0), \quad(v_e^i, g_e^i)(x,1) =(0,0),\\
(v_e^i, g_e^i)(0, Y)=(V_{b 0}^{i}, G_{b 0}^{i})(Y),\quad(v_e^i, g_e^i)(L, Y)=(V_{b L}^{i}, G_{b L}^{i})(Y),
\end{array}\right.
\end{align}
and the compatibility conditions at the corners
\begin{align}\label{3.11}
\left\{\begin{array}{l}
(V_{b 0}^{i}, G_{b 0}^{i})(0)=-(v_p^{i-1}, g_p^{i-1})(0,0), \quad (V_{b L}^{i}, G_{b L}^{i})(0)=-(v_p^{i-1}, g_p^{i-1})(L,0),\\
(V_{b 0}^{i}, G_{b 0}^{i})(1)=(0,0), \quad (V_{b L}^{i}, G_{b L}^{i})(1)=(0,0),
\end{array}\right.
\end{align}
where
\begin{align*}
-f_1^i=&u_e^1 u_{e x}^{i-1}+u_e^{i-1} u_{e x}^{1}+v_e^1 u_{e Y}^{i-1}+ v_e^{i-1}u_{e Y}^{1}\\
&-h_e^1 h_{e x}^{i-1}-h_e^{i-1} h_{e x}^{1}-g_e^1 h_{e Y}^{i-1}- g_e^{i-1}h_{e Y}^{1}-\nu_1 \Delta u_e^{i-2},\\
-f_2^i=&v_e^{i-1}\partial_{Y}v_e^1+v_e^{1}\partial_{Y}v_e^{i-1}
       +u_e^{i-1}\partial_{x}v_e^1+u_e^{1}\partial_{x}v_e^{i-1}\\
       &-g_e^{i-1}\partial_{Y}g_e^1-g_e^{1}\partial_{Y}g_e^{i-1}
       -h_e^{i-1}\partial_{x}g_e^1-h_e^{1}\partial_{x}g_e^{i-1}-\nu_{2}\Delta v_e^{i-2},\\
-f_3^i=&u_e^1 h_{e x}^{i-1}+u_e^{i-1} h_{e x}^{1}+v_e^1 h_{e Y}^{i-1}+ v_e^{i-1}h_{e Y}^{1}\\
&-h_e^1 u_{e x}^{i-1}-h_e^{i-1} u_{e x}^{1}-g_e^1 u_{e Y}^{i-1}- g_e^{i-1}u_{e Y}^{1}-\nu_3 \Delta h_e^{i-2},\\
-f_4^i=&v_e^{i-1}\partial_{Y}g_e^1+v_e^{1}\partial_{Y}g_e^{i-1}
       +u_e^{i-1}\partial_{x}g_e^1+u_e^{1}\partial_{x}g_e^{i-1}\\
       &-g_e^{i-1}\partial_{Y}v_e^1-g_e^{1}\partial_{Y}v_e^{i-1}
       -h_e^{i-1}\partial_{x}v_e^1-h_e^{1}\partial_{x}v_e^{i-1}-\nu_{4}\Delta g_e^{i-2}.
\end{align*}
To solve the problem $(\ref{3.9})-(\ref{3.11})$,
using the divergence free conditions of the velocity and magnetic fields, we rewrite the third and the fourth equations of $(\ref{3.9})$ respectively, and obtain
{{
$$
\partial_Y(u_e^0 g_e^i-h_e^0 v_e^i)=-f_{3}^i,
$$
$$
\partial_x(u_e^0 g_e^i-h_e^0 v_e^i)=f_{4}^i.
$$
Integrating the above two equalities respectively yields
\begin{align*}
v_e^i=\frac{u_e^0}{h_e^0} g_e^i+\frac{u_e \overline{g_p^{i-1}}-h_e \overline{v_p^{i-1}}-u_e^0 G_{b0}^{i}+h_e^0V_{b0}^{i}}{2 h_e^0}+\frac{\int_0^Y f_3^i(x,\theta)\mathrm{d} \theta-\int_0^x f_4^i(s,Y)\mathrm{d} s}{2 h_e^0}:=\frac{u_e^0}{h_e^0} g_e^i+\frac{b_{i}^{1}(x, Y)}{h_e^0},
\end{align*}
and
\begin{align*}
g_e^i=\frac{h_e^0}{u_e^0} v_e^i+\frac{h_e \overline{v_p^{i-1}}-u_e \overline{g_p^{i-1}}+u_e^0 G_{b0}^{i}-h_e^0V_{b0}^{i}}{2 u_e^0}-\frac{\int_0^Y f_3^i(x,\theta)\mathrm{d} \theta-\int_0^x f_4^i(s,Y)\mathrm{d} s}{2 h_e^0}:=\frac{h_e^0}{u_e^0} v_e^i+\frac{b_{i}^{2}(x, Y)}{u_e^0},
\end{align*}
where
$$
b_{i}^{1}(x,Y)=\frac{1}{2}(u_e \overline{g_p^{i-1}}-h_e \overline{v_p^{i-1}}-u_e^0 G_{b0}^{i}+h_e^0V_{b0}^{i}+\int_0^Y f_3^i(x,\theta)\mathrm{d} \theta-\int_0^x f_4^i(s,Y)\mathrm{d} s)
$$
and $b_{i}^{2}(x,Y)=-b_{i}^{1}(x,Y)$. Using the divergence free conditions, it follows from the first and second equations in $(\ref{3.9})$ that
\begin{align}\label{3.9.1}
-u_e^0 \Delta v_e^i+\partial_Y^2 u_e^0 \cdot v_e^i+(h_e^0 \Delta g_e^i-\partial_Y^2 h_e^0 \cdot g_e^i)=f_{1 Y}^i-f_{2 x}^i,
\end{align}
which satisfies the boundary conditions $(\ref{3.10})$.

When $i=1$, we have $f_1^1=f_2^1=f_3^1=f_4^1=0$,  resulting in the elimination of the terms containing $f_j^1(j=1,2,3,4)$ in $(\ref{3.9})~\text{and}~ (\ref{3.9.1})$. Now, we consider the $\sqrt{\varepsilon}$-order ideal inner MHD flow. 

\subsubsection{The first order ideal inner MHD profile}

Let us rewrite $(\ref{3.9.1})$ in the following form
\begin{align*}
-u_e^0 \Delta v_e^1+\partial_Y^2 u_e^0 \cdot v_e^1+(h_e^0 \Delta g_e^1-\partial_Y^2 h_e^0 \cdot g_e^1)=0.
\end{align*}
To avoid the singularity at the corners of $\Omega_{1}$, we consider the following elliptic problem 
\begin{align}\label{3.14}
-u_e^0 \Delta v_e^1+\partial_Y^2 u_e^0 \cdot v_e^1+(h_e^0 \Delta g_e^1-\partial_Y^2 h_e^0 \cdot g_e^1)=E_b,
\end{align}
which satisfies the boundary conditions $(\ref{3.10})$.  Now, we construct $E_b$. In order to do this, we first introduce
\begin{align}\label{3.15}
\left\{\begin{array}{l}
B_v(x, Y):=\left(1-\frac{x}{L}\right) \frac{V_{b 0}^{1}(Y)}{v_p^0(0,0)} v_p^0(x, 0)+\frac{x}{L} \frac{V_{b L}^{1}(Y)}{v_p^0(L, 0)} v_p^0(x, 0), \\
B_g(x, Y):=\left(1-\frac{x}{L}\right) \frac{G_{b 0}^{1}(Y)}{g_p^0(0,0)} g_p^0(x, 0)+\frac{x}{L} \frac{G_{b L}^{1}(Y)}{g_p^0(L, 0)} g_p^0(x, 0),
\end{array}\right.
\end{align}
where $v_p^0(0,0), v_p^0(L, 0), g_p^0(0,0) ~\text{and}~ g_p^0(L, 0)$ are nonzero. When $v_p^0(0,0)=0$,  
we replace $\frac{V_{b 0}^{1}(Y)}{v_p^0(0,0)} v_p^0(x, 0)$ 
by $V_{b 0}^{1}(Y)-v_p^0(x, 0)(1-Y)$. 
The same argument can be applied when the other three terms $v_p^0(L, 0), g_p^0(0,0) ~\text{and}~ g_p^0(L, 0)$ are equal to zero respectively.
It is easy to show that $B_v (x,Y)$ and $B_g(x,Y)$ satisfy the boundary conditions $(\ref{3.10})$.
Then, we introduce the smooth function $F_e$
\begin{align}\label{3.16}
-u_e^0 \Delta B_v+\partial_Y^2 u_e^0 \cdot B_v+(h_e^0 \Delta B_g-\partial_Y^2 h_e^0 \cdot B_g)=F_e.
\end{align}
As $\left|\partial_{Y}^k\left(V_{bL}^{1}(Y)-V_{b0}^{1}(Y), G_{bL}^{1}(Y)-G_{b0}^{1}(Y)\right)\right|\leq CL$, we obtain $B_{v}, B_{g} \in W^{k,p}(\Omega_{1})$.
Therefore, for any $k \geq 0$, $p \in[1, \infty]$, we have
$$
\|F_e\|_{W^{k, p}(\Omega_1)} \leq C,
$$
where $C$ is a positive constant.

Let $\chi(\cdot)$ be a smooth function supported in $[0,1]$ and let us take
$$
E_b=\chi \left(\frac{Y}{\varepsilon^{\frac{3}{16(n-1)}}}\right)F_e(x,0).
$$
It follows that
$$\|\partial_Y^k E_b\|_{L^{p}(\Omega_{1})} \lesssim \varepsilon^{-\frac{3k}{16(n-1)}+\frac{3}{16(n-1)p}}.$$
 Let
\begin{align}\label{3.17.1}
v_e^1=B_v+\omega_1, \quad g_e^1=B_g+\omega_2.
\end{align}
Using $(\ref{3.14})$,  we obtain the following system
\begin{align}\label{3.17}
\left\{\begin{array}{l}
-u_e^0 \Delta \omega_1+\partial_Y^2 u_e^0 \cdot \omega_1+(h_e^0 \Delta \omega_2-\partial_Y^2 h_e^0 \cdot \omega_2)=E_b-F_e, \\
h_e^0\omega_1-u_e^0\omega_2=u_e^0 B_g-h_e^0 B_v+b_{1}^{1}, \\
\left.\omega_i\right|_{\partial \Omega_1}=0, \quad i=1,2.
\end{array}\right.
\end{align}
For the system $(\ref{3.17})$, we  get the following proposition.
\begin{pro}\label{b3.1}
Under the assumptions of Theorem $\ref{b1.1}$, suppose that, for any $k \geq 0,~ p\geq 1$, $F_e(x,Y) \in W^{k,p}(\Omega_1)$, and
$$
\left|\partial_{Y}^k (V_{bL}^{1}(Y)-V_{b0}^{1}(Y), G_{bL}^{1}(Y)-G_{b0}^{1}(Y)) \right|\leq CL.
$$
Then, there exists a unique smooth solution $(\omega_1, \omega_2)$ of the boundary value problem $(\ref{3.17})$, satisfying
\begin{align}\label{3.18}
& \|(\omega_1, \omega_2)\|_{L^{\infty}(\Omega_1)}
+\|(\omega_1, \omega_2)\|_{H^2(\Omega_1)} \leq C, \nonumber\\
& \|(\omega_1, \omega_2)\|_{H^{2+j}(\Omega_1)}\leq C \varepsilon^{-\frac{3j}{16(n-1)}+\frac{3}{32(n-1)}}, \quad j=1,2,\ldots,2n,
\end{align}
where the constant $C>0$ only depends on the given boundary data. 
Moreover, it holds that
\begin{align}\label{3.19}
& \|(\omega_1, \omega_2)\|_{W^{2, q}(\Omega_1)} \leq C, \nonumber\\
& \|(\omega_1, \omega_2)\|_{W^{2+j, q}(\Omega_1)} \leq C \varepsilon^{-\frac{3j}{16(n-1)}+\frac{3}{16(n-1)q}},
\end{align}
for $q \in (1, \infty)$.
\end{pro}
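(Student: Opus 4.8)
The plan is to exploit the mixed structure of (\ref{3.17}): its first equation is second order elliptic, while the second is a purely algebraic (zeroth order) constraint linking $\omega_1$ and $\omega_2$. First I would solve the algebraic relation for $\omega_2$, writing
$$
\omega_2 = \frac{h_e^0 \omega_1 - \Phi}{u_e^0}, \qquad \Phi := u_e^0 B_g - h_e^0 B_v + b_1^1,
$$
which is legitimate since $u_e^0$ is smooth and bounded below by a positive constant. Substituting this expression into the first equation of (\ref{3.17}) and collecting the top-order contributions yields a single scalar equation for $\omega_1$ whose principal part is
$$
\frac{(h_e^0)^2 - (u_e^0)^2}{u_e^0}\,\Delta \omega_1 .
$$
By assumption (H), $u_e^0 \geq C_0 h_e^0$ with $C_0 > 1$, so this coefficient equals $-u_e^0\bigl(1 - (h_e^0/u_e^0)^2\bigr) \leq -u_e^0(1 - C_0^{-2})$, which is negative and bounded away from zero. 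Hence the reduced problem for $\omega_1$ is uniformly elliptic.

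Next I would treat the lower-order terms generated by the substitution. These involve $\nabla(h_e^0/u_e^0)$ and $\partial_Y^2(u_e^0,h_e^0)$, all controlled through the hypothesis $\|\langle Y\rangle \partial_Y(u_e^0,h_e^0)\|_{L^\infty}\lesssim \sigma_1$; in particular, taking $\sigma_1$ small makes the first-order coefficients small, so the bilinear form associated to the reduced operator on $H_0^1(\Omega_1)$ is coercive and Lax--Milgram gives a unique weak solution $\omega_1$ (alternatively, the Fredholm alternative together with the uniqueness forced by coercivity). I then recover $\omega_2$ from the algebraic relation. To verify that $\omega_2$ also vanishes on $\partial\Omega_1$, I would check that $\Phi|_{\partial\Omega_1}=0$: on the boundary $\omega_1=0$ forces $v_e^1=B_v$, $g_e^1=B_g$, and the construction (\ref{3.15}) of $B_v, B_g$ together with the compatibility conditions (\ref{3.11}) is precisely arranged so that $h_e^0 B_v - u_e^0 B_g = b_1^1$ there, i.e. $\Phi=0$ on $\partial\Omega_1$.

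For the quantitative bounds I would use elliptic regularity on the rectangle. The base estimates $\|(\omega_1,\omega_2)\|_{L^\infty}+\|(\omega_1,\omega_2)\|_{H^2}\leq C$ follow because the forcing $E_b - F_e$ is bounded in $L^2$ (indeed in $L^\infty$, since $|\chi|\leq 1$ gives $|E_b|\leq \|F_e\|_{L^\infty}$) uniformly in $\varepsilon$, and the data $B_v, B_g, b_1^1$ are uniformly bounded; the convexity of $\Omega_1$ supplies the $H^2$ bound for $L^2$ data. The localization of $E_b = \chi(Y/\varepsilon^{3/(16(n-1))})F_e(x,0)$ near $Y=0$, matching $F_e$ at the bottom, is exactly what removes the corner incompatibilities and legitimizes the higher regularity up to $\partial\Omega_1$. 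For the $H^{2+j}$ and $W^{2+j,q}$ estimates, I would differentiate the reduced equation and apply $H^{m}$ (resp. Calder\'on--Zygmund $L^q$) elliptic estimates iteratively; since the coefficients and $F_e$ are smooth and $\varepsilon$-independent, the entire $\varepsilon$-growth is inherited from $\partial_Y^j E_b$. A direct computation with the rescaled cutoff gives $\|\partial_Y^j E_b\|_{L^2(\Omega_1)}\lesssim \varepsilon^{-\frac{3j}{16(n-1)}+\frac{3}{32(n-1)}}$ and $\|\partial_Y^j E_b\|_{L^q(\Omega_1)}\lesssim \varepsilon^{-\frac{3j}{16(n-1)}+\frac{3}{16(n-1)q}}$, which reproduce exactly the bounds (\ref{3.18})--(\ref{3.19}).

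The main obstacle I anticipate is not the ellipticity itself but the careful bookkeeping at the boundary and corners: one must confirm that the two Dirichlet conditions remain mutually consistent after the elimination (the check $\Phi|_{\partial\Omega_1}=0$ above), and that the localized source $E_b$ genuinely suppresses the corner singularities so that the $\varepsilon$-dependent high-order estimates hold up to the boundary without an additional loss. Tracking the sharp powers of $\varepsilon$ through the iterated differentiation --- ensuring the smooth, $\varepsilon$-independent coefficients contribute only harmless constants --- is the remaining point that requires care.
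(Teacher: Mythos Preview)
Your approach is correct but genuinely different from the paper's. You eliminate $\omega_2$ algebraically and reduce to a single uniformly elliptic scalar equation for $\omega_1$, then invoke standard Lax--Milgram and elliptic regularity. The paper instead keeps the system coupled: for the $H^1$ bound it multiplies $(\ref{3.17})_1$ by $\omega_1/u_e^0$ and exploits the identity
\[
\iint |\partial_Y\omega_1|^2 + \frac{\partial_Y^2 u_e^0}{u_e^0}|\omega_1|^2
= \iint |u_e^0|^2\,\bigl|\partial_Y(\omega_1/u_e^0)\bigr|^2 \ge \beta_0 \iint |\partial_Y\omega_1|^2,
\]
which dispatches the zeroth-order term $\partial_Y^2 u_e^0\cdot\omega_1$ directly, and only at the closing stage uses $(\ref{3.17})_2$ to express $\nabla\omega_2$ in terms of $\omega_1$. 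For $H^2$ and higher the paper does not appeal to black-box elliptic regularity on the rectangle; it applies $\partial_Y^k$ to the rewritten equation $-\Delta\omega_1 + (h_e^0/u_e^0)\Delta\omega_2 = M_e^1$, multiplies by $\partial_Y^k\omega_1$, and carefully tracks boundary terms (using $M_e^1(x,0)=0$ so that $\partial_Y^2\omega_i|_{Y=0}=0$), then recovers $\partial_x^3\omega_i$ separately from the equation itself.

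What each buys: your route is conceptually cleaner and leans on off-the-shelf scalar elliptic theory, but it pushes the difficulty into the smallness of the first-order coefficients (you need $\sigma_1$ small for Lax--Milgram coercivity) and into the corner regularity argument you flag at the end. The paper's multiplier trick handles the sign of the zeroth-order term structurally without that smallness, and its hands-on $\partial_Y$/$\partial_x$ bootstrap sidesteps appealing to global higher elliptic regularity on a domain with corners. Both arrive at the same $\varepsilon$-powers, which, as you correctly compute, come entirely from $\partial_Y^j E_b$.
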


\begin{proof}
Under assumption  (H),  the system $(\ref{3.17})$ is non-degenerate.
%
Next, we prove these estimates of the solution for the system $(\ref{3.17})$. The proof is divided into  three steps.

\subsubsection*{Step 1: $H^1$ estimates of $\omega_{i}(i=1,2)$}

Multiplying $(\ref{3.17})_{1}$ by $\frac{\omega_1}{u_e^0}$ and integrating by parts, we get
\begin{align*}
& \iint_{\Omega_{1}}\left|\nabla \omega_1\right|^2 \mathrm{d}x \mathrm{d}Y+\iint_{\Omega_{1}} \frac{\partial_Y^2 u_e^0}{u_e^0}\left|\omega_1\right|^2 \mathrm{d}x \mathrm{d}Y\\
& =\iint_{\Omega_{1}} \frac{h_e^0}{u_e^0} \nabla \omega_1 \cdot \nabla \omega_2 \mathrm{d}x \mathrm{d}Y+\iint_{\Omega_{1}} \partial_Y\left(\frac{h_e^0}{u_e^0}\right) \partial_Y \omega_2 \cdot \omega_1 \mathrm{d}x \mathrm{d}Y\\
& \quad+\iint_{\Omega_{1}} \frac{\partial_Y^2 h_e^0}{u_e^0} \omega_1 \cdot \omega_2 \mathrm{d}x \mathrm{d}Y
+\iint_{\Omega_{1}} \frac{(E_b-F_e)}{u_e^0} \omega_1 \mathrm{d}x \mathrm{d}Y := \sum_{i=1}^4 I_{i} .
\end{align*}
For $I_{i}$ ($i=1,2,3,4$), we obtain
$$
\begin{aligned}
\left|I_{1}\right| & =\left|\iint_{\Omega_{1}} \frac{h_e^0}{u_e^0} \nabla \omega_2 \cdot \nabla \omega_1 \mathrm{d}x \mathrm{d}Y \right| \lesssim \left\|\frac{h_e^0}{u_e^0}\right\|_{L^{\infty}}\|\nabla \omega_1\|_{L^2(\Omega_1)}\|\nabla \omega_2\|_{L^2(\Omega_1)}, \\
\left|I_{2}\right| & = \left|\iint_{\Omega_{1}} \partial_Y\left(\frac{h_e^0}{u_e^0}\right) \partial_Y \omega_2 \cdot \omega_1 \mathrm{d}x \mathrm{d}Y\right|\\
& \lesssim \left(\left\|\frac{h_e^0}{u_e^0}\right\|_{L^{\infty}}
\left\|\partial_{Y}{u_e^0}\right\|_{L^{\infty}}
+\left\|\partial_{Y}{h_e^0}\right\|_{L^{\infty}}\right)\|\omega_1\|_{L^2(\Omega_1)}
\|\partial_Y \omega_2\|_{L^2(\Omega_1)} \\
& \lesssim L\left(\left\|\frac{h_e^0}{u_e^0}\right\|_{L^{\infty}}
\left\|\partial_{Y}{u_e^0}\right\|_{L^{\infty}}
+\left\|\partial_{Y}{h_e^0}\right\|_{L^{\infty}}\right)\|\partial_x \omega_1\|_{L^2(\Omega_1)}\|\partial_Y \omega_2\|_{L^2(\Omega_1)},\\
\left|I_{3}\right| & =\left|\iint_{\Omega_{1}} \frac{\partial_Y^2 h_e^0}{u_e^0} \omega_1 \cdot \omega_2 \mathrm{d}x \mathrm{d}Y\right| \\
& \lesssim \|\omega_1\|_{L^2(\Omega_1)}
 \|\omega_2\|_{L^2(\Omega_1)} \lesssim L^2 \|\partial_x \omega_1\|_{L^2(\Omega_1)}\|\partial_x \omega_2\|_{L^2(\Omega_1)},\\
\left|I_{4}\right|&=\left|\iint_{\Omega_{1}} \frac{E_b-F_e}{u_e^0} \omega_1 \mathrm{d}x \mathrm{d}Y\right| \lesssim L \|\partial_x \omega_1\|_{L^2(\Omega_1)}\|E_b-F_e\|_{L^2(\Omega_1)}
\lesssim L \|\partial_{x}\omega_{1}\|_{L^2({\Omega_{1}})}.
\end{aligned}
$$
Combining all the above estimates, we get
\begin{align}\label{3.20.2}
&\iint_{\Omega_{1}} \left|\nabla \omega_1\right|^2 \mathrm{d}x \mathrm{d}Y+\iint_{\Omega_{1}} \frac{\partial_{Y}^{2}u_e^0}{u_e^0}\left|\omega_1\right|^2 \mathrm{d}x \mathrm{d}Y \nonumber\\
\lesssim & \left(\left\|\frac{h_e^0}{u_e^0}\right\|_{L^{\infty}}
+\left\|\partial_{Y}{h_e^0}\right\|_{L^{\infty}}\right)\|\nabla \omega_1\|_{L^2(\Omega_1)}\|\nabla \omega_2\|_{L^2(\Omega_1)}
+L \|\nabla \omega_1\|_{L^2(\Omega_1)}.
\end{align}
Applying the arguments of  \cite{GN2017} (Page $19$), we have
\begin{align*}
\iint_{\Omega_{1}}(\left|\partial_Y \omega_1\right|^2+\frac{\partial_Y^2 u_e^0}{u_e^0}\left|\omega_1\right|^2)\mathrm{d}x \mathrm{d}Y=\iint_{\Omega_{1}}\left|u_e^0\right|^2
\left|\partial_Y\left(\frac{\omega_1}{u_e^0}\right)\right|^2 \mathrm{d}x \mathrm{d}Y \geq \beta_0 \iint_{\Omega_{1}}\left|\partial_Y \omega_1\right|^2 \mathrm{d}x \mathrm{d}Y.
\end{align*}
From the second equation of $(\ref{3.17})$, we obtain
$$
\nabla \omega_2
=\left(\begin{array}{c}
\frac{h_e^0}{u_e^0} \partial_x \omega_1 \\
\frac{h_e^0}{u_e^0} \partial_Y \omega_1
+\partial_Y\left(\frac{h_e^0}{u_e^0}\right) \omega_1
\end{array}\right)+\nabla\left(\frac{h_e^0}{u_e^0} B_v+\frac{b_{1}^{1}}{u_e^0}-B_g\right) .
$$
Thus, we have
\begin{align}\label{3.221}
\begin{aligned}
 \|\nabla \omega_2\|_{L^2(\Omega_1)}
& \lesssim \left\|\nabla \left(\frac{h_e^0}{u_e^0}B_{v}+\frac{b_{1}^{1}}{u_e^0}-B_g\right)\right\|_{L^2(\Omega_1)}
 +\left\|\frac{h_e^0}{u_e^0}\right\|_{L^{\infty}}\|\nabla\omega_1\|_{L^2(\Omega_1)}\\
& \quad \quad +L\left( \left\|\frac{h_e^0}{u_e^0}\right\|_{L^{\infty}}+\left\|\partial_Y h_e^0\right\|_{L^{\infty}}\right)\|\partial_x\omega_1\|_{L^2(\Omega_1)}\\
&\lesssim 1+\left\|\frac{h_e^0}{u_e^0}\right\|_{L^{\infty}}\|\nabla \omega_1\|_{L^2(\Omega_1)}+L\left(\left\|\frac{h_e^0}{u_e^0}\right\|_{L^{\infty}}+\left\|\partial_Y h_e^0\right\|_{L^{\infty}}\right)\|\partial_x \omega_1\|_{L^2(\Omega_1)}.
\end{aligned}
\end{align}
Combining this estimate with $(\ref{3.20.2})$ and using Young's inequality, we obtain
$$
\|\omega_1\|_{H^1(\Omega_{1})} \leq C,
$$
in which we have used the assumption (H). 
In addition, using the above estimate and $(\ref{3.221})$, we get
$$
\|\omega_2\|_{H^1(\Omega_{1})} \leq C .
$$

\subsubsection*{Step 2: $H^2$ and $L^{\infty}$ estimates of $\omega_{i}(i=1,2)$}

Note that $(\ref{3.17})$ may be rewritten
\begin{align}\label{3.21.1}
\left\{
\begin{array}{l}
-\Delta \omega_1+\frac{h_e^0}{u_e^0} \Delta \omega_1=M_e^{1},\\
\left. \omega_i \right|_{\partial{\Omega_1}}=0, \quad i=1,2,
\end{array}\right.
\end{align}
where
$
M_e^{1}:=\frac{1}{u_e^0}(E_b-F_e-\partial_Y^2 u_e^0 \cdot \omega_1+\partial_Y^2 h_e^0 \cdot \omega_2) .
$
We then have $\|M_e^{1}\|_{L^2(\Omega_1)} \leq C$. On the boundary $\left\{Y=0\right\}$, $M_{e}^{1}(x,0)=0$ and
\begin{align}\label{3.21.1.1}
-\partial_{Y}^{2}\omega_{1}+\frac{h_{e}^{0}(0)}{u_{e}^{0}} \partial_{Y}^{2}\omega_{1}=0.
\end{align}
Applying $\partial_Y$ to the first equation in $(\ref{3.21.1})$, then multiplying the resulted equation by $\partial_{Y} \omega_{1}$ and  integrating by parts, we have
\begin{align*}
&\iint_{\Omega_{1}} \left|\nabla \partial_{Y}\omega_{1}\right|^{2} \mathrm{d}x \mathrm{d}Y+\int_{0}^{L} \left(-\partial_{Y}^{2}\omega_{1}+\frac{h_{e}}{u_{e}}\partial_{Y}^{2} \omega_{2}\right)\partial_{Y}\omega_{1}(x,0)\mathrm{d}x \\
& -\int_{0}^{1}\left.\partial_{xY}\omega_{1} \cdot \partial_{Y}\omega_{1}\right|_{x=0}^{x=L} \mathrm{d}Y+\int_{0}^{1} \left.\frac{h_{e}^{0}}{u_{e}^{0}}\partial_{xY}\omega_{2} \cdot \partial_{Y}\omega_{1}\right|_{x=0}^{x=L} \mathrm{d}Y \\
=&\iint_{\Omega_{1}}\frac{h_{e}^{0}}{u_{e}^{0}}\nabla\partial_{Y}\omega_{1} \cdot \nabla\partial_{Y}\omega_{2}\mathrm{d}x \mathrm{d}Y+\iint_{\Omega_{1}} \partial_{Y}\left(\frac{h_{e}^{0}}{u_{e}^{0}}\right)\partial_{Y}^{2}\omega_{2} \cdot \partial_{Y}\omega_{1} \mathrm{d}x \mathrm{d}Y\\
& -\int_{0}^{L}\left.\partial_{Y}\left(\frac{h_{e}^{0}}{u_{e}^{0}}\right)
\partial_{Y}\omega_{1} \cdot \partial_{Y}\omega_{2}\right|_{Y=0}^{Y=1} \mathrm{d}x +\iint_{\Omega_{1}} \partial_{Y}\left(\frac{h_{e}^{0}}{u_{e}^{0}}\right)\nabla \omega_{2}\cdot \nabla \partial_{Y}\omega_{1}\mathrm{d}x \mathrm{d}Y\\
&+\iint_{\Omega_{1}}\partial_{Y}^{2}\left(\frac{h_{e}^{0}}{u_{e}^{0}}\right)\partial_{Y}\omega_{1}
\cdot \partial_{Y}\omega_{2}\mathrm{d}x \mathrm{d}Y-\iint_{\Omega_{1}} M_{e}^{1}\partial_{Y}^{2}\omega_{1}\mathrm{d}x \mathrm{d}Y.
\end{align*}
The last three terms on the left-hand side of the above equality vanish thanks to the boundary conditions $(\ref{3.21.1})_{2}$ and $(\ref{3.21.1.1})$. 
For the terms on the right-hand side, we obtain the following estimates
\begin{align*}
\left|\iint_{\Omega_{1}} \frac{h_{e}^{0}}{u_{e}^{0}}\nabla \partial_{Y}\omega_{1} \cdot \nabla\partial_{Y}\omega_{2}\mathrm{d}x \mathrm{d}Y \right|
& \lesssim \left\|\frac{h_{e}^{0}}{u_{e}^{0}}\right\|_{L^{\infty}(\Omega_{1})} \|\nabla\partial_{Y}\omega_{1}\|_{L^{2}(\Omega_{1})}\|\nabla \partial_{Y}\omega_{2}\|_{L^{2}(\Omega_{1})},\\
\left|\iint_{\Omega_{1}} \partial_{Y}\left(\frac{h_{e}^{0}}{u_{e}^{0}}\right)
\partial_{Y}^{2}\omega_{2} \cdot \partial_{Y}\omega_{1}\mathrm{d}x \mathrm{d}Y \right|
&\lesssim\left\|\partial_{Y}\left(\frac{h_{e}^{0}}{u_{e}^{0}}\right)\right\|_{L^{\infty}\left(\Omega_{1}\right)}
\|\nabla\partial_{Y}\omega_{2}\|_{L^{2}(\Omega_{1})}
\| \partial_{Y}\omega_{1}\|_{L^{2}(\Omega_{1})} \\
& \lesssim \|\nabla \partial_{Y}\omega_{2}\|_{L^{2}(\Omega_{1})}
\|\omega_{1}\|_{H^{1}(\Omega_{1})},\\
\left|\iint_{\Omega_{1}}\partial_{Y}\left(\frac{h_{e}^{0}}{u_{e}^{0}}\right)\nabla \omega_{2}\cdot \nabla \partial_{Y}\omega_{1} \mathrm{d}x \mathrm{d}Y \right|
&\lesssim \|\nabla\partial_{Y}\omega_{1}\|_{L^{2}(\Omega_{1})}
\|\omega_{2}\|_{H^{1}(\Omega_{1})},\\
\left|\iint_{\Omega_{1}}\partial_{Y}^{2}\left(\frac{h_{e}^{0}}{u_{e}^{0}}\right)
\partial_{Y}\omega_{1}\cdot \partial_{Y}\omega_{2}\mathrm{d}x \mathrm{d}Y \right|
&\lesssim\|\omega_{1}\|_{H^{1}(\Omega_{1})}\|\omega_{2}\|_{H^{1}(\Omega_{1})},\\
\left|-\iint_{\Omega_{1}} M_{e}^{1}\partial_{Y}^{2}\omega_{1}\mathrm{d}x \mathrm{d}Y \right|
&\lesssim \|M_{e}^{1}\|_{L^{2}(\Omega_{1})}\|\nabla \partial_{Y} \omega_{1}\|_{L^{2}(\Omega_{1})},\\
\left|\int_{0}^{L}\left.\partial_{Y}\left(\frac{h_{e}^{0}}{u_{e}^{0}}\right)
\partial_{Y}\omega_{1} \cdot \partial_{Y}\omega_{2}\right|_{Y=0}^{Y=1} \mathrm{d}x\right|
&\lesssim \left\|\partial_{Y}\left(\partial_{Y}\left(\frac{h_{e}^{0}}{u_{e}^{0}}\right)
\partial_{Y}\omega_{1}\right)\right\|_{L^{2}(\Omega_{1})}
\|\partial_{Y}\omega_{2}\|_{L^{2}(\Omega_{1})}\\
& \quad+\left\|\partial_{Y}\left(\frac{h_{e}^{0}}{u_{e}^{0}}\right)
\partial_{Y}\omega_{1}\right\|_{L^{2}(\Omega_{1})}
\|\partial_{Y}^{2}\omega_{2}\|_{L^{2}(\Omega_{1})}\\
&\lesssim  \|\nabla\partial_{Y}\omega_{1}\|_{L^{2}(\Omega_{1})}
\|\omega_{2}\|_{H^{1}(\Omega_{1})}
+\|\omega_{1}\|_{H^{1}(\Omega_{1})}\|\nabla \partial_{Y}\omega_{2}\|_{L^{2}(\Omega_{1})}.
\end{align*}
Combining all the above inequalities, we get
\begin{align}\label{3.21.1.2}
\|\nabla \partial_{Y}\omega_{1}\|_{L^{2}(\Omega_{1})}^{2}
\lesssim &
\left\|\frac{h_{e}^{0}}{u_{e}^{0}}\right\|_{L^{\infty}(\Omega_{1})} \|\nabla \partial_{Y}\omega_{1}\|_{L^{2}(\Omega_{1})}\|\nabla \partial_{Y}\omega_{2}\|_{L^{2}(\Omega_{1})}\nonumber\\
&+\|\nabla\partial_{Y}\omega_{2}\|_{L^{2}(\Omega_{1})}
\|\omega_{1}\|_{H^{1}(\Omega_{1})}
+\|\omega_{2}\|_{H^{1}(\Omega_{1})}\|\nabla \partial_{Y}\omega_{1}\|_{L^{2}(\Omega_{1})} \nonumber\\
&+\|\omega_{1}\|_{H^{1}(\Omega_{1})}\|\omega_{2}\|_{H^{1}(\Omega_{1})}
+\|M_{e}^{1}\|_{L^{2}(\Omega_{1})}\|\nabla\partial_{Y}\omega_{1}\|_{L^{2}(\Omega_{1})}.
\end{align}
For the $H^2$ norm of $\omega_2$, it follows from the second equation of $(\ref{3.17})$ that
\begin{align}\label{3.20.3}
\omega_{2}=\frac{h_e^0}{u_e^0}(B_v+\omega_1)-\frac{b_1^1}{u_e^0}-B_g.
\end{align}
Applying $\nabla \partial_{Y}$ to $(\ref{3.20.3})$, we have
$$
\nabla \partial_Y \omega_{2}=\left(\begin{array}{c}
\partial_{Y}\left(\frac{h_e^0}{u_e^0}\right) \partial_x \omega_1 +\frac{h_e^0}{u_e^0} \partial_{xY} \omega_1 \\
2 \partial_{Y} \left(\frac{h_e^0}{u_e^0}\right) \partial_Y \omega_1+\partial_Y^2 \left(\frac{h_e^0}{u_e^0}\right) \omega_1+\frac{h_e^0}{u_e^0}\partial_{Y}^2 \omega_1
\end{array}\right)+\nabla \partial_{Y}\left(\frac{h_e^0}{u_e^0} B_v-\frac{b_{1}^{1}}{u_e^0}-B_g\right) .
$$
Hence, we get the following estimate
\begin{align*}
\|\nabla \partial_{Y} \omega_2\|_{L^2(\Omega_1)}
& \lesssim \left\|\nabla \partial_{Y}\left(\frac{h_e^0}{u_e^0}B_v\right)\right\|_{L^2(\Omega_1)}
+\left\|\nabla \partial_{Y}\left(\frac{b_{1}^{1}}{u_e^0}\right)\right\|_{L^2(\Omega_1)}
+\|\nabla \partial_{Y}B_g\|_{L^2(\Omega_1)}\\
& \quad +\left\|\frac{h_e^0}{u_e^0}\right\|_{L^{\infty}(\Omega_{1})}\|\nabla \partial_{Y}\omega_1\|_{L^2(\Omega_1)}
+\left(\left\|\partial_{Y} \left(\frac{h_e^0}{u_e^0}\right)\right\|_{L^{\infty}(\Omega_{1})}+\left\|\partial_{Y}^2
\left(\frac{h_e^0}{u_e^0}\right)\right\|_{L^{\infty}(\Omega_{1})}\right)\|\omega_1\|_{H^1(\Omega_1)}\\
&  \lesssim 1+\left\|\frac{h_e^0}{u_e^0}\right\|_{L^{\infty}(\Omega_{1})}\|\nabla \partial_{Y}\omega_1\|_{L^2(\Omega_1)}+\| \omega_1\|_{H^1(\Omega_1)},
\end{align*}
which together with $(\ref{3.21.1.2})$, implies that
$$
\|\nabla \partial_{Y} \omega_{i}\|_{L^2(\Omega_1)} \leq C, \quad i=1,2,
$$
where we have used the $H^{1}$ estimates of $\omega_{i}(i=1,2)$ and the assumption (H).

For $\partial_{xx} \omega_i, ~(i=1,2)$,  using $(\ref{3.21.1})$ and taking $\partial_{xx}$ to $(\ref{3.20.3})$, we get the $L^2$ estimates of $\partial_{xx}\omega_i$.
 For the $L^{\infty}$ estimates of $\omega_{i}$, $(i=1,2)$, we have the the following estimate
 \begin{align*}
\left|\omega_i(x, Y)\right| & \leq \int_0^x\left|\partial_x \omega_i(s, Y)\right| \mathrm{d} s \\
& \lesssim \int_0^x\left(\int_0^Y\left|\partial_x \omega_i \partial_{x Y} \omega_i\right|(s, \theta) \mathrm{d} \theta\right)^{1 / 2} \mathrm{d} s \\
& \lesssim \sqrt{x}\left\|\partial_x \omega_i\right\|_{L^2(\Omega_1)}^{\frac{1}{2}}\left\|\partial_{x Y} \omega_i\right\|_{L^2(\Omega_1)}^{\frac{1}{2}} \lesssim \sqrt{L}.
\end{align*}

\subsubsection*{Step 3: Higher order estimates of $\omega_{i}(i=1,2)$}

Applying  $\partial_{Y}^{k}$ ${(k=1,2)}$ to the first equation in $(\ref{3.21.1})$, we obtain
\begin{align}\label{3.22.1}
\left\{\begin{array}{l}
-\Delta \partial_Y^{k} \omega_1+\partial_{Y}^{k}(\frac{h_e^0}{u_e^0} \Delta \omega_2)=\partial_Y^{k} M_e^{1},\\ 
\omega_2=\frac{h_e^0}{u_e^0}(\omega_1+B_v)-\frac{b_{1}^{1}}{u_e^0}-B_g, \\
\left.\partial_Y^{k} \omega_i\right|_{x=0, L}=\left.\partial_Y^2 \omega_i\right|_{Y=0}=0, \quad \left.\partial_{Y}^{2}\omega_i\right|_{Y=1}=0.
\end{array}\right.
\end{align}
Based on the $H^2$ bounds of $\omega_i$ and the estimates of $E_b, F_e$, we have
$$
\|\partial_{Y}^{k}\omega_i\|_{H^2(\Omega_{1})} \leq C\varepsilon^{-\frac{3k}{16(n-1)}+\frac{3}{32(n-1)}},\quad k=1,2.
$$
For the estimates of $\partial_{x}^{3} \omega_i$ in $L^{2}~\text{and}~ H^{1}$ norms, we take the $x$-derivative of the first equation in $(\ref{3.21.1})$,
$$
\begin{aligned}
-\partial_x^3 \omega_1+\frac{h_e^0}{u_e^0} \partial_x^3 \omega_2
& =\partial_{x Y Y} \omega_1-\frac{h_e^0}{u_e^0} \partial_{x Y Y} \omega_2+(-\Delta \partial_x \omega_1+\frac{h_e^0}{u_e^0} \Delta \partial_x \omega_2) \\
& =\partial_{x Y Y} \omega_1-\frac{h_e^0}{u_e^0} \partial_{x Y Y} \omega_2+\partial_x M_e^{1},
\end{aligned}
$$
and apply $\partial_{x}^3$ to the second equation in $(\ref{3.22.1})$, we can  obtain the $L^2$ and $H^1$ estimates of $\partial_x^3 \omega_i$ $(i=1,2)$. 
Similarly, we can establish the estimates of $H^5$ to $H^{2(n+1)}(n\geq 3)$ on $\omega_i$ $(i=1,2)$.


By the standard elliptic theory \cite{GT1983}, this leads to $W^{k,p}$ estimates of $\omega_{i}(i=1,2)$. This ends the proof of the Proposition.
\end{proof}

Next, we derive the estimates of $v_e^1, g_e^1$. By their definition $(\ref{3.17.1})$ and boundary conditions $(\ref{3.10})$, we obtain that $(v_e^1, g_e^1) \in W^{k,p}(\Omega_1)$ is the unique smooth solution to the system $(\ref{3.14})$ with boundary conditions $(\ref{3.10})$.
It follows from $B_v, B_g \in W^{2(n+1),p}(\Omega_1)$ that
$$
\|(v_e^1, g_e^1)\|_{L^{\infty}(\Omega_{1})}
+\|(v_e^1,g_e^1)\|_{W^{2,q}(\Omega_{1})} \leq C,
\quad \|(v_e^1, g_e^1)\|_{W^{2+j,q}(\Omega_{1})} \leq C \varepsilon^{-\frac{3j}{16(n-1)}+\frac{3}{16(n-1)q}}, \quad j=1,2,\ldots,2n.
$$
Now, we construct the first order ideal MHD correctors $u_e^1, h_e^1 ~\text{and}~ p_e^1$. Using $(\ref{3.9})$ and the divergence free conditions, we have
\begin{align}\label{3.24}
& u_e^1(x, Y)=u_b^1(Y)-\int_0^x v_{e Y}^1(s, Y) \mathrm{d} s, \nonumber\\
& h_e^1(x, Y)=h_b^1(Y)-\int_0^x g_{e Y}^1(s, Y) \mathrm{d} s, \\
& p_e^1(x, y)=\int_Y^1 (u_e^0 v_{e x}^1-h_e^0 g_{e x}^1)(x, \theta) \mathrm{d} \theta-\int_0^x ( u_e^0(1) v_{e Y}^1(s, 1)-h_e^0(1) g_{e Y}^1(s, 1)) \mathrm{d} s,\nonumber
\end{align}
where $u_e^1(0, Y)=u_b^1(Y) ~\text{and}~ h_e^1(0, Y)=h_b^1(Y)$ satisfy $\partial_Y u_b^1(1)=\partial_Y h_b^1(1)=0$. Equation $(\ref{3.14})$ implies that $v_{e Y Y}^{1}(x,1)=g_{e Y Y}^{1}(x,1)=0$, thus, $u_{e Y}^1(x, 1)=h_{e Y}^1(x, 1)=0$.
By  definition of $(u_e^1, h_e^1)$ in $(\ref{3.24})$
and Proposition $\ref{b3.1}$, we get
\begin{align}\label{3.26}
\|(u_e^1,h_e^1)\|_{L^{\infty}(\Omega_{1})} \leq C, \quad
\|(u_e^1, h_e^1)\|_{H^{1+j}(\Omega_{1})} \leq C \varepsilon^{-\frac{3j}{16(n-1)}+\frac{3}{32(n-1)}}, \quad j=1,2,\ldots,2n.
\end{align}

\subsubsection{The $i$-th order ideal inner MHD profiles $(2\leq i \leq n)$}

Combining $v_e^i, g_e^i$ in Section $3.1$ with $(\ref{3.9.1})$, we consider the following system
\begin{align}\label{5.12}
\left\{\begin{array}{l}
-u_e^0 \Delta v_e^i+\partial_Y^2 u_e^0 \cdot v_e^i+(h_e^0 \Delta g_e^i-\partial_Y^2 h_e^0 \cdot g_e^i)=f_{1 Y}^i-f_{2 x}^i, \\
h_e^0 v_e^i-u_e^0 g_e^i=b_i^{1}, \\
(v_e^i,g_e^i)(x,0)=-(v_p^{i-1},g_p^{i-1})(x,0),\quad
(v_e^i, g_e^i)(x,1)=(0,0).
\end{array}\right.
\end{align}
For the system $(\ref{5.12})$, we establish the estimates of $(v_e^i, g_e^i)$ as follows.
\begin{pro}\label{b5.1}
Suppose that the assumption (H) 
is satisfied.
Then, for $2\leq i \leq n$, there exists a unique smooth solution $(v_e^i, g_e^i)$ to the boundary value problems $(\ref{5.12})$, satisfying
\begin{align}\label{5.13}
& \|(v_e^i, g_e^i)\|_{L^{\infty}(\Omega_1)}
+\|(v_e^i, g_e^i)\|_{H^2(\Omega_1)} \leq C \varepsilon^{-\frac{3(i-1)}{16(n-1)}+\frac{3}{32(n-1)}}, \nonumber\\
& \|(v_e^i, g_e^i)\|_{H^{2+j}(\Omega_1)}\leq C \varepsilon^{-\frac{3(i-1+j)}{16(n-1)}+\frac{3}{32(n-1)}}, 
\quad j=1,2,\ldots,2(n-i+1),
\end{align}
where the constant $C>0$ only depends on the given boundary data. Moreover, it holds that
\begin{align}\label{5.14}
& \|(v_e^i, g_e^i)\|_{W^{2, q}(\Omega_1)} \leq C(L)\varepsilon^{-\frac{3(i-1)}{16(n-1)}+\frac{3}{16(n-1)q}}, \nonumber\\
& \|(v_e^i, g_e^i)\|_{W^{2+j, q}(\Omega_1)} \leq C(L) \varepsilon^{-\frac{3(i-1+j)}{16(n-1)}+\frac{3}{16(n-1)q}},
\end{align}
for $q \in[1, \infty)$, $C(L)>0$.
\end{pro}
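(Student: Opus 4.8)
The plan is to argue by induction on $i$, treating $2 \le i \le n$. For the base case, Proposition \ref{b3.1} already supplies the required bounds at order $i=1$ (together with the estimates \refeq{3.26} for $u_e^1, h_e^1$), while the zeroth order boundary layer is controlled by Proposition \ref{b2.4}. Assuming the analogues of \refeq{5.13}--\refeq{5.14} for all orders $i' < i$, and the corresponding bounds for $u_e^{i'}, h_e^{i'}$ obtained by integrating in $x$ exactly as in \refeq{3.24}, I would first reduce the system to a scalar elliptic equation just as in the first order case: the algebraic relation $(\ref{5.12})_2$, namely $h_e^0 v_e^i - u_e^0 g_e^i = b_i^1$, lets me eliminate $g_e^i = (h_e^0 v_e^i - b_i^1)/u_e^0$ and rewrite $(\ref{5.12})_1$ as $-(1-(h_e^0/u_e^0)^2)\Delta v_e^i + (\text{lower order}) = f_{1Y}^i - f_{2x}^i$. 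Under assumption (H) one has $|h_e^0/u_e^0| \le C_0^{-1} < 1$, so the principal coefficient $1-(h_e^0/u_e^0)^2 \ge 1 - C_0^{-2} > 0$ is bounded away from zero and the operator is uniformly elliptic.

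Next I would lift the inhomogeneous boundary data. The traces $(v_e^i, g_e^i)(x,0) = -(v_p^{i-1}, g_p^{i-1})(x,0)$ and the lateral data $V_{b0}^i, V_{bL}^i, G_{b0}^i, G_{bL}^i$ are all known from lower orders and satisfy the corner compatibility \refeq{3.11}. Mimicking \refeq{3.15}--\refeq{3.17}, I would construct boundary lifts $B_v^i, B_g^i$ matching these data, introduce a regularized corner source of the form $E_b^i = \chi(Y/\varepsilon^{3/(16(n-1))}) F_e^i(x,0)$ to avoid the corner singularity, and solve the resulting homogeneous Dirichlet problem for the correction $(\omega_1^i, \omega_2^i) = (v_e^i - B_v^i, g_e^i - B_g^i)$. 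The $\varepsilon$-concentration of $E_b^i$ on the scale $\varepsilon^{3/(16(n-1))}$ is what produces the $\varepsilon^{-3k/(16(n-1))}$ losses under $k$-fold $Y$-differentiation, precisely as in Proposition \ref{b3.1}.

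The core of the argument is then the energy scheme of Steps 1--3 of Proposition \ref{b3.1} --- $H^1$ estimates by testing the reduced equation against $\omega_1^i/u_e^0$, then $H^2$ and $L^\infty$ bounds, and finally higher tangential and normal derivatives by differentiating the analogue of $(\ref{3.21.1})$ --- now carried out with the nonzero forcing $f_{1Y}^i - f_{2x}^i$ and the data $b_i^1$. The genuinely new ingredient is the $L^2$ (and higher) estimation of this forcing via the inductive hypothesis: inspecting the definitions of $f_1^i, \dots, f_4^i$, the worst contributions come from the products of first order profiles with the $(i-1)$-th order profile and from the terms $\nu_j \Delta(\cdot)^{i-2}$; each of these, using \refeq{5.13} and \refeq{3.26} at the appropriate orders, is of size $\varepsilon^{-3(i-1)/(16(n-1)) + 3/(32(n-1))}$ in $L^2$. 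Feeding this into the uniformly elliptic scalar equation yields the claimed $H^2$ bound, while each additional derivative costs one more factor $\varepsilon^{-3/(16(n-1))}$, giving \refeq{5.13}; the $W^{2+j,q}$ bounds \refeq{5.14} then follow from standard $L^p$ elliptic regularity \cite{GT1983}.

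The main obstacle I anticipate is the bookkeeping of the $\varepsilon$-powers. One must verify that the loss accumulates at exactly the rate $\varepsilon^{-3/(16(n-1))}$ per order --- so that after $i-1$ steps it is precisely $\varepsilon^{-3(i-1)/(16(n-1))}$ --- and that this is consistent with the admissible derivative range $j \le 2(n-i+1)$, which guarantees that one never requires more than $2(n+1)$ derivatives of any profile and hence stays within the regularity $m \ge 2n+3$ assumed on the data. A secondary technical point is to confirm that the non-degeneracy $1-(h_e^0/u_e^0)^2 \ge \mathrm{const} > 0$ persists uniformly throughout the induction, which is exactly where the largeness of $C_0$ in (H) is used.
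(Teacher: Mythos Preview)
Your proposal is correct and follows essentially the same approach as the paper: induction on $i$, reduction via the algebraic relation $(\ref{5.12})_2$ to the coupled elliptic form $-\Delta v_e^i + (h_e^0/u_e^0)\Delta g_e^i = M_e^i$, and then the energy scheme of Proposition~\ref{b3.1} (Steps~1--3) applied with the inductive forcing $f_{1Y}^i - f_{2x}^i$, whose $L^2$-size $\varepsilon^{-3(i-1)/(16(n-1))+3/(32(n-1))}$ is exactly what drives the stated bounds. One minor remark: the paper does not introduce a fresh corner corrector $E_b^i$ at each order $i\ge 2$; the $\varepsilon$-losses there come entirely from the cascading forcing (which already carries the $\varepsilon^{-3/(16(n-1))}$ factor from the original $E_b$ at order~$1$), so your $E_b^i$ is not needed for the bookkeeping, though it is a harmless extra safeguard for corner regularity.
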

\begin{proof}
The energy estimates on $(v_e^i,g_e^i)$ under assumption (H).

Using the methods mentioned in Section $3.1.1$, we obtain
$$
\|(v_e^i,g_e^i)\|_{H^1(\Omega_{1})} \leq C \varepsilon^{-\frac{3(i-1)}{16(n-1)}+\frac{3}{32(n-1)}}.
$$
Now, we rewrite the equations $(\ref{5.12})$ in the following form
\begin{align}\label{5.21.1}
\left\{
\begin{array}{l}
-\Delta v_e^i+\frac{h_e^0}{u_e^0} \Delta g_e^i=G_e^i,\\
\left. (v_e^i, g_e^i)
\right|_{\partial{\Omega_1}}=0,
\end{array}\right.
\end{align}
where
$
M_e^i:=\frac{1}{u_e^0}(f_{1Y}^i-f_{2x}^i-\partial_Y^2 u_e^0 \cdot v_e^i+\partial_Y^2 h_e^0 \cdot g_e^i) .
$
Clearly,
 $$\|M_e^{i}\|_{L^2(\Omega_1)} \leq C \varepsilon^{-\frac{3(i-1)}{16(n-1)}+\frac{3}{32(n-1)}}.$$
Following the  method used to estimate $H^2$ norms of $(v_e^1,g_e^1)$, we get
\begin{align*}
&\|(v_e^i,g_e^i)\|_{H^2(\Omega_1)}  \leq C \varepsilon^{-\frac{3(i-1)}{16(n-1)}+\frac{3}{32(n-1)}},\\
&\|(v_e^i, g_e^i)\|_{L^{\infty}(\Omega_{1})}\leq C \varepsilon^{-\frac{3(i-1)}{16(n-1)}+\frac{3}{32(n-1)}},\\
\end{align*}
Similarly, we have the higher order estimates
\begin{align*}
&\|(v_e^i, g_e^i)\|_{H^{2+j}(\Omega_{1})} \leq C \varepsilon^{-\frac{3(i-1+j)}{16(n-1)}+\frac{3}{32(n-1)}},
\end{align*}
where  $j=1,2,\ldots,2(n-i+1)$.
This leads to the desired estimates $(\ref{5.13})$. By the standard elliptic theory, we  obtain $W^{k,p}$ estimates of $v_e^i ~\text{and}~ g_e^i$. This ends the  proof of the proposition.
\end{proof}

Next, we study the estimates of $u_e^i, h_e^i ~\text{and}~p_e^i$.
Using $(\ref{3.9})$ and the divergence free conditions, we have
\begin{align}\label{5.15}
& u_e^i(x, Y)=u_b^i(Y)-\int_0^x v_{e Y}^i(s, Y) \mathrm{d} s, \nonumber\\
& h_e^i(x, Y)=h_b^i(Y)-\int_0^x g_{e Y}^i(s, Y) \mathrm{d} s, \\
& p_e^i(x, y)=\int_Y^1 [u_e^0 v_{e x}^i-h_e^0 g_{e x}^i-f_2^i](x, \theta) \mathrm{d} \theta-\int_0^x[ u_e^0(1) v_{e Y}^i(s, 1)-h_e^0(1) g_{e Y}^i(s, 1)-f_1^i(s,1)] \mathrm{d} s,\nonumber
\end{align}
where $u_e^i(0, Y)=u_b^i(Y) ~\text{and}~ h_e^i(0, Y)=h_b^i(Y)$ satisfy $\partial_Y u_b^i(1)=\partial_Y h_b^i(1)=0$. Hence, we have $u_{e Y}^i(x, 1)=h_{e Y}^i(x, 1)=0$.
By definition of $(u_e^i, h_e^i)$ in $(\ref{5.15})$ and Proposition $\ref{b5.1}$, we get
\begin{align}\label{5.17}
&\|(u_e^i, h_e^i)\|_{L^{\infty}(\Omega_{1})}+\|(u_e^i, h_e^i)\|_{H^1(\Omega_{1})} \leq C \varepsilon^{-\frac{3(i-1)}{16(n-1)}+\frac{3}{32(n-1)}},\nonumber\\
&\|(u_e^i, h_e^i)\|_{H^{1+j}(\Omega_{1})} \leq C \varepsilon^{-\frac{3(i-1+j)}{16(n-1)}+\frac{3}{32(n-1)}}, \quad j=1,2,\ldots,2(n-i+1).
\end{align}


\subsection{The $\varepsilon^{\frac{i}{2}}$-order MHD boundary layer profiles}


In this subsection, we construct the $\varepsilon^{\frac{i}{2}}$-order MHD boundary layer profiles $(u_p^i, v_p^i, h_p^i, g_p^i, p_p^i)$, $(1\leq i \leq n)$. Let us first estimate the $i$-th order boundary layer profiles $(u_p^i, v_p^i, h_p^i, g_p^i, p_p^i)(1\leq i \leq n-1)$.


\subsubsection{The $i$-th order MHD boundary layer profiles $(1\leq i \leq n-1)$}



We now study the $i$-th order MHD boundary layer system
\begin{align}\label{6.1}
\left\{\begin{aligned}
&u^0 \partial_x u_p^{i}+u_p^{i} \partial_x u^0+v_p^{i} \partial_y u^0+(v_p^0+\overline{v_e^1}) \partial_y u_p^{i}-\nu_{1}\partial_y^2 u_p^{i} +\partial_{x}p_p^i\\
& \quad -h^0 \partial_x h_p^{i}-h_p^{i} \partial_x h^0-g_p^{i} \partial_y h^0-(g_p^0+\overline{g_e^1}) \partial_y h_p^{i}=F_{p_{1}}^i, \\
&u^0 \partial_x h_p^{i}+u_p^{i} \partial_x h^0+v_p^{i} \partial_y h^0+(v_p^0+\overline{v_e^1}) \partial_y h_p^{i}- \nu_{3}\partial_y^2 h_p^{i} \\
& \quad-h^0 \partial_x u_p^{i}-h_p^{i} \partial_x u^0-g_p^{i} \partial_y u^0-(g_p^0+\overline{g_e^1}) \partial_y u_p^{i}=F_{p_2}^i ,
\end{aligned}\right.
\end{align}
with the boundary conditions 
\begin{align}\label{6.2}
& (u_p^{i}, h_p^{i})(0, y)=(\tilde{u}_i(y), \tilde{h}_{i}(y)), (u_p^{i}, h_{py}^{i})(x, 0)=-(u_e^i, h_{eY}^{i-1})(x, 0),\nonumber\\
&u_{p y}^{i}(x, \varepsilon^{-1/2})=v_p^{i}(x, \varepsilon^{-1/2})=0, v_p^{i}(x, 0)=-v_e^{i+1}(x,0),\\
&h_{p y}^{i}(x, \varepsilon^{-1/2})=g_p^{i}(x, \varepsilon^{-1/2})=0, g_p^{i}(x, 0)=-g_e^{i+1}(x,0),\nonumber
\end{align}
where
\begin{align*}
F_{p_1}^i=&-\left[\varepsilon^{-\frac{1}{2}}E_{r_1}^{i-1} +u_p^{i-1} \partial_{x}(u_p^1+u_e^1)-h_p^{i-1} \partial_{x}(h_p^1+h_e^1) + v_p^{i-1}(\partial_{Y}u_e^0+\partial_{y}u_p^1)
-g_p^{i-1}(\partial_{Y}h_e^0+\partial_{y}h_p^1)\right.\\
&+\partial_{x}u_{p}^{i-1}(u_e^1+u_p^1)-\partial_{x}h_{p}^{i-1}(h_e^1+h_p^1)
+ v_e^2\partial_{y}u_p^{i-1}-g_e^2\partial_{y}h_p^{i-1} +v_e^i \partial_{y}u_p^1-g_e^i \partial_{y}h_p^1\\
& \left.+u_e^{i}\partial_{x}u_p^0+u_p^0 \partial_{x}u_e^i-h_e^{i}\partial_{x}h_p^0-h_p^0 \partial_{x}h_e^i- \nu_{1}\partial_{x}^{2} u_p^{i-2}\right],\\
F_{p_2}^i=&-\left[\varepsilon^{-\frac{1}{2}}E_{r_2}^{i-1} +u_p^{i-1} \partial_{x}(h_p^1+h_e^1)-h_p^{i-1} \partial_{x}(u_p^1+u_e^1) + v_p^{i-1}(\partial_{Y}h_e^0+\partial_{y}h_p^1)
-g_p^{i-1}(\partial_{Y}u_e^0+\partial_{y}u_p^1)\right.\\
&+\partial_{x}h_{p}^{i-1}(u_e^1+u_p^1)-\partial_{x}u_{p}^{i-1}(h_e^1+h_p^1)
+ v_e^2\partial_{y}h_p^{i-1}-g_e^2\partial_{y}u_p^{i-1} +v_e^i \partial_{y}h_p^1-g_e^i \partial_{y}u_p^1\\
& \left.+u_e^{i}\partial_{x}h_p^0+u_p^0 \partial_{x}h_e^i-h_e^{i}\partial_{x}u_p^0-h_p^0 \partial_{x}u_e^i- \nu_{3}\partial_{x}^{2} h_p^{i-2}\right],
\end{align*}
When $i=1$, 
$\partial_x p_p^1=0$ and
$F_{p_1}^1 ~\text{and}~ F_{p_2}^1$ are as follows
\begin{align*}
F_{p_{1}}^1 = & -\overline{\partial_Y u_e^0}[y \partial_x u_p^0+v_p^0]-y \overline{\partial_Y v_e^1} \partial_y u_p^0-\overline{u_e^1} \partial_x u_p^0-u_p^0 \overline{\partial_x u_e^1}-\overline{v_e^2} \partial_{y}u_p^0  \nonumber\\
& +\overline{\partial_Y h_e^0}[y \partial_x h_p^0+g_p^0]+y \overline{\partial_Y g_e^1} \partial_y h_p^0+\overline{h_e^1} \partial_x h_p^0+h_p^0 \overline{\partial_x h_e^1}+\overline{g_e^2} \partial_{y}h_p^0, \nonumber\\
F_{p_2}^1 = & -\overline{\partial_Y h_e^0} v_p^0-y \overline{\partial_Y u_e^0} \partial_x h_p^0-y \overline{\partial_Y v_e^1} \partial_y h_p^0-\overline{\partial_x h_e^1} u_p^0-\overline{u_e^1} \partial_x h_p^0-\overline{v_e^2} \partial_{y}h_p^0 \\
& +\overline{\partial_Y u_e^0} g_p^0+y \overline{\partial_Y h_e^0} \partial_x u_p^0+y \overline{\partial_Y g_e^1} \partial_y u_p^0+\overline{h_e^1} \partial_x u_p^0+\overline{\partial_x u_e^1} h_p^0 +\overline{g_e^2} \partial_{y}u_p^0.
\end{align*}

Next, we estimate the solution $(u_p^i, v_p^i, h_p^i, g_p^i,
p_p^i)$ to the system $(\ref{6.1})$. We follow the method of Section $2$ and omit the details. First, we extend the domain $I_{\varepsilon}$ to $\mathbb{R}_{+}$ and establish the estimates on $[0,L] \times \mathbb{R}_{+}$.
Second, we cut the domain from $\mathbb{R}_{+}$ to $I_{\varepsilon}$.

\subsubsection*{Step1}

We apply the method used to estimate $(u_{p}^{0, \infty}, v_{p}^{0, \infty}, h_{p}^{0, \infty}, g_{p}^{0, \infty})$ to bound $(u_{p}^{i, \infty}, v_{p}^{i, \infty}, h_{p}^{i, \infty}, g_{p}^{i, \infty})$ on $[0,L] \times \mathbb{R}_{+}$.
\begin{pro}\label{b6.1}
 For $1\leq i\leq n-1$, there exists a smooth solution $(u_p^{i,\infty}, v_p^{i,\infty}, h_p^{i,\infty}, g_p^{i,\infty}, p_p^{i, \infty})$ to the system $(\ref{6.1})$ in $\left[0, L\right] \times[0, \infty)$ such that the following estimates hold
\begin{align}\label{6.4}
&\|(u_p^{i,\infty}, v_p^{i,\infty}, h_p^{i,\infty}, g_p^{i,\infty})\|_{L^{\infty}(0,L;\mathbb{R}_{+})}+\sup _{0 \leq x \leq L}\|\langle y\rangle^l \partial_{y y}(v_p^{i,\infty}, g_p^{i,\infty})\|_{L^2(0, \infty)} \nonumber\\
&\qquad +\|\langle y\rangle^l \partial_{x y}(v_p^{i,\infty}, g_p^{i,\infty})\|_{L^2(0,L;\mathbb{R}_{+})} \leq C(L, \xi) \varepsilon^{-\xi-\frac{3(i-1)}{16(n-1)}},  \nonumber\\
\sup _{0 \leq x \leq L}&\|\langle y\rangle^l \partial_{y y}(v_p^{i, \infty}, g_p^{i, \infty})\|_{{H^j}(0, \infty)}
+\|\langle y\rangle^l \partial_{x y}(v_p^{i, \infty}, g_p^{i, \infty})\|_{{H^j}(0,L;\mathbb{R}_{+})} \leq C(L, \xi) \varepsilon^{-\xi-\frac{3(i-1+j)}{16(n-1)}},
\end{align}
for some small positive constant $\xi$ and $j=1,2,\ldots,2(n-i)+1$.
\end{pro}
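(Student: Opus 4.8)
The plan is to treat (\ref{6.1}) as a \emph{linear} system for the unknowns $(u_p^i,h_p^i)$---with $v_p^i,g_p^i$ recovered from the divergence-free conditions via $v_p^i(x,y)=\int_y^{\infty}\partial_x u_p^i\,\mathrm d z$ and $g_p^i(x,y)=\int_y^{\infty}\partial_x h_p^i\,\mathrm d z$---whose coefficients are the zeroth-order full profiles $u^0,h^0,v_p^0+\overline{v_e^1},g_p^0+\overline{g_e^1}$ already controlled in Section 2, and whose forcing $F_{p_1}^i,F_{p_2}^i$ depends only on lower-order data. I would argue by induction on $i$, assuming (\ref{6.4}) for all orders strictly below $i$. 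The whole scheme mirrors Proposition \ref{b2.1} and its Lemma \ref{b2.2}: homogenize the data at $y=0$ and at infinity by subtracting suitable cut-off profiles, rewrite the magnetic equation in divergence form and integrate once to introduce the stream function $\psi=\int_0^y h_p^i\,\mathrm d z$, and then run weighted energy estimates exploiting the nondegeneracy $h_e+h_p^{0}\ge \theta_0/2>0$ and assumption (H) for coercivity.

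First I would estimate the forcing. The terms in $F_{p_1}^i,F_{p_2}^i$ are products of order-$(i-1)$ boundary-layer quantities with the first-order profiles, products of the ideal correctors $u_e^i,h_e^i,v_e^i,g_e^i$ with zeroth-order boundary layers, the diffusive remainder $\nu_1\partial_x^2 u_p^{i-2}$, and the matching remainder $\varepsilon^{-1/2}E_{r_1}^{i-1}$. Using the inductive weighted bounds together with the ideal-profile estimates (\ref{3.26}) and (\ref{5.17}) from Propositions \ref{b3.1} and \ref{b5.1}, and the fast decay of $\partial_Y(u_e^0,h_e^0)$, each product can be bounded in $L_l^2$; the dominant contribution comes from the ideal correctors and produces the factor $\varepsilon^{-3(i-1)/(16(n-1))}$, while taking $j$ further derivatives multiplies this by $\varepsilon^{-3j/(16(n-1))}$. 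The singular prefactor $\varepsilon^{-1/2}$ in front of $E_{r_1}^{i-1}$ is compensated because $E_{r_1}^{i-1}$ is itself of one higher power of $\sqrt\varepsilon$; the net power must be checked carefully here. The additional small loss $\varepsilon^{-\xi}$ is absorbed from the corrector $E_b$ supported on the thin strip of height $\varepsilon^{3/(16(n-1))}$, exactly as in (\ref{3.19}).

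With the forcing controlled, the energy estimates proceed in two stages. For derivatives $\partial_x^{k_1}\partial_y^{k_2}$ with $k_1\le m-1$ I would use the standard weighted energy method, testing the (combined) system against $\langle y\rangle^{2l}$ times the solution and its derivatives; the convective and Lorentz terms are handled by integration by parts and the positivity of $h_e+h_p^0$, while the forcing is absorbed by Young's inequality, yielding the advertised $\varepsilon$-powers. The delicate case is the top-order tangential derivative $\partial_x^m$, where a naive estimate generates the uncontrollable term $\partial_x^m(v_p^i,g_p^i)$, effectively one order too high. Following Lemma \ref{b2.2}, I would introduce the good unknowns $u^{m}:=\partial_x^m u_p^i-\frac{\partial_y u_p^i+\cdots}{h+h_e\phi}\,\partial_x^m\psi$ and $h^m:=\partial_x^m h_p^i-\frac{\partial_y h_p^i+\cdots}{h+h_e\phi}\,\partial_x^m\psi$, for which the top-order $(v,g)$ contributions cancel; this is where division by the tangential magnetic field, hence the nondegeneracy from (H), is indispensable. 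Closing the estimate for $(u^m,h^m)$ and transferring back through the $L^2$-equivalence $\|\partial_x^m(u_p^i,h_p^i)\|\sim\|(u^m,h^m)\|$ completes the a priori bounds.

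Finally, existence follows from these a priori estimates: since the problem is linear in $(u_p^i,h_p^i)$, I would construct the solution by the same Picard/fixed-point scheme as in Lemma \ref{b2.2}---now linear, so a single contraction or a continuity argument suffices---on $[0,L]\times\mathbb R_+$, then read off (\ref{6.4}) for $j$ up to $2(n-i)+1$. I expect the main obstacle to be precisely the top-order tangential derivative estimate: building the cancelling combination $(u^m,h^m)$ compatibly with the boundary conditions (\ref{6.2}) and verifying that the residual terms---now carrying the forcing $F_{p_1}^i,F_{p_2}^i$ and the lower-order coefficients---stay controllable under the inductive hypothesis without degrading the $\varepsilon$-exponents.
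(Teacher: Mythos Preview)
Your proposal is correct and follows precisely the approach the paper intends: the paper's own proof is a single sentence (``We apply the method used to estimate $(u_p^{0,\infty},\dots)$''), and what you have written is an accurate unpacking of that method---treating (\ref{6.1}) as a linear system in $(u_p^i,h_p^i)$ with lower-order forcing, arguing by induction on $i$, running the weighted energy estimates of Lemma~\ref{b2.2}, and resolving the top tangential derivative via the good unknowns $(u^m,h^m)$ built from the magnetic stream function. Your identification of the ideal correctors $(u_e^i,h_e^i,v_e^i,g_e^i)$ as the source of the $\varepsilon^{-3(i-1)/(16(n-1))}$ loss, and of the $E_b$ cutoff as the origin of the auxiliary $\varepsilon^{-\xi}$, matches the paper's bookkeeping exactly.
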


\subsubsection*{Step 2}

Now, we cut the solution from $[0,L]\times \mathbb{R}_{+}$ to $\Omega_{\varepsilon}$ as in Section $2$, which leads to
\begin{pro}\label{b6.2}
 Under the assumptions of Theorem $\ref{b1.1}$, there exists smooth functions $(u_p^i, v_p^i, h_p^i, g_p^i, p_p^i)$ satisfying the following inhomogeneous system
\begin{align}\label{6.5}
\left\{\begin{aligned}
& u^0 u_{p x}^i+u_x^0 u_p^i+u_y^0 v_p^i+[v_p^0+\overline{v_e^1}] u_{p y}^i+\partial_{x}p_p^i-\nu_{1}\partial_{y}^2u_p^i\\
& \quad -h^0 h_{p x}^i-h_x^0 h_p^i-h_y^0 g_p^i -[g_p^0+\overline{g_e^1}] h_{p y}^i=R_p^{u, i}, \\
& u^0 h_{p x}^i+h_x^0 u_p^i+h_y^0 v_p^i+[v_p^0+\overline{v_e^1}] h_{p y}^i-\nu_{3}\partial_{y}^2h_{p}^i \\
& \quad -h^0 u_{p x}^i-u_x^0 h_p^i-u_y^0 g_p^i-[g_p^0+\overline{g_e^1}] u_{p y}^i=R_p^{h, i}, \\
& u_{p x}^i+v_{p y}^i=h_{p x}^i+g_{p y}^i=0, \\
&(u_p^i, h_p^i)(0, y)=(\tilde{u}_i(y),\tilde{h}_i(y)), (u_p^i,\partial_y h_p^i)(x, 0)=-(u_e^i, \partial_{Y} h_e^{i-1})(x, 0), \\
&(u_{p y}^i,h_{py}^i)(x, \varepsilon^{-1/2})=(v_p^i,g_p^i)(x, \varepsilon^{-1/2})=0, (v_p^i, g_p^i)(x, 0)=-(v_e^{i+1}, g_e^{i+1})(x,0),
\end{aligned}\right.
\end{align}
such that, for any given $l \in \mathbb{N}$,
\begin{align}\label{6.6}
& \|(u_p^i, v_p^i, h_p^i, g_p^i)\|_{L^{\infty}(\Omega_{\varepsilon})}+\sup _{0 \leq x \leq L}\|\langle y\rangle^l \partial_{y y}
(v_{p}^i,g_{p}^i)\|_{L^2(I_{\varepsilon})}
+\|\langle y\rangle^l \partial_{x y}(v_{p}^i, g_{p}^i)\|_{L^2(\Omega_{\varepsilon})} \leq C(L, \xi) \varepsilon^{-\xi-\frac{3(i-1)}{16(n-1)}},\nonumber\\
&\sup _{0 \leq x \leq L}\|\langle y\rangle^l \partial_{y y}(v_p^i, g_p^i)\|_{{H^j}(I_{\varepsilon})}
+\|\langle y\rangle^l \partial_{x y}(v_p^i, g_p^i)\|_{{H^j}(\Omega_{\varepsilon})} \leq C(L, \xi) \varepsilon^{-\xi-\frac{3(i-1+j)}{16(n-1)}},
\end{align}
where $R_p^{u, i}, R_{p}^{h,i}$ are higher order terms of $\sqrt{\varepsilon}$ and $j=1,2,\ldots,2(n-i)+1$.
\end{pro}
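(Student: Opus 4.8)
The plan is to mirror the cutting argument of Proposition~\ref{b2.4} (Step~2 of the zeroth order), with the half-strip solution now supplied by Proposition~\ref{b6.1} in place of Proposition~\ref{b2.1}. Starting from the profiles $(u_p^{i,\infty},v_p^{i,\infty},h_p^{i,\infty},g_p^{i,\infty},p_p^{i,\infty})$ on $[0,L]\times[0,\infty)$, I would localize them to $\Omega_\varepsilon$ using the same cutoff $\chi(\sqrt\varepsilon y)$ of (\ref{2.19}), where $\chi$ is supported in $[0,1]$ with $\chi(0)=1$ and $\chi'(0)=\chi'(1)=0$. Concretely I set
\begin{align*}
u_p^i(x,y) &:= \chi(\sqrt\varepsilon y)\, u_p^{i,\infty}(x,y) - \sqrt\varepsilon\, \chi'(\sqrt\varepsilon y)\int_y^\infty u_p^{i,\infty}(x,\theta)\,\mathrm{d}\theta, \\
h_p^i(x,y) &:= \chi(\sqrt\varepsilon y)\, h_p^{i,\infty}(x,y) - \sqrt\varepsilon\, \chi'(\sqrt\varepsilon y)\int_y^\infty h_p^{i,\infty}(x,\theta)\,\mathrm{d}\theta,
\end{align*}
together with $v_p^i:=\chi(\sqrt\varepsilon y)\,v_p^{i,\infty}$ and $g_p^i:=\chi(\sqrt\varepsilon y)\,g_p^{i,\infty}$. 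The integral-correction terms carried by $u_p^i$ and $h_p^i$ are exactly what keeps the divergence-free relations $u_{px}^i+v_{py}^i=h_{px}^i+g_{py}^i=0$ \emph{exact} after cutting, since $v_p^{i,\infty}(x,y)=\int_y^\infty u_{px}^{i,\infty}(x,\theta)\,\mathrm{d}\theta$ (and analogously for $g_p^{i,\infty}$).

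The first step is to check the boundary conditions in (\ref{6.5}). At $y=0$ one has $\chi=1,\chi'=0$, so the cut functions coincide with the half-strip traces; hence $(u_p^i,\partial_y h_p^i)(x,0)=-(u_e^i,\partial_Y h_e^{i-1})(x,0)$ and $(v_p^i,g_p^i)(x,0)=-(v_e^{i+1},g_e^{i+1})(x,0)$ are inherited from Proposition~\ref{b6.1}. At the top boundary $y=\varepsilon^{-1/2}$, where $\sqrt\varepsilon y=1$, all derivatives of $\chi$ vanish (as $\chi\equiv0$ on $[1,\infty)$ and is smooth), which forces $v_p^i=g_p^i=0$ and, after differentiating, $\partial_y u_p^i=\partial_y h_p^i=0$. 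The left-edge data $(u_p^i,h_p^i)(0,y)=(\tilde u_i,\tilde h_i)$ survives because the cutoff acts only in $y$.

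The second step is to substitute the cut ansatz into the MHD operator on the left of (\ref{6.5}) and collect the remainders $R_p^{u,i},R_p^{h,i}$. Since (\ref{6.5}) and the half-strip system (\ref{6.1}) share the \emph{same} differential operator, $R_p^{u,i}$ and $R_p^{h,i}$ consist precisely of the commutator terms in which a derivative falls on $\chi(\sqrt\varepsilon y)$; each such term picks up a factor $\sqrt\varepsilon$ through the chain rule $\partial_y[\chi(\sqrt\varepsilon y)]=\sqrt\varepsilon\,\chi'(\sqrt\varepsilon y)$, so the remainders are higher order in $\sqrt\varepsilon$, structurally identical to the explicit expressions $\triangle_0^1,\dots,\triangle_0^4$ of Proposition~\ref{b2.4} but built from the $i$-th order profiles. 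Because $\chi$ localizes to $y\le\varepsilon^{-1/2}$ and the weighted bounds (\ref{6.4}) control both the profiles and their tails $\int_y^\infty(\cdot)\,\mathrm{d}\theta$ via the $\langle y\rangle^l$ weights, these half-strip estimates transfer verbatim to the desired estimates (\ref{6.6}), retaining the growth factor $\varepsilon^{-\xi-\frac{3(i-1)}{16(n-1)}}$ and its $H^j$-refinements.

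The main obstacle, relative to the zeroth-order case, is the bookkeeping forced by the richer structure of the $i$-th order system: the pressure contribution $\partial_x p_p^i$ in the tangential balance and the forcing $F_{p_1}^i,F_{p_2}^i$, which contains the singular term $\varepsilon^{-1/2}E_{r_1}^{i-1}$ (resp.\ $\varepsilon^{-1/2}E_{r_2}^{i-1}$) together with products of all lower-order interior and boundary-layer profiles. One must verify that, once localized, these are absorbed into $R_p^{u,i},R_p^{h,i}$ without spoiling the higher-order character of the genuine cutting error. This is where the a priori bounds on the lower-order profiles from Propositions~\ref{b2.4}, \ref{b5.1}, and~\ref{b6.1} are invoked; since the pressure is reconstructed a posteriori from the divergence-free constraint (with $\partial_x p_p^1=0$ at leading order) and all lower-order quantities are already controlled, this step amounts to careful tracking rather than a new estimate, justifying the authors' remark that the details may be omitted.
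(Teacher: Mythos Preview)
Your plan is essentially the paper's own argument: apply the cutoff $\chi(\sqrt\varepsilon y)$ to the half-strip profiles from Proposition~\ref{b6.1}, check boundary conditions, and verify that the commutator with $\chi$ is of order $\sqrt\varepsilon$. The paper carries this out and writes the remainders explicitly as $R_p^{u,i}=\sqrt\varepsilon\,\triangle_i^1+\varepsilon\,\triangle_i^2$, $R_p^{h,i}=\sqrt\varepsilon\,\triangle_i^3+\varepsilon\,\triangle_i^4$.

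One point where you diverge from the paper: you define the correction via $\int_y^\infty u_p^{i,\infty}\,\mathrm d\theta$, literally copying~(\ref{2.19}) from the zeroth order, whereas the paper's definition~(\ref{6.7}) for $i\ge 1$ uses $\int_0^y u_p^{i,\infty}\,\mathrm d\theta$. This is not cosmetic. The only direct control Proposition~\ref{b6.1} gives on $u_p^{i,\infty}$ is the $L^\infty$ bound, so the paper's $\int_0^y$ form is immediately estimated by $\sqrt\varepsilon\,y\,|\chi'(\sqrt\varepsilon y)|\,\|u_p^{i,\infty}\|_{L^\infty}\le C$. Your $\int_y^\infty$ form instead requires integrable decay of $u_p^{i,\infty}$ as $y\to\infty$, which is not among the stated estimates~(\ref{6.4}); at order zero this was available from the full weighted bounds~(\ref{2.9}), but at order $i\ge 1$ you would have to extract it separately from the proof of Proposition~\ref{b6.1}. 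Your remark that ``the weighted bounds~(\ref{6.4}) control \dots their tails $\int_y^\infty(\cdot)\,\mathrm d\theta$'' is therefore not justified by the estimates as stated.

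Your final paragraph overstates the difficulty: the pressure $\partial_x p_p^i$ and the forcings $F_{p_1}^i,F_{p_2}^i$ (including $\varepsilon^{-1/2}E_{r}^{i-1}$) are already present in the half-strip system~(\ref{6.1}), so after cutting they contribute only the harmless term $F_{p_1}^i\int_0^y\chi'\,\mathrm d\theta$ (respectively $F_{p_2}^i\int_0^y\chi'\,\mathrm d\theta$) that the paper records in $\triangle_i^1,\triangle_i^3$. No separate ``absorption'' step is needed.
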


\begin{proof}
 The solution $(u_p^{i,\infty}, v_p^{i,\infty}, h_p^{i,\infty}, g_p^{i,\infty})$ has been constructed in Proposition $\ref{b6.1}$. We define
\begin{align}\label{6.7}
& (u_p^i, h_p^i)(x, y):=\chi(\sqrt{\varepsilon} y) (u_p^{i,\infty}, h_p^{i,\infty})(x, y)-\sqrt{\varepsilon} \chi^{\prime}(\sqrt{\varepsilon} y) \int_0^y ( u_p^{i,\infty}, h_p^{i,\infty} )(x, \theta) \mathrm{d} \theta, \nonumber\\
& (v_p^i, g_p^i)(x, y):=\chi(\sqrt{\varepsilon} y) (v_p^{i,\infty}, g_p^{i,\infty})(x, y).
\end{align}
It is clear that $(u_p^i, v_p^i, h_p^i, g_p^i)$ satisfies $(\ref{6.5})$. Thus, we have
\begin{align*}
R_p^{u,i}=\sqrt{\varepsilon}\triangle_i^1 + \varepsilon \triangle_i^2, \quad R_p^{h,i}=\sqrt{\varepsilon}\triangle_i^3+\varepsilon \triangle_i^4,
\end{align*}
where
\begin{align*}
\triangle_i^1:= &  \chi^{\prime} [u^0 v_p^{i,\infty}- h^0 g_p^{i,\infty}]+ \chi^{\prime} [u_x^0 \int_0^y u_p^{i,\infty} \mathrm{d} \theta-h_x^0 \int_0^y h_p^{i,\infty} \mathrm{d} \theta]-3 \nu_1 \chi^{\prime} u_{p y}^{i,\infty}\\
&+2 \chi^{\prime}
\{[v_p^0+\overline{v_e^1}] u_p^{i,\infty}-[g_p^0+\overline{g_e^1}] h_p^{i,\infty}\}+ F_{p_1}^i \int_0^y \chi^{\prime} \mathrm{d} \theta,\\
\triangle_i^2:=&  \chi^{\prime \prime}[v_p^0+\overline{v_e^1}] \int_0^y u_p^{i,\infty} \mathrm{d} \theta-\nu_1\varepsilon^{1 / 2} \chi^{\prime \prime \prime} \int_0^y u_p^{i,\infty} \mathrm{d} \theta
- \chi^{\prime \prime}[g_p^0+\overline{g_e^1}] \int_0^y h_p^{i,\infty} \mathrm{d} \theta
-3\nu_1 \chi^{\prime \prime} u_p^{i,\infty}, \\
\triangle_i^3:= &  \chi^{\prime} [u^0 g_p^{i,\infty}- h^0 v_p^{i,\infty}]+ \chi^{\prime} [h_x^0 \int_0^y u_p^{i,\infty} \mathrm{d} \theta-u_x^0 \int_0^y h_p^{i,\infty} \mathrm{d} \theta]-3\nu_3 \chi^{\prime} h_{p y}^{i,\infty}\\
&+2  \chi^{\prime}\{[v_p^0+\overline{v_e^1}] h_p^{i,\infty}-[g_p^0+\overline{g_e^1}] u_p^{i,\infty}\}+ F_{p_2}^i \int_0^y \chi^{\prime} \mathrm{d} \theta,\\
\triangle_i^4:=& \chi^{\prime \prime}[v_p^0+\overline{v_e^1}] \int_0^y h_p^{i,\infty} \mathrm{d} \theta -\nu_3\varepsilon^{1 / 2} \chi^{\prime \prime \prime} \int_0^y h_p^{i,\infty} \mathrm{d} \theta
- \chi^{\prime \prime}[g_p^0+\overline{g_e^1}] \int_0^y u_p^{i,\infty} \mathrm{d} \theta
-3\nu_3  \chi^{\prime \prime} h_p^{i,\infty}.
\end{align*}
Clearly,
$$\partial_x u_p^i + \partial_y v_p^i =\partial_x h_p^i + \partial_y g_p^i=0.$$
Next, by using  Proposition $\ref{b6.1}$, we have
$$
\left|\sqrt{\varepsilon} \chi^{\prime}(\sqrt{\varepsilon} y) \int_0^y u_p^{i,\infty}(x, \theta) \mathrm{d} \theta\right| \leq \sqrt{\varepsilon} y\left|\chi^{\prime}(\sqrt{\varepsilon} y)\right|\|u_p^{i,\infty}\|_{L^{\infty}} \leq C(L, \xi) \varepsilon^{-\xi-\frac{3(i-1)}{16(n-1)}},
$$

$$
\left|\sqrt{\varepsilon} \chi^{\prime}(\sqrt{\varepsilon} y) \int_0^y h_p^{i,\infty}(x, \theta) \mathrm{d} \theta\right| \leq \sqrt{\varepsilon} y\left|\chi^{\prime}(\sqrt{\varepsilon} y)\right|\|h_p^{i,\infty}\|_{L^{\infty}} \leq C(L, \xi) \varepsilon^{-\xi-\frac{3(i-1)}{16(n-1)}} .
$$
Hence, applying  Proposition $\ref{b6.1}$, we obtain $(\ref{6.6})$. This completes the proof of the proposition.
\end{proof}


\subsubsection{The $n$-th order MHD boundary layer profile}


 Let us estimate the solution $(u_p^n, v_p^n, h_p^n, g_p^n, p_p^n)$ of the following system
\begin{align}\label{8.1}
\left\{\begin{aligned}
&u^0 \partial_x u_p^{n}+u_p^{n} \partial_x u^0+v_p^{n} \partial_y u^0
 +(v_p^0+\overline{v_e^1}) \partial_y u_p^{n}
 -\nu_{1}\partial_y^2 u_p^{n} + \partial_{x}p_p^n \\
& \quad -h^0 \partial_x h_p^{n}-h_p^{n} \partial_x h^0
 -g_p^{n} \partial_y h^0
 -(g_p^0+\overline{g_e^1}) \partial_y h_p^{n}=F_{p_{1}}^n,\\
&u^0 \partial_x h_p^{n}+u_p^{n} \partial_x h^0+v_p^{n} \partial_y h^0
 +(v_p^0+\overline{v_e^1}) \partial_y h_p^{n}
 -\nu_{3}\partial_y^2 h_p^{n} \\
&\quad-h^0 \partial_x u_p^{n}-h_p^{n} \partial_x u^0
 -g_p^{n} \partial_y u^0
 -(g_p^0+\overline{g_e^1}) \partial_y u_p^{n} =F_{p_2}^n,
\end{aligned}\right.
\end{align}
with the boundary conditions 
\begin{align}\label{8.2}
\left\{\begin{aligned}
& (u_p^{n}, h_p^{n})(0, y)=(\tilde{u}_n(y), \tilde{h}_{n}(y)), (u_p^{n}, h_{py}^{n})(x, 0)=-(u_e^n, h_{eY}^{n-1})(x, 0),\\
&u_{p y}^{n}(x, \varepsilon^{-1/2})=v_p^{n}(x, 0)=v_p^{n}(x, \varepsilon^{-1/2})=0,\\
&h_{p y}^{n}(x, \varepsilon^{-1/2})=g_p^{n}(x, 0)=g_p^{n}(x, \varepsilon^{-1/2})=0,
\end{aligned}\right.
\end{align}
where $F_{p_1}^n$ and $F_{p_2}^n$ are as follows
\begin{align*}
F_{p_1}^n=&-\left[\varepsilon^{-\frac{1}{2}}E_{r_1}^{n-1}+u_p^{n-1}\partial_{x}
(u_p^1+u_e^1)+ (u_e^1+u_p^1)\partial_{x}u_p^{n-1}
+ v_p^{n-1}(\partial_{Y}u_e^0+\partial_{y}u_p^1) + v_e^2\partial_{y}u_p^{n-1} + v_e^n \partial_{y}u_p^1\right.\\
&\quad+\partial_{x} u_e^n u_p^0+ u_e^n \partial_{x}u_p^0-h_p^{n-1}\partial_{x}
(h_p^1+h_e^1)- (h_e^1+h_p^1)\partial_{x}h_p^{n-1}
 -g_p^{n-1}(\partial_{Y}h_e^0+\partial_{y}h_p^1)- g_e^2\partial_{y}h_p^{n-1}\\
&\left.\quad - g_e^n \partial_{y}h_p^1-\partial_{x} h_e^n h_p^0-h_e^n \partial_{x}h_p^0
- \nu_{1}\partial_{x}^{2} u_p^{n-2}\right],\\
F_{p_2}^n=&-\left[\varepsilon^{-\frac{1}{2}}E_{r_2}^{n-1}+u_p^{n-1}\partial_{x}
(h_p^1+h_e^1)+ (u_e^1+u_p^1)\partial_{x}h_p^{n-1}
+ v_p^{n-1}(\partial_{Y}h_e^0+\partial_{y}h_p^1) + v_e^2\partial_{y}h_p^{n-1} + v_e^n \partial_{y}h_p^1\right.\\
&\quad+\partial_{x} h_e^n u_p^0+ u_e^n \partial_{x}h_p^0-h_p^{n-1}\partial_{x}
(u_p^1+u_e^1)- (h_e^1+h_p^1)\partial_{x}u_p^{n-1}
 -g_p^{n-1}(\partial_{Y}u_e^0+\partial_{y}u_p^1)- g_e^2\partial_{y}u_p^{n-1}\\
&\left.\quad - g_e^n \partial_{y}u_p^1-\partial_{x} u_e^n h_p^0-h_e^n \partial_{x}u_p^0
- \nu_{3}\partial_{x}^{2} h_p^{n-2}\right].
\end{align*}

\subsubsection*{Step1}

 We establish the estimates of $(u_p^{n,\infty}, v_p^{n,\infty}, h_p^{n,\infty}, g_p^{n,\infty})$ on $[0,L] \times \mathbb{R}_{+}$ as in Section $3.2.1$ and neglect the proof here.

\begin{pro}\label{b8.1}
 For $n \geq 3$, there exists a smooth solution $(u_p^{n,\infty}, v_p^{n,\infty}, h_p^{n,\infty}, g_p^{n,\infty}, p_{p}^{n,\infty})$ to the problem $(\ref{8.1})$ in $[0, L] \times[0, \infty)$ such that the following estimates hold,
\begin{align}\label{8.4}
\begin{aligned}
&\|(u_p^{n,\infty}, v_p^{n,\infty}, h_p^{n,\infty}, g_p^{n,\infty})\|_{L^{\infty}(0,L;\mathbb{R}_{+})}+\sup _{0 \leq x \leq L}\|\langle y\rangle^l \partial_{yy}(v_p^{n,\infty}, g_p^{n,\infty})\|_{L^2(0, \infty)} \\
& \qquad +\|\langle y\rangle^l \partial_{xy}(v_p^{n,\infty}, g_p^{n,\infty})\|_{L^2(0,L;\mathbb{R}_{+})} \leq C(L, \xi) \varepsilon^{-\xi-\frac{3}{16}}, \\
\sup _{0 \leq x \leq L}&\|\langle y\rangle^l \partial_{xyy}(v_p^{n,\infty}, g_p^{n,\infty})\|_{L^2(0, \infty)}+\|\langle y\rangle^l \partial_{xxy}(v_p^{n,\infty}, g_p^{n,\infty})\|_{L^2(0,L;\mathbb{R}_{+})} \leq C(L) \varepsilon^{-\frac{3}{16}-\frac{3}{16(n-1)}},
\end{aligned}
\end{align}
where $\xi$ is small positive constant.
\end{pro}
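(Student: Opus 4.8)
The plan is to establish the estimates $(\ref{8.4})$ for a solution of $(\ref{8.1})$ on the half-strip $[0,L]\times[0,\infty)$ by repeating, almost verbatim, the weighted-energy scheme already carried out for the lower orders. Indeed, the linearization in $(\ref{8.1})$ has exactly the same leading coefficients $u^0=u_e+u_p^0$, $h^0=h_e+h_p^0$, $v_p^0+\overline{v_e^1}$ and $g_p^0+\overline{g_e^1}$ as the $i$-th order system $(\ref{6.1})$ treated in Proposition \ref{b6.1}, so the linear machinery carries over unchanged. The two genuinely new features are the homogeneous trace conditions $v_p^n(x,0)=g_p^n(x,0)=0$ in $(\ref{8.2})$ and the need to pin down the \emph{exact} power of $\varepsilon$ generated by the forcing $(F_{p_1}^n,F_{p_2}^n)$, which is what fixes the exponents in $(\ref{8.4})$.

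First I would reduce the system. Using the divergence-free relations $\partial_x u_p^n+\partial_y v_p^n=\partial_x h_p^n+\partial_y g_p^n=0$ together with $(\ref{8.2})$, I recover $(v_p^n,g_p^n)$ from $(u_p^n,h_p^n)$ by vertical integration and rewrite the magnetic equation in the divergence form analogous to $(\ref{2.15})$, which after one integration in $y$ becomes a first-order transport-diffusion identity of the type $(\ref{2.16.1})$ for the stream function $\psi=\int_0^y h_p^n\,\mathrm{d}\theta$. The non-degeneracy needed to close the estimates is furnished by assumption (H) and by the lower bound $h_e+h_p^{0,\infty}\ge\theta_0/2>0$ proved in Lemma \ref{b2.2}, which keeps the coefficient $h^0$ uniformly away from zero.

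Next I would run the weighted $L^2_l$ energy estimates exactly as in Lemma \ref{b2.2}. Derivatives $\partial_x^{k_1}\partial_y^{k_2}(u_p^n,h_p^n)$ with $k_1\le m-1$ are controlled directly by the standard energy method, while at the top tangential order I would pass to the good unknowns
\begin{align*}
u^{m}:=\partial_x^{m}u_p^n-\frac{\partial_y u_p^n}{h^0}\,\partial_x^{m}\psi,\qquad h^{m}:=\partial_x^{m}h_p^n-\frac{\partial_y h_p^n}{h^0}\,\partial_x^{m}\psi,
\end{align*}
(with the same trace corrections as in Lemma \ref{b2.2}), which cancel the terms carrying $\partial_x^{m}(v_p^n,g_p^n)$ and thus remove the apparent loss of one derivative; the bound on $\partial_x^{m}(u_p^n,h_p^n)$ is then recovered from the equivalence of the $L^2$ norms of $\partial_x^m(u_p^n,h_p^n)$ and $(u^m,h^m)$, using $h^0\ge\theta_0/2$. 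Existence of a smooth solution finally follows from the classical Picard iteration and fixed-point theorem, as in \cite{DLX2021,LXY2019}.

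The hard part will be the $\varepsilon$-bookkeeping of the forcing. The terms $F_{p_1}^n,F_{p_2}^n$ contain the singular contributions $\varepsilon^{-\frac12}E_{r_1}^{n-1}$, $\varepsilon^{-\frac12}E_{r_2}^{n-1}$ together with products of the interior correctors $(u_e^i,v_e^i,h_e^i,g_e^i)$ and the boundary-layer correctors $(u_p^j,h_p^j)$ of all lower orders, whose higher Sobolev norms blow up like $\varepsilon^{-3(i-1+j)/(16(n-1))}$ by Propositions \ref{b3.1}, \ref{b5.1}, \ref{b6.1} and \ref{b6.2}. I would track these negative powers term by term and check that the worst combination, after the $\varepsilon^{-1/2}$ prefactor is compensated by the gains coming from the lower-order profiles, produces precisely the exponent $-\tfrac{3}{16}$ for the base norm and $-\tfrac{3}{16}-\tfrac{3}{16(n-1)}$ for the one-extra-derivative norm in $(\ref{8.4})$, the small loss $\xi$ absorbing the weighted-embedding and cut-off errors. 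Verifying that no single term overshoots these thresholds — consistently with the $i=n$ specialization of Proposition \ref{b6.1} — is the delicate point of the argument.
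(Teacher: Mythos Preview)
Your proposal is correct and follows exactly the approach of the paper, which in fact gives no independent proof of Proposition~\ref{b8.1} but simply refers back to Section~3.2.1 (and thence to the zeroth-order machinery of Lemma~\ref{b2.2}); you have accurately reconstructed that machinery---the stream-function reduction, the weighted energy method, the good-unknown cancellation at top tangential order, and the Picard iteration---together with the $\varepsilon$-bookkeeping that specializes Proposition~\ref{b6.1} to $i=n$ and yields the exponents $-\tfrac{3}{16}$ and $-\tfrac{3}{16}-\tfrac{3}{16(n-1)}$ in $(\ref{8.4})$. The only additional remark is that the homogeneous traces $v_p^n(x,0)=g_p^n(x,0)=0$ are indeed a simplification rather than a complication, so nothing new is required there.
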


\subsubsection*{Step 2}

We cut the domain from $[0,L]\times \mathbb{R}_{+}$ to $\Omega_{\varepsilon}$ and give the following proposition to estimate the solution of the system $(\ref{8.1})$ on $\Omega_{\varepsilon}$.
 \begin{pro}\label{b8.2}
 Under the assumptions of Theorem $\ref{b1.1}$, there exists smooth functions $(u_p^n, v_p^n, h_p^n, g_p^n, p_p^n)$, satisfying the following inhomogeneous system
\begin{align}\label{8.5}
\left\{\begin{aligned}
& u^0 u_{p x}^n+u_x^0 u_p^n+u_y^0 v_p^n+[v_p^0+\overline{v_e^1}] u_{p y}^n+p_{p x}^n-\nu_{1}\partial_{y}^2u_p^n\\
& \quad -h^0 h_{p x}^n-h_x^0 h_p^n-h_y^0 g_p^n-[g_p^0+\overline{g_e^1}] h_{p y}^n=R_p^{u, n}, \\
& u^0 h_{p x}^n+h_x^0 u_p^n+h_y^0 v_p^n+[v_p^0+\overline{v_e^1}] h_{p y}^n-\nu_{3}\partial_{y}^2h_{p}^n\\
& \quad -h^0 u_{p x}^n-u_x^0 h_p^n-u_y^0 g_p^n-[g_p^0+\overline{g_e^1}] u_{p y}^n=R_p^{h, n}, \\
& u_{p x}^n+v_{p y}^n=h_{p x}^n+g_{p y}^n=0, \\
&(u_p^n, h_p^n)(0, y)=(\tilde{u}_n(y),\tilde{h}_n(y)), (u_p^n,\partial_y h_p^n )(x, 0)=-(u_e^n, \partial_{Y} h_e^{n-1})(x, 0), \\
&(u_{p y}^n,h_{py}^n)(x, \varepsilon^{-1/2})
=(v_p^n, g_p^n)(x, 0)=(v_p^n,g_p^n)(x, \varepsilon^{-1/2})=0,
\end{aligned}\right.
\end{align}
such that for any given $l \in \mathbb{N}$,
\begin{align}\label{8.6}
& \|(u_p^n, v_p^n, h_p^n, g_p^n)\|_{L^{\infty}(\Omega_{\varepsilon})}+\sup _{0 \leq x \leq L}\|\langle y\rangle^l \partial_{yy}(v_{p}^n,g_{p}^n )\|_{L^2(I_{\varepsilon})}
+\|\langle y\rangle^l \partial_{xy}(v_{p}^n, g_{p}^n)\|_{L^2(\Omega_{\varepsilon})} \leq C(L, \xi) \varepsilon^{-\xi-\frac{3}{16}}, \nonumber\\
& \sup _{0 \leq x \leq L}\|\langle y\rangle^l \partial_{xyy}( v_{p}^n,g_{p}^n)\|_{L^2(I_{\varepsilon})}
+\|\langle y\rangle^l \partial_{xxy}(v_{p}^n, g_{p}^n)\|_{L^2(\Omega_{\varepsilon})} \leq C(L) \varepsilon^{-\frac{3}{16}-\frac{3}{16(n-1)}},
\end{align}
where $R_p^{u, n}$ and $R_{p}^{h, n}$ are higher order terms.
\end{pro}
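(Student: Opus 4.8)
The plan is to repeat the cut-off construction of Proposition $\ref{b6.2}$, now feeding in the infinite-domain profiles supplied by Proposition $\ref{b8.1}$ rather than those of Proposition $\ref{b6.1}$. Concretely, with the same cut-off $\chi$ supported in $[0,1]$ satisfying $\chi(0)=1$ and $\chi(1)=\chi'(0)=\chi'(1)=0$, I would set
\begin{align*}
(u_p^n, h_p^n)(x, y)&:=\chi(\sqrt{\varepsilon} y)(u_p^{n,\infty}, h_p^{n,\infty})(x, y)-\sqrt{\varepsilon}\, \chi'(\sqrt{\varepsilon} y)\int_0^y (u_p^{n,\infty}, h_p^{n,\infty})(x, \theta)\,\mathrm{d}\theta,\\
(v_p^n, g_p^n)(x, y)&:=\chi(\sqrt{\varepsilon} y)(v_p^{n,\infty}, g_p^{n,\infty})(x, y).
\end{align*}
Exactly as in Proposition $\ref{b6.2}$, the integral corrections on $(u_p^n,h_p^n)$ are tailored so that the cut-off fields stay divergence free, $\partial_x u_p^n+\partial_y v_p^n=\partial_x h_p^n+\partial_y g_p^n=0$; here the computation uses the $n$-th order boundary value $(v_p^{n,\infty},g_p^{n,\infty})(x,0)=(0,0)$ in place of the matching condition $-(v_e^{i+1},g_e^{i+1})(x,0)$ of the lower orders.

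Next I would substitute these definitions into $(\ref{8.1})$ and collect the terms in which at least one derivative falls on $\chi$. Since $\partial_y\chi(\sqrt{\varepsilon}y)=\sqrt{\varepsilon}\chi'(\sqrt{\varepsilon}y)$, every such term carries a factor $\sqrt{\varepsilon}$ or $\varepsilon$, so the residual splits as $R_p^{u,n}=\sqrt{\varepsilon}\triangle_n^1+\varepsilon\triangle_n^2$ and $R_p^{h,n}=\sqrt{\varepsilon}\triangle_n^3+\varepsilon\triangle_n^4$, with $\triangle_n^j$ of the same shape as the $\triangle_i^j$ appearing in Proposition $\ref{b6.2}$ but built from the source terms $F_{p_1}^n,F_{p_2}^n$ and the $n$-th order infinite-domain profiles. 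I would then read off the boundary conditions of $(\ref{8.5})$ directly from the definitions: the conditions at $y=\varepsilon^{-1/2}$ follow because $\chi$ and $\chi'$ vanish there; the conditions at $y=0$ reduce to $(u_p^{n,\infty},\partial_y h_p^{n,\infty})(x,0)=-(u_e^n,\partial_Y h_e^{n-1})(x,0)$, which is built into Proposition $\ref{b8.1}$; and the distinguishing $n$-th order conditions $v_p^n(x,0)=g_p^n(x,0)=0$ follow from $\chi(0)=1$ together with $(v_p^{n,\infty},g_p^{n,\infty})(x,0)=(0,0)$.

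To transfer the estimates I would argue as in Proposition $\ref{b6.2}$. The cut-off fields coincide with the infinite-domain profiles on $\{\sqrt{\varepsilon}y\le\delta\}$ and are supported in $[0,\varepsilon^{-1/2}]$, so restricting the weighted $L^2$ and $H^j$ norms from $(0,\infty)$ to $I_\varepsilon$ is harmless, while the correction terms obey
$$
\Big|\sqrt{\varepsilon}\,\chi'(\sqrt{\varepsilon}y)\int_0^y (u_p^{n,\infty},h_p^{n,\infty})(x,\theta)\,\mathrm{d}\theta\Big|\le \sqrt{\varepsilon}\,y\,|\chi'(\sqrt{\varepsilon}y)|\,\|(u_p^{n,\infty},h_p^{n,\infty})\|_{L^\infty}\le C(L,\xi)\varepsilon^{-\xi-\frac{3}{16}},
$$
by $(\ref{8.4})$. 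Inserting the bounds of Proposition $\ref{b8.1}$ into the expressions for $u_p^n,v_p^n,h_p^n,g_p^n$ and their $\partial_{yy},\partial_{xy},\partial_{xyy},\partial_{xxy}$ derivatives then yields the two lines of $(\ref{8.6})$.

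The main obstacle is the $\varepsilon$-power bookkeeping, since $n$ is the top order. The source $F_{p_1}^n,F_{p_2}^n$ contains the most singular ingredients available, namely the term $\varepsilon^{-1/2}E_{r_1}^{n-1}$ together with products of the highest inner correctors, whose $W^{k,p}$ norms from Propositions $\ref{b3.1}$ and $\ref{b5.1}$ already carry negative powers of $\varepsilon$. One must check that after multiplication by the cut-off factors $\sqrt{\varepsilon}$ and $\varepsilon$ these contributions remain genuinely higher order, so that the exponents in $(\ref{8.6})$ come out exactly as $-\xi-\tfrac{3}{16}$ and $-\tfrac{3}{16}-\tfrac{3}{16(n-1)}$. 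Apart from this accounting and the modified boundary data $v_p^n(x,0)=g_p^n(x,0)=0$, the argument is a verbatim repetition of Proposition $\ref{b6.2}$.
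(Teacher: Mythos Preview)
Your proposal is correct and matches the paper's proof essentially verbatim: the paper defines $(u_p^n,h_p^n,v_p^n,g_p^n)$ by exactly the cut-off formulas you wrote, checks the divergence-free conditions, records the explicit expressions for $R_p^{u,n}$ and $R_p^{h,n}$ (without the $\triangle_n^j$ labeling, but with the same $\sqrt{\varepsilon}$ and $\varepsilon$ prefactors), bounds the integral correction exactly as you did, and then invokes Proposition~\ref{b8.1} to obtain~$(\ref{8.6})$. The $\varepsilon$-power bookkeeping for the residuals $R_p^{u,n},R_p^{h,n}$ that you flag as the main obstacle is in fact deferred to the subsequent Proposition~\ref{b8.3}, so it is not needed here.
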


\begin{proof}
The solution $(u_p^{n,\infty}, v_p^{n,\infty}, h_p^{n,\infty}, g_p^{n,\infty})$ has been constructed in Proposition $\ref{b8.1}$. We introduce the following definition
\begin{align}\label{8.7}
& (u_p^n, h_p^n )(x, y):=\chi(\sqrt{\varepsilon} y) (u_p^{n,\infty}, h_p^{n,\infty})(x, y)-\sqrt{\varepsilon} \chi^{\prime}(\sqrt{\varepsilon} y) \int_0^y (u_p^{n,\infty}, h_p^{n,\infty})(x, \theta) \mathrm{d} \theta, \nonumber\\
& (v_p^n, g_p^n)(x, y):=\chi(\sqrt{\varepsilon} y) (v_p^{n,\infty}, g_p^{n,\infty} )(x, y).
\end{align}
Clearly, $(u_p^n, v_p^n, h_p^n, g_p^n)$ satisfies $(\ref{8.5})$ and we have
\begin{align}\label{8.8}
R_p^{u, n}:= & \sqrt{\varepsilon} \chi^{\prime} [u^0 v_p^{n,\infty}- h^0 g_p^{n,\infty}]+\sqrt{\varepsilon} \chi^{\prime} [u_x^0 \int_0^y u_p^{n,\infty} \mathrm{d} \theta-h_x^0 \int_0^y h_p^{n,\infty} \mathrm{d} \theta]+2 \sqrt{\varepsilon} \chi^{\prime}[v_p^0+\overline{v_e^1}] u_p^{n,\infty}\nonumber\\
&-2 \sqrt{\varepsilon}\chi^{\prime}[g_p^0+\overline{g_e^1}]h_p^{n,\infty}
-3\nu_1 \sqrt{\varepsilon} \chi^{\prime} u_{p y}^{n,\infty}+\varepsilon \chi^{\prime \prime}\{[v_p^0+\overline{v_e^1}] \int_0^y u_p^{n,\infty} \mathrm{d} \theta-[g_p^0+\overline{g_e^1}] \int_0^y h_p^{n,\infty} \mathrm{d} \theta\}\nonumber\\
&+\sqrt{\varepsilon} F_{p_1}^n \int_0^y \chi^{\prime} \mathrm{d} \theta
-3 \nu_1\varepsilon \chi^{\prime \prime} u_p^{n,\infty}-\nu_1\varepsilon^{3 / 2} \chi^{\prime \prime \prime} \int_0^y u_p^{n,\infty} \mathrm{d} \theta, \\
R_p^{h, n}:= & \sqrt{\varepsilon} \chi^{\prime} [u^0 g_p^{n,\infty}- h^0 v_p^{n,\infty}]+\sqrt{\varepsilon} \chi^{\prime} [h_x^0 \int_0^y u_p^{n,\infty} \mathrm{d} \theta-u_x^0 \int_0^y h_p^{n,\infty} \mathrm{d} \theta]+2 \sqrt{\varepsilon} \chi^{\prime}[v_p^0+\overline{v_e^1}] h_p^{n,\infty}\nonumber\\
&-2 \sqrt{\varepsilon} \chi^{\prime}[g_p^0+\overline{g_e^1}] u_p^{n,\infty}-3 \nu_3\sqrt{\varepsilon} \chi^{\prime} h_{p y}^{n,\infty}+\varepsilon \chi^{\prime \prime}\{[v_p^0+\overline{v_e^1}] \int_0^y h_p^{n,\infty} \mathrm{d} \theta-[g_p^0+\overline{g_e^1}] \int_0^y u_p^{n,\infty} \mathrm{d} \theta \}\nonumber\\
&+\sqrt{\varepsilon} F_{p_2}^n\int_0^y \chi^{\prime} \mathrm{d} \theta
-3 \nu_3\varepsilon \chi^{\prime \prime} h_p^{n,\infty}-\nu_3\varepsilon^{3 / 2} \chi^{\prime \prime \prime} \int_0^y h_p^{n,\infty} \mathrm{d} \theta,\nonumber
\end{align}
where $R_p^{u,n}$ and $R_p^{h,n}$ will be estimated in Proposition $\ref{b8.4}$.
It is clear that
$$\partial_x u_p^n + \partial_y v_p^n =\partial_x h_p^n + \partial_y g_p^n=0.$$
Using the Proposition $\ref{b8.1}$, we obtain
$$
\left|\sqrt{\varepsilon} \chi^{\prime}(\sqrt{\varepsilon} y) \int_0^y u_p^{n,\infty}(x, \theta) \mathrm{d} \theta\right| \leq \sqrt{\varepsilon} y\left|\chi^{\prime}(\sqrt{\varepsilon} y)\right|\|u_p^{n,\infty}\|_{L^{\infty}} \leq C(L, \xi) \varepsilon^{-\xi-\frac{3}{16}},
$$

$$
\left|\sqrt{\varepsilon} \chi^{\prime}(\sqrt{\varepsilon} y) \int_0^y h_p^{n,\infty}(x, \theta) \mathrm{d} \theta\right| \leq \sqrt{\varepsilon} y\left|\chi^{\prime}(\sqrt{\varepsilon} y)\right|\|h_p^{n,\infty}\|_{L^{\infty}} \leq C(L, \xi) \varepsilon^{-\xi-\frac{3}{16}} .
$$
We thus  obtain  $(\ref{8.6})$ through the Proposition $\ref{b8.1}$, which ends   the proof of the proposition.
\end{proof}

\begin{pro}\label{b8.3}
Assume that $(u_p^n, v_p^n, h_p^n, g_p^n, p_p^n)$ is the solution to $(\ref{8.5})$.
Then
\begin{align}\label{8.9}
&\|E_{r_3}^{n-1}\|_{L^2(\Omega_{\varepsilon})}
+\|E_{r_1}^n\|_{L^2(\Omega_{\varepsilon})}
+\|E_{r_2}^n\|_{L^2(\Omega_{\varepsilon})}
+\|R_p^{u, n}\|_{L^2(\Omega_{\varepsilon})}
+\|R_p^{h, n}\|_{L^2(\Omega_{\varepsilon})}\leq C(L, \xi) \varepsilon^{-\xi+\frac{1}{16}},\nonumber\\
&\|p_{p x}^{n+1}\|_{L^2(\Omega_{\varepsilon})} \leq C \varepsilon^{-\frac{7}{16}+\frac{3}{32(n-1)}},
\end{align}
for any $\xi>0$ small enough. 
\end{pro}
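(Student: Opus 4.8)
The plan is to read $(\ref{8.9})$ as a bookkeeping estimate: each quantity on the left is, by construction, an explicit finite sum of products of already-estimated profiles multiplied by fixed powers of $\sqrt\varepsilon$, so the proof reduces to collecting $\varepsilon$-exponents term by term and keeping the worst one. First I would recall the explicit expressions of $R_p^{u,n}$ and $R_p^{h,n}$ from $(\ref{8.8})$, together with the definitions of $E_{r_1}^n,E_{r_2}^n,E_{r_3}^{n-1}$ and of $p_p^{n+1}$ coming from Appendix A. The inputs are the weighted bounds of Propositions $\ref{b8.1}$ and $\ref{b8.2}$ for the $n$-th order layer, combined with the lower-order profile bounds $(\ref{2.9})$, $(\ref{2.18})$, $(\ref{3.18})$--$(\ref{3.26})$, $(\ref{5.13})$--$(\ref{5.17})$, $(\ref{6.4})$ and $(\ref{6.6})$.

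For the cut-off remainders $R_p^{u,n},R_p^{h,n}$ I would estimate each summand in $L^2(\Omega_\varepsilon)$ separately. The decisive structural fact is that every term carries a factor $\chi'(\sqrt\varepsilon y)$, $\chi''(\sqrt\varepsilon y)$, $\chi'''(\sqrt\varepsilon y)$, or $\chi(\sqrt\varepsilon y)-1$, all of which are supported in $y\gtrsim\varepsilon^{-1/2}$. On that region the weighted bounds of Proposition $\ref{b8.1}$ force $|w(x,y)|\lesssim\langle y\rangle^{-l}\le C\varepsilon^{l/2}$ for every profile $w$ entering $R_p$, so taking $l$ large makes each such summand gain an arbitrarily high positive power of $\varepsilon$. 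Together with the explicit $\sqrt\varepsilon$ or $\varepsilon$ prefactors, this shows $\|R_p^{u,n}\|_{L^2(\Omega_\varepsilon)}+\|R_p^{h,n}\|_{L^2(\Omega_\varepsilon)}$ is in fact far smaller than $\varepsilon^{-\xi+\frac1{16}}$, so these terms never bind.

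The binding contributions are the genuine expansion errors $E_{r_3}^{n-1},E_{r_1}^n,E_{r_2}^n$. Each is a finite sum of products in which one factor is a profile of order up to $n$, carrying the accumulated elliptic-regularity loss $\varepsilon^{-\frac{3(i-1)}{16(n-1)}}$ (respectively $\varepsilon^{-\xi-\frac{3(i-1)}{16(n-1)}}$) from Propositions $\ref{b5.1}$ and $\ref{b8.2}$, while the remaining factors are lower-order profiles with order-one bounds, the whole product being weighted by the appropriate power $\varepsilon^{k/2}$ dictated by $(\ref{1.7})$. I would bound each product by Hölder's inequality in $L^2(\Omega_\varepsilon)$, placing the highest-derivative factor in $L^2$ and the others in $L^\infty$, and then add the exponents. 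The arithmetic is arranged — this is precisely the role of the scale $\varepsilon^{\frac{3}{16(n-1)}}$ in $E_b$ and of the constraints $n\ge3$, $\gamma\in(0,\frac1{16})$ — so that the worst product is controlled by $\varepsilon^{-\xi+\frac1{16}}$, which gives the first line of $(\ref{8.9})$.

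For the pressure bound I would recover $p_p^{n+1}$ from the vertical-momentum relation of the expansion (the order-$n$ analogue of the $v_p$ balance determines $\partial_y p_p^{n+1}$, hence $p_p^{n+1}$ by integration in $y$ up to a function of $x$), differentiate in $x$, and estimate $\|p_{px}^{n+1}\|_{L^2(\Omega_\varepsilon)}$ with the same profile bounds; integrating the order-$n$ quantities across the layer $[0,\varepsilon^{-1/2}]$ costs a further $\varepsilon^{-1/4}$, which together with the $n$-th order loss $\varepsilon^{-3/16-\frac{3}{16(n-1)}}$ produces the exponent $-\frac7{16}+\frac{3}{32(n-1)}$. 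The main obstacle throughout is the exponent bookkeeping: one must check that the products involving the top-order factors $v_p^n,g_p^n,v_e^n,g_e^n$ and the pressure do not exceed the stated powers, and in particular that the $\varepsilon^{-1/2}$ prefactors multiplying $E_r^{n-1}$ inside $F_{p_1}^n,F_{p_2}^n$ are compensated by the layer decay and by the half-power gained at each expansion order, so that no single term degrades the final thresholds $\frac1{16}$ and $-\frac7{16}$.
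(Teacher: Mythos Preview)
Your overall picture—that $(\ref{8.9})$ is a bookkeeping estimate obtained by H\"older and exponent counting—is correct and matches the paper. However, your treatment of the cut-off remainders $R_p^{u,n},R_p^{h,n}$ contains a genuine error.

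You claim that every profile $w$ appearing in $R_p^{u,n}$ satisfies $|w(x,y)|\lesssim\langle y\rangle^{-l}$ on the support of $\chi'(\sqrt\varepsilon\,\cdot)$, so that choosing $l$ large makes these terms arbitrarily small. This fails for $v_p^{n,\infty}$ and $g_p^{n,\infty}$: at the top order the boundary condition $(\ref{8.2})$ reads $v_p^{n}(x,0)=g_p^{n}(x,0)=0$, so $v_p^{n,\infty}(x,y)=-\int_0^y \partial_x u_p^{n,\infty}\,d\theta$ tends to a \emph{nonzero} limit as $y\to\infty$ (there is no $(n{+}1)$-th inner corrector to absorb it, unlike the lower orders where $v_p^{i,\infty}(x,0)=-\overline{v_e^{i+1}}$ forces $v_p^{i,\infty}=\int_y^\infty\partial_x u_p^{i,\infty}$). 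Consistently, Proposition~$\ref{b8.1}$ provides only an $L^\infty$ bound on $(v_p^{n,\infty},g_p^{n,\infty})$ and weighted bounds on their \emph{second} derivatives, not weighted pointwise decay. Hence the term $\sqrt\varepsilon\,\chi'\,u^0 v_p^{n,\infty}$ must be estimated directly: $\|\sqrt\varepsilon\,\chi'(\sqrt\varepsilon\,\cdot)\|_{L^2(I_\varepsilon)}\sim\varepsilon^{1/4}$ together with $\|v_p^{n,\infty}\|_{L^\infty}\le C\varepsilon^{-\xi-3/16}$ gives exactly $\varepsilon^{1/16-\xi}$, which is the threshold in $(\ref{8.9})$. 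So $R_p^{u,n},R_p^{h,n}$ do bind, and the paper's proof computes them this way (see $(\ref{8.10})$--$(\ref{8.11})$); your ``far smaller'' claim is false.

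A smaller issue: your heuristic for $\|p_{px}^{n+1}\|_{L^2}$ does not add up. The integrand in $p_p^{n+1}$ (see its definition) involves order-$(n-1)$ layer quantities, not order-$n$ ones, and the combination $-\tfrac14-\tfrac3{16}-\tfrac{3}{16(n-1)}$ you write is $-\tfrac7{16}-\tfrac{3}{16(n-1)}$, not $-\tfrac7{16}+\tfrac{3}{32(n-1)}$. The paper obtains the correct exponent by bounding each term of $\partial_x p_p^{n+1}$ pointwise by $\langle y\rangle^{-l+1}$ or $\langle y\rangle^{-l+2}$ times weighted norms of the relevant profiles, then taking $l\ge3$ so that the $y$-integration is harmless; the positive correction $+\tfrac{3}{32(n-1)}$ comes from the $H^2$ bound $(\ref{5.13})$ on the inner profiles, not from a layer-width loss.
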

\begin{proof}
$E_{r_3}^{n-1}$ and $E_{r_i}^{n}(i=1,2)$ are defined in $(\ref{6.3.1})$ and $(\ref{8.3})$, using the same method as in Section $3.1.1$ and Section  $3.2.1$, we get
$$
\|(E_{r_3}^{n-1},E_{r_i}^{n})\|_{L^2(\Omega_{\varepsilon})}\leq C(L,\xi)\varepsilon^{-\xi+\frac{1}{16}} , \quad i=1,2 .
$$
It follows from the Proposition ${\ref{b8.1}}$ that
\begin{align}\label{8.10}
\|R_p^{u, n}\|_{L^2(\Omega_{\varepsilon})}^2
\lesssim & \sqrt{\varepsilon} L \|(v_p^{n,\infty}, g_p^{n,\infty})\|_{L^{\infty}}^2
+\varepsilon \|u_p^{n,\infty}\|_{L^{\infty}}^2 \iint_{\Omega_{\varepsilon}}\langle y\rangle^2|u_{p x}^0|^2 \mathrm{d}x \mathrm{d}y \nonumber\\
& + \varepsilon \|h_p^{n,\infty}\|_{L^{\infty}}^2 \iint_{\Omega_{\varepsilon}}\langle y\rangle^2|h_{p x}^0|^2 \mathrm{d}x \mathrm{d}y  +\varepsilon \|u_p^{n,\infty}\|_{L^{\infty}}^2 \iint_{\Omega_{\varepsilon}}|v_p^0+\overline{v_e^1}|^2 \mathrm{d}x \mathrm{d}y \nonumber\\
& + \varepsilon\|h_p^{n,\infty}\|_{L^{\infty}}^2 \iint_{\Omega_{\varepsilon}}|g_p^0+\overline{g_e^1}|^2\mathrm{d}x \mathrm{d}y +\varepsilon \iint_{\Omega_{\varepsilon}}|u_{p y}^{n,\infty}|^2\mathrm{d}x \mathrm{d}y \nonumber\\
&+\varepsilon \iint_{\Omega_{\varepsilon}}\langle y\rangle^2|F_{p_1}^n|^2 \mathrm{d}x \mathrm{d}y
+\varepsilon^{\frac{3}{2}}\|u_p^{n,\infty}\|_{L^{\infty}}^2 \nonumber\\
\leq & C(L, \xi) \varepsilon^{-2\xi+\frac{1}{8}} .
\end{align}
Similarly, we have
\begin{align}\label{8.11}
\|R_p^{h, n}\|_{L^2(\Omega_{\varepsilon})}^2
\lesssim & \sqrt{\varepsilon} L\|(v_p^{n,\infty}, g_p^{n,\infty})\|_{L^{\infty}}^2
+\varepsilon \|u_p^{n,\infty}\|_{L^{\infty}}^2 \iint_{\Omega_{\varepsilon}}\langle y\rangle^2|h_{p x}^0|^2 \mathrm{d}x \mathrm{d}y \nonumber\\
& +\varepsilon\|h_p^{n,\infty}\|_{L^{\infty}}^2 \iint_{\Omega_{\varepsilon}}\langle y\rangle^2|u_{p x}^0|^2\mathrm{d}x \mathrm{d}y +\varepsilon \|h_p^{n,\infty}\|_{L^{\infty}}^2 \iint_{\Omega_{\varepsilon}}|v_p^0+\overline{v_e^1}|^2 \mathrm{d}x \mathrm{d}y \nonumber\\
& +\varepsilon\|u_p^{n,\infty}\|_{L^{\infty}}^2 \iint_{\Omega_{\varepsilon}}|g_p^0+\overline{g_e^1}|^2 \mathrm{d}x \mathrm{d}y +\varepsilon \iint_{\Omega_{\varepsilon}}|h_{p y}^{n,\infty}|^2 \mathrm{d}x \mathrm{d}y \nonumber\\
&+\varepsilon \iint_{\Omega_{\varepsilon}}\langle y\rangle^2
|F_{p_2}^n|^2 \mathrm{d}x \mathrm{d}y +\varepsilon^{\frac{3}{2}}\|h_p^{n,\infty}\|_{L^{\infty}}^2 \nonumber\\
\leq & C(L, \xi) \varepsilon^{-2\xi+\frac{1}{8}} .
\end{align}
By  definition of $p_p^{n+1}$, we have
\begin{align*}
p_{px}^{n+1}(x, y)= & \int_y^{1/\sqrt{\varepsilon}}\partial_{x}\left\{\sum_{i_1+i_2=n-1}\left[(u_e^{i_1}+u_p^{i_1}) \partial_x v_p^{i_2}-(h_e^{i_1}+h_p^{i_1}) \partial_x g_p^{i_2}\right]-\nu_{2}\partial_{y}^2v_p^{n-1}\right.\nonumber\\
&+\sum_{i_1+i_2=n-1}\left[(v_p^{i_1}+v_e^{i_1+1}) \partial_y v_p^{i_2}-(g_p^{i_1}+g_e^{i_1+1}) \partial_y g_p^{i_2}\right]
-\nu_2 \partial_x^2 v_p^{n-3}\nonumber\\
&\left.+\sum_{i_1+i_2=n-1}\left[u_p^{i_1} \partial_x v_e^{i_2+1}-h_p^{i_1} \partial_x g_e^{i_2+1}\right]+\sum_{i_1+i_2=n-2}\left[v_p^{i_1} \partial_Y v_e^{i_2+1}-g_p^{i_1} \partial_Y g_e^{i_2+1}\right]
\right\}(x,\theta) \mathrm{d} \theta.
\end{align*}
Note that
\begin{align*}
\int_y^{1 / \sqrt{\varepsilon}}(u_e^{n-1}+u_p^{n-1}) v_{p x x}^0 (x, \theta) \mathrm{d} \theta
\leq & C\langle y\rangle^{-l+1}\|u_e^{n-1}+u_p^{n-1}\|_{L^{\infty}}
\|\langle y\rangle^l v_{p x x}^0\|_{L^2}, \\
\int_y^{1 / \sqrt{\varepsilon}} u_p^{n-1} v_{exx}^1(x, \theta) \mathrm{d} \theta
\leq & C\langle y\rangle^{-l+2}\|\langle y\rangle^l u_{p y}^{n-1}\|_{L^2}
\|v_{exx}^1(x, \sqrt{\varepsilon} \cdot)\|_{L^2},\\
\int_y^{1 / \sqrt{\varepsilon}}\left(v_p^{n-1}+v_e^n\right) v_{p x y}^0 \left(x, \theta\right) \mathrm{d} \theta
\leq & C\langle y\rangle^{-l+1}\|v_p^{n-1}+v_e^n\|_{L^{\infty}}
\|\langle y\rangle^l v_{p x y}^0\|_{L^2}, \\
\int_y^{1 / \sqrt{\varepsilon}} v_{p x y y}^{n-1} (x, \theta) \mathrm{d} \theta
\leq & C\langle y\rangle^{-l+1}\|\langle y\rangle^l v_{pxyy}^{n-1}\|_{L^2}.
\end{align*}
Similarly, we get
\begin{align*}
\int_y^{1 / \sqrt{\varepsilon}}(h_e^{n-1}+h_p^{n-1}) g_{p x x}^0 (x, \theta) \mathrm{d} \theta
\leq & C\langle y\rangle^{-l+1}\|h_e^{n-1}+h_p^{n-1}\|_{L^{\infty}}
\|\langle y\rangle^l g_{p x x}^0\|_{L^2}, \\
\int_y^{1 / \sqrt{\varepsilon}} h_p^{n-1} g_{exx}^1 (x, \theta) \mathrm{d} \theta
\leq & C\langle y\rangle^{-l+2}\|\langle y\rangle^l h_{py}^{n-1}\|_{L^2}
\|g_{exx }^1(x, \sqrt{\varepsilon} \cdot)\|_{L^2},\\
\int_y^{1 / \sqrt{\varepsilon}}(g_p^0+g_e^1) g_{p x y}^{n-1} (x, \theta) \mathrm{d} \theta
\leq & C\langle y\rangle^{-l+1}\|h_e^0+h_p^0\|_{L^{\infty}}
\|\langle y\rangle^l g_{p x y}^{n-1}\|_{L^2}.
\end{align*}
The remaining terms of $p_{px}^{n+1}$ are treated using the same method as above. Hence, taking $l \geq 3$, we have
$$
\|p_{p x}^{n+1}\|_{L^2(\Omega_{\varepsilon})} \leq C \varepsilon^{-\frac{7}{16}+\frac{3}{32(n-1)}} .
$$
This completes the proof of the proposition.
\end{proof}

Combining all the error terms derived from the construction of the ideal MHD profiles and boundary layer profiles, we obtain
\begin{pro}\label{b8.4}
Under the assumptions of Theorem $\ref{b1.1}$, for arbitrarily small $\xi>0$,
\begin{align}\label{8.12}
\|R_{app}^1\|_{L^2(\Omega_{\varepsilon})}
+\|R_{app}^3\|_{L^2(\Omega_{\varepsilon})}
+\sqrt{\varepsilon}(\|R_{a p p}^2\|_{L^2(\Omega_{\varepsilon})}+\|R_{a p p}^4\|_{L^2(\Omega_{\varepsilon})}) \leq C(L, \xi) \varepsilon^{\frac{1}{16}+\frac{n}{2}-\xi}.
\end{align}
\end{pro}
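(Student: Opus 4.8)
The plan is to treat this proposition as the bookkeeping step that assembles the residuals $R_{app}^1,\dots,R_{app}^4$ of the four equations in (\ref{1.5}) produced when the truncated ansatz (\ref{1.7}) — the full expansion with the remainder $\varepsilon^{\gamma+n/2}(u^\varepsilon,v^\varepsilon,h^\varepsilon,g^\varepsilon,p^\varepsilon)$ deleted — is inserted. First I would identify which pieces of these residuals survive the order-by-order construction, then bound each surviving piece by the estimates already proved, and finally read off the worst exponent. By design the interior profiles of Section 3.1 and the boundary-layer profiles of Sections 2 and 3.2 solve their respective equations at every order $\varepsilon^{0},\varepsilon^{1/2},\dots,\varepsilon^{n/2}$; moreover each error generated at level $i-1$ is fed, through the term $\varepsilon^{-1/2}E_{r}^{i-1}$ in the forcings $F_{p_1}^i,F_{p_2}^i$, into the equation at level $i$, and is therefore cancelled by the level-$i$ corrector. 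Consequently the only contributions left in $R_{app}^j$ are: the top-level remainders $E_{r_1}^n,E_{r_2}^n,E_{r_3}^{n-1}$, which have no corrector above them; the truncation errors $R_p^{u,n},R_p^{h,n}$ coming from cutting the boundary layers off from $\mathbb{R}_+$ to $I_\varepsilon$; and the top pressure correction entering $P^\varepsilon_x$ as $\varepsilon^{(n+1)/2}\partial_x p_p^{n+1}$.

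I would then invoke Proposition \ref{b8.3} directly, which gives
\begin{align*}
\|E_{r_3}^{n-1}\|_{L^2(\Omega_\varepsilon)}+\|E_{r_1}^n\|_{L^2(\Omega_\varepsilon)}+\|E_{r_2}^n\|_{L^2(\Omega_\varepsilon)}+\|R_p^{u,n}\|_{L^2(\Omega_\varepsilon)}+\|R_p^{h,n}\|_{L^2(\Omega_\varepsilon)}\le C(L,\xi)\varepsilon^{-\xi+\frac{1}{16}}.
\end{align*}
Each of these surviving terms inherits the weight $\varepsilon^{n/2}$ from its position in (\ref{1.7}) (the $n$-th order profiles appear there with coefficient $\sqrt{\varepsilon}^{\,n}$), so their contribution to $\|R_{app}^1\|_{L^2(\Omega_\varepsilon)}$ and $\|R_{app}^3\|_{L^2(\Omega_\varepsilon)}$ is $\varepsilon^{n/2}\cdot\varepsilon^{-\xi+1/16}=\varepsilon^{\frac{1}{16}+\frac{n}{2}-\xi}$, exactly the claimed rate. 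For the pressure piece, $\|p_{px}^{n+1}\|_{L^2(\Omega_\varepsilon)}\le C\varepsilon^{-7/16+3/(32(n-1))}$ enters with weight $\varepsilon^{(n+1)/2}$; since $(n+1)/2-7/16=n/2+1/16$, its contribution is $\varepsilon^{\frac{n}{2}+\frac{1}{16}+\frac{3}{32(n-1)}}$, which is strictly smaller than the target and hence harmless.

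For $R_{app}^2$ and $R_{app}^4$ the same decomposition applies, the difference being that the $V$- and $G$-equations of (\ref{1.5}) carry the singular weight $\varepsilon^{-1}P^\varepsilon_y$; this makes the raw residuals larger by a factor $\varepsilon^{-1/2}$, which is precisely why (\ref{8.12}) multiplies $\|R_{app}^2\|_{L^2(\Omega_\varepsilon)}$ and $\|R_{app}^4\|_{L^2(\Omega_\varepsilon)}$ by $\sqrt{\varepsilon}$. After that multiplication the two residuals are governed by the same Proposition \ref{b8.3} bounds and fall under the identical rate $\varepsilon^{\frac{1}{16}+\frac{n}{2}-\xi}$. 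Summing the finitely many contributions then yields (\ref{8.12}).

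The only genuinely delicate point — and the part I expect to absorb most of the work — is not any single estimate but the verification that the cancellation hierarchy really does leave nothing below order $\varepsilon^{n/2}$. One must check that every interior regularization error (the corner forcing $E_b$ introduced near (\ref{3.14}) and controlled in Proposition \ref{b3.1}) and every profile–profile interaction, whose norms blow up like $\varepsilon^{-3(\cdot)/(16(n-1))}$, is fed into the $E_{r}^{i}$ hierarchy with enough powers of $\sqrt{\varepsilon}$ that it is either cancelled by the next corrector or bounded by $\varepsilon^{-\xi+1/16}$ at the top level. The constraints $n\ge 3$ and $\gamma\in(0,\frac{1}{16})$, together with the weight $\langle y\rangle^{l}$ in the profile norms, are exactly what make this margin positive and concentrate the worst case at the single index $i=n$.
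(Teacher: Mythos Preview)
Your overall strategy---enumerate the surviving residual pieces and bound them via Proposition~\ref{b8.3} and the profile estimates---is exactly the paper's, but your claim about ``the only contributions left'' is incomplete in a way that matters. The paper's explicit decomposition of $R_{app}^1$ in (\ref{8.13}) contains, beyond your three classes, several further families that are \emph{not} fed back into the $E_r^i$ hierarchy and therefore must be bounded directly: the ideal-MHD residuals $R_n^u,R_n^h$ of (\ref{7.4}) (and $R_{n-1}^v,R_{n-1}^g$ of (\ref{7.8}) for the second and fourth equations); all quadratic profile interactions with $i_1+i_2\ge n+1$, which cannot be cancelled because the expansion stops at order $n$; the $(n{-}1)$-level cutoff errors $\triangle_{n-1}^2,\triangle_{n-1}^4$ entering at order $\varepsilon^{(n+1)/2}$; and the uncompensated viscous terms $-\nu_1\varepsilon^{(n+1)/2}\partial_{YY}u_e^{n-1}$ and $-\nu_1\varepsilon^{(n+1)/2}\partial_x^2[u_p^{n-1}+u_e^{n-1}+\sqrt{\varepsilon}(u_e^n+u_p^n)]$. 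The paper devotes most of its proof to these, showing for instance $\|R_n^u\|_{L^2}\le C\varepsilon^{1/16}$ and estimating each cross term individually using Propositions~\ref{b5.1}, \ref{b6.2}, \ref{b8.2}.

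Your treatment of $R_{app}^2,R_{app}^4$ is also too quick. Their decompositions (\ref{8.14})--(\ref{8.14.1}) are structurally different from those of $R_{app}^1,R_{app}^3$: they contain $-\nu_2\varepsilon^{n/2}\partial_y^2 v_p^n$, $-\nu_2\varepsilon^{n/2}\Delta v_e^{n-1}$, and a large collection of convective terms of the form $\varepsilon^{(n+i)/2}[(u_e^i+u_p^i)\partial_x+\cdots]v_p^n$, none of which are covered by Proposition~\ref{b8.3}. The eventual bound $\|R_{app}^2\|_{L^2}\le C(L,\xi)\varepsilon^{-\xi-7/16+n/2}$ does give the right rate after multiplication by $\sqrt{\varepsilon}$, but reaching it requires the full set of interior and boundary-layer profile norms, not merely the top-level error estimates you cite.
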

\begin{proof}
Collecting all the error terms in $R_{app}^1$ and $R_{app}^3$, we obtain
\begin{align}\label{8.13}
R_{a p p}^1:= &- \nu_{1}\varepsilon^{\frac{n+1}{2}} \partial_{Y Y} u_e^{n-1}
+ \varepsilon^{\frac{n+1}{2}} \triangle_{n-1}^{2}
+ \varepsilon^{\frac{n}{2}}(R^u_n+R_p^{u,n}+E_{r_1}^n)
+\varepsilon^{\frac{n+1}{2}} \partial_x p_p^{n+1}\nonumber\\
&+\sum_{\substack{i_1,i_2=1\\ i_1+i_2 \geq n}}^{n-1}\sqrt{\varepsilon}^{i_1+i_2+1}[v_e^{i_1+1}\partial_{y}(u_e^{i_2+1}+u_p^{i_2+1})
-g_e^{i_1+1}\partial_{y}(h_e^{i_2+1}+h_p^{i_2+1})]\nonumber\\
&+\sum_{\substack{i_1,i_2=1\\i_1+i_2\geq n+1}}^{n}\sqrt{\varepsilon}^{i_1+i_2}[(u_e^{i_1}+u_p^{i_1}) \partial_x+v_p^{i_1} \partial_y](u_e^{i_2}+u_p^{i_2}) \nonumber\\ &-\sum_{\substack{i_1,i_2=1\\i_1+i_2\geq n+1}}^{n}\sqrt{\varepsilon}^{i_1+i_2}[(h_e^{i_1}+h_p^{i_1}) \partial_x+g_p^{i_1}\partial_y](h_e^{i_2}+h_p^{i_2})\nonumber\\
&-\nu_1\varepsilon^{\frac{n+1}{2}} \partial_x^2[(u_p^{n-1}+u_e^{n-1})
+\sqrt{\varepsilon}(u_e^n+u_p^n)],
\end{align}
and
\begin{align}\label{8.13.1}
R_{a p p}^3:= &- \nu_{3}\varepsilon^{\frac{n+1}{2}} \partial_{Y Y} h_e^{n-1}
+ \varepsilon^{\frac{n+1}{2}} \triangle_{n-1}^{4}
+ \varepsilon^{\frac{n}{2}}(R^h_n+R_p^{h,n}+E_{r_2}^n)\nonumber\\
&+\sum_{\substack{i_1,i_2=1\\ i_1+i_2 \geq n}}^{n-1}\sqrt{\varepsilon}^{i_1+i_2+1}[v_e^{i_1+1}\partial_{y}(h_e^{i_2+1}+h_p^{i_2+1})
-g_e^{i_1+1}\partial_{y}(u_e^{i_2+1}+u_p^{i_2+1})]\nonumber\\
&+\sum_{\substack{i_1,i_2=1\\i_1+i_2\geq n+1}}^{n}\sqrt{\varepsilon}^{i_1+i_2}[(u_e^{i_1}+u_p^{i_1}) \partial_x+v_p^{i_1} \partial_y](h_e^{i_2}+h_p^{i_2}) \nonumber\\ &-\sum_{\substack{i_1,i_2=1\\i_1+i_2\geq n+1}}^{n}\sqrt{\varepsilon}^{i_1+i_2}[(h_e^{i_1}+h_p^{i_1}) \partial_x+g_p^{i_1}\partial_y](u_e^{i_2}+u_p^{i_2})\nonumber\\
&-\nu_3\varepsilon^{\frac{n+1}{2}} \partial_x^2[(h_p^{n-1}+h_e^{n-1})
+\sqrt{\varepsilon}(h_e^n+h_p^n)].
\end{align}
We now estimate $R^u_n$. It follows from the estimates of $v_p^i(i=0,1,\ldots,n) ~\text{and}~ (u_e^j,v_e^j,h_e^j,g_e^j)(j=1,2,\ldots,n)$ that
\begin{align*}
& \|(u_{e}^{n}, h_{e}^{n})\|_{L^{\infty}(\Omega_{\varepsilon})}
\leq C \varepsilon^{-\frac{3}{16}+\frac{3}{32(n-1)}}, \quad \|(u_{e}^{n}, h_{e}^{n})\|_{H^{1}(\Omega_{\varepsilon})} \leq C \varepsilon^{-\frac{7}{16}+\frac{3}{32(n-1)}}, \\
&\|\partial_{Y}^{2}u_{e}^{n}\|_{L^{2}(\Omega_{\varepsilon})} \leq C \varepsilon^{-\frac{1}{4}}\|\partial_{Y}^{2} u_{b}^{n}\|_{L^{2}(0,1)}+C \varepsilon^{-\frac{1}{4}}\|\partial_{Y}^{3}v_{e}^{n}\|_{L^{2}(\Omega_{1})}
\leq C \varepsilon^{-\frac{7}{16}-\frac{3}{32(n-1)}},\\
&\|(v_{p}^{0}+v_{e}^{1})\partial_{Y}u_{e}^{n}\|_{L^{2}(\Omega_{\varepsilon})} \leq \|v_{p}^{0}+v_{e}^{1}\|_{L^{\infty}(\Omega_{\varepsilon})}
\|\partial_{Y}u_{e}^{n}\|_{L^{2}(\Omega_{\varepsilon})} \leq C\varepsilon^{-\frac{7}{16}+\frac{3}{32(n-1)}},\\
&\|(v_{p}^{1}+v_{e}^{2})\partial_{Y}u_{e}^{n-1}\|_{L^{2}(\Omega_{\varepsilon})} \leq \|v_{p}^{1}+v_{e}^{2}\|_{L^{\infty}(\Omega_{\varepsilon})}
\|\partial_{Y}h_{e}^{n-1}\|_{L^{2}(\Omega_{\varepsilon})} \leq C\varepsilon^{-\frac{7}{16}+\frac{9}{32(n-1)}},\\
&\|(v_{p}^{2}+v_{e}^{3})\partial_{Y}u_{e}^{n-2}\|_{L^{2}(\Omega_{\varepsilon})} \leq \|v_{p}^{2}+v_{e}^{3}\|_{L^{\infty}(\Omega_{\varepsilon})}
\|\partial_{Y}h_{e}^{n-2}\|_{L^{2}(\Omega_{\varepsilon})} \leq C\varepsilon^{-\frac{7}{16}+\frac{3}{32(n-1)}}.
\end{align*}
Using similar calculations for the other terms of $R_{n}^{u}$, we obtain
\begin{align*}
\|R^{u}_{n}\|_{L^2(\Omega_{\varepsilon})} \leq C \varepsilon^{\frac{1}{16}}.
\end{align*}
Using similar arguments, we have
\begin{align*}
\|(R^{h}_{n}, R^{v}_{n-1}, R^{g}_{n-1})\|_{L^2(\Omega_{\varepsilon})} \leq C \varepsilon^{\frac{1}{16}}.
\end{align*}
By the above estimates and $(\ref{8.9})$, we directly have
\begin{align*}
&\|-\nu_{1}\varepsilon^{\frac{n+1}{2}} \partial_{Y Y} u_e^{n-1}
+\varepsilon^{\frac{n}{2}}(R^{u}_{n}+R_p^{u, n}+E_{r_1}^n)
+\varepsilon^{\frac{n+1}{2}} \partial_x p_p^{n+1}\|_{L^2({\Omega_{\varepsilon}})} \leq C(L, \xi) \varepsilon^{\frac{n}{2}+\frac{1}{16}-\xi}, \\
&\|-\nu_{3}\varepsilon^{\frac{n+1}{2}} \partial_{Y Y} h_e^{n-1}
+\varepsilon^{\frac{n}{2}}(R^{h}_{n}
+R_p^{h, n}+E_{r_2}^n)\|_{L^2({\Omega_{\varepsilon}})} \leq C(L, \xi) \varepsilon^{\frac{n}{2}+\frac{1}{16}-\xi} .
\end{align*}
Applying the estimates of $(u_e^i, v_e^i, h_e^i, g_e^i)$ and $(u_p^i, v_p^i, h_p^i, g_p^i)(i=1,2,\ldots,n)$, we obtain
\begin{align*}
&\varepsilon^{\frac{n+1}{2}}\|(u_e^n+u_p^n)\partial_x(u_e^1+u_p^1)\|_{L^2(\Omega_{\varepsilon})} \\
\leq & \varepsilon^{\frac{n+1}{2}} (\|u_e^n\|_{L^{\infty}}
+\|u_p^n\|_{L^{\infty}}) \times (\|\partial_x u_e^1\|_{L^2}+\|\partial_x u_p^1\|_{L^2})\leq  C(L, \xi) \varepsilon^{\frac{n}{2}+\frac{1}{16}-\xi}, \\
&\varepsilon^{\frac{n+1}{2}}\|v_p^n \partial_y(u_e^1+u_p^1)\|_{L^2
(\Omega_{\varepsilon})}\\
\leq & \varepsilon^{\frac{n+1}{2}} \|v_p^n\|_{L^{\infty}}(\sqrt{\varepsilon}
\|\partial_Y u_e^1\|_{L^2}+\|\partial_y u_p^1\|_{L^2})\leq  C(L, \xi) \varepsilon^{\frac{n}{2}+\frac{1}{16}-\xi}, \\
&\varepsilon^{\frac{n+1}{2}}\|\partial_x^2[(u_p^{n-1}+u_e^{n-1})+\sqrt{\varepsilon}
(u_e^n+u_p^n)]\|_{L^2(\Omega_{\varepsilon})} \\
\leq & \varepsilon^{\frac{n+1}{2}}\|\partial_x^2 u_p^{n-1}+\partial_x^2 u_e^{n-1}\|_{L^2}
+\varepsilon^{\frac{n+2}{2}}(\|\partial_{xY} v_e^n\|_{L^2}+\|\partial_{xy} v_p^n\|_{L^2})
\leq  C\varepsilon^{\frac{n}{2}+\frac{1}{16}+\frac{9}{32(n-1)}-\xi}.
\end{align*}
Similarly, we have
\begin{align*}
&\varepsilon^{\frac{n+1}{2}}\|(h_e^n+h_p^n) \partial_x(h_e^1+h_p^1)\|_{L^2(\Omega_{\varepsilon})}
\leq  C(L, \xi) \varepsilon^{\frac{n}{2}+\frac{1}{16}-\xi}, \\
&\varepsilon^{\frac{n+1}{2}}\|g_p^n\partial_y(h_e^1+h_p^1)\|_{L^2
(\Omega_{\varepsilon})}
\leq  C(L, \xi) \varepsilon^{\frac{n}{2}+\frac{1}{16}-\xi},\\
&\varepsilon^{\frac{n+1}{2}} \| \partial_x^2[(h_p^{n-1}+h_e^{n-1})+\sqrt{\varepsilon}
(h_e^n+h_p^n)]\|_{L^2(\Omega_{\varepsilon})} \\
\leq & \varepsilon^{\frac{n+1}{2}}\|\partial_x^2 h_p^{n-1}+\partial_x^2 h_e^{n-1}\|_{L^2}+\varepsilon^{\frac{n+2}{2}}(\|\partial_{xY} g_e^n\|_{L^2}+\|\partial_{xy} g_p^n\|_{L^2})
\leq  C\varepsilon^{\frac{n}{2}+\frac{1}{16}+\frac{9}{32(n-1)}-\xi}.
\end{align*}
For the remaining terms in $(\ref{8.13})$, 
we follow the same  approach as previously. Recalling the definition of $\triangle_{n-1}^{2}$, we get
$$
\begin{aligned}
\|\triangle_{n-1}^{2}\|_{L^2(\Omega_{\varepsilon})}
\lesssim & \|u_p^{n-1,\infty}\|_{L^2}
\|\langle y\rangle (v_p^{0}+\overline{v_e^1})\|_{L^{\infty}}
+\|h_p^{n-1,\infty}\|_{L^2}
\|\langle y\rangle (g_p^{0}+\overline{g_e^1})\|_{L^{\infty}}\\
&+\|\chi^{\prime \prime}\|_{L^2}\|u_p^{n-1,\infty}\|_{L^2}
+\|\chi^{\prime \prime}\|_{L^2}\|h_p^{n-1,\infty}\|_{L^2}\\
\leq & C(L, \xi) \varepsilon^{-\xi-\frac{3(n-2)}{16(n-1)}},
\end{aligned}
$$
similarly, we have
$\|\triangle_{n-1}^{4} \|_{L^2(\Omega_{\varepsilon})}\leq C(L,\xi)\varepsilon^{-\xi-\frac{3(n-2)}{16(n-1)}}.$
Therefore, we obtain
$$
\|R_{a p p}^1\|_{L^2(\Omega_{\varepsilon})} \leq C(L, \xi) \varepsilon^{\frac{n}{2}+\frac{1}{16}-\xi}.
$$
Similarly, we have
$$
\|R_{a p p}^3\|_{L^2(\Omega_{\varepsilon})} \leq C(L, \xi) \varepsilon^{\frac{n}{2}+\frac{1}{16}-\xi} .
$$
Next, we turn to estimates of $R_{a p p}^2$ and $R_{a p p}^4$. Collecting all error terms, we have
\begin{align}\label{8.14}
R_{a p p}^2:= & \varepsilon^{\frac{n-1}{2}} R^{v}_{n-1}
-\nu_{2} \varepsilon^{\frac{n}{2}}\Delta v_e^{n-1}
+\sum_{\substack{i_1,i_2=1\\i_1+i_2\geq n}}^{n-1}
\sqrt{\varepsilon}^{i_1+i_2}[(u_e^{i_1}+u_p^{i_1})\partial_{x}
+(v_p^{i_1}+v_e^{i_1+1}) \partial_{y}] (v_p^{i_2}+v_e^{i_2+1})\nonumber\\
&+\sum_{i_2=0}^{n-1}\varepsilon^{\frac{n+i_2}{2}}[(u_e^n+u_p^n)\partial_{x}
+v_p^n \partial_{y}] (v_p^{i_2}+v_e^{i_2+1}) +\varepsilon^{n}[(u_e^n+u_p^n)\partial_{x}+v_p^n\partial_{y}]v_p^n \nonumber\\
&+\sum_{i_1=0}^{n-1}\varepsilon^{\frac{n+i_1}{2}}[(u_e^{i_1}+u_p^{i_1})\partial_{x}
+(v_p^{i_1}+v_e^{i_1+1})\partial_{y}] v_p^n-\varepsilon^{n}[(h_e^n+h_p^n)\partial_{x}+g_p^n\partial_{y}]g_p^n \nonumber\\
&-\sum_{\substack{i_1,i_2=1\\i_1+i_2\geq n}}^{n-1}
\sqrt{\varepsilon}^{i_1+i_2}[(h_e^{i_1}+h_p^{i_1})\partial_{x}
+(g_p^{i_1}+g_e^{i_1+1}) \partial_{y}] (g_p^{i_2}+g_e^{i_2+1})-\nu_{2}\varepsilon^{\frac{n}{2}}\partial_y^2 v_p^n \nonumber\\
&-\sum_{i_2=0}^{n-1}\varepsilon^{\frac{n+i_2}{2}}[(h_e^n+h_p^n)\partial_{x}
+g_p^n \partial_{y}] (g_p^{i_2}+g_e^{i_2+1})-\nu_{2}\varepsilon^{\frac{n}{2}} \partial_x^2(v_p^{n-2}+\sqrt{\varepsilon} v_p^{n-1})\nonumber\\
&-\sum_{i_1=0}^{n-1}\varepsilon^{\frac{n+i_1}{2}}[(h_e^{i_1}+h_p^{i_1})\partial_{x}
+(g_p^{i_1}+g_e^{i_1+1})\partial_{y}] g_p^n-\nu_{2}\varepsilon^{\frac{n}{2}} \partial_x^2(\sqrt{\varepsilon} v_e^n+\varepsilon v_p^n),
\end{align}
and
\begin{align}\label{8.14.1}
R_{a p p}^4:= & -\nu_{4} \varepsilon^{\frac{n}{2}}\Delta g_e^{n-1}
+\sum_{\substack{i_1,i_2=1\\i_1+i_2\geq n}}^{n-1}
\sqrt{\varepsilon}^{i_1+i_2}[(u_e^{i_1}+u_p^{i_1})\partial_{x}
+(v_p^{i_1}+v_e^{i_1+1}) \partial_{y}] (g_p^{i_2}+g_e^{i_2+1})\nonumber\\
&+\sum_{i_2=0}^{n-1}\varepsilon^{\frac{n+i_2}{2}}[(u_e^n+u_p^n)\partial_{x}
+v_p^n \partial_{y}] (g_p^{i_2}+g_e^{i_2+1}) +\varepsilon^{n}[(u_e^n+u_p^n)\partial_{x}+v_p^n\partial_{y}]g_p^n \nonumber\\
&+\sum_{i_1=0}^{n-1}\varepsilon^{\frac{n+i_1}{2}}[(u_e^{i_1}+u_p^{i_1})\partial_{x}
+(v_p^{i_1}+v_e^{i_1+1})\partial_{y}] g_p^n-\varepsilon^{n}[(h_e^n+h_p^n)\partial_{x}+g_p^n\partial_{y}]v_p^n \nonumber\\
&-\sum_{\substack{i_1,i_2=1\\i_1+i_2\geq n}}^{n-1}
\sqrt{\varepsilon}^{i_1+i_2}[(h_e^{i_1}+h_p^{i_1})\partial_{x}
+(g_p^{i_1}+g_e^{i_1+1}) \partial_{y}] (v_p^{i_2}+v_e^{i_2+1})-\nu_{4}\varepsilon^{\frac{n}{2}}\partial_y^2 g_p^n \nonumber\\
&-\sum_{i_2=0}^{n-1}\varepsilon^{\frac{n+i_2}{2}}[(h_e^n+h_p^n)\partial_{x}
+g_p^n \partial_{y}] (v_p^{i_2}+v_e^{i_2+1})-\nu_{4}\varepsilon^{\frac{n}{2}} \partial_x^2(g_p^{n-2}+\sqrt{\varepsilon} g_p^{n-1})\nonumber\\
&-\sum_{i_1=0}^{n-1}\varepsilon^{\frac{n+i_1}{2}}[(h_e^{i_1}+h_p^{i_1})\partial_{x}
+(g_p^{i_1}+g_e^{i_1+1})\partial_{y}] v_p^n-\nu_{4}\varepsilon^{\frac{n}{2}} \partial_x^2(\sqrt{\varepsilon} g_e^n+\varepsilon g_p^n)\nonumber\\
&+\varepsilon^{\frac{n-1}{2}} (R^{g}_{n-1}+E_{r_3}^{n-1}).
\end{align}
Using the estimates of $R^v_{n-1}~\text{and}~  R^g_{n-1} $, we directly have
$$
\|(R^v_{n-1}, R^g_{n-1})\|_{L^2(\Omega_{\varepsilon})} \lesssim \varepsilon^{\frac{1}{16}} .
$$
Next, we estimate the other terms in $(\ref{8.14})$ and get
$$
\begin{aligned}
 & \varepsilon^{\frac{n}{2}} \| [(u_e^0+u_p^0) \partial_x+(v_p^0+v_e^1) \partial_y] v_p^n \|_{L^2(\Omega_{\varepsilon})} \\
\leq & \varepsilon^{\frac{n}{2}}\|(u_e^0, u_p^0, v_p^0, v_e^1)\|_{L^{\infty}}\|\nabla v_p^n\|_{L^2} \\
\leq & C(L, \xi) \varepsilon^{-\xi-\frac{7}{16}+\frac{n}{2}},
\end{aligned}
$$

$$
\begin{aligned}
&\varepsilon^{\frac{n}{2}} \|[(u_e^1+u_p^1) \partial_x+(v_p^1+v_e^2) \partial_y](v_p^{n-1}+v_e^n)\|_{L^2(\Omega_{\varepsilon})}  \\
\leq & \varepsilon^{\frac{n}{2}}\|(u_e^1, u_p^1, v_p^1,v_e^2)\|_{L^{\infty}}(\|\partial_x v_p^{n-1}+\partial_x v_e^n\|_{L^2}+\|\partial_y v_p^{n-1}+\sqrt{\varepsilon} \partial_Y v_e^n\|_{L^2}) \\
\leq & C(L, \xi) \varepsilon^{-\xi-\frac{7}{16}+\frac{n}{2}},\\
&\varepsilon^{\frac{n}{2}} \|[(u_e^{n-1}+u_p^{n-1}) \partial_x+(v_p^{n-1}+v_e^n) \partial_y](v_p^1+v_e^2)\|_{L^2(\Omega_{\varepsilon})}\\
\leq & \varepsilon^{\frac{n}{2}}\|(u_e^{n-1}, u_p^{n-1}, v_p^{n-1},v_e^n)\|_{L^{\infty}}(\|\partial_x v_p^1+\partial_x v_e^2\|_{L^2}+\|\partial_y v_p^1+\sqrt{\varepsilon} \partial_Y v_e^2\|_{L^2}) \\
\leq & C(L, \xi) \varepsilon^{-\xi-\frac{7}{16}+\frac{n}{2}},
\end{aligned}
$$
and
$$
\begin{aligned}
&\varepsilon^{\frac{n}{2}} \|[(h_e^n+h_p^n) \partial_x+g_p^n\partial_y]
(g_p^0+g_e^1)\|_{L^2(\Omega_{\varepsilon})}  \\
 \leq & \varepsilon^{\frac{n}{2}}\|(h_e^n, h_p^n, g_p^n)\|_{L^{\infty}}(\|\partial_x g_p^0+\partial_x g_e^1\|_{L^2}+\|\partial_y g_p^0+\sqrt{\varepsilon} \partial_Y g_e^1\|_{L^2}) \\
\leq & C(L, \xi) \varepsilon^{-\xi-\frac{7}{16}+\frac{n}{2}}.
\end{aligned}
$$
For the last two terms of $(\ref{8.14})$, we have
$$
\|\varepsilon^{\frac{n}{2}} \partial_{y y} v_p^n
+\varepsilon^{\frac{n}{2}} \partial_x^2(v_p^{n-2}+\sqrt{\varepsilon}v_p^{n-1}+\sqrt{\varepsilon}v_e^n
+\varepsilon v_p^n)\|_{L^2(\Omega_{\varepsilon})} \leq C(L) \varepsilon^{-\xi-\frac{7}{16}+\frac{n}{2}},
$$
in which we use the estimates of $v_p^{n-2}, v_p^{n-1}, v_p^n ~\text{and}~ v_e^n$. Hence, we obtain
$$
\|R_{app}^2\|_{L^2(\Omega_{\varepsilon})} \leq C(L, \xi) \varepsilon^{-\xi-\frac{7}{16}+\frac{n}{2}} .
$$
Similar arguments imply that
$$
\|R_{a p p}^4\|_{L^2(\Omega_{\varepsilon})} \leq C(L, \xi) \varepsilon^{-\xi-\frac{7}{16}+\frac{n}{2}} .
$$
Combining all the previous estimates, we obtain $(\ref{8.12})$.
\end{proof}


\section{The proof of the main result}


Each term in the expansions $(\ref{1.7})$ has already been previously constructed.
In this section, we  prove bounds on the remainder  $(u^{\varepsilon}, v^{\varepsilon}, h^{\varepsilon}, g^{\varepsilon}, p^{\varepsilon})$.
Let us define
\begin{align}\label{9.1}
\left\{\begin{aligned}
u_s(x, y)=& U^{\varepsilon}-\varepsilon^{\frac{n}{2}}u_p^n
-\varepsilon^{\gamma+\frac{n}{2}}u^{\varepsilon},
\quad v_s(x, y)=V^{\varepsilon}-\varepsilon^{\frac{n}{2}}v_p^n
-\varepsilon^{\gamma+\frac{n}{2}}v^{\varepsilon},\\
h_s(x, y)=&H^{\varepsilon}-\varepsilon^{\frac{n}{2}}h_p^n
-\varepsilon^{\gamma+\frac{n}{2}}h^{\varepsilon},
\quad g_s(x, y)=G^{\varepsilon}-\varepsilon^{\frac{n}{2}}g_p^n
-\varepsilon^{\gamma+\frac{n}{2}}g^{\varepsilon}.
\end{aligned}\right.
\end{align}
Substituting $(\ref{9.1})$ into $(\ref{1.5})$, we obtain that $(u^{\varepsilon}, v^{\varepsilon}, h^{\varepsilon}, g^{\varepsilon},  p^{\varepsilon})$ satisfy the following system
\begin{align}\label{9.2}
\left\{\begin{aligned}
&u_s u_x^{\varepsilon}+u^{\varepsilon} u_{s x}+v_s u_y^{\varepsilon}+v^{\varepsilon} u_{s y}+p_x^{\varepsilon}-\nu_{1}\Delta_{\varepsilon} u^{\varepsilon}\\
& \quad -h_s h_x^{\varepsilon}-h^{\varepsilon} h_{s x}-g_s h_y^{\varepsilon}-g^{\varepsilon} h_{s y}=R_1(u^{\varepsilon}, v^{\varepsilon},h^{\varepsilon}, g^{\varepsilon}), \\
&u_s v_x^{\varepsilon}+u^{\varepsilon} v_{s x}+v_s v_y^{\varepsilon}+v^{\varepsilon} v_{s y}
+\frac{p_y^{\varepsilon}}{\varepsilon}
-\nu_{2}\Delta_{\varepsilon} v^{\varepsilon}\\
& \quad -h_s g_x^{\varepsilon}-h^{\varepsilon} g_{s x}-g_s g_y^{\varepsilon}-g^{\varepsilon} g_{s y}=R_2(u^{\varepsilon}, v^{\varepsilon},h^{\varepsilon}, g^{\varepsilon}), \\
&u_s h_x^{\varepsilon}+u^{\varepsilon} h_{s x}+v_s h_y^{\varepsilon}+v^{\varepsilon} h_{s y}-\nu_{3}\Delta_{\varepsilon} h^{\varepsilon}\\
& \quad -h_s u_x^{\varepsilon}-h^{\varepsilon} u_{s x}-g_s u_y^{\varepsilon}-g^{\varepsilon} u_{s y}=R_3(u^{\varepsilon}, v^{\varepsilon},h^{\varepsilon}, g^{\varepsilon}), \\
&u_s g_x^{\varepsilon}+u^{\varepsilon} g_{s x}+v_s g_y^{\varepsilon}+v^{\varepsilon} g_{s y}-\nu_{4}\Delta_{\varepsilon} g^{\varepsilon}\\
& \quad -h_s v_x^{\varepsilon}-h^{\varepsilon} v_{s x}-g_s v_y^{\varepsilon}-g^{\varepsilon} v_{s y}=R_4(u^{\varepsilon}, v^{\varepsilon},h^{\varepsilon}, g^{\varepsilon}), \\
&u_x^{\varepsilon}+v_y^{\varepsilon}=h_x^{\varepsilon}+g_y^{\varepsilon}=0.
\end{aligned}\right.
\end{align}
The  source terms $R_i$  $(i=1,2,3,4)$ are defined by
\begin{align}\label{9.3}
 R_1(u^{\varepsilon}, v^{\varepsilon},h^{\varepsilon}, g^{\varepsilon}):=&\varepsilon^{-\gamma-\frac{n}{2}} R_{a p p}^1-\varepsilon^{\frac{n}{2}}\left[(u_p^n+\varepsilon^\gamma u^{\varepsilon}) u_x^{\varepsilon}+u^{\varepsilon} u_{p x}^n+(v_p^n+\varepsilon^\gamma v^{\varepsilon}) u_y^{\varepsilon}+v^{\varepsilon} u_{p y}^n \right. \nonumber\\
 & \left.- (h_p^n+\varepsilon^\gamma h^{\varepsilon}) h_x^{\varepsilon}-h^{\varepsilon} h_{p x}^n-(g_p^n+\varepsilon^\gamma g^{\varepsilon}) h_y^{\varepsilon}-g^{\varepsilon} h_{p y}^n\right],\nonumber\\
 R_2(u^{\varepsilon}, v^{\varepsilon},h^{\varepsilon}, g^{\varepsilon}):=&\varepsilon^{-\gamma-\frac{n}{2}} R_{a p p}^2-\varepsilon^{\frac{n}{2}}\left[(u_p^n+\varepsilon^\gamma u^{\varepsilon}) v_x^{\varepsilon}+u^{\varepsilon} v_{p x}^n+(v_p^n+\varepsilon^\gamma v^{\varepsilon}) v_y^{\varepsilon}+v^{\varepsilon} v_{p y}^n \right. \nonumber\\
 & \left.- (h_p^n+\varepsilon^\gamma h^{\varepsilon}) g_x^{\varepsilon}-h^{\varepsilon} g_{p x}^n
 -(g_p^n+\varepsilon^\gamma g^{\varepsilon}) g_y^{\varepsilon}-g^{\varepsilon} g_{p y}^n\right], \\
 R_3(u^{\varepsilon}, v^{\varepsilon},h^{\varepsilon}, g^{\varepsilon}):=&\varepsilon^{-\gamma-\frac{n}{2}} R_{a p p}^3-\varepsilon^{\frac{n}{2}}\left[(u_p^n+\varepsilon^\gamma u^{\varepsilon}) h_x^{\varepsilon}+u^{\varepsilon} h_{p x}^n+(v_p^n+\varepsilon^\gamma v^{\varepsilon}) h_y^{\varepsilon}+v^{\varepsilon} h_{p y}^n \right.\nonumber\\
 & \left. -(h_p^n+\varepsilon^\gamma h^{\varepsilon}) u_x^{\varepsilon}-h^{\varepsilon} u_{p x}^n-(g_p^n+\varepsilon^\gamma g^{\varepsilon}) u_y^{\varepsilon}-g^{\varepsilon} u_{p y}^n\right], \nonumber\\
 R_4(u^{\varepsilon}, v^{\varepsilon},h^{\varepsilon}, g^{\varepsilon}):=&\varepsilon^{-\gamma-\frac{n}{2}} R_{a p p}^4-\varepsilon^{\frac{n}{2}}\left[(u_p^n+\varepsilon^\gamma u^{\varepsilon}) g_x^{\varepsilon}+u^{\varepsilon} g_{p x}^n
 +(v_p^n+\varepsilon^\gamma v^{\varepsilon}) g_y^{\varepsilon}
 +v^{\varepsilon} g_{p y}^n \right. \nonumber\\
 & \left.-(h_p^n+\varepsilon^\gamma h^{\varepsilon}) v_x^{\varepsilon}-h^{\varepsilon} v_{p x}^n
 -(g_p^n+\varepsilon^\gamma g^{\varepsilon}) v_y^{\varepsilon}-g^{\varepsilon} v_{p y}^n\right],\nonumber
\end{align}
in which the error terms $R_{app}^i (i=1,2,3,4)$ have been estimated in Proposition $\ref{b8.4}$.

By the definition of $u_s, h_s$ in $(\ref{9.1})$ and the previously obtained estimates of $\left(u_p^i, v_p^i, h_p^i, g_p^i\right)(i=0,1,\ldots,n)$ and $(u_e^j, v_e^j,h_e^j,g_e^j)(j=1,2,\ldots,n)$, for small enough $\sigma_0 > 0$, we get
$\|y \partial_{y} (u_s, h_s)\|_{L^{\infty}} < C  \sigma_0$.
In addition, under assumption (H), we obtain $\left\|h_s / u_s\right\|_{L^{\infty}} \ll 1$. These two estimates will be used in the proof of Proposition $\ref{b9.1}$.

Next, we state two propositions which will be used later in the proof of  Theorem $\ref{b1.1}$. 

\begin{pro}\label{b9.1}
For any given $f_i \in L^2(\Omega_{\varepsilon}) (i=1,2,3,4)$, there exists $L>0$ such that the following linear problem
\begin{align}\label{9.4}
\left\{\begin{array}{l}
u_s u_x+u u_{s x}+v_s u_y+v u_{s y}-(h_s h_x+h h_{s x}+g_s h_y+g h_{s y})+p_x-\nu_{1}\Delta_{\varepsilon} u=f_1, \\
u_s v_x+u v_{s x}+v_s v_y+v v_{s y}-(h_s g_x+h g_{s x}+g_s g_y+g g_{s y})+\varepsilon^{-1} p_y -\nu_{2}\Delta_{\varepsilon} v=f_2,\\
u_s h_x+u h_{s x}+v_s h_y+v h_{s y}-(h_s u_x+h u_{s x}+g_s u_y+g u_{s y})-\nu_{3}\Delta_{\varepsilon} h=f_3,  \\
u_s g_x+u g_{s x}+v_s g_y+v g_{s y}-(h_s v_x+h v_{s x}+g_s v_y+g v_{s y})-\nu_{4}\Delta_{\varepsilon} g=f_4,\\
u_x+v_y=h_x+g_y=0,
\end{array}\right.
\end{align}
with boundary conditions
\begin{align}\label{9.5}
\left\{\begin{array}{l}
{(u, v,  h_y, g)(x,0)=0, \quad(u_y, v, h_y, g)(x,\varepsilon^{-1/2})=0,} \\
{(u, v, h, g)(0,y)=0, \quad(p-2 \nu_{1}\varepsilon u_x, u_y+\nu_{2}\varepsilon v_x, h, g_x)(L,y)=0,}
\end{array}\right.
\end{align}
has a unique solution $(u, v, h, g, p)$ in the domain $\Omega_{\varepsilon}$, which satisfies
\begin{align}\label{9.6}
&\|\nabla_{\varepsilon} u\|_{L^2(\Omega_{\varepsilon})}+\|\nabla_{\varepsilon} v\|_{L^2(\Omega_{\varepsilon})}
+\|\nabla_{\varepsilon} h\|_{L^2(\Omega_{\varepsilon})}+\|\nabla_{\varepsilon} g\|_{L^2(\Omega_{\varepsilon})}\nonumber\\
\lesssim &\|f_1\|_{L^2(\Omega_{\varepsilon})}
+\|f_3\|_{L^2(\Omega_{\varepsilon})}
+\sqrt{\varepsilon}(\|f_2\|_{L^2(\Omega_{\varepsilon})}
+\|f_4\|_{L^2(\Omega_{\varepsilon})}).
\end{align}
\end{pro}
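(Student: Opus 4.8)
The plan is to reduce the proposition to the \emph{a priori} bound $(\ref{9.6})$: since $(\ref{9.4})$ is linear, once this estimate is known for both $(\ref{9.4})$ and its formal adjoint (which has the same structure), existence and uniqueness of $(u,v,h,g,p)$ on $\Omega_\varepsilon$ follow by the usual duality/Lax--Milgram argument on the range of the adjoint operator, exactly as in \cite{DLX2021,LXY2019}. The whole difficulty therefore lies in $(\ref{9.6})$, and throughout I will use the two structural facts recorded just above the statement, namely $\|y\,\partial_y(u_s,h_s)\|_{L^\infty}<C\sigma_0$ with $\sigma_0$ small and $\|h_s/u_s\|_{L^\infty}\ll1$ under (H). These are what let me treat the stretching terms and the entire Lorentz/induction coupling as small perturbations.

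First I would derive a basic weighted energy estimate by testing the four equations of $(\ref{9.4})$ against $(u,\varepsilon v,h,\varepsilon g)$ and summing. The weight $\varepsilon$ on the second and fourth equations is tuned so that the two pressure contributions combine into $\int(p_x u+p_y v)=-\int p\,(u_x+v_y)+(\text{boundary})$, which vanishes by the divergence-free condition and the outflow conditions in $(\ref{9.5})$; the remaining $x=L$ contributions are tailored to cancel against the viscous boundary terms through $p-2\nu_1\varepsilon u_x=0$ and $u_y+\nu_2\varepsilon v_x=0$. Integration by parts of the diffusion terms yields the coercive quantity
$$\nu_1\|\nabla_\varepsilon u\|_{L^2(\Omega_\varepsilon)}^2+\nu_3\|\nabla_\varepsilon h\|_{L^2(\Omega_\varepsilon)}^2+\varepsilon\nu_2\|\nabla_\varepsilon v\|_{L^2(\Omega_\varepsilon)}^2+\varepsilon\nu_4\|\nabla_\varepsilon g\|_{L^2(\Omega_\varepsilon)}^2,$$
the convection and stretching terms are bounded by $\sigma_0$ times the coercive norms via Hardy's and Poincaré's inequalities (using $u(x,0)=v(x,0)=g(x,0)=0$ and the vanishing at $x=0$), and the magnetic coupling is absorbed using $\|h_s/u_s\|_{L^\infty}\ll1$. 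This controls $\|\nabla_\varepsilon u\|$, $\|\nabla_\varepsilon h\|$ and the degenerate quantities $\sqrt\varepsilon\|\nabla_\varepsilon v\|$, $\sqrt\varepsilon\|\nabla_\varepsilon g\|$, with right-hand side $\|f_1\|\|u\|+\|f_3\|\|h\|+\varepsilon\|f_2\|\|v\|+\varepsilon\|f_4\|\|g\|$, which after Poincaré reproduces the weighting of $(\ref{9.6})$.

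The main obstacle is that this energy estimate yields $\nabla_\varepsilon v$ and $\nabla_\varepsilon g$ only with a spurious factor $\sqrt\varepsilon$, and the relation $v_y=-u_x$ cannot repair it since it costs a power $\varepsilon^{-1/2}$. To recover the order-one bounds in $(\ref{9.6})$ I would run a second, positivity-type estimate in the spirit of \cite{GN2017,LD2020}: introducing the stream functions $u=\psi_y,\ v=-\psi_x$ and $h=\phi_y,\ g=-\phi_x$, and testing the horizontal momentum and induction equations against profile-weighted multipliers of the form $v/u_s$ and $g/u_s$, the natural analogues of the multiplier $\omega_1/u_e^0$ used in Proposition \ref{b3.1}. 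The positivity $\int|u_s|^2|\partial_y(\cdot/u_s)|^2\ge\beta_0\int|\partial_y(\cdot)|^2$ then produces $\|\nabla_\varepsilon v\|^2+\|\nabla_\varepsilon g\|^2$ at order one. The price is a family of cross terms coupling $u,h$ to $v,g$ and coupling the velocity block to the magnetic block; I expect to close the scheme by forming a combination $(\text{energy})+\kappa\,(\text{positivity})$ and absorbing every cross term into the coercive parts using the smallness of $\sigma_0$ and of $L\le L_0$ together with the largeness of $C_0$ in (H). Arranging this absorption so that the Lorentz coupling does not spoil the simultaneous coercivity of the velocity and magnetic blocks is the genuine technical heart of the proof, and is precisely where assumption (H) is indispensable.
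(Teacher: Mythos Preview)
Your outline is essentially the paper's approach: the proof is reduced to two a priori estimates, a basic energy estimate (the paper's Lemma~\ref{b9.1.1}) and a positivity/profile-weighted estimate (the paper's Lemma~\ref{b9.1.2}), both quoted from \cite{DLX2021}, which are then combined using the smallness of $L$, of $\|h_s/u_s\|_{L^\infty}$, and of $\|y\,\partial_y(u_s,h_s)\|_{L^\infty}$ to absorb the cross terms. One minor recalibration: the first estimate as stated in the paper does \emph{not} give degenerate control $\sqrt{\varepsilon}\,\|\nabla_\varepsilon v\|$ on the left but rather leaves $L\,(\|\nabla_\varepsilon v\|^2+\|\nabla_\varepsilon g\|^2)$ on the right, which is then closed against Lemma~\ref{b9.1.2}; this is just a different bookkeeping of the same computation and does not affect your argument.
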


\begin{proof}
Under assumption (H), the above proposition is a direct consequence of the following two Lemmas.
\end{proof}
We recall the following two lemmas, proved in \cite{DLX2021}.

\begin{Lemma}\label{b9.1.1}
Let $(u, v, h, g)$ be the solution to the linear problem $(\ref{9.4})$, and assume that $\varepsilon \ll L$, then the following estimate holds
\begin{align}\label{9.1.1.1}
&\nu_{1}\|\nabla_{\varepsilon}u\|_{L^2(\Omega_{\varepsilon})}^{2}
+\nu_{3}\|\nabla_{\varepsilon}h\|_{L^2(\Omega_{\varepsilon})}^{2}
+\int_{x=L} u_s(|u|^{2}+|h|^{2}+\varepsilon |v|^{2}+\varepsilon |g|^{2})\mathrm{d}y \nonumber\\
& \lesssim  L (\|\nabla_{\varepsilon} v\|_{L^2(\Omega_{\varepsilon})}^{2}+\|\nabla_{\varepsilon} g\|_{L^2(\Omega_{\varepsilon})}^{2})
+\|(f_{1},f_{3})\|_{L^2(\Omega_{\varepsilon})}^{2}
+\varepsilon\|(f_{2},f_{4})\|_{L^2(\Omega_{\varepsilon})}^{2}.
\end{align}
\end{Lemma}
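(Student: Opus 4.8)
The plan is to establish $(\ref{9.1.1.1})$ as a basic energy identity for the linearized system $(\ref{9.4})$, obtained by multiplying the $u$-, $v$-, $h$- and $g$-equations by $u$, $\varepsilon v$, $h$ and $\varepsilon g$ respectively and integrating over $\Omega_\varepsilon$. First I would treat the diffusion terms: integrating $-\nu_1\Delta_\varepsilon u$ against $u$ and $-\nu_3\Delta_\varepsilon h$ against $h$ produces the coercive quantities $\nu_1\|\nabla_\varepsilon u\|_{L^2(\Omega_\varepsilon)}^2$ and $\nu_3\|\nabla_\varepsilon h\|_{L^2(\Omega_\varepsilon)}^2$ on the left, while the boundary terms at $y=0$ and $y=\varepsilon^{-1/2}$ vanish thanks to the conditions $(u,h_y)(x,0)=0$ and $(u_y,h_y)(x,\varepsilon^{-1/2})=0$ in $(\ref{9.5})$; the contributions of the $v$- and $g$-diffusion come weighted by $\varepsilon$ and are harmless. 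The pressure is handled by adding $\int p_x u$ and $\int p_y v$ (the weight $\varepsilon$ on the $v$-equation exactly cancels the factor $\varepsilon^{-1}$ in front of $p_y$), integrating by parts and invoking the divergence-free condition $u_x+v_y=0$; all that survives is the outflow trace $\int_{x=L}pu\,\mathrm{d}y$, which combines with the viscous trace at $x=L$ through the condition $(p-2\nu_1\varepsilon u_x)(L,y)=0$.

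Next I would extract the good boundary contribution from the convective operator. For each component, $\int_{\Omega_\varepsilon}(u_s\partial_x+v_s\partial_y)(\cdot)\,(\cdot)$ integrates by parts into a quadratic boundary term plus $-\tfrac12\int(u_{sx}+v_{sy})(\cdot)^2$; since the background field is divergence-free and $v_s$ vanishes on $y=0,\varepsilon^{-1/2}$ while the unknowns vanish on $x=0$, this yields precisely the trace $\tfrac12\int_{x=L}u_s(|u|^2+|h|^2+\varepsilon|v|^2+\varepsilon|g|^2)\,\mathrm{d}y$, which I keep on the left, noting it is nonnegative because $u_s>0$ under (H). The remaining convective contributions are the stretching terms carrying a derivative of the background, namely $\int u_{sx}u^2$, $\int u_{sy}uv$ and their $h$-, $v$-, $g$-analogues.

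The heart of the argument is the magnetic coupling. The Lorentz terms $-(h_sh_x+hh_{sx}+g_sh_y+gh_{sy})$ tested against $u$ and the induction terms $-(h_su_x+hu_{sx}+g_su_y+gu_{sy})$ tested against $h$ must be added together: the top-order pieces reorganize as $-\int h_s(uh)_x-\int g_s(uh)_y$, which after integration by parts and the divergence-free condition $h_{sx}+g_{sy}=0$ reduce to an $x=L$ trace plus commutators, and the same cancellation occurs between the $v$- and $g$-equations. The surviving terms of zeroth and first order in $(u_s,h_s)$ are then estimated using the structural smallness recorded before the statement, namely $\|y\,\partial_y(u_s,h_s)\|_{L^\infty}\lesssim\sigma_0$ and $\|h_s/u_s\|_{L^\infty}\ll1$ from (H): combining Hardy's inequality (licensed by the vanishing of $u,h_y,v,g$ at $y=0$) with a Poincaré inequality in $x$ (licensed by $u=h=0$ at $x=0$) turns each stretching term into a small multiple of gradient norms. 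The point is that the $\nabla_\varepsilon u$- and $\nabla_\varepsilon h$-type contributions carry a factor $L$ or $\sigma_0$ and are absorbed into the coercive left-hand side once $L\le L_0$, whereas the $\nabla_\varepsilon v$- and $\nabla_\varepsilon g$-type contributions cannot be absorbed at this stage and are therefore left on the right as $L(\|\nabla_\varepsilon v\|_{L^2(\Omega_\varepsilon)}^2+\|\nabla_\varepsilon g\|_{L^2(\Omega_\varepsilon)}^2)$, to be controlled later by the companion positivity estimate.

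Finally the forcing $\int(f_1u+\varepsilon f_2v+f_3h+\varepsilon g f_4)$ is bounded by Cauchy--Schwarz and Young's inequality, the solution factors being absorbed via the positive trace together with the same Poincaré/Hardy bounds; the weights $\varepsilon$ on $f_2,f_4$ produce exactly the $\varepsilon\|(f_2,f_4)\|_{L^2(\Omega_\varepsilon)}^2$ on the right, while $f_1,f_3$ give $\|(f_1,f_3)\|_{L^2(\Omega_\varepsilon)}^2$. I expect the main difficulty to be twofold: the precise bookkeeping of the $x=L$ boundary terms, so that the pressure trace and the anisotropic viscous stresses cancel exactly through the artificial outflow conditions in $(\ref{9.5})$; and the pairwise cancellation of the magnetic coupling, where one must check that no term carrying a full $\nabla_\varepsilon u$ or $\nabla_\varepsilon h$ with an $O(1)$ coefficient survives. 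It is precisely here that assumption (H), in the quantitative form $\|h_s/u_s\|_{L^\infty}\ll1$, is indispensable, since it guarantees that the magnetic contributions are dominated by the hydrodynamic coercive terms.
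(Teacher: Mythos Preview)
Your outline is correct and is precisely the standard energy argument used for this estimate; note, however, that the present paper does not actually carry out the proof in the text but simply cites \cite{DLX2021}, so there is no in-paper proof to compare against beyond that reference. Your scheme---test $(\ref{9.4})$ by $(u,\varepsilon v,h,\varepsilon g)$, extract the coercive $\nu_1\|\nabla_\varepsilon u\|^2+\nu_3\|\nabla_\varepsilon h\|^2$ from diffusion, eliminate the pressure via $u_x+v_y=0$ and the outflow condition $(p-2\nu_1\varepsilon u_x)(L,\cdot)=0$, recover the positive trace $\tfrac12\int_{x=L}u_s(|u|^2+|h|^2+\varepsilon|v|^2+\varepsilon|g|^2)$ from $(u_s\partial_x+v_s\partial_y)$, and cancel the top-order magnetic cross terms by pairing the Lorentz and induction contributions---is exactly the route taken in \cite{DLX2021}.

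One small point to tighten: you invoke Hardy's inequality ``licensed by the vanishing of $u,h_y,v,g$ at $y=0$'', but $h$ satisfies a Neumann condition $h_y(x,0)=0$, not $h(x,0)=0$, so a Hardy bound of the form $\|h/y\|_{L^2}\lesssim\|h_y\|_{L^2}$ is not directly available for $h$. In the actual argument the stretching terms involving $h$ (e.g.\ $\int h_{sx}uh$ or $\int u_{sy}gh$) are controlled instead via the Poincar\'e inequality in $x$ from $h(0,y)=0$, giving $\|h\|_{L^2}\lesssim L\|h_x\|_{L^2}$, together with the trace $\int_{x=L}u_s|h|^2$ that you already have on the left. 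This is a bookkeeping adjustment rather than a gap, but it is worth stating accurately since the asymmetry between the Dirichlet condition on $u$ and the Neumann condition on $h$ is one of the features distinguishing the MHD case from the pure Navier--Stokes setting of \cite{LD2020}.
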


\begin{Lemma}\label{b9.1.2}
Let $(u, v, h, g)$ be the solution to the linear problem $(\ref{9.4})$, 
then
\begin{align}\label{9.1.2.1}
&\|\nabla_{\varepsilon}g\|_{L^2(\Omega_{\varepsilon})}^{2} +\|\nabla_{\varepsilon}v\|_{L^2(\Omega_{\varepsilon})}^{2}
+ \int_{x=0}\frac{\varepsilon^{2}(\nu_1 |v_{x}|^2+\nu_3 |g_{x}|^2)}{u_s} \mathrm{d}y
+\varepsilon\int_{x=L}\frac{|v_y|^{2}}{u_s}\mathrm{d}y\nonumber\\
\leq&C\left[\|(f_1,f_3)\|_{L^2(\Omega_{\varepsilon})}^{2}
+\varepsilon\|(f_2,f_4)\|_{L^2(\Omega_{\varepsilon})}^{2}
+\|\nabla_{\varepsilon} u\|_{L^2(\Omega_{\varepsilon})}^{2}
+\|\nabla_{\varepsilon} h\|_{L^2(\Omega_{\varepsilon})}^{2}
\right.\nonumber\\
&\left. +\left(L+\left\|\frac{h_s}{u_s}\right\|_{L^{\infty}}+\|y \partial_{y}(u_s,h_s)\|_{L^{\infty}}\right)\left(\|\nabla_{\varepsilon} v\|_{L^2(\Omega_{\varepsilon})}^{2}
+\|\nabla_{\varepsilon}g\|_{L^2(\Omega_{\varepsilon})}^2\right)
\right].
\end{align}
\end{Lemma}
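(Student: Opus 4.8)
The plan is to establish the complementary \emph{positivity estimate} for the vertical components $v$ and $g$. Whereas Lemma \ref{b9.1.1} controls $\|\nabla_\varepsilon u\|_{L^2(\Omega_\varepsilon)}$ and $\|\nabla_\varepsilon h\|_{L^2(\Omega_\varepsilon)}$ at the cost of a small multiple of $\|\nabla_\varepsilon v\|_{L^2(\Omega_\varepsilon)}^2+\|\nabla_\varepsilon g\|_{L^2(\Omega_\varepsilon)}^2$, here one recovers the latter two quantities. Concretely I would test the vertical velocity equation (second line of \ref{9.4}) against the weighted multiplier $v/u_s$ and the vertical magnetic equation (fourth line) against $g/u_s$, integrate over $\Omega_\varepsilon$, and integrate by parts. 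Since $u_s$ is a bounded perturbation of $u_e+u_p^0$ and is bounded below by a positive constant under (H) (recall $h_e+h_p^{0,\infty}\ge\theta_0/2$ from Lemma \ref{b2.2}), division by $u_s$ is harmless. The diffusion terms $-\nu_2\Delta_\varepsilon v\cdot(v/u_s)$ and $-\nu_4\Delta_\varepsilon g\cdot(g/u_s)$ then yield, after integrating by parts and using that $v,g$ vanish on $y=0,\varepsilon^{-1/2}$ and on $x=0$, the leading positive terms $\nu_2\int_{\Omega_\varepsilon}|\nabla_\varepsilon v|^2/u_s$ and $\nu_4\int_{\Omega_\varepsilon}|\nabla_\varepsilon g|^2/u_s\gtrsim\|\nabla_\varepsilon v\|_{L^2(\Omega_\varepsilon)}^2+\|\nabla_\varepsilon g\|_{L^2(\Omega_\varepsilon)}^2$, together with the boundary contributions at $x=0$ and $x=L$ recorded in \ref{9.1.2.1}, which emerge once the integrations by parts in $x$ and $y$ are combined with the Dirichlet data at $x=0$ and the Robin/Neumann conditions at $x=L$ in \ref{9.5}.

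The main obstacle is the singular pressure term $\varepsilon^{-1}p_y$, since testing it naively against $v/u_s$ produces an $\varepsilon^{-1}$-singular quantity. I would remove it by exploiting the divergence-free structure: integrate $\int_{\Omega_\varepsilon}\varepsilon^{-1}p_y(v/u_s)$ by parts in $y$ (the $y$-boundary terms drop because $v=0$ on both horizontal walls), substitute $v_y=-u_x$, integrate by parts once more in $x$, and invoke the horizontal momentum equation (first line of \ref{9.4}) to replace $p_x$. The decisive cancellation is that the $\varepsilon^{-1}$ factor is compensated by the $\varepsilon$ hidden in the outflow condition $p-2\nu_1\varepsilon u_x=0$ at $x=L$ and by the $\varepsilon$-weighted viscosity $\nu_1\varepsilon\partial_x^2$; this is precisely why the surviving boundary terms at $x=0$ carry the weights $\varepsilon^2\nu_1|v_x|^2/u_s$ (and $\varepsilon^2\nu_3|g_x|^2/u_s$ from the parallel manipulation on the magnetic pair), with the viscosities $\nu_1,\nu_3$ inherited from the horizontal equations used in the substitution. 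This pressure bookkeeping, reconciled with the boundary conditions \ref{9.5}, is the technically delicate heart of the argument.

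The remaining terms split into two groups. The variable-coefficient convection terms, such as $\int_{\Omega_\varepsilon}v_s v_y(v/u_s)$ and $\int_{\Omega_\varepsilon}vv_{sy}(v/u_s)$ together with their magnetic analogues, either produce $y$-boundary terms that vanish or interior terms controlled, after distributing a weight $\langle y\rangle$ and using $v(x,0)=0$ with a Hardy/Poincar\'e inequality $\|v\|\lesssim\|v_y\|\le\|\nabla_\varepsilon v\|$, by $\|y\,\partial_y(u_s,h_s)\|_{L^\infty}$. The magnetic coupling terms $-\int_{\Omega_\varepsilon}h_s g_x(v/u_s)$, $-\int_{\Omega_\varepsilon}h_s v_x(g/u_s)$ and the like are estimated by Cauchy--Schwarz in terms of $\|h_s/u_s\|_{L^\infty}$ and of the $h$-gradients. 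By the smallness recorded just before Proposition \ref{b9.1}, namely $\|y\,\partial_y(u_s,h_s)\|_{L^\infty}<C\sigma_0$ and, under (H), $\|h_s/u_s\|_{L^\infty}\ll1$, the first contributions are absorbed into the small multiple of $\|\nabla_\varepsilon v\|_{L^2(\Omega_\varepsilon)}^2+\|\nabla_\varepsilon g\|_{L^2(\Omega_\varepsilon)}^2$ on the right-hand side of \ref{9.1.2.1}, while the coupling contributes to the admissible $\|\nabla_\varepsilon u\|_{L^2(\Omega_\varepsilon)}^2+\|\nabla_\varepsilon h\|_{L^2(\Omega_\varepsilon)}^2$.

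Finally, the forcing is handled by Cauchy--Schwarz: the direct tests $\int_{\Omega_\varepsilon}f_2(v/u_s)$ and $\int_{\Omega_\varepsilon}f_4(g/u_s)$ enter with the scaling weight $\varepsilon$ as $\varepsilon\|(f_2,f_4)\|_{L^2(\Omega_\varepsilon)}^2$, whereas the dependence on $f_1,f_3$ enters through the pressure-elimination substitution, giving $\|(f_1,f_3)\|_{L^2(\Omega_\varepsilon)}^2$. Collecting the positive leading terms and the boundary terms on the left, and all error, coupling and forcing terms on the right, yields exactly \ref{9.1.2.1}. Combined with Lemma \ref{b9.1.1}, the smallness of $L$ together with $\|h_s/u_s\|_{L^\infty}$ and $\|y\,\partial_y(u_s,h_s)\|_{L^\infty}$ lets one absorb the cross-terms and close the full estimate \ref{9.6} of Proposition \ref{b9.1}.
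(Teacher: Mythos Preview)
Your overall plan---a weighted multiplier estimate with weight $1/u_s$ on the vertical equations---is in the right spirit, but the specific scheme you describe has a genuine gap in the treatment of the pressure, and it is not the argument used in \cite{DLX2021} (to which the present paper defers for the proof).

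The issue is the $\varepsilon$-scaling. If you test the second equation of \eqref{9.4} against $v/u_s$, the diffusion indeed produces $\nu_2\int|\nabla_\varepsilon v|^2/u_s$, but the pressure contributes $\varepsilon^{-1}\int p_y\,(v/u_s)$. After your integration by parts in $y$ and the substitution $v_y=-u_x$, you obtain not only the ``clean'' piece $\varepsilon^{-1}\int p\,u_x/u_s$ (which you then convert into $-\varepsilon^{-1}\int p_x\,u/u_s$ plus a boundary term), but also the commutator $\varepsilon^{-1}\int p\,v\,u_{sy}/u_s^2$ from differentiating the weight, and a second commutator $\varepsilon^{-1}\int p\,u\,u_{sx}/u_s^2$ from the $x$-integration by parts. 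These still carry the singular $\varepsilon^{-1}$ and a bare copy of $p$ over which you have no independent control. Likewise, once you substitute $p_x$ via the first equation, every term except $\nu_1\varepsilon u_{xx}$ and the outflow contribution $p=2\nu_1\varepsilon u_x$ inherits the $\varepsilon^{-1}$ prefactor: for instance $\varepsilon^{-1}\int f_1\,u/u_s$, $\varepsilon^{-1}\int u_s u_x\,u/u_s$, and $\varepsilon^{-1}\nu_1\int u_{yy}\,u/u_s$, none of which can be bounded by the right-hand side of \eqref{9.1.2.1}. The ``compensation'' you invoke neutralises only two very specific contributions; the rest blows up as $\varepsilon\to0$.

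The actual proof in \cite{DLX2021} (following Guo--Nguyen \cite{GN2017}) does not test the vertical equation directly. One instead forms the $\varepsilon$-curl $\partial_y(\text{eq.\,1})-\varepsilon\,\partial_x(\text{eq.\,2})$, and analogously $\partial_y(\text{eq.\,3})-\varepsilon\,\partial_x(\text{eq.\,4})$ for the magnetic pair. The pressure drops out \emph{exactly}, with no $\varepsilon^{-1}$ ever appearing. The crucial observation is that the dominant convection operator $u_s\partial_x$, acting on $(u,v)$, combines under the curl (via $u_x+v_y=0$) into $-u_s\Delta_\varepsilon v$; testing this against $v/u_s$ yields $+\|\nabla_\varepsilon v\|_{L^2(\Omega_\varepsilon)}^2$ directly. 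Thus the positivity comes from the \emph{convection}, not the diffusion, and the diffusive contributions $-\nu_1\partial_y\Delta_\varepsilon u$, $\nu_2\varepsilon\partial_x\Delta_\varepsilon v$ (respectively $\nu_3,\nu_4$) are handled as lower-order terms---which is exactly why the boundary integrals in \eqref{9.1.2.1} carry $\nu_1,\nu_3$ rather than $\nu_2,\nu_4$. Your treatment of the variable-coefficient and magnetic coupling terms in the last two paragraphs is, however, essentially correct once the right multiplier framework is in place.
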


The following proposition provides  $L^{\infty}$ estimates on the solutions for the nonlinear problem.
\begin{pro}\label{b9.2}
For any given $F_i \in L^2\left(\Omega_{\varepsilon}\right)(i=1,2,3,4)$, suppose that the following system in the domain $\Omega_{\varepsilon}$
\begin{align}\label{9.7}
\left\{\begin{array}{l}
-\nu_{1}\Delta_{\varepsilon} u+p_x=F_1, \\
-\nu_{2}\Delta_{\varepsilon} v+\frac{p_y}{\varepsilon}=F_2, \\
-\nu_{3}\Delta_{\varepsilon} h=F_3,  \\
-\nu_{4}\Delta_{\varepsilon} g=F_4,\\
u_x+v_y=h_x+g_y=0,
\end{array}\right.
\end{align}
has a solution $\left(u, v, h, g\right)$ with the same boundary conditions as that in $(\ref{9.5})$. Then, for any $\gamma>0$,
\begin{align}\label{9.8}
& \|u\|_{L^{\infty}(\Omega_{\varepsilon})}
+\|h\|_{L^{\infty}(\Omega_{\varepsilon})}
+\sqrt{\varepsilon}(\|v\|_{L^{\infty}(\Omega_{\varepsilon})}
+\|g\|_{L^{\infty}(\Omega_{\varepsilon})})\nonumber\\
\lesssim & C_{\gamma, L} \varepsilon^{-\frac{\gamma}{16}}\left[\|\nabla_{\varepsilon} u\|_{L^2(\Omega_{\varepsilon})}+\|\nabla_{\varepsilon} v\|_{L^2(\Omega_{\varepsilon})}+\|\nabla_{\varepsilon} h\|_{L^2(\Omega_{\varepsilon})}+\|\nabla_{\varepsilon} g\|_{L^2(\Omega_{\varepsilon})}\right.\nonumber\\
& \quad \quad \quad \left.+\|F_1\|_{L^2(\Omega_{\varepsilon})}
+\|F_3\|_{L^2(\Omega_{\varepsilon})}
+\sqrt{\varepsilon}(\|F_2\|_{L^2(\Omega_{\varepsilon})}
+\|F_4\|_{L^2(\Omega_{\varepsilon})})\right].
\end{align}
\end{pro}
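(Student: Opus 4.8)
The plan is to exploit the fact that the principal system \eqref{9.7} decouples: the pair $(u,v)$ together with the pressure $p$ solves an anisotropic Stokes system, while $h$ and $g$ each solve a scalar elliptic equation $-\nu_{3}\Delta_\varepsilon h=F_3$ and $-\nu_{4}\Delta_\varepsilon g=F_4$ in which no pressure appears. For each block I would first gain control of second-order derivatives by elliptic (respectively Stokes) regularity, and then pass from this regularity, together with the $L^2$ control of $\nabla_\varepsilon$, to an $L^\infty$ bound via a two-dimensional Agmon-type inequality. Throughout I would use the change of variables $Y=\sqrt\varepsilon\,y$, under which $\Delta_\varepsilon$ becomes $\varepsilon\Delta$ on the fixed rectangle $\Omega_1=[0,L]\times[0,1]$, so that the standard regularity theory applies on an $\varepsilon$-independent domain and the various powers of $\varepsilon$ are tracked explicitly through the rescaling; for instance $\|\nabla_\varepsilon u\|_{L^2(\Omega_\varepsilon)}=\varepsilon^{1/4}\|\nabla_{x,Y}\tilde u\|_{L^2(\Omega_1)}$, where $\tilde u(x,Y)=u(x,Y/\sqrt\varepsilon)$. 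Since the statement assumes a solution exists, it suffices to prove the a priori estimate \eqref{9.8}.

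I would treat the magnetic components first, as they are the simplest. After rescaling, $\tilde h$ and $\tilde g$ solve Poisson equations on $\Omega_1$ with the mixed Dirichlet--Neumann data inherited from \eqref{9.5}; elliptic regularity provides $H^2(\Omega_1)$ bounds in terms of the rescaled forcing and the $L^2$ gradient, and the two-dimensional Agmon inequality $\|f\|_{L^\infty}^2\lesssim \|f\|_{L^2}\,\|f\|_{H^2}$ on $\Omega_1$ converts these into the desired bounds for $\|h\|_{L^\infty}$ and $\sqrt\varepsilon\,\|g\|_{L^\infty}$. The $\sqrt\varepsilon$ weight on $g$ is dictated by the divergence-free relation $h_x+g_y=0$, which forces $g$ to be larger than $h$ by a factor $\varepsilon^{-1/2}$ in the rescaled geometry.

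For the velocity--pressure block the key structural point is that the outflow conditions $(p-2\nu_{1}\varepsilon u_x)(L,\cdot)=0$ and $(u_y+\nu_{2}\varepsilon v_x)(L,\cdot)=0$ are exactly the natural (stress-free, ``do-nothing'') boundary conditions associated with the Stokes operator, so the variational theory applies without modification. I would eliminate the pressure by taking the divergence of the two momentum equations, which yields a Poisson problem $\Delta p=\nabla\cdot F$ with compatible boundary conditions, recover an $H^1$ bound on $p$ and $H^2$ bounds on $(\tilde u,\tilde v)$ in terms of $\|F_i\|_{L^2}$ and $\|\nabla_\varepsilon(u,v)\|_{L^2}$, and then invoke the same Agmon inequality to obtain $\|u\|_{L^\infty}$ and $\sqrt\varepsilon\,\|v\|_{L^\infty}$.

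Finally, the small loss $\varepsilon^{-\gamma/16}$ is produced in the Agmon/interpolation step. Because $L^\infty$ is critical for $H^1$ in two dimensions, bounding the supremum requires slightly more than the gradient, and expressing the extra second-derivative content back in terms of the a priori norms on the tall domain $\Omega_\varepsilon$ (of height $\varepsilon^{-1/2}$) costs a negative power of $\varepsilon$; choosing the interpolation exponent appropriately keeps this power no worse than $\varepsilon^{-\gamma/16}$ for any fixed $\gamma>0$. The main obstacle is the velocity--pressure step: controlling the pressure and establishing $H^2$ regularity for the anisotropic Stokes system under these mixed inflow/outflow boundary conditions, while keeping the $\varepsilon$-dependence sharp enough that the total loss remains $\varepsilon^{-\gamma/16}$. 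The magnetic equations, by contrast, are routine.
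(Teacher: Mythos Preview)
Your proposal captures the high-level structure correctly (the decoupling of the magnetic and velocity--pressure blocks, the need for elliptic/Stokes regularity, and an embedding step to pass to $L^\infty$), and you rightly identify the velocity--pressure coupling as the crux. However, the specific route via rescaling to $\Omega_1$, $H^2$ regularity, and Agmon's inequality does not yield the stated $\varepsilon$-dependence, even in the ``routine'' magnetic case.

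The issue is quantitative. Under $Y=\sqrt{\varepsilon}\,y$ one has $\|\nabla_{X,Y}\tilde f\|_{L^2(\Omega_1)}=\varepsilon^{-1/4}\|\nabla_\varepsilon f\|_{L^2(\Omega_\varepsilon)}$, so the $H^1(\Omega_1)$ norm already carries a factor $\varepsilon^{-1/4}$; the $H^2(\Omega_1)$ norm coming from elliptic regularity (forcing $\varepsilon^{-1}\tilde F$, with $\|\tilde F\|_{L^2(\Omega_1)}=\varepsilon^{1/4}\|F\|_{L^2(\Omega_\varepsilon)}$) carries $\varepsilon^{-3/4}$. Agmon then gives $\|f\|_{L^\infty}\lesssim\|f\|_{L^2}^{1/2}\|f\|_{H^2}^{1/2}\sim\varepsilon^{-1/2}$ (or $\varepsilon^{-1/4}$ if one uses the sharper Poincar\'e bound in the $x$-direction for $\|f\|_{L^2}$), and no interpolation between $H^1$ and $H^2$ can do better than $\varepsilon^{-1/4}$. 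Since the claim is $\varepsilon^{-\gamma/16}$ for arbitrarily small $\gamma>0$, this is a genuine gap; the phrase ``choosing the interpolation exponent appropriately'' does not repair it.

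The paper gives no self-contained proof here; it defers to the argument of Ding--Lin--Xie (the reference \cite{DLX2021}), itself an adaptation of Guo--Nguyen. That argument does not rely on isotropic $H^2$ control on the rescaled rectangle. It works directly in the anisotropic setting and uses two ingredients absent from your plan: first, the divergence-free constraints, which give $\|u_x\|_{L^2(\Omega_\varepsilon)}\le\|\nabla_\varepsilon v\|_{L^2}$ and $\|h_x\|_{L^2}\le\|\nabla_\varepsilon g\|_{L^2}$ without any $\varepsilon^{-1/2}$ penalty; second, $W^{k,q}$ Stokes/elliptic estimates with $q$ chosen as a function of $\gamma$, so that the Sobolev-embedding loss is $\varepsilon^{-c\gamma}$ rather than a fixed negative power. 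It is precisely this $q$-dependent step that manufactures the factor $\varepsilon^{-\gamma/16}$ together with the $\gamma$-dependent constant $C_{\gamma,L}$, and it cannot be reproduced by an Agmon/$H^2$ scheme with a fixed interpolation exponent.
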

The detailed proof of this proposition can be obtained by modifying the results of \cite{DLX2021}.

\begin{proof}[Proof of Theorem \ref{b1.1}]
 We will apply the standard contraction mapping principle to prove the existence of the solutions for the nonlinear problem.
First, we define the function space $\mathbb{X}$  by its norm
\begin{align}\label{9.9}
\left\|(u^{\varepsilon}, v^{\varepsilon},h^{\varepsilon}, g^{\varepsilon})\right\|_{\mathbb{X}}:=&\left\|\nabla_{\varepsilon} u^{\varepsilon}\right\|_{L^2(\Omega_{\varepsilon})}
+\left\|\nabla_{\varepsilon}v^{\varepsilon}\right\|_{L^2
(\Omega_{\varepsilon})}+\left\|\nabla_{\varepsilon} h^{\varepsilon}\right\|_{L^2(\Omega_{\varepsilon})}
+\left\|\nabla_{\varepsilon} g^{\varepsilon}\right\|_{L^2(\Omega_{\varepsilon})}\nonumber\\
&+\varepsilon^{\frac{\gamma}{8}}\left\|u^{\varepsilon}\right\|_{L^{\infty}(\Omega_{\varepsilon})}
+\varepsilon^{\frac{\gamma}{8}}\left\|h^{\varepsilon}\right\|_{L^{\infty}(\Omega_{\varepsilon})}
+\varepsilon^{\frac{1}{2}+\frac{\gamma}{8}}(\left\|v^{\varepsilon}\right\|_{L^{\infty}
(\Omega_{\varepsilon})}+\left\|g^{\varepsilon}\right\|_{L^{\infty}
(\Omega_{\varepsilon})}),
\end{align}
where $\nabla_{\varepsilon}:=\partial_y+\sqrt{\varepsilon} \partial_x$.
Next, we select a subspace of $\mathbb{X}$
$$
\mathbb{X}_K:=\left\{\left.(u^{\varepsilon}, v^{\varepsilon},h^{\varepsilon}, g^{\varepsilon}) \in \mathbb{X} \right| \left\|(u^{\varepsilon}, v^{\varepsilon},h^{\varepsilon}, g^{\varepsilon})\right\|_{\mathbb{X}} \leq K\right\},
$$
where $K$ will be fixed later.
For each $(u^{\varepsilon}, v^{\varepsilon},h^{\varepsilon}, g^{\varepsilon}) $ of $\mathbb{X}_K$, we solve the following linear problem:
\begin{align}\label{9.10}
\left\{\begin{array}{l}
u_s \bar{u}_x^{\varepsilon}+\bar{u}^{\varepsilon} u_{s x}+v_s \bar{u}_y^{\varepsilon}+\bar{v}^{\varepsilon} u_{s y}-(h_s \bar{h}_x^{\varepsilon}+\bar{h}^{\varepsilon} h_{s x}+g_s \bar{h}_y^{\varepsilon}+\bar{g}^{\varepsilon} h_{s y})+\bar{p}_x^{\varepsilon}-\nu_{1}\Delta_{\varepsilon} \bar{u}^{\varepsilon}=R_1(u^{\varepsilon}, v^{\varepsilon},h^{\varepsilon}, g^{\varepsilon}), \\
u_s \bar{v}_x^{\varepsilon}+\bar{u}^{\varepsilon} v_{s x}+v_s \bar{v}_y^{\varepsilon}+\bar{v}^{\varepsilon} v_{s y}
-(h_s \bar{g}_x^{\varepsilon}+\bar{h}^{\varepsilon} g_{s x}+g_s \bar{g}_y^{\varepsilon}+\bar{g}^{\varepsilon} g_{s y})+\frac{\bar{p}_y^{\varepsilon}}{\varepsilon}-\nu_{2}\Delta_{\varepsilon} \bar{v}^{\varepsilon}=R_2(u^{\varepsilon},v^{\varepsilon},
h^{\varepsilon},g^{\varepsilon}), \\
u_s \bar{h}_x^{\varepsilon}+\bar{u}^{\varepsilon} h_{s x}+v_s \bar{h}_y^{\varepsilon}+\bar{v}^{\varepsilon} h_{s y}-(h_s \bar{u}_x^{\varepsilon}+\bar{h}^{\varepsilon} u_{s x}+g_s \bar{u}_y^{\varepsilon}+\bar{g}^{\varepsilon} u_{s y})-\nu_{3}\Delta_{\varepsilon} \bar{h}^{\varepsilon}=R_3(u^{\varepsilon}, v^{\varepsilon},h^{\varepsilon}, g^{\varepsilon}), \\
u_s \bar{g}_x^{\varepsilon}+\bar{u}^{\varepsilon} g_{s x}+v_s \bar{g}_y^{\varepsilon}+\bar{v}^{\varepsilon} g_{s y}
-(h_s \bar{v}_x^{\varepsilon}+\bar{h}^{\varepsilon} v_{s x}+g_s \bar{v}_y^{\varepsilon}+\bar{g}^{\varepsilon} v_{s y})-\nu_{4}\Delta_{\varepsilon} \bar{g}^{\varepsilon}=R_4(u^{\varepsilon},v^{\varepsilon},
h^{\varepsilon},g^{\varepsilon}), \\
\bar{u}_x^{\varepsilon}+\bar{v}_y^{\varepsilon}
=\bar{h}_x^{\varepsilon}+\bar{g}_y^{\varepsilon}=0.
\end{array}\right.
\end{align}
By using the Proposition ${\ref{b9.1}}$, we have
\begin{align}\label{9.11}
&\|\nabla_{\varepsilon} \bar{u}^{\varepsilon}\|_{L^2(\Omega_{\varepsilon})}
+\|\nabla_{\varepsilon} \bar{v}^{\varepsilon}\|_{L^2(\Omega_{\varepsilon})}
+\|\nabla_{\varepsilon} \bar{h}^{\varepsilon}\|_{L^2(\Omega_{\varepsilon})}
+\|\nabla_{\varepsilon} \bar{g}^{\varepsilon}\|_{L^2(\Omega_{\varepsilon})}\nonumber\\
\lesssim &\|R_1\|_{L^2(\Omega_{\varepsilon})}
+\|R_3\|_{L^2(\Omega_{\varepsilon})}
+\sqrt{\varepsilon}(\|R_2\|_{L^2(\Omega_{\varepsilon})}
+\|R_4\|_{L^2(\Omega_{\varepsilon})}).
\end{align}
We now estimate $R_i (i=1,2,3,4)$ of $(\ref{9.11})$. Applying the Proposition $\ref{b8.4}$, it is clear that
$$
\varepsilon^{-\gamma-\frac{n}{2}}(\|R_{a p p}^1\|_{L^2(\Omega_{\varepsilon})}
+\|R_{a p p}^3\|_{L^2(\Omega_{\varepsilon})}
+\sqrt{\varepsilon}\|R_{a p p}^2\|_{L^2(\Omega_{\varepsilon})}
+\sqrt{\varepsilon}\|R_{a p p}^4\|_{L^2(\Omega_{\varepsilon})}) \leq C(L, \xi) \varepsilon^{-\gamma-\xi+\frac{1}{16}}.
$$
For $R_1$, the other terms are estimated as follows
$$
\begin{aligned}
& \varepsilon^{\frac{n}{2}}\|(u_p^n+\varepsilon^\gamma u^{\varepsilon}) u_x^{\varepsilon}+(v_p^n+\varepsilon^\gamma v^{\varepsilon}) u_y^{\varepsilon}\|_{L^2(\Omega_{\varepsilon})} \\
\leq & \varepsilon^{\frac{n}{2}}[(\|u_p^n\|_{L^{\infty}}
+\varepsilon^\gamma\|u^{\varepsilon}\|_{L^{\infty}})
\|v_y^{\varepsilon}\|_{L^2}+(\|v_p^n\|_{L^{\infty}}
+\varepsilon^\gamma \|v^{\varepsilon}\|_{L^{\infty}})
\|u_y^{\varepsilon}\|_{L^2}] \\
\leq & \varepsilon^{\frac{n}{2}}\|(u_p^n, v_p^n)\|_{L^{\infty}}\|(u^{\varepsilon}, v^{\varepsilon}, h^{\varepsilon}, g^{\varepsilon})\|_{\mathbb{X}}
+\varepsilon^{\frac{7 \gamma}{8}+\frac{n-1}{2}}\|(u^{\varepsilon}, v^{\varepsilon}, h^{\varepsilon}, g^{\varepsilon})\|_{\mathbb{X}}^2 \\
\leq & C(L, \xi) \varepsilon^{\frac{n}{2}-\frac{3}{16}-\xi} K+\varepsilon^{\frac{7 \gamma}{8}+\frac{n-1}{2}}K^2,\\
& \varepsilon^{\frac{n}{2}}\|(h_p^n+\varepsilon^\gamma h^{\varepsilon}) h_x^{\varepsilon}+(g_p^n+\varepsilon^\gamma g^{\varepsilon}) h_y^{\varepsilon}\|_{L^2(\Omega_{\varepsilon})} \\
\leq & \varepsilon^{\frac{n}{2}}[(\|h_p^n\|_{L^{\infty}}
+\varepsilon^\gamma\|h^{\varepsilon}\|_{L^{\infty}})
\|g_y^{\varepsilon}\|_{L^2}+(\|g_p^n\|_{L^{\infty}}
+\varepsilon^\gamma \|g^{\varepsilon}\|_{L^{\infty}})
\|h_y^{\varepsilon}\|_{L^2}] \\
\leq & \varepsilon^{\frac{n}{2}}\|(h_p^n, g_p^n)
\|_{L^{\infty}}\|(u^{\varepsilon}, v^{\varepsilon}, h^{\varepsilon},g^{\varepsilon})\|_{\mathbb{X}}
+\varepsilon^{\frac{7 \gamma}{8}+\frac{n-1}{2}}\|(u^{\varepsilon}, v^{\varepsilon}, h^{\varepsilon}, g^{\varepsilon})\|_{\mathbb{X}}^2 \\
\leq & C(L, \xi) \varepsilon^{\frac{n}{2}-\frac{3}{16}-\xi} K+\varepsilon^{\frac{7 \gamma}{8}+\frac{n-1}{2}}K^2,
\end{aligned}
$$
in which we use the divergence free conditions and the estimates of $(u_p^n, v_p^n, h_p^n, g_p^n)$,
and
\begin{align*}
&\varepsilon^{\frac{n}{2}}\|u^{\varepsilon} u_{p x}^n
+v^{\varepsilon} u_{p y}^n +h^{\varepsilon} h_{p x}^n+g^{\varepsilon} h_{p y}^n\|_{L^2(\Omega_{\varepsilon})} \\
\leq & C \varepsilon^{\frac{n}{2}}\left[\|u_y^{\varepsilon}
\|_{L^2} \sup_x\|\langle y\rangle^l u_{p x}^n\|_{L^2}+\|v_y^{\varepsilon}\|_{L^2} \sup_x\|\langle y\rangle^l u_{p y}^n\|_{L^2}\right. \\
 & \left.+\|h_y^{\varepsilon}\|_{L^2} \sup_x\|\langle y\rangle^l h_{p x}^n\|_{L^2}
 +\|g_y^{\varepsilon}\|_{L^2} \sup_x\|\langle y\rangle^l h_{p y}^n\|_{L^2}\right]\\
 \leq & C(L, \xi) \varepsilon^{\frac{n}{2}-\frac{3}{16}-\xi}\|(u^{\varepsilon}, v^{\varepsilon},h^{\varepsilon}, g^{\varepsilon})\|_ \mathbb{X} \leq C(L, \xi) \varepsilon^{\frac{n}{2}-\frac{3}{16}-\xi} K,
\end{align*}
in which we have used the fact that $\left|(u^{\varepsilon}, v^{\varepsilon}, h^{\varepsilon}, g^{\varepsilon})\right| \leq \sqrt{y}\left\|(u_y^{\varepsilon}, v_y^{\varepsilon},h_y^{\varepsilon}, g_y^{\varepsilon})\right\|_{L^2(0, \varepsilon^{-1/2})}$.

Using  similar methods to estimate $R_2$, we obtain
$$
\begin{aligned}
& \varepsilon^{\frac{n}{2}}\|(u_p^n+\varepsilon^\gamma u^{\varepsilon}) v_x^{\varepsilon}+(v_p^n+\varepsilon^\gamma v^{\varepsilon}) v_y^{\varepsilon}\|_{L^2(\Omega_{\varepsilon})} \\
\leq & \varepsilon^{\frac{n-1}{2}} (\|u_p^n\|_{L^{\infty}}
+\varepsilon^\gamma\|u^{\varepsilon}\|_{L^{\infty}})
\|\sqrt{\varepsilon} v_x^{\varepsilon}\|_{L^2}
+\varepsilon^{\frac{n}{2}}(\|v_p^n\|_{L^{\infty}}
+\varepsilon^\gamma\|v^{\varepsilon}\|_{L^{\infty}})
\|v_y^{\varepsilon}\|_{L^2} \\
\leq & C \varepsilon^{\frac{n-1}{2}}\|(u_p^n, v_p^n)\|_{L^{\infty}}\|(u^{\varepsilon}, v^{\varepsilon},h^{\varepsilon}, g^{\varepsilon})\|_\mathbb{X}
+\varepsilon^{\frac{7 \gamma}{8}+\frac{n-1}{2}}\|(u^{\varepsilon}, v^{\varepsilon},h^{\varepsilon}, g^{\varepsilon})\|_{\mathbb{X}}^2 \\
\leq & C(L, \xi) \varepsilon^{\frac{n-1}{2}-\frac{3}{16}-\xi} K
+\varepsilon^{\frac{7 \gamma}{8}+\frac{n-1}{2}} K^2,
\end{aligned}
$$

$$
\begin{aligned}
& \varepsilon^{\frac{n}{2}}\|(h_p^n+\varepsilon^\gamma h^{\varepsilon}) g_x^{\varepsilon}+(g_p^n+\varepsilon^\gamma g^{\varepsilon}) g_y^{\varepsilon}\|_{L^2(\Omega_{\varepsilon})} \\
\leq & \varepsilon^{\frac{n-1}{2}} (\|h_p^n\|_{L^{\infty}}
+\varepsilon^\gamma\|h^{\varepsilon}\|_{L^{\infty}})
\|\sqrt{\varepsilon} g_x^{\varepsilon}\|_{L^2}
+\varepsilon^{\frac{n}{2}}(\|g_p^n\|_{L^{\infty}}
+\varepsilon^\gamma\|g^{\varepsilon}\|_{L^{\infty}})
\|g_y^{\varepsilon}\|_{L^2} \\
\leq & C \varepsilon^{\frac{n-1}{2}} \|(h_p^n,g_p^n)\|_{L^{\infty}}\|(u^{\varepsilon}, v^{\varepsilon},h^{\varepsilon}, g^{\varepsilon})\|_\mathbb{X}
+\varepsilon^{\frac{7 \gamma}{8}+\frac{n-1}{2}}\|(u^{\varepsilon}, v^{\varepsilon},h^{\varepsilon},
g^{\varepsilon})\|_{\mathbb{X}}^2 \\
\leq & C(L, \xi) \varepsilon^{\frac{n-1}{2}-\frac{3}{16}-\xi} K
+\varepsilon^{\frac{7 \gamma}{8}+\frac{n-1}{2}} K^2,
\end{aligned}
$$
and
\begin{align*}
& \varepsilon^{\frac{n}{2}}\|u^{\varepsilon} v_{p x}^n+v^{\varepsilon} v_{p y}^n+h^{\varepsilon} g_{p x}^n+g^{\varepsilon} g_{p y}^n\|_{L^2(\Omega_{\varepsilon})} \\
\leq & C \varepsilon^{\frac{n}{2}}\left[\|u^{\varepsilon}\|_{L^{\infty}}
\|v_{p x}^n\|_{L^2}+\|v_y^{\varepsilon}\|_{L^2} \sup_x\|\langle y\rangle^l v_{p y y}^n\|_{L^2}\right. \\
& \left. +\|h^{\varepsilon}\|_{L^{\infty}}
\|g_{p x}^n\|_{L^2}+\|g_y^{\varepsilon}\|_{L^2} \sup_x\|\langle y\rangle^l g_{p y y}^n\|_{L^2}\right]\\
\leq & C(L, \xi) \varepsilon^{\frac{n-1}{2}-\frac{3}{16}-\xi-\frac{\gamma}{8}}\|(u^{\varepsilon}, v^{\varepsilon}, h^{\varepsilon}, g^{\varepsilon})\|_ \mathbb{X} \leq C(L, \xi) \varepsilon^{\frac{n-1}{2}-\frac{3}{16}-\xi-\frac{\gamma}{8}} K.
\end{align*}

Therefore, we obtain
\begin{align}\label{9.12}
& \|R_1(u^{\varepsilon}, v^{\varepsilon},h^{\varepsilon}, g^{\varepsilon})\|_{L^2(\Omega_{\varepsilon})}
+\sqrt{\varepsilon}\|R_2(u^{\varepsilon}, v^{\varepsilon},h^{\varepsilon}, g^{\varepsilon})\|_{L^2(\Omega_{\varepsilon})} \nonumber\\
& \leq C(L, \xi) \varepsilon^{-\xi-\gamma+\frac{1}{16}}+C(L, \xi) \varepsilon^{\frac{n}{2}-\frac{3}{16}-\xi-\frac{\gamma}{8}} K+\varepsilon^{\frac{7 \gamma}{8}+\frac{n-1}{2}} K^2.
\end{align}
Similarly, we have
\begin{align}\label{9.13}
& \|R_3(u^{\varepsilon}, v^{\varepsilon},h^{\varepsilon}, g^{\varepsilon})\|_{L^2(\Omega_{\varepsilon})}
+\sqrt{\varepsilon}\|R_4(u^{\varepsilon}, v^{\varepsilon},h^{\varepsilon}, g^{\varepsilon})\|_{L^2(\Omega_{\varepsilon})} \nonumber\\
& \leq C(L, \xi) \varepsilon^{-\xi-\gamma+\frac{1}{16}}+C(L, \xi) \varepsilon^{\frac{n}{2}-\frac{3}{16}-\xi-\frac{\gamma}{8}} K+\varepsilon^{\frac{7 \gamma}{8}+\frac{n-1}{2}} K^2.
\end{align}
This leads to
\begin{align}\label{9.14}
& \|\nabla_{\varepsilon}\bar{u}^{\varepsilon}\|_{L^2(\Omega_{\varepsilon})}
+\|\nabla_{\varepsilon} \bar{v}^{\varepsilon}\|_{L^2(\Omega_{\varepsilon})}
+\|\nabla_{\varepsilon} \bar{h}^{\varepsilon}\|_{L^2(\Omega_{\varepsilon})}
+\|\nabla_{\varepsilon}\bar{g}^{\varepsilon}\|_{L^2(\Omega_{\varepsilon})}\nonumber\\
\leq & C(u_s, v_s,h_s, g_s, L, \xi) \varepsilon^{-\xi-\gamma+\frac{1}{16}}+C(u_s, v_s, h_s, g_s, L, \xi) \varepsilon^{\frac{n}{2}-\frac{3}{16}-\xi-\frac{\gamma}{8}} K \nonumber\\
&+C(u_s, v_s, h_s, g_s) \varepsilon^{\frac{7 \gamma}{8}+\frac{n-1}{2}} K^2.
\end{align}
We now give  $L^{\infty}$ bounds of $(\bar{u}^{\varepsilon}, \bar{v}^{\varepsilon},\bar{h}^{\varepsilon}, \bar{g}^{\varepsilon})$. From the equations $(\ref{9.2})$ and $(\ref{9.7})$, we get
\begin{align}\label{9.15}
\left\{\begin{aligned}
& F_1:=R_1-(u_s \bar{u}_x^{\varepsilon}+u_{s x} \bar{u}^{\varepsilon}+v_s \bar{u}_y^{\varepsilon}+\bar{v}^{\varepsilon} u_{s y})+(h_s \bar{h}_x^{\varepsilon}+h_{s x} \bar{h}^{\varepsilon}+g_s \bar{h}_y^{\varepsilon}+\bar{g}^{\varepsilon} h_{s y}), \\
& F_2:=R_2-(u_s \bar{v}_x^{\varepsilon}+v_{s x} \bar{u}^{\varepsilon}+v_s \bar{v}_y^{\varepsilon}+\bar{v}^{\varepsilon} v_{s y})+(h_s \bar{g}_x^{\varepsilon}+g_{s x} \bar{h}^{\varepsilon}+g_s \bar{g}_y^{\varepsilon}+\bar{g}^{\varepsilon} g_{s y}),\\
& F_3:=R_3-(u_s \bar{h}_x^{\varepsilon}+h_{s x} \bar{u}^{\varepsilon}+v_s \bar{h}_y^{\varepsilon}+\bar{v}^{\varepsilon} h_{s y})+(h_s \bar{u}_x^{\varepsilon}+u_{s x} \bar{h}^{\varepsilon}+g_s \bar{u}_y^{\varepsilon}+\bar{g}^{\varepsilon} u_{s y}), \\
& F_4:=R_4-(u_s \bar{g}_x^{\varepsilon}+g_{s x} \bar{u}^{\varepsilon}+v_s \bar{g}_y^{\varepsilon}+\bar{v}^{\varepsilon} g_{s y})+(h_s \bar{v}_x^{\varepsilon}+v_{s x} \bar{h}^{\varepsilon}+g_s \bar{v}_y^{\varepsilon}+\bar{g}^{\varepsilon} v_{s y}).
\end{aligned}\right.
\end{align}
According to the Proposition $\ref{b9.2}$, we obtain
\begin{align}\label{9.16}
& \varepsilon^{\frac{\gamma}{8}}\|\bar{u}^{\varepsilon}\|_{L^{\infty}
(\Omega_{\varepsilon})}
+\varepsilon^{\frac{\gamma}{8}}\|\bar{h}^{\varepsilon}\|_{L^{\infty}(\Omega_{\varepsilon})}
+\varepsilon^{\frac{1}{2}+\frac{\gamma}{8}}(\|\bar{v}^{\varepsilon}\|_{L^{\infty}
(\Omega_{\varepsilon})}+\|\bar{g}^{\varepsilon}\|_{L^{\infty}
(\Omega_{\varepsilon})}) \nonumber\\
\leq & C_{\gamma, L} \varepsilon^{\frac{\gamma}{16}}(\|\nabla_{\varepsilon} \bar{u}^{\varepsilon}\|_{L^2}+\|\nabla_{\varepsilon} \bar{v}^{\varepsilon}\|_{L^2}+\|\nabla_{\varepsilon} \bar{h}^{\varepsilon}\|_{L^2}+\|\nabla_{\varepsilon} \bar{g}^{\varepsilon}\|_{L^2}) \nonumber\\
& +C_{\gamma, L} \varepsilon^{\frac{\gamma}{16}}(\|F_1\|_{L^2}
+\|F_3\|_{L^2})+C_{\gamma, L} \varepsilon^{\frac{1}{2}+\frac{\gamma}{16}}(\|F_2\|_{L^2}
+\|F_4\|_{L^2}).
\end{align}
Using $(\ref{9.14})$, we  obtain the $L^2$ estimate on $(\nabla_{\varepsilon} \bar{u}^{\varepsilon}, \nabla_{\varepsilon} \bar{v}^{\varepsilon},\nabla_{\varepsilon} \bar{h}^{\varepsilon}, \nabla_{\varepsilon} \bar{g}^{\varepsilon})$.
 Next, we establaish estimates of $F_i ( i=1,2,3,4)$ in $(\ref{9.16})$.
We have already obtained the estimates of $R_i(i=1,2,3,4)$, hence, we only estimate the remaining terms of $F_i$. For the remaining terms of $F_{1}$  in $(\ref{9.15})$, we have
\begin{align}\label{9.18}
\|u_{s x} \bar{u}^{\varepsilon}+u_{s y} \bar{v}^{\varepsilon}\|_{L^2(\Omega_{\varepsilon})}
\leq &(\|\bar{u}_y^{\varepsilon}\|_{L^2} \sup_x\|\sqrt{y} u_{s x}\|_{L^2}
+\|\bar{v}_y^{\varepsilon}\|_{L^2} \sup_x\|\sqrt{y} u_{s y}\|_{L^2}) \nonumber\\
\leq & C(L, u_{s}, v_{s}, h_{s}, g_{s}) \|(\nabla_{\varepsilon} \bar{u}^{\varepsilon}, \nabla_{\varepsilon} \bar{v}^{\varepsilon})\|_{L^2}, \nonumber\\
\|u_s \bar{u}_x^{\varepsilon}+v_s \bar{u}_y^{\varepsilon}\|_{L^2(\Omega_{\varepsilon})}
\leq & \|(u_s, v_s)\|_{L^{\infty}}
\|(\bar{u}_y^{\varepsilon}, \bar{v}_y^{\varepsilon})\|_{L^2}\nonumber\\
\leq & C (L, u_{s}, v_{s}, h_{s}, g_{s}) \|(\nabla_{\varepsilon} \bar{u}^{\varepsilon}, \nabla_{\varepsilon} \bar{v}^{\varepsilon})\|_{L^2},\\
\|h_{s x} \bar{h}^{\varepsilon}+h_{s y}
\bar{g}^{\varepsilon}\|_{L^2(\Omega_{\varepsilon})}
\leq &(\|\bar{h}_y^{\varepsilon}\|_{L^2} \sup_x\|\sqrt{y} h_{s x}\|_{L^2}
+\|\bar{g}_y^{\varepsilon}\|_{L^2} \sup_x\|\sqrt{y} h_{s y}\|_{L^2}) \nonumber\\
\leq & C (L, u_{s}, v_{s}, h_{s}, g_{s}) \|(\nabla_{\varepsilon} \bar{h}^{\varepsilon}, \nabla_{\varepsilon} \bar{g}^{\varepsilon})\|_{L^2}, \nonumber\\
\|h_s \bar{h}_x^{\varepsilon}+g_s \bar{h}_y^{\varepsilon}\|_{L^2(\Omega_{\varepsilon})} \leq &\|(h_s, g_s)\|_{L^{\infty}}\|(\bar{h}_y^{\varepsilon}, \bar{g}_y^{\varepsilon})\|_{L^2}
\leq C(L, u_{s}, v_{s}, h_{s}, g_{s}) \|(\nabla_{\varepsilon} \bar{h}^{\varepsilon}, \nabla_{\varepsilon} \bar{g}^{\varepsilon})\|_{L^2}.\nonumber
\end{align}
Similarly, we obtain the estimates of the remaining terms in $F_{2}$
\begin{align}\label{9.19}
\|v_{s x} \bar{u}^{\varepsilon}+v_{s y} \bar{v}^{\varepsilon}\|_{L^2(\Omega_{\varepsilon})}
& \leq (\|\bar{u}_y^{\varepsilon}\|_{L^2} \sup_x\|\sqrt{y} v_{s x}\|_{L^2}+\|\bar{v}_y^{\varepsilon}\|_{L^2} \sup_x\|\sqrt{y} u_{s x}\|_{L^2}) \nonumber\\
& \leq C(L, u_{s}, v_{s}, h_{s}, g_{s})  \varepsilon^{-\frac{1}{2}-\xi}\|(\nabla_{\varepsilon} \bar{u}^{\varepsilon}, \nabla_{\varepsilon} \bar{v}^{\varepsilon})\|_{L^2}, \nonumber\\
\|u_s \bar{v}_x^{\varepsilon}+v_s \bar{v}_y^{\varepsilon}\|_{L^2(\Omega_{\varepsilon})}
& \leq \|(u_s, v_s)\|_{L^{\infty}}
\|(\bar{v}_x^{\varepsilon}, \bar{v}_y^{\varepsilon})\|_{L^2}\nonumber\\
& \leq C (L, u_{s}, v_{s}, h_{s}, g_{s}) \varepsilon^{-\frac{1}{2}-\xi}\|(\nabla_{\varepsilon} \bar{u}^{\varepsilon}, \nabla_{\varepsilon} \bar{v}^{\varepsilon})\|_{L^2},\\
\|g_{s x} \bar{h}^{\varepsilon}+g_{s y} \bar{g}^{\varepsilon}\|_{L^2(\Omega_{\varepsilon})}
& \leq(\|\bar{h}_y^{\varepsilon}\|_{L^2} \sup_x\|\sqrt{y} g_{s x}\|_{L^2}
+\|\bar{g}_y^{\varepsilon}\|_{L^2} \sup_x\|\sqrt{y} h_{s x}\|_{L^2}) \nonumber\\
& \leq C (L, u_{s}, v_{s}, h_{s}, g_{s})  \varepsilon^{-\frac{1}{2}-\xi}\|(\nabla_{\varepsilon} \bar{h}^{\varepsilon}, \nabla_{\varepsilon} \bar{g}^{\varepsilon})\|_{L^2}, \nonumber\\
\|h_s \bar{g}_x^{\varepsilon}+g_s \bar{g}_y^{\varepsilon}\|_{L^2(\Omega_{\varepsilon})} & \leq \|(h_s, g_s)\|_{L^{\infty}}\|(\bar{g}_x^{\varepsilon},\bar{g}_y^{\varepsilon})\|_{L^2}
\leq C (L, u_{s}, v_{s}, h_{s}, g_{s})  \varepsilon^{-\frac{1}{2}-\xi}\|(\nabla_{\varepsilon} \bar{h}^{\varepsilon}, \nabla_{\varepsilon} \bar{g}^{\varepsilon})\|_{L^2},\nonumber
\end{align}
where we have used
\begin{align*}
\sup _x\left\|\sqrt{y} u_{s x}\right\|_{L^2\left(I_{\varepsilon}\right)} \leq &\sum_{i=0}^{n-1}\sqrt{\varepsilon}^{i} \sup _x\left\|\langle y\rangle u_{p x}^i\right\|_{L^2}
+\sum_{i=1}^{n}\sqrt{\varepsilon}^{i-1}\sup _x\left\|v_{e Y}^i\right\|_{L^2}\\
\leq &\sum_{i=0}^{n-1}\sqrt{\varepsilon}^{i} \sup _x\left\|\langle y\rangle u_{p x}^i\right\|_{L^2}
+\sum_{i=1}^{n}\sqrt{\varepsilon}^{i-1}(\sup _x\left\|v_{e xY}^i\right\|_{L^2}+\sup _x\left\|\partial_{Y}V_{b 0}^i\right\|_{L^2}),\\
\sup _x\left\|\sqrt{y} u_{s y}\right\|_{L^2\left(I_{\varepsilon}\right)} \leq &\left\|u_{e Y}^0\right\|_{L^2}+\sum_{i=0}^{n-1}\sqrt{\varepsilon}^{i}\sup _x\left\|\langle y\rangle u_{p y}^i\right\|_{L^2}+\sum_{i=1}^{n}\sqrt{\varepsilon}^{i-1}(\left\|u_{b Y}^i\right\|_{L^2}+\left\|v_{e Y Y}^i\right\|_{L^2}),\\
\|(u_s,v_s)\|_{L^{\infty}(\Omega_{\varepsilon})}
\leq &\|u_e^0\|_{L^{\infty}}+\sum_{i=0}^{n-1}\sqrt{\varepsilon}^{i}\|(u_p^i, v_p^i)\|_{L^{\infty}}+\sum_{i=1}^{n}\sqrt{\varepsilon}^{i-1}\|(\sqrt{\varepsilon}u_e^i, v_e^i)\|_{L^{\infty}},
\end{align*}
and
\begin{align*}
\sup_x\|\sqrt{y} v_{s x}\|_{L^2(I_{\varepsilon})}
\leq & \sum_{i=0}^{n-1}\sqrt{\varepsilon}^i\sup_x\|\langle y\rangle v_{p x}^i\|_{L^2}
+\sum_{i=1}^{n}\sqrt{\varepsilon}^{i-{2}} \sup_x\|v_{e x}^i(x,\cdot)\|_{L^2}\\
 \leq & \sum_{i=0}^{n-1}\sqrt{\varepsilon}^i\sup_x\|\langle y\rangle v_{p x}^i\|_{L^2}
+\sum_{i=1}^{n}\sqrt{\varepsilon}^{i-{2}} \|v_{e x}^i(x,\cdot)\|_{L^2}^{\frac{1}{2}}\|v_{exx}^i(x,\cdot)\|_{L^2}^{\frac{1}{2}}.
\end{align*}
We apply the same arguments to these terms $\sup_{x}\|\langle y \rangle h_{s x}\|_{L^2(I_{\varepsilon})}, \sup_{x}\|\langle y \rangle h_{s y}\|_{L^2(I_{\varepsilon})}, \sup_{x}\|\langle y \rangle g_{s x}\|_{L^2(I_{\varepsilon})}$ and $\|(h_{s}, g_{s})\|_{L^{\infty} (\Omega_{\varepsilon})}$ of  $(\ref{9.18})$ and $(\ref{9.19})$. This completes the estimates of $F_1 ~\text{and}~ F_2$. Similarly, we follow the previous method to estimate $F_{3}, F_{4}$.
Combining all the above estimates into $(\ref{9.16})$, we obtain
\begin{align}\label{9.17}
& \varepsilon^{\frac{\gamma}{8}} \|\bar{u}^{\varepsilon}\|_{L^{\infty}(\Omega_{\varepsilon})}
+\varepsilon^{\frac{1}{2}+\frac{\gamma}{8}}\|\bar{v}^{\varepsilon}\|_{L^{\infty}
(\Omega_{\varepsilon})}
+\varepsilon^{\frac{\gamma}{8}}\|\bar{h}^{\varepsilon}\|_{L^{\infty}(\Omega_{\varepsilon})}
+\varepsilon^{\frac{1}{2}+\frac{\gamma}{8}}\|\bar{g}^{\varepsilon}\|_{L^{\infty}
(\Omega_{\varepsilon})}  \nonumber\\
\leq & C (u_s, v_s,h_s,g_s, L, \xi) \varepsilon^{\frac{1}{16}-\xi-\frac{15\gamma}{16}}+C (u_s, v_s, h_s,  g_s, L, \xi) \varepsilon^{\frac{n}{2}-\frac{3}{16}-\xi-\frac{\gamma}{16}} K \nonumber\\
&+C (u_s, v_s, h_s, g_s) \varepsilon^{\frac{15 \gamma}{16}+\frac{n-1}{2}} K^2+C(L,u_s, v_s, h_s, g_s) \varepsilon^{\frac{\gamma}{16}-\xi} \|(\bar{u}^{\varepsilon},\bar{v}^{\varepsilon},
\bar{h}^{\varepsilon},\bar{g}^{\varepsilon})\|_{\mathbb{X}},
\end{align}
where $\xi < \frac{\gamma}{16}$.
Combining $(\ref{9.14})$ with $(\ref{9.17})$, taking $\frac{1}{16}-\xi-\gamma \geq 0$ and $\varepsilon \ll 1$, we have
$$
\|(\bar{u}^{\varepsilon}, \bar{v}^{\varepsilon}, \bar{h}^{\varepsilon}, \bar{g}^{\varepsilon})\|_{\mathbb{X}} \leq C (u_s, v_s, h_s, g_s, L, \xi)+C (u_s, v_s,h_s, g_s, L, \xi) \varepsilon^{\frac{2n-1}{4}} K+C (u_s, v_s, h_s, g_s) \varepsilon^{\frac{7 \gamma}{8}+\frac{n-1}{2}} K^2 .
$$
Let $K:=C (u_s, v_s, h_s, g_s,L, \xi)+1$. To obtain $\|(\bar{u}^{\varepsilon}, \bar{v}^{\varepsilon},\bar{h}^{\varepsilon}, \bar{g}^{\varepsilon})\|_\mathbb{X} \leq K$, for any small $\varepsilon$, we must take
$$
C (u_s, v_s, h_s, g_s, L, \xi) \varepsilon^{\frac{2n-1}{4}} K+C (u_s, v_s, h_s, g_s) \varepsilon^{\frac{7 \gamma}{8}+\frac{n-1}{2}} K^2 \leq 1,\quad n\geq 3.
$$
This leads to $(\bar{u}^{\varepsilon}, \bar{v}^{\varepsilon},\bar{h}^{\varepsilon}, \bar{g}^{\varepsilon}) \in \mathbb{X}_{K}$. Therefore, there exists an operator $\mathbb{T}: (u^{\varepsilon}, v^{\varepsilon}, h^{\varepsilon}, g^{\varepsilon}) \mapsto (\bar{u}^{\varepsilon}, \bar{v}^{\varepsilon}, \bar{h}^{\varepsilon}, \bar{g}^{\varepsilon})$ that maps $\mathbb{X}_K$  to itself.
We now prove that the operator $\mathbb{T}$ is a contraction mapping.
For any two pairs $(u_1^{\varepsilon}, v_1^{\varepsilon},h_1^{\varepsilon}, g_1^{\varepsilon})$ and $(u_2^{\varepsilon}, v_2^{\varepsilon},h_2^{\varepsilon}, g_2^{\varepsilon})$ in $\mathbb{X}_K$, it holds that
\begin{align*}
&\|(\bar{u}_1^{\varepsilon}-\bar{u}_2^{\varepsilon}, \bar{v}_1^{\varepsilon}-\bar{v}_2^{\varepsilon},\bar{h}_1^{\varepsilon}
-\bar{h}_2^{\varepsilon}, \bar{g}_1^{\varepsilon}-\bar{g}_2^{\varepsilon})\|_\mathbb{X}\\ \leq & C (u_s, v_s, h_s, g_s,L,\xi)(\varepsilon^{\frac{n}{2}-\frac{3}{16}-\xi-\frac{\gamma}{8}}
+\varepsilon^{\frac{7 \gamma}{8}+\frac{n-1}{2}} K) \|(u_1^{\varepsilon}-u_2^{\varepsilon}, v_1^{\varepsilon}-v_2^{\varepsilon},h_1^{\varepsilon}-h_2^{\varepsilon}, g_1^{\varepsilon}-g_2^{\varepsilon})\|_\mathbb{X}.
\end{align*}
This indicates that for any small $ \varepsilon $, $\mathbb {T} $ is a contraction mapping. We can prove that the system $(\ref {9.2})$ has the unique solution through the standard contraction mapping theorem, which ends the proof.
\end{proof}



\section*{Appendix A. The derivation of any order ideal inner MHD profiles and MHD boundary layer profiles}
This section is devoted to deriving any order ideal MHD profiles 
and boundary layer profiles. 
Let us first consider the zeroth order boundary layer profile.

\subsection*{A.1 The zeroth order MHD boundary layer profile}
We first define
\begin{align}\label{2.1}\tag{{A.1}}
\left\{\begin{aligned}
u_{a p p}(x, y)=& U^{\varepsilon}(x,y)-\varepsilon^{\frac{n}{2}+\gamma}u^{\varepsilon}(x,y), 
\quad v_{a p p}(x, y)= V^{\varepsilon}(x,y)-\varepsilon^{\frac{n}{2}+\gamma}v^{\varepsilon}(x,y),\\
h_{a p p}(x,y)=& H^{\varepsilon}(x,y)-\varepsilon^{\frac{n}{2}+\gamma}h^{\varepsilon}(x,y),
\quad g_{a p p}(x, y)=G^{\varepsilon}(x,y)-\varepsilon^{\frac{n}{2}+\gamma}g^{\varepsilon}(x,y),\\
p_{a p p}(x, y)=&P^{\varepsilon}(x,y)-\varepsilon^{\frac{n}{2}+\gamma}p^{\varepsilon}(x,y).
\end{aligned}\right.
\end{align}
Substituting $({\ref{2.1}})$ into $(\ref{1.5})$ and matching order of $\varepsilon$, we have
\begin{align*}
&R_{a p p}^1+\varepsilon^{\gamma+\frac{n}{2}}\left[(u^{\varepsilon} \partial_x+v^{\varepsilon} \partial_y) u_{a p p}+(u_{a p p} \partial_x+v_{a p p} \partial_y) u^{\varepsilon}-(h^{\varepsilon} \partial_x+g^{\varepsilon} \partial_y) h_{a p p}
+p_x^{\varepsilon}\right.\notag\\
& \quad \left.-\nu_{1}\Delta_{\varepsilon} u^{\varepsilon}-(h_{a p p} \partial_x+g_{a p p} \partial_y) h^{\varepsilon}\right] +\varepsilon^{2 \gamma+n}\left[(u^{\varepsilon} \partial_x+v^{\varepsilon} \partial_y) u^{\varepsilon}-(h^{\varepsilon} \partial_x+g^{\varepsilon} \partial_y) h^{\varepsilon}\right]=0, \notag\\
& R_{a p p}^2+\varepsilon^{\gamma+\frac{n}{2}}\left[(u^{\varepsilon} \partial_x+v^{\varepsilon} \partial_y) v_{a p p}+(u_{a p p} \partial_x+v_{a p p} \partial_y) v^{\varepsilon}-(h^{\varepsilon} \partial_x+g^{\varepsilon} \partial_y) g_{a p p}
+\frac{p_y^{\varepsilon}}{\varepsilon}\right.\notag\\
& \quad \left.- \nu_{2}\Delta_{\varepsilon} v^{\varepsilon}-(h_{a p p} \partial_x+h_{a p p} \partial_y) g^{\varepsilon}\right] +\varepsilon^{2 \gamma+n}\left[(u^{\varepsilon} \partial_x+v^{\varepsilon} \partial_y) v^{\varepsilon}-(h^{\varepsilon} \partial_x+g^{\varepsilon} \partial_y) g^{\varepsilon}\right]=0, \notag\\
& R_{a p p}^3+\varepsilon^{\gamma+\frac{n}{2}}\left[(u^{\varepsilon} \partial_x+v^{\varepsilon} \partial_y) h_{a p p}+(u_{a p p} \partial_x+v_{a p p} \partial_y) h^{\varepsilon}-(h^{\varepsilon} \partial_x+g^{\varepsilon} \partial_y) u_{a p p}- \nu_{3}\Delta_{\varepsilon} h^{\varepsilon}\right.\notag\\
& \quad \left.-(h_{a p p} \partial_x+g_{a p p} \partial_y) u^{\varepsilon}\right] +\varepsilon^{2 \gamma+n}
\left[(u^{\varepsilon} \partial_x+v^{\varepsilon} \partial_y) h^{\varepsilon}-(h^{\varepsilon} \partial_x+g^{\varepsilon} \partial_y) u^{\varepsilon}\right]=0, \notag\\
& R_{a p p}^4+\varepsilon^{\gamma+\frac{n}{2}}\left[(u^{\varepsilon} \partial_x+v^{\varepsilon} \partial_y) g_{a p p}+(u_{a p p} \partial_x+v_{a p p} \partial_y) g^{\varepsilon}-(h^{\varepsilon} \partial_x+g^{\varepsilon} \partial_y) v_{a p p}-\nu_{4}\Delta_{\varepsilon} g^{\varepsilon}\right.\notag\\
& \quad \left.-(h_{a p p} \partial_x+g_{a p p} \partial_y) v^{\varepsilon}\right] +\varepsilon^{2 \gamma+n}
\left[(u^{\varepsilon} \partial_x+v^{\varepsilon} \partial_y) g^{\varepsilon}-(h^{\varepsilon} \partial_x+g^{\varepsilon} \partial_y) v^{\varepsilon}\right]=0,\notag
\end{align*}
and the following terms
\begin{align}\label{2.3}
 R_{a p p}^1 &:=(u_{a p p} \partial_x+v_{a p p} \partial_y) u_{a p p}-(h_{a p p} \partial_x+g_{a p p} \partial_y) h_{a p p}+\partial_x p_{a p p}-\nu_{1}\Delta_{\varepsilon} u_{a p p}, \notag\\
 R_{a p p}^2 &:=(u_{a p p} \partial_x+v_{a p p} \partial_y) v_{a p p}-(h_{a p p} \partial_x+g_{a p p} \partial_y) g_{a p p}+\frac{\partial_y p_{a p p}}{\varepsilon}-\nu_{2}\Delta_{\varepsilon} v_{a p p}, \notag\\
 R_{a p p}^3 &:=(u_{a p p} \partial_x+v_{a p p} \partial_y) h_{a p p}-(h_{a p p} \partial_x+g_{a p p} \partial_y) u_{a p p}-\nu_{3}\Delta_{\varepsilon} h_{a p p}, \notag\\
 R_{a p p}^4 &:=(u_{a p p} \partial_x+v_{a p p} \partial_y) g_{a p p}-(h_{a p p} \partial_x+g_{a p p} \partial_y) v_{a p p}-\nu_{4}\Delta_{\varepsilon} g_{a p p},\tag{A.2}
\end{align}
where $\Delta_{\varepsilon}=\varepsilon \partial_{x}^2+\partial_{y}^2$.
The zeroth order terms of $(\ref{2.3})$ are
\begin{align*}
\begin{aligned}
R^{1}_0= & [(u_e^0+u_p^0) \partial_x+(v_p^0+v_e^1) \partial_y](u_e^0+u_p^0) \\
&-[(h_e^0+h_p^0) \partial_x+(g_p^0+g_e^1) \partial_y](h_e^0+h_p^0)-\nu_{1}\partial_y^2(u_e^0+u_p^0), \\
R^{3}_0= & [(u_e^0+u_p^0)\partial_x+(v_p^0+v_e^1) \partial_y](h_e^0+h_p^0)\\
& -[(h_e^0+h_p^0) \partial_x+(g_p^0+g_e^1) \partial_y](u_e^0+u_p^0)-\nu_{3}\partial_y^2(h_e^0+h_p^0), \\
R^{4}_0= & [(u_e^0+u_p^0)\partial_x+(v_p^0+v_e^1) \partial_y](g_p^0+g_e^1)\\
& -[(h_e^0+h_p^0) \partial_x+(g_p^0+g_e^1) \partial_y](v_p^0+v_e^1)-\nu_{4}\partial_y^2(g_p^0+g_e^1).
\end{aligned}
\end{align*}
We have
$$
\begin{array}{ll}
(v_p^0+v_e^1) \partial_y u_e^0=\sqrt{\varepsilon}(v_p^0+v_e^1) \partial_Y u_e^0, & (g_p^0+g_e^1) \partial_y h_e^0=\sqrt{\varepsilon}(g_p^0+g_e^1) \partial_Y h_e^0, \\
(v_p^0+v_e^1) \partial_y h_e^0=\sqrt{\varepsilon}(v_p^0+v_e^1) \partial_Y h_e^0, & (g_p^0+g_e^1) \partial_y u_e^0=\sqrt{\varepsilon}(g_p^0+g_e^1) \partial_Y u_e^0, \\
(v_p^0+v_e^1) \partial_y g_e^1=\sqrt{\varepsilon}(v_p^0+v_e^1) \partial_Y g_e^1, & (g_p^0+g_e^1) \partial_y v_e^1=\sqrt{\varepsilon}(g_p^0+g_e^1) \partial_Y v_e^1.
\end{array}
$$
Therefore,
$$
\begin{aligned}
& u_e^0 \partial_x u_p^0+v_e^1 \partial_y u_p^0-h_e^0 \partial_x h_p^0-g_e^1 \partial_y h_p^0 \\
= & \,  u_e \partial_x u_p^0+\overline{v_e^1} \partial_y u_p^0+\sqrt{\varepsilon} y(u_{e Y}^0(\sqrt{\varepsilon} y) \partial_x u_p^0
+v_{e Y}^1(x,\sqrt{\varepsilon} y) \partial_y u_p^0) \\
& -h_e \partial_x h_p^0-\overline{g_e^1} \partial_y h_p^0-\sqrt{\varepsilon} y (h_{e Y}^0(\sqrt{\varepsilon} y) \partial_x h_p^0+g_{e Y}^1(x,\sqrt{\varepsilon} y) \partial_y h_p^0)+E_1^0,\\
& u_e^0 \partial_x h_p^0+v_e^1 \partial_y h_p^0-h_e^0 \partial_x u_p^0-g_e^1 \partial_y u_p^0 \\
= &  \,  u_e \partial_x h_p^0+\overline{v_e^1} \partial_y h_p^0+\sqrt{\varepsilon} y (u_{e Y}^0(\sqrt{\varepsilon} y) \partial_x h_p^0+v_{e Y}^1(x, \sqrt{\varepsilon} y) \partial_y h_p^0) \\
& -h_e \partial_x u_p^0-\overline{g_e^1} \partial_y u_p^0-\sqrt{\varepsilon} y (h_{e Y}^0(\sqrt{\varepsilon} y) \partial_x u_p^0+g_{e Y}^1(x, \sqrt{\varepsilon} y) \partial_y u_p^0)+E_3^0,
\end{aligned}
$$
and
$$
\begin{aligned}
& u_e^0 \partial_x g_p^0+v_e^1 \partial_y g_p^0+u_p^0 \partial_x g_e^1-h_e^0 \partial_x v_p^0-g_e^1 \partial_y v_p^0-h_p^0 \partial_x v_e^1\\
= &  \,  u_e \partial_x g_p^0+\overline{v_e^1} \partial_y g_p^0+\sqrt{\varepsilon} y(u_{e Y}^0(\sqrt{\varepsilon} y) \partial_x g_p^0
+v_{e Y}^1(x, \sqrt{\varepsilon} y) \partial_y g_p^0) \\
& -h_e \partial_x v_p^0-\overline{g_e^1} \partial_y v_p^0-\sqrt{\varepsilon} y(h_{e Y}^0(\sqrt{\varepsilon} y) \partial_x v_p^0
+g_{e Y}^1(x, \sqrt{\varepsilon} y) \partial_y v_p^0) \\
& +u_p^0 \overline{\partial_x g_e^1}-h_p^0 \overline{\partial_x v_e^1}+E_4^0,
\end{aligned}
$$
where $u_e=u_e^0(0), h_e=h_e^{0}(0)$ and
\begin{align*}
E_1^0= & \varepsilon \partial_x u_p^0 \int_0^y \int_y^{y_{1}} \partial_Y^2 u_e^0(\sqrt{\varepsilon} \tau) \mathrm{d} \tau \mathrm{d} {y_{1}}+\varepsilon \partial_y u_p^0 \int_0^y \int_y^{y_{1}} \partial_Y^2 v_e^1(x, \sqrt{\varepsilon} \tau) \mathrm{d} \tau \mathrm{d} {y_{1}} \notag\\
& -\varepsilon \partial_x h_p^0 \int_0^y \int_y^{y_{1}} \partial_Y^2 h_e^0(\sqrt{\varepsilon} \tau) \mathrm{d} \tau \mathrm{d} {y_{1}}-\varepsilon \partial_y h_p^0 \int_0^y \int_y^{y_{1}} \partial_Y^2 g_e^1(x, \sqrt{\varepsilon} \tau) \mathrm{d} \tau \mathrm{d} {y_{1}},\notag\\
E_3^0= & \varepsilon \partial_x h_p^0 \int_0^y \int_y^{y_{1}} \partial_Y^2 u_e^0(\sqrt{\varepsilon} \tau) \mathrm{d} \tau \mathrm{d} {y_{1}}+\varepsilon \partial_y h_p^0 \int_0^y \int_y^{y_{1}} \partial_Y^2 v_e^1(x, \sqrt{\varepsilon} \tau) \mathrm{d} \tau \mathrm{d} {y_{1}} \notag\\
& -\varepsilon \partial_x u_p^0 \int_0^y \int_y^{y_{1}} \partial_Y^2 h_e^0(\sqrt{\varepsilon} \tau) \mathrm{d} \tau \mathrm{d} {y_{1}}-\varepsilon \partial_y u_p^0 \int_0^y \int_y^{y_{1}} \partial_Y^2 g_e^1(x, \sqrt{\varepsilon} \tau) \mathrm{d} \tau \mathrm{d} {y_{1}}, \\
E_4^0= & \varepsilon \partial_x g_p^0 \int_0^y \int_y^{y_{1}} \partial_Y^2 u_e^0(\sqrt{\varepsilon} \tau) \mathrm{d} \tau \mathrm{d} {y_{1}}+\varepsilon \partial_y g_p^0 \int_0^y \int_y^{y_{1}} \partial_Y^2 v_e^1(x, \sqrt{\varepsilon} \tau) \mathrm{d} \tau \mathrm{d} {y_{1}} \notag\\
& -\varepsilon \partial_x v_p^0 \int_0^y \int_y^{y_{1}} \partial_Y^2 h_e^0(\sqrt{\varepsilon} \tau) \mathrm{d} \tau \mathrm{d} {y_{1}}-\varepsilon \partial_y v_p^0 \int_0^y \int_y^{y_{1}} \partial_Y^2 g_e^1(x, \sqrt{\varepsilon} \tau) \mathrm{d} \tau \mathrm{d} {y_{1}} \notag\\
& +\sqrt{\varepsilon} u_p^0 \int_0^y \partial_{x Y} g_e^1(x, \sqrt{\varepsilon} \tau) \mathrm{d} \tau-\sqrt{\varepsilon} h_p^0 \int_0^y \partial_{x Y} v_e^1(x, \sqrt{\varepsilon} \tau) \mathrm{d} \tau.\notag
\end{align*}
This leads to
\begin{align}\label{2.6}\tag{A.3}
\left\{\begin{array}{l}
(u_e+u_p^0) \partial_x u_p^0+(v_p^0+\overline{v_e^1}) \partial_y u_p^0-(h_e+h_p^0) \partial_x h_p^0-(g_p^0+\overline{g_e^1}) \partial_y h_p^0= \nu_{1}\partial_y^2 u_p^0, \\
(u_e+u_p^0) \partial_x h_p^0+(v_p^0+\overline{v_e^1}) \partial_y h_p^0-(h_e+h_p^0) \partial_x u_p^0-(g_p^0+\overline{g_e^1}) \partial_y u_p^0= \nu_{3}\partial_y^2 h_p^0, \\
(u_e+u_p^0) \partial_x(g_p^0+\overline{g_e^1})
+(v_p^0+\overline{v_e^1})\partial_y(g_p^0+\overline{g_e^1}) \\
\quad-(h_e+h_p^0)\partial_x(v_p^0+\overline{v_e^1})
-(g_p^0+\overline{g_e^1})\partial_y(v_p^0+\overline{v_e^1})=\nu_{4}\partial_y^2 g_p^0, \\
(v_p^0, g_p^0)(x, y)=\int_y^{\varepsilon^{-1/2}} \partial_x (u_p^0, h_p^0)(x, z) \mathrm{d} z, \\
(v_e^1, g_e^1)(x, 0)=-\int_0^{\varepsilon^{-1/2}} \partial_x(u_p^0, h_p^0)(x, z) \mathrm{d} z, \\
(u_p^0, \partial_y h_p^0)(x, 0)=(u_b-u_e, 0), \\
(v_p^0, g_p^0)(x, 0)=-(v_e^1, g_e^1)(x, 0),
(u_p^0, h_p^0)(0, y)=(\tilde{u}_0(y), \tilde{h}_0(y)).
\end{array}\right.
\end{align}
We note that the third equality can be deduced from the second equality and the boundary conditions.
Then, the zeroth order terms $R^1_0, R^3_0 ~\text{and}~ R^4_0$ equal
\begin{align*}
\left\{\begin{aligned}
R^{1}_{0}= & \sqrt{\varepsilon}(v_p^0+v_e^1) \partial_Y u_e^0+\sqrt{\varepsilon} y (\partial_Y u_e^0(\sqrt{\varepsilon} y) \partial_x u_p^0+\partial_Y v_e^1 \partial_y u_p^0)-\nu_{1}\varepsilon \partial_Y^2 u_e^0 \\
& -\sqrt{\varepsilon}(g_p^0+g_e^1) \partial_Y h_e^0-\sqrt{\varepsilon} y(\partial_Y h_e^0(\sqrt{\varepsilon} y) \partial_x h_p^0+\partial_Y g_e^1 \partial_y h_p^0)+E_1^0, \\
R^{3}_{0}= & \sqrt{\varepsilon}(v_p^0+v_e^1) \partial_Y h_e^0+\sqrt{\varepsilon} y(\partial_Y u_e^0(\sqrt{\varepsilon} y) \partial_x h_p^0+\partial_Y v_e^1 \partial_y h_p^0)- \nu_{3}\varepsilon \partial_Y^2 h_e^0 \\
& -\sqrt{\varepsilon}(g_p^0+g_e^1) \partial_Y u_e^0-\sqrt{\varepsilon} y(\partial_Y h_e^0(\sqrt{\varepsilon} y) \partial_x u_p^0+\partial_Y g_e^1 \partial_y u_p^0)+E_3^0, \\
R^{4}_{0}= &\sqrt{\varepsilon} y(\partial_Y u_e^0(\sqrt{\varepsilon} y) \partial_x g_p^0+\partial_Y v_e^1 \partial_y g_p^0)
-\sqrt{\varepsilon} y(\partial_Y h_e^0(\sqrt{\varepsilon} y) \partial_x v_p^0+\partial_Y g_e^1 \partial_y v_p^0) + E_4^0.
\end{aligned}\right.
\end{align*}

\subsection*{A.2 The $\sqrt{\varepsilon}$-order ideal inner MHD profile and MHD boundary layer  profile}


In this section, we consider the $\sqrt{\varepsilon}$-order ideal inner MHD profile $(u_e^1, v_e^1, h_e^1, g_e^1, p_e^1)$ and MHD boundary layer profile $(u_p^1, v_p^1, h_p^1, g_p^1, p_p^1)$.


\subsubsection*{A.2.1 The first order ideal inner MHD profile}


Matching the various terms of order  $\sqrt{\varepsilon}$ in $(\ref{2.3})$, $R^{1,0}_{p} , R^{3,0}_{p}~\text{and}~ R^{4,0}_{p}$, we have
\begin{align}\label{3.1}
R^{u}_1= & [(u_e^{1}+u_p^1) \partial_x+(v_p^1+v_e^2) \partial_y](u_e^{0}+u_p^{0})+[(u_e^{0}+u_p^{0}) \partial_x+(v_p^{0}+v_e^1) \partial_y](u_e^{1}+u_p^1) \notag\\
& +\partial_x (p_e^1+p_p^1)- \nu_{1}\partial_y^2(u_e^1+u_p^1)+(y \partial_x u_p^0+v_p^0+v_e^1) \partial_Y u_e^0+y \partial_Y v_e^1 \partial_y u_p^0 \notag\\
& -[(h_e^1+h_p^1) \partial_x+(g_p^1+g_e^2) \partial_y](h_e^0+h_p^0)-[(h_e^0+h_p^0) \partial_x+(g_p^0+g_e^1) \partial_y](h_e^1+h_p^1) \notag\\
& -(y \partial_x h_p^0+g_p^0+g_e^1) \partial_Y h_e^0-y \partial_Y g_e^1 \partial_y h_p^0 + \triangle_0^1, \notag\\
R^{h}_1= & [(u_e^1+u_p^1) \partial_x+(v_p^1+v_e^2) \partial_y](h_e^0+h_p^0)+[(u_e^0+u_p^0) \partial_x+(v_p^0+v_e^1) \partial_y](h_e^1+h_p^1)  \notag\\
& -\nu_{3}\partial_y^2(h_e^1+h_p^1)+(v_p^0+v_e^1) \partial_Y h_e^0+y \partial_Y u_e^0 \partial_x h_p^0+y \partial_Y v_e^1 \partial_y h_p^0 \tag{A.4}\\
& -[(h_e^1+h_p^1) \partial_x+(g_p^1+g_e^2) \partial_y](u_e^0+u_p^0)-[(h_e^0+h_p^0) \partial_x+(g_p^0+g_e^1) \partial_y](u_e^1+u_p^1) \notag\\
& -(g_p^0+g_e^1) \partial_Y u_e^0-y \partial_Y h_e^0 \partial_x u_p^0-y \partial_Y g_e^1 \partial_y u_p^0 + \triangle_0^3,\notag\\
R^{g}_1= & [(u_e^1+u_p^1) \partial_x+(v_p^1+v_e^2) \partial_y](g_e^1+g_p^0)+[(u_e^0+u_p^0) \partial_x+(v_p^0+v_e^1) \partial_y](g_e^2+g_p^1)  \notag\\
& -\nu_{4}\partial_y^2(g_e^2+g_p^1)+(v_p^0+v_e^1) \partial_Y g_e^1+y \partial_Y u_e^0 \partial_x g_p^0+y \partial_Y v_e^1 \partial_y g_p^0 \notag\\
& -[(h_e^1+h_p^1) \partial_x+(g_p^1+g_e^2) \partial_y](v_e^1+v_p^0)-[(h_e^0+h_p^0) \partial_x+(g_p^0+g_e^1) \partial_y](v_e^2+v_p^1) \notag\\
& -(g_p^0+g_e^1) \partial_Y v_e^1-y \partial_Y h_e^0 \partial_x v_p^0-y \partial_Y g_e^1 \partial_y v_p^0.\notag
\end{align}
Note that the inner MHD correctors are always evaluated at $(x, \sqrt{\varepsilon} y)$. Thus, we have
\begin{align*}
 (v_p^1+v_e^2) \partial_y u_e^0=\sqrt{\varepsilon}(v_p^1+v_e^2) \partial_Y u_e^0,
 \quad 
 (v_p^0+v_e^1) \partial_y u_e^1=\sqrt{\varepsilon}(v_p^0+v_e^1) \partial_Y u_e^1.
\end{align*}
The remaining high order terms of $(\ref{3.1})$ have the same form.  By matching the terms of order  $\sqrt{\varepsilon}$, we obtain that the first order ideal MHD profile $(u_e^1,v_e^1, h_e^1, g_e^1, p_e^1)$ satisfies
\begin{align}\label{3.2}\tag{A.5}
\left\{\begin{array}{l}
u_e^0 \partial_x u_e^1+v_e^1 \partial_Y u_e^0-h_e^0 \partial_x h_e^1-g_e^1 \partial_Y h_e^0+\partial_x p_e^1=0, \\
u_e^0 \partial_x h_e^1+v_e^1 \partial_Y h_e^0-h_e^0 \partial_x u_e^1-g_e^1 \partial_Y u_e^0=0,
\end{array}\right.
\end{align}
and the first order MHD boundary layer profile $(u_p^1,v_p^1,h_p^1,g_p^1,p_p^1)$ satisfies
\begin{align}\label{3.3}\tag{A.6}
\begin{aligned}
&(u_e^1+u_p^1) \partial_x u_p^0+u_p^0 \partial_x u_e^1+(u_e^0+u_p^0) \partial_x u_p^1+(v_p^1+v_e^2)\partial_y u_p^0 \notag\\
&\quad+(v_p^0+v_e^1) \partial_y u_p^1+\partial_x p_p^1- \nu_{1}\partial_y^2 u_p^1+(y \partial_x u_p^0+v_p^0) \partial_Y u_e^0+y \partial_Y v_e^1 \partial_y u_p^0 \notag\\
&\quad-(h_e^1+h_p^1) \partial_x h_p^0-h_p^0 \partial_x h_e^1-(h_e^0+h_p^0) \partial_x h_p^1-(g_p^1+g_e^2)\partial_y h_p^0 \notag\\
&\quad -(g_p^0+g_e^1) \partial_y h_p^1-(y \partial_x h_p^0+g_p^0) \partial_Y h_e^0-y \partial_Y g_e^1 \partial_y h_p^0 + \triangle_0^1=0 ,\notag\\
&(u_e^1+u_p^1) \partial_x h_p^0+u_p^0 \partial_x h_e^1+(u_e^0+u_p^0) \partial_x h_p^1+(v_p^1+v_e^2)\partial_y h_p^0\notag\\
&\quad+(v_p^0+v_e^1) \partial_y h_p^1-\nu_{3}\partial_y^2 h_p^1+v_p^0 \partial_Y h_e^0+y \partial_Y u_e^0 \partial_x h_p^0+y \partial_Y v_e^1 \partial_y h_p^0 \notag\\
&\quad -(h_e^1+h_p^1) \partial_x u_p^0-h_p^0 \partial_x u_e^1-(h_e^0+h_p^0) \partial_x u_p^1-(g_p^1+g_e^2)\partial_y u_p^0 \\
&\quad -(g_p^0+g_e^1) \partial_y u_p^1-g_p^0 \partial_Y u_e^0-y \partial_Y h_e^0 \partial_x u_p^0-y \partial_Y g_e^1 \partial_y u_p^0+\triangle_0^3=0,\notag\\
&(u_e^1+u_p^1) \partial_x g_p^0+u_p^0 \partial_x g_e^2+(u_e^0+u_p^0) \partial_x g_p^1+(v_p^1+v_e^2)\partial_y g_p^0\notag\\
&\quad+(v_p^0+v_e^1) \partial_y g_p^1-\nu_{4}\partial_y^2 g_p^1+v_p^0 \partial_Y g_e^1+y \partial_Y u_e^0 \partial_x g_p^0+y \partial_Y v_e^1 \partial_y g_p^0 \notag\\
&\quad -(h_e^1+h_p^1) \partial_x v_p^0-h_p^0 \partial_x v_e^2-(h_e^0+h_p^0) \partial_x v_p^1-(g_p^1+g_e^2)\partial_y v_p^0 \notag\\
&\quad -(g_p^0+g_e^1) \partial_y v_p^1-g_p^0 \partial_Y v_e^1-y \partial_Y h_e^0 \partial_x v_p^0-y \partial_Y g_e^1 \partial_y v_p^0=0. \notag
\end{aligned}
\end{align}
Hence, $R^{u}_1, R^{h}_1 ~\text{and}~ R^{g}_1$ may be rewritten
\begin{align*}
\left\{\begin{aligned}
 R^{u}_1=&\sqrt{\varepsilon}[(v_p^1+v_e^2) \partial_Y u_e^0+(v_p^0+v_e^1) \partial_Y u_e^1-(g_p^1+g_e^2) \partial_Y h_e^0-(g_p^0+g_e^1) \partial_Y h_e^1]-\nu_{1}\varepsilon \partial_Y^2 u_e^1,\\
 R^{h}_1=&\sqrt{\varepsilon}[(v_p^1+v_e^2) \partial_Y h_e^0+(v_p^0+v_e^1) \partial_Y h_e^1-(g_p^1+g_e^2) \partial_Y u_e^0-(g_p^0+g_e^1) \partial_Y u_e^1]-\nu_{3}\varepsilon \partial_Y^2 h_e^1,\\
 R^{g}_1=&u_e^1 \partial_{x}g_e^1+u_e^0 \partial_{x}g_e^2+v_e^1 \partial_{Y}g_e^1-h_e^1 \partial_{x}v_e^1-h_e^0 \partial_{x}v_e^2-g_e^1 \partial_{Y}v_e^1-\nu_{4}\varepsilon \partial_Y^2 g_e^2\\
  &+\sqrt{\varepsilon}[(v_p^1+v_e^2) \partial_Y g_e^1+(v_p^0+v_e^1) \partial_Y g_e^2-(g_p^1+g_e^2) \partial_Y v_e^1-(g_p^0+g_e^1) \partial_Y v_e^2].
  \end{aligned}\right.
\end{align*}
Next, we consider the zeroth order terms in $R^{2}_{app} ~\text{and}~  R^{4}_{app}$
\begin{align*}
R^{v}_0= & [(u_e^0+u_p^0) \partial_x+(v_p^0+v_e^1) \partial_y](v_p^0+v_e^1)+\partial_Y p_e^1+\frac{p_{p y}^1}{\sqrt{\varepsilon}}+\partial_y p_p^2  \notag\\
& -\nu_{2}\partial_y^2 (v_p^0+v_e^1)
-[(h_e^0+h_p^0) \partial_x+(g_p^0+g_e^1) \partial_y](g_p^0+g_e^1), \notag\\
R^{g}_0= & [(u_e^0+u_p^0) \partial_x+(v_p^0+v_e^1)\partial_y](g_p^0+g_e^1)
- \nu_{4}\partial_y^2(g_p^0+g_e^1)  \notag\\
& -[(h_e^0+h_p^0) \partial_x+(g_p^0+g_e^1) \partial_y](v_p^0+v_e^1).
\end{align*}
The zeroth order term in $R^{v}_0$ is $p^1_{py}$. We take $p_{py}^1 =0$, and obtain that $p_p^1$ only depends on the variable $x$, that is
$$p_p^1=p_p^1(x).$$
Next, we match the terms of order $\varepsilon^{\frac{1}{2}}$ in  $R^{v}_0=R^{g}_0=0$. Recall that in Appendix A.1, the terms of $R^g_0$  related to the zeroth order boundary layer profile have been constructed.  Thus,
$$
R^{g}_0=u_e^0 \partial_{x}g_e^1-h_e^0 \partial_{x}v_e^1+\sqrt{\varepsilon}[(v_p^0+v_e^1) \partial_Y g_e^1-(g_p^0+g_e^1) \partial_Y v_e^1]- \nu_{4}\varepsilon \partial_Y^2 g_e^1.
$$
We obtain that $(u_e^1, v_e^1, h_e^1, g_e^1, p_e^1)$ satisfies the following system
\begin{align}\label{3.6}\tag{A.7}
\left\{\begin{array}{l}
u_e^0 \partial_x v_e^1-h_e^0 \partial_x g_e^1+\partial_Y p_e^1=0, \\
u_e^0 \partial_x g_e^1-h_e^0 \partial_x v_e^1=0,
\end{array}\right.
\end{align}
and $p_p^2$ satisfies
\begin{align*}
p_p^2=&\int_{y}^{1/\sqrt{\varepsilon}}\left[(u_e^0+u_p^0) \partial_x v_p^0+u_p^0 \partial_x v_e^1 +(v_p^0+v_e^1) \partial_y v_p^0-(h_e^0+h_p^0) \partial_x g_p^0\right.\notag\\
&\left.-h_p^0 \partial_x g_e^1
-(g_p^0+g_e^1) \partial_y g_p^0-\nu_{2} \partial_y^2 v_p^0\right](x, \theta)\mathrm{d}\theta. 
\end{align*}
Then, 
 $R^{v}_0$ and $R^{g}_0$ may be rewritten as follows
\begin{align}\label{3.8}\tag{A.8}
\left\{\begin{array}{l}
R^{v}_0=\sqrt{\varepsilon}[(v_p^0+v_e^1) \partial_Y v_e^1-(g_p^0+g_e^1) \partial_Y g_e^1]- \nu_{2}\varepsilon \partial_Y^2 v_e^1, \\
R^{g}_0=\sqrt{\varepsilon}[(v_p^0+v_e^1) \partial_Y g_e^1-(g_p^0+g_e^1) \partial_Y v_e^1]- \nu_{4}\varepsilon \partial_Y^2 g_e^1.
\end{array}\right.
\end{align}

\subsubsection*{A.2.2 The first order MHD boundary layer profile}


In this subsection, we derive 
 $(u_p^1, v_p^1, h_p^1, g_p^1, p_p^1)$, which solves $(\ref{3.3})$. For simplicity, we introduce
$$
u^0:=u_e+u_p^0(x, y), \quad h^0:=h_e+h_p^0(x, y).
$$
Let $p_{p x}^{1}=0$, then we rewrite $(\ref{3.3})$
\begin{align}\label{4.1}
&u^0 \partial_x u_p^{1}+u_p^{1} \partial_x u^0+v_p^{1} \partial_y u^0+(v_p^0+\overline{v_e^1}) \partial_y u_p^{1}-\nu_{1}\partial_y^2 u_p^{1} \notag\\
& -h^0 \partial_x h_p^{1}-h_p^{1} \partial_x h^0-g_p^{1} \partial_y h^0-(g_p^0+\overline{g_e^1}) \partial_y h_p^{1} \notag\\
= & -\overline{\partial_Y u_e^0}[y \partial_x u_p^0+v_p^0]-y \overline{\partial_Y v_e^1} \partial_y u_p^0-\overline{u_e^1} \partial_x u_p^0-u_p^0 \overline{\partial_x u_e^1}-\overline{v_e^2} \partial_{y}u_p^0  \notag\\
& +\overline{\partial_Y h_e^0}[y \partial_x h_p^0+g_p^0]+y \overline{\partial_Y g_e^1} \partial_y h_p^0+\overline{h_e^1} \partial_x h_p^0+h_p^0 \overline{\partial_x h_e^1}+\overline{g_e^2} \partial_{y}h_p^0:= F_{p_{1}}^1, \notag\\
&u^0 \partial_x h_p^{1}+u_p^{1} \partial_x h^0+v_p^{1} \partial_y h^0+\left(v_p^0+\overline{v_e^1}\right) \partial_y h_p^{1}- \nu_{3}\partial_y^2 h_p^{1} \notag\\
& -h^0 \partial_x u_p^{1}-h_p^{1} \partial_x u^0-g_p^{1} \partial_y u^0-\left(g_p^0+\overline{g_e^1}\right) \partial_y u_p^{1} \notag\\
= & -\overline{\partial_Y h_e^0} v_p^0-y \overline{\partial_Y u_e^0} \partial_x h_p^0-y \overline{\partial_Y v_e^1} \partial_y h_p^0-\overline{\partial_x h_e^1} u_p^0-\overline{u_e^1} \partial_x h_p^0-\overline{v_e^2} \partial_{y}h_p^0 \tag{A.9}\\
& +\overline{\partial_Y u_e^0} g_p^0+y \overline{\partial_Y h_e^0} \partial_x u_p^0+y \overline{\partial_Y g_e^1} \partial_y u_p^0+\overline{h_e^1} \partial_x u_p^0+\overline{\partial_x u_e^1} h_p^0 +\overline{g_e^2} \partial_{y}u_p^0:=F_{p_2}^1 ,\notag\\
&u^0 \partial_x g_p^{1}+u_p^{1} \partial_x g_p^0+v_p^{1} \partial_y g_p^0+\left(v_p^0+\overline{v_e^1}\right) \partial_y g_p^{1}- \nu_{4}\partial_y^2 g_p^{1} \notag\\
& -h^0 \partial_x v_p^{1}-h_p^{1} \partial_x v_p^0-g_p^{1} \partial_y v_p^0-\left(g_p^0+\overline{g_e^1}\right) \partial_y v_p^{1} \notag\\
= & -\overline{\partial_Y g_e^1} v_p^0-y \overline{\partial_Y u_e^0} \partial_x g_p^0-y \overline{\partial_Y v_e^1} \partial_y g_p^0-\overline{\partial_x g_e^2} u_p^0-\overline{u_e^1} \partial_x g_p^0-\overline{v_e^2} \partial_{y}g_p^0 \notag\\
& +\overline{\partial_Y v_e^1} g_p^0+y \overline{\partial_Y h_e^0} \partial_x v_p^0+y \overline{\partial_Y g_e^1} \partial_y v_p^0+\overline{h_e^1} \partial_x v_p^0+\overline{\partial_x v_e^2} h_p^0 +\overline{g_e^2} \partial_{y}v_p^0:=F_{p_3}^1 .\notag
\end{align}
It is noted that the third equality in $(\ref{4.1})$ can be deduced from the second equality and the boundary conditions. 
Then, the new error terms are given by
\begin{align*}
E_{r_1}^1:= & (u_e^0-u_e) \partial_x u_p^{1}+(v_e^{2}-\overline{v_e^2}) \partial_y u_p^0+(v_e^1-\overline{v_e^1}) \partial_y u_p^{1}-(h_e^0-h_e) \partial_x h_p^{1} \notag\\
& -(g_e^{2}-\overline{g_e^2}) \partial_y h_p^0-(g_e^1-\overline{g_e^1}) \partial_y h_p^{1}+(\partial_Y u_e^0-\overline{\partial_Y u_e^0})(y \partial_x u_p^0+v_p^0) \notag\\
& +y(\partial_Y v_e^1-\overline{\partial_Y v_e^1}) \partial_y u_p^0+(u_e^1-\overline{u_e^1}) \partial_x u_p^0+(\partial_x u_e^1-\overline{\partial_x u_e^1}) u_p^0 \notag\\
& -(\partial_Y h_e^0-\overline{\partial_Y h_e^0})(y \partial_x h_p^0+g_p^0)-y(\partial_Y g_e^1-\overline{\partial_Y g_e^1}) \partial_y h_p^0 \notag\\
& -(h_e^1-\overline{h_e^1}) \partial_x u_p^0-(\partial_x h_e^1-\overline{\partial_x h_e^1}) h_p^0+\triangle_0^1, \notag\\
E_{r_2}^1:= & (u_e^0-u_e) \partial_x h_p^{1}+(v_e^{2}-\overline{v_e^2}) \partial_y h_p^0+(v_e^1-\overline{v_e^1}) \partial_y h_p^{1} \notag\\
& -(h_e^0-h_e) \partial_x u_p^{1}-(g_e^{2}-\overline{g_e^2}) \partial_y u_p^0-(g_e^1-\overline{g_e^1}) \partial_y u_p^{1} \nonumber\\
& +(\partial_Y h_e^0-\overline{\partial_Y h_e^0}) v_p^0+y \partial_x h_p^0(\partial_Y u_e^0-\overline{\partial_Y u_e^0})\notag\\
& +y \partial_y h_p^0(\partial_Y v_e^1-\overline{\partial_Y v_e^1})+(\partial_x h_e^1-\overline{\partial_x h_e^1}) u_p^0+(u_e^1-\overline{u_e^1}) \partial_x h_p^0 \\
& -(\partial_Y u_e^0-\overline{\partial_Y u_e^0}) g_p^0-y(\partial_Y h_e^0-\overline{\partial_Y h_e^0}) \partial_x u_p^0-(h_e^1-\overline{h_e^1}) \partial_x u_p^0 \notag\\
& -y(\partial_Y g_e^1-\overline{\partial_Y g_e^1}) \partial_y u_p^0-(\partial_x u_e^1-\overline{\partial_x u_e^1}) h_p^0+\triangle_0^3,\notag\\
E_{r_3}^1:= & (u_e^0-u_e) \partial_x g_p^{1}+(v_e^{2}-\overline{v_e^2}) \partial_y g_p^0+(v_e^1-\overline{v_e^1}) \partial_y g_p^{1} \notag\\
& -(h_e^0-h_e) \partial_x v_p^{1}-(g_e^{2}-\overline{g_e^2}) \partial_y v_p^0-(g_e^1-\overline{g_e^1}) \partial_y v_p^{1} \notag\\
& +(\partial_Y g_e^1-\overline{\partial_Y g_e^1}) v_p^0+y \partial_x g_p^0(\partial_Y u_e^0-\overline{\partial_Y u_e^0})\notag\\
& +y \partial_y g_p^0(\partial_Y v_e^1-\overline{\partial_Y v_e^1})+(\partial_x g_e^2-\overline{\partial_x g_e^2}) u_p^0+(u_e^1-\overline{u_e^1}) \partial_x g_p^0 \notag\\
& -(\partial_Y v_e^1-\overline{\partial_Y v_e^1}) g_p^0-y(\partial_Y h_e^0-\overline{\partial_Y h_e^0}) \partial_x v_p^0-(v_e^2-\overline{v_e^2}) \partial_x h_p^0 \notag\\
& -y(\partial_Y g_e^1-\overline{\partial_Y g_e^1}) \partial_y v_p^0-(\partial_x v_e^2-\overline{\partial_x v_e^2}) h_p^0.\notag
\end{align*}

\subsection*{A.3 The $\varepsilon^{\frac{i}{2}}$-order ideal inner MHD profiles and boundary layer profiles}

In this subsection, we study the $i$-th order ideal MHD profiles  $(u_e^i, v_e^i, h_e^i, g_e^i, p_e^i)$ and boundary layer profiles $(u_p^i, v_p^i, h_p^i, g_p^i, p_p^i)(2\leq i \leq n-1, n\geq3)$.

\subsubsection*{A.3.1 The $i$-th order ideal MHD profiles}

Let us consider the $i$-th order ideal MHD profiles and  define $R^u_i, R^h_i ~\text{and}~ R^g_i$ as follows
\begin{align}\label{5.1}
R^{u}_i= & \sum_{i_1+i_2=i} [(u_e^{i_1}+u_p^{i_1}) \partial_x+(v_p^{i_1}+v_e^{i_1+1}) \partial_y](u_e^{i_2}+u_p^{i_2})+\partial_x(p_e^i+p_p^i)
- \nu_{1}\partial_y^2(u_e^{i}+u_p^{i})\notag\\
& -\sum_{i_1+i_2=i} [(h_e^{i_1}+h_p^{i_1}) \partial_x+(g_p^{i_1}+g_e^{i_1+1}) \partial_y](h_e^{i_2}+h_p^{i_2}) -\nu_{1}\partial_{x}^{2}u_p^{i-2}\notag\\
&+\sum_{i_1+i_2=i-1}[(v_p^{i_1}+v_e^{i_1+1}) \partial_Y u_e^{i_2}-(g_p^{i_1}+g_e^{i_1+1}) \partial_Y h_e^{i_2}] +(4-n)\frac{E_{1}^0}{\varepsilon}\notag\\
&+\triangle_{i-2}^{2}+\varepsilon^{-\frac{1}{2}}E_{r_1}^{i-1}
+ \triangle_{i-1}^{1}-\nu_{1}\Delta u_e^{i-2},\notag\\
R^{h}_i= & \sum_{i_1+i_2=i} [(u_e^{i_1}+u_p^{i_1}) \partial_x+(v_p^{i_1}+v_e^{i_1+1}) \partial_y](h_e^{i_2}+h_p^{i_2})
- \nu_{3}\partial_y^2(h_e^{i}+h_p^{i})\notag\\
& -\sum_{i_1+i_2=i} [(h_e^{i_1}+h_p^{i_1}) \partial_x+(g_p^{i_1}+g_e^{i_1+1}) \partial_y](u_e^{i_2}+u_p^{i_2}) -\nu_{3}\partial_{x}^{2}h_p^{i-2}\tag{A.10}\\
&+\sum_{i_1+i_2=i-1}[(v_p^{i_1}+v_e^{i_1+1}) \partial_Y h_e^{i_2}-(g_p^{i_1}+g_e^{i_1+1}) \partial_Y u_e^{i_2}] +(4-n)\frac{E_{3}^0}{\varepsilon} \notag\\
&+\triangle_{i-2}^{4}+\varepsilon^{-\frac{1}{2}}E_{r_2}^{i-1}
+ \triangle_{i-1}^{3}-\nu_{3}\Delta h_e^{i-2},\notag\\
R^{g}_i= & \sum_{i_1+i_2=i} [(u_e^{i_1}+u_p^{i_1}) \partial_x+(v_p^{i_1}+v_e^{i_1+1}) \partial_y](g_e^{i_2}+g_p^{i_2})
- \nu_{4}\partial_y^2(g_e^{i+1}+g_p^{i})\notag\\
& -\sum_{i_1+i_2=i} [(h_e^{i_1}+h_p^{i_1}) \partial_x+(g_p^{i_1}+g_e^{i_1+1}) \partial_y](v_e^{i_2+1}+v_p^{i_2}) -\nu_{4}\partial_{x}^{2}g_p^{i-2}\notag\\
&+\sum_{i_1+i_2=i-1}[(v_p^{i_1}+v_e^{i_1+1}) \partial_Y g_e^{i_2+1}-(g_p^{i_1}+g_e^{i_1+1}) \partial_Y v_e^{i_2+1}] +(4-n)\frac{E_{4}^0}{\varepsilon}\notag\\
&+\varepsilon^{-\frac{1}{2}}E_{r_3}^{i-1}
-\nu_{4}\Delta g_e^{i-1},\notag
\end{align}
 if $4-n < 0$, the terms $(4-n)\frac{E_1^0}{\varepsilon}, (4-n)\frac{E_3^0}{\varepsilon} ~\text{and}~ (4-n)\frac{E_4 ^0}{\varepsilon}$ vanish. Since the ideal MHD profiles are always evaluated at $(x, \sqrt{\varepsilon} y)$, we have
\begin{align*}
 [v_p^{i_1}+v_e^{i_1+1}] \partial_y u_e^{i_2}=\sqrt{\varepsilon}[v_p^{i_1}+v_e^{i_1+1}] \partial_Y u_e^{i_2}, \quad [g_p^{i_1}+g_e^{i_1+1}]\partial_y h_e^{i_2}=\sqrt{\varepsilon}[g_p^{i_1}+g_e^{i_1+1}] \partial_Y h_e^{i_2}, \quad \partial_y^2 u_e^{i}=\varepsilon \partial_Y^2 u_e^i.
\end{align*}
The remaining high order terms of $(\ref{5.1})$ are the same form. Matching the order of $\varepsilon$, we obtain the following $i$-th order ideal MHD system
\begin{align}\label{5.2}\tag{A.11}
\left\{\begin{array}{l}
u_e^0 \partial_x u_e^i+v_e^i \partial_Y u_e^0-h_e^0 \partial_x h_e^i-g_e^i \partial_Y h_e^0+\partial_x p_e^i=f_1^{i}, \\
u_e^0 \partial_x h_e^i+v_e^i \partial_Y h_e^0-h_e^0 \partial_x u_e^i-g_e^i \partial_Y u_e^0=f_3^i,
\end{array}\right.
\end{align}
where
\begin{align*}
&-f_1^i=u_e^1 u_{e x}^{i-1}+u_e^{i-1} u_{e x}^{1}+v_e^1 u_{e Y}^{i-1}+ v_e^{i-1}u_{e Y}^{1}-h_e^1 h_{e x}^{i-1}-h_e^{i-1} h_{e x}^{1}-g_e^1 h_{e Y}^{i-1}- g_e^{i-1}h_{e Y}^{1}-\nu_1 \Delta u_e^{i-2},\\
&-f_3^i=u_e^1 h_{e x}^{i-1}+u_e^{i-1} h_{e x}^{1}+v_e^1 h_{e Y}^{i-1}+ v_e^{i-1}h_{e Y}^{1}-h_e^1 u_{e x}^{i-1}-h_e^{i-1} u_{e x}^{1}-g_e^1 u_{e Y}^{i-1}- g_e^{i-1}u_{e Y}^{1}-\nu_3 \Delta h_e^{i-2},
\end{align*}
and the $i$-th order boundary layer system
\begin{align}\label{5.3}
&\sum_{i_1+i_2=i}(u_e^{i_1}+u_p^{i_1}) \partial_x u_p^{i_2}+\sum_{i_1+i_2=i-1}u_p^{i_1} \partial_x u_e^{i_2+1}
+\sum_{i_1+i_2=i}(v_p^{i_1}+v_e^{i_1+1})\partial_y u_p^{i_2}
+\partial_x p_p^{i} - \nu_{1}\partial_y^2 u_p^{i} \notag\\
& \quad -\sum_{i_1+i_2=i}(h_e^{i_1}+h_p^{i_1}) \partial_x h_p^{i_2}-\sum_{i_1+i_2=i-1}h_p^{i_1} \partial_x h_e^{i_2+1}
-\sum_{i_1+i_2=i}(g_p^{i_1}+g_e^{i_1+1})\partial_y h_p^{i_2}-\nu_{1}\partial_{x}^{2}u_p^{i-2} \notag\\
& \quad +\sum_{i_1+i_2=i-1}[v_p^{i_1} \partial_Y u_e^{i_2}-g_p^{i_1} \partial_Y h_e^{i_2}]+\sum_{\substack{i_1+i_2=i,\\i_1,i_2\neq 0,1}}[u_e^{i_1}u_{e x}^{i_2}+v_e^{i_1}u_{eY}^{i_2}-h_e^{i_1}h_{e x}^{i_2}-g_e^{i_1}h_{eY}^{i_2}] \notag\\
& \quad+(4-n)\frac{E_1^0}{\varepsilon} + \triangle_{i-2}^{2}+ \triangle_{i-1}^{1} + \varepsilon^{-\frac{1}{2}}E_{r_1}^{i-1}=0,\notag\\
&\sum_{i_1+i_2=i}(u_e^{i_1}+u_p^{i_1}) \partial_x h_p^{i_2}+\sum_{i_1+i_2=i-1}u_p^{i_1} \partial_x h_e^{i_2+1}
+\sum_{i_1+i_2=i}(v_p^{i_1}+v_e^{i_1+1})\partial_y h_p^{i_2}
 - \nu_{3}\partial_y^2 h_p^{i} \notag\\
& \quad -\sum_{i_1+i_2=i}(h_e^{i_1}+h_p^{i_1}) \partial_x u_p^{i_2}-\sum_{i_1+i_2=i-1}h_p^{i_1} \partial_x u_e^{i_2+1}
-\sum_{i_1+i_2=i}(g_p^{i_1}+g_e^{i_1+1})\partial_y u_p^{i_2}-\nu_{3}\partial_{x}^{2}h_p^{i-2} \tag{A.12}\\
& \quad +\sum_{i_1+i_2=i-1}[v_p^{i_1} \partial_Y h_e^{i_2}-g_p^{i_1} \partial_Y u_e^{i_2}]+\sum_{\substack{i_1+i_2=i,\\i_1,i_2\neq 0,1}}[u_e^{i_1}h_{e x}^{i_2}+v_e^{i_1}h_{eY}^{i_2}-h_e^{i_1}u_{e x}^{i_2}-g_e^{i_1}u_{eY}^{i_2}] \notag\\
& \quad+(4-n)\frac{E_3^0}{\varepsilon} + \triangle_{i-2}^{4}+ \triangle_{i-1}^{3} + \varepsilon^{-\frac{1}{2}}E_{r_2}^{i-1}=0,\notag\\
&\sum_{i_1+i_2=i}(u_e^{i_1}+u_p^{i_1}) \partial_x g_p^{i_2}+\sum_{i_1+i_2=i-1}u_p^{i_1} \partial_x g_e^{i_2+2}
+\sum_{i_1+i_2=i}(v_p^{i_1}+v_e^{i_1+1})\partial_y g_p^{i_2}
 - \nu_{4}\partial_y^2 g_p^{i} \notag\\
& \quad -\sum_{i_1+i_2=i}(h_e^{i_1}+h_p^{i_1}) \partial_x v_p^{i_2}-\sum_{i_1+i_2=i-1}h_p^{i_1} \partial_x v_e^{i_2+2}
-\sum_{i_1+i_2=i}(g_p^{i_1}+g_e^{i_1+1})\partial_y v_p^{i_2}-\nu_{4}\partial_{x}^{2}g_p^{i-2} \notag\\
& \quad +\sum_{i_1+i_2=i-1}[v_p^{i_1} \partial_Y g_e^{i_2+1}-g_p^{i_1} \partial_Y v_e^{i_2+1}]+\sum_{\substack{i_1+i_2=i,\\i_1,i_2\neq 0,1}}[u_e^{i_1}g_{e x}^{i_2+1}+v_e^{i_1}g_{eY}^{i_2+1}-h_e^{i_1}v_{e x}^{i_2+1}-g_e^{i_1}v_{eY}^{i_2+1}] \notag\\
& \quad+(4-n)\frac{E_4^0}{\varepsilon} + \varepsilon^{-\frac{1}{2}}E_{r_3}^{i-1}=0.\notag
\end{align}
Hence, $R^{u}_i, R^{h}_i ~\text{and}~ R^{g}_i$ further reduce to
\begin{align}\label{5.4}\tag{A.13}
\left\{\begin{aligned}
R^{u}_i=&\sqrt{\varepsilon}\sum_{i_1+i_2=i}[(v_p^{i_1}+v_e^{i_1+1}) \partial_Y u_e^{i_2}-(g_p^{i_1}+g_e^{i_1+1}) \partial_Y h_e^{i_2}]-\nu_{1}\varepsilon \partial_Y^2 u_e^{i}, \\
R^{h}_i=&\sqrt{\varepsilon}\sum_{i_1+i_2=i}[(v_p^{i_1}+v_e^{i_1+1}) \partial_Y h_e^{i_2}-(g_p^{i_1}+g_e^{i_1+1}) \partial_Y u_e^{i_2}]-\nu_{3}\varepsilon \partial_Y^2 h_e^{i}, \\
R^{g}_i=&\sqrt{\varepsilon}\sum_{i_1+i_2=i}[(v_p^{i_1}+v_e^{i_1+1}) \partial_Y g_e^{i_2+1}-(g_p^{i_1}+g_e^{i_1+1}) \partial_Y v_e^{i_2+1}]-\nu_{4}\varepsilon \partial_Y^2 g_e^{i+1}\\
&+u_e^i \partial_{x}g_e^1+u_e^1 \partial_{x}g_e^i+u_e^0 \partial_{x}g_e^{i+1}-h_e^i \partial_{x}v_e^1-h_e^1 \partial_{x}v_e^i
-h_e^0 \partial_{x}v_e^{i+1}\\
&+v_e^i \partial_{Y}g_e^1+v_e^1 \partial_{Y}g_e^i-g_e^i \partial_{Y}v_e^1-g_e^1 \partial_{Y}v_e^i-\nu_{4} \Delta g_e^{i-1}.
\end{aligned}\right.
\end{align}
Next, we consider the $(i-1)$-th order terms of $R^{2}_{app} ~\text{and}~  R^{4}_{app}$. Since $R^{g}_{i-1}$ has been given, 
we have
\begin{align}\label{5.5}
 R^{v}_{i-1}= &\sum_{i_1+i_2=i-1} [(u_e^{i_1}+u_p^{i_1}) \partial_x+(v_p^{i_1}+v_e^{i_1+1}) \partial_y](v_p^{i_2}+v_e^{i_2+1})
 +\partial_Y p_e^{i}+\partial_{y} p_{p}^{i+1}\notag\\
 & -\nu_{2}\partial_y^2(v_p^{i-1}+v_e^{i})-\sum_{i_1+i_2=i-1} [(h_e^{i_1}+h_p^{i_1}) \partial_x+(g_p^{i_1}+g_e^{i_1+1}) \partial_y](g_p^{i_2}+g_e^{i_2+1})\notag\\
 & +\sum_{i_1+i_2=i-2}[(v_p^{i_1}+v_e^{i_1+1})\partial_{Y}v_e^{i_2+1}
 -(g_p^{i_1}+g_e^{i_1+1})\partial_{Y}g_e^{i_2+1}]-\nu_{2}\Delta v_e^{i-2}-\nu_2 \partial_{x}^2 v_p^{i-3}, \notag\\
R^{g}_{i-1}=&\sqrt{\varepsilon}\sum_{i_1+i_2=i-1}[(v_p^{i_1}+v_e^{i_1+1}) \partial_Y g_e^{i_2+1}-(g_p^{i_1}+g_e^{i_1+1}) \partial_Y v_e^{i_2+1}]-\nu_{4}\varepsilon \partial_Y^2 g_e^{i}\notag\\
&+u_e^{i-1} \partial_{x}g_e^1+u_e^1 \partial_{x}g_e^{i-1}+u_e^0 \partial_{x}g_e^{i}-h_e^{i-1} \partial_{x}v_e^1-h_e^1 \partial_{x}v_e^{i-1}-h_e^0 \partial_{x}v_e^{i}\notag\\
&+v_e^{i-1} \partial_{Y}g_e^1+v_e^1 \partial_{Y}g_e^{i-1}-g_e^{i-1} \partial_{Y}v_e^1-g_e^1 \partial_{Y}v_e^{i-1}-\nu_{4} \Delta g_e^{i-2}.\tag{A.14}
\end{align}
Matching the terms of order $\varepsilon^{\frac{i-1}{2}}$ in $R^{v}_{i-1}=R^{g}_{i-1}=0$,  $(u_e^i, v_e^i, h_e^i, g_e^i, p_e^i)$ satisfies
\begin{align}\label{5.6}\tag{A.15}
\left\{\begin{array}{l}
u_e^0 \partial_x v_e^i-h_e^0 \partial_x g_e^i+\partial_Y p_e^i=f_2^i,\\
u_e^0 \partial_x g_e^i-h_e^0 \partial_x v_e^i=f_4^i,
\end{array}\right.
\end{align}
where
\begin{align*}
-f_2^i=&v_e^{i-1}\partial_{Y}v_e^1+v_e^{1}\partial_{Y}v_e^{i-1}
       +u_e^{i-1}\partial_{x}v_e^1+u_e^{1}\partial_{x}v_e^{i-1}\\
       &-g_e^{i-1}\partial_{Y}g_e^1-g_e^{1}\partial_{Y}g_e^{i-1}
       -h_e^{i-1}\partial_{x}g_e^1-h_e^{1}\partial_{x}g_e^{i-1}-\nu_{2}\Delta v_e^{i-2},\\
-f_4^i=&v_e^{i-1}\partial_{Y}g_e^1+v_e^{1}\partial_{Y}g_e^{i-1}
       +u_e^{i-1}\partial_{x}g_e^1+u_e^{1}\partial_{x}g_e^{i-1}\\
       &-g_e^{i-1}\partial_{Y}v_e^1-g_e^{1}\partial_{Y}v_e^{i-1}
       -h_e^{i-1}\partial_{x}v_e^1-h_e^{1}\partial_{x}v_e^{i-1}-\nu_{4}\Delta g_e^{i-2}.
\end{align*}
When $i=2$, $f_1^2, f_2^2, f_3^2$ and $f_4^2$ are as follows
\begin{align*}
-f_1^2=&v_e^{1}\partial_{Y}u_e^1+u_e^{1}\partial_{x}u_e^1
       -g_e^{1}\partial_{Y}h_e^1-h_e^{1}\partial_{x}h_e^1-\nu_1\partial_Y^2u_e^0,\\
-f_2^2=&v_e^{1}\partial_{Y}v_e^1+u_e^{1}\partial_{x}v_e^1
       -g_e^{1}\partial_{Y}g_e^1-h_e^{1}\partial_{x}g_e^1,\\
-f_3^2=&v_e^{1}\partial_{Y}h_e^1+u_e^{1}\partial_{x}h_e^1
       -g_e^{1}\partial_{Y}u_e^1-h_e^{1}\partial_{x}u_e^1-\nu_3\partial_Y^2h_e^0,\\
-f_4^i=&v_e^{1}\partial_{Y}g_e^1+u_e^{1}\partial_{x}g_e^1
       -g_e^{1}\partial_{Y}v_e^1-h_e^{1}\partial_{x}v_e^1.
\end{align*}
The $(i+1)$-th order boundary layer of pressure $p_p^{i+1}$ is
\begin{align}\label{5.7}
p_p^{i+1}(x, y)= & \int_y^{1/\sqrt{\varepsilon}}\left\{\sum_{i_1+i_2=i-1}\left[(u_e^{i_1}+u_p^{i_1}) \partial_x v_p^{i_2}-(h_e^{i_1}+h_p^{i_1}) \partial_x g_p^{i_2}\right]+\sum_{i_1+i_2=i-1}\left[u_p^{i_1} \partial_{x}v_e^{i_2+1}-h_p^{i_1}\partial_{x}g_e^{i_2+1}\right]\right.\notag\\
& \left.+\sum_{i_1+i_2=i-1}\left[(v_p^{i_1}+v_e^{i_1+1}) \partial_{y}v_p^{i_2}-(g_p^{i_1}+g_e^{i_1+1}) \partial_{y}g_p^{i_2}\right]+\sum_{i_1+i_2=i-2}\left[v_p^{i_1} \partial_{Y}v_e^{i_2+1}-g_p^{i_1} \partial_{Y}g_e^{i_2+1}\right]\right.\notag\\
& \left.+\sum_{\substack{i_1+i_2=i,\\i_1, i_2 \neq 0,1}}(u_e^{i_1} \partial_x v_e^{i_2}+v_e^{i_1} \partial_{Y}v_e^{i_2}-h_e^{i_1} \partial_x g_e^{i_2}-g_e^{i_1} \partial_{Y}g_e^{i_2})-\nu_{2} \partial_y^2 v_p^{i-1}-\nu_{2}\partial_{x}^2 v_p^{i-3}\right\}(x, \theta) \mathrm{d} \theta. \tag{A.16}
\end{align}
Then, $R^{v}_{i-1}$ and $R^{g}_{i-1}$ may be rewritten as follows
\begin{align}\label{5.8}\tag{A.17}
\left\{
\begin{aligned}
R^{v}_{i-1}=\sqrt{\varepsilon} \sum_{i_1+i_2=i-1}\left[(v_p^{i_1}+v_e^{i_1+1}) \partial_Y v_e^{i_2+1}-(g_p^{i_1}+g_e^{i_1+1}) \partial_Y g_e^{i_2+1}\right] - \nu_{2}\varepsilon \partial_Y^2 v_e^i, \\
R^{g}_{i-1}=\sqrt{\varepsilon} \sum_{i_1+i_2=i-1}\left[(v_p^{i_1}+v_e^{i_1+1}) \partial_Y g_e^{i_2+1}-(g_p^{i_1}+g_e^{i_1+1}) \partial_Y v_e^{i_2+1}\right] - \nu_{4}\varepsilon \partial_Y^2 g_e^i.
\end{aligned}\right.
\end{align}

\subsubsection*{A.3.2 The $i$-th order MHD boundary layer profiles}


We now consider the $i$-th order MHD boundary layer profile $(u_p^i, v_p^i, h_p^i, g_p^i, p_p^i)(2\leq i \leq n-1)$.
System $(\ref{5.3})$ can be rewritten
\begin{align}\label{6.1.1}\tag{A.18}
\left\{\begin{aligned}
&u^0 \partial_x u_p^{i}+u_p^{i} \partial_x u^0+v_p^{i} \partial_y u^0+(v_p^0+\overline{v_e^1}) \partial_y u_p^{i}-\nu_{1}\partial_y^2 u_p^{i} +\partial_{x}p_p^i\notag\\
& \quad -h^0 \partial_x h_p^{i}-h_p^{i} \partial_x h^0-g_p^{i} \partial_y h^0-(g_p^0+\overline{g_e^1}) \partial_y h_p^{i}=F_{p_{1}}^i, \notag\\
&u^0 \partial_x h_p^{i}+u_p^{i} \partial_x h^0+v_p^{i} \partial_y h^0+(v_p^0+\overline{v_e^1}) \partial_y h_p^{i}- \nu_{3}\partial_y^2 h_p^{i} \notag\\
& \quad-h^0 \partial_x u_p^{i}-h_p^{i} \partial_x u^0-g_p^{i} \partial_y u^0-(g_p^0+\overline{g_e^1}) \partial_y u_p^{i}=F_{p_2}^i ,\\
&u^0 \partial_x g_p^{i}+u_p^{i} \partial_x g_p^0+v_p^{i} \partial_y g_p^0+(v_p^0+\overline{v_e^1}) \partial_y g_p^{i}- \nu_{4}\partial_y^2 g_p^{i} \notag\\
& \quad-h^0 \partial_x v_p^{i}-h_p^{i} \partial_x v_p^0-g_p^{i} \partial_y v_p^0-(g_p^0+\overline{g_e^1}) \partial_y v_p^{i}=F_{p_3}^i,\notag
\end{aligned}\right.
\end{align}
where
\begin{align*}
F_{p_1}^i=&-\left[\varepsilon^{-\frac{1}{2}}E_{r_1}^{i-1} +u_p^{i-1} \partial_{x}(u_p^1+u_e^1)-h_p^{i-1} \partial_{x}(h_p^1+h_e^1) + v_p^{i-1}(\partial_{Y}u_e^0+\partial_{y}u_p^1)
-g_p^{i-1}(\partial_{Y}h_e^0+\partial_{y}h_p^1)\right.\\
&+\partial_{x}u_{p}^{i-1}(u_e^1+u_p^1)-\partial_{x}h_{p}^{i-1}(h_e^1+h_p^1)
+ v_e^2\partial_{y}u_p^{i-1}-g_e^2\partial_{y}h_p^{i-1} +v_e^i \partial_{y}u_p^1-g_e^i \partial_{y}h_p^1\\
& \left.+u_e^{i}\partial_{x}u_p^0+u_p^0 \partial_{x}u_e^i-h_e^{i}\partial_{x}h_p^0-h_p^0 \partial_{x}h_e^i- \nu_{1}\partial_{x}^{2} u_p^{i-2}\right],\\
F_{p_2}^i=&-\left[\varepsilon^{-\frac{1}{2}}E_{r_2}^{i-1} +u_p^{i-1} \partial_{x}(h_p^1+h_e^1)-h_p^{i-1} \partial_{x}(u_p^1+u_e^1) + v_p^{i-1}(\partial_{Y}h_e^0+\partial_{y}h_p^1)
-g_p^{i-1}(\partial_{Y}u_e^0+\partial_{y}u_p^1)\right.\\
&+\partial_{x}h_{p}^{i-1}(u_e^1+u_p^1)-\partial_{x}u_{p}^{i-1}(h_e^1+h_p^1)
+ v_e^2\partial_{y}h_p^{i-1}-g_e^2\partial_{y}u_p^{i-1} +v_e^i \partial_{y}h_p^1-g_e^i \partial_{y}u_p^1\\
& \left.+u_e^{i}\partial_{x}h_p^0+u_p^0 \partial_{x}h_e^i-h_e^{i}\partial_{x}u_p^0-h_p^0 \partial_{x}u_e^i- \nu_{3}\partial_{x}^{2} h_p^{i-2}\right],\\
F_{p_3}^i=&-\left[\varepsilon^{-\frac{1}{2}}E_{r_3}^{i-1} +u_p^{i-1} \partial_{x}(g_p^1+g_e^2)-h_p^{i-1} \partial_{x}(v_p^1+v_e^2) + v_p^{i-1}(\partial_{Y}g_e^1+\partial_{y}g_p^1)
-g_p^{i-1}(\partial_{Y}v_e^1+\partial_{y}v_p^1)\right.\\
&+\partial_{x}g_{p}^{i-1}(u_e^1+u_p^1)-\partial_{x}v_{p}^{i-1}(h_e^1+h_p^1)
+ v_e^2\partial_{y}g_p^{i-1}-g_e^2\partial_{y}v_p^{i-1} +v_e^i \partial_{y}g_p^1-g_e^i \partial_{y}v_p^1\\
& \left.+u_e^{i}\partial_{x}g_p^0+u_p^0 \partial_{x}g_e^{i+1}-h_e^{i}\partial_{x}v_p^0-h_p^0 \partial_{x}v_e^{i+1}- \nu_{4}\partial_{x}^{2} g_p^{i-2}\right].
\end{align*}
Therefore,
\begin{align}\label{6.3.1}
E^{i}_{ r_1}:= & (u_e^0-u_e) \partial_x u_p^{i}+(v_e^1-\overline{v_e^1}) \partial_y u_p^{i}-(h_e^0-h_e) \partial_x h_p^{i}
-(g_e^1-\overline{g_e^1}) \partial_y h_p^{i}+\Delta_{i-2}^{2}+\Delta_{i-1}^{1}\notag\\
&+v_e^{i+1}\partial_{y}u_p^0 +u_e^{i-1}\partial_{x}u_p^1+u_p^1 \partial_{x}u_e^{i-1}+v_p^1\partial_{y}u_p^{i-1}
+(4-n)\frac{E_1^0}{\varepsilon}+v_p^0\partial_{Y}u_e^{i-1}
+v_p^{i-2}\partial_{Y}u_e^1\notag\\
& +\sum_{\substack{i_1+i_2=i\\i_1,i_2\neq 0,1}}(u_e^{i_1}+u_p^{i_1})\partial_{x}(u_e^{i_2}+u_p^{i_2})+\sum_{\substack{i_1+i_2=i\\i_1,i_2 \neq0,1}}(v_p^{i_1}+v_e^{i_1+1})\partial_{y}u_p^{i_2}+\sum_{\substack{i_1+i_2=i-1\\i_1\neq0,i_2\neq0,1}}
(v_p^{i_1}+v_e^{i_1+1})\partial_{Y}u_e^{i_2}\notag\\
& -\sum_{\substack{i_1+i_2=i\\i_1,i_2\neq 0,1}}(h_e^{i_1}+h_p^{i_1})\partial_{x}(h_e^{i_2}+h_p^{i_2})-\sum_{\substack{i_1+i_2=i\\i_1,i_2 \neq0,1}}(g_p^{i_1}+g_e^{i_1+1})\partial_{y}h_p^{i_2}-\sum_{\substack{i_1+i_2=i-1\\i_1\neq0,i_2\neq0,1}}
(g_p^{i_1}+g_e^{i_1+1})\partial_{Y}h_e^{i_2}\notag\\
&-g_e^{i+1}\partial_{y}h_p^0-h_e^{i-1}\partial_{x}h_p^1-h_p^1 \partial_{x}h_e^{i-1}-g_p^1\partial_{y}h_p^{i-1}
-g_p^0\partial_{Y}h_e^{i-1}-g_p^{i-2}\partial_{Y}h_e^1,\notag\\
E^{i}_{ r_2}:= & (u_e^0-u_e) \partial_x h_p^{i}+(v_e^1-\overline{v_e^1}) \partial_y h_p^{i}-(h_e^0-h_e) \partial_x u_p^{i}
-(g_e^1-\overline{g_e^1}) \partial_y u_p^{i}+\Delta_{i-2}^{4}+\Delta_{i-1}^{3}\notag\\
&+v_e^{i+1}\partial_{y}h_p^0 +u_e^{i-1}\partial_{x}h_p^1+u_p^1 \partial_{x}h_e^{i-1}+v_p^1\partial_{y}h_p^{i-1}
+(4-n)\frac{E_3^0}{\varepsilon}+v_p^0\partial_{Y}h_e^{i-1}
+v_p^{i-2}\partial_{Y}h_e^1\notag\\
& +\sum_{\substack{i_1+i_2=i\\i_1,i_2\neq 0,1}}(u_e^{i_1}+u_p^{i_1})\partial_{x}(h_e^{i_2}+h_p^{i_2})+\sum_{\substack{i_1+i_2=i\\i_1,i_2 \neq0,1}}(v_p^{i_1}+v_e^{i_1+1})\partial_{y}h_p^{i_2}+\sum_{\substack{i_1+i_2=i-1\\i_1\neq0,i_2\neq0,1}}
(v_p^{i_1}+v_e^{i_1+1})\partial_{Y}h_e^{i_2}\notag\\
& -\sum_{\substack{i_1+i_2=i\\i_1,i_2\neq 0,1}}(h_e^{i_1}+h_p^{i_1})\partial_{x}(u_e^{i_2}+u_p^{i_2})-\sum_{\substack{i_1+i_2=i\\i_1,i_2 \neq0,1}}(g_p^{i_1}+g_e^{i_1+1})\partial_{y}u_p^{i_2}-\sum_{\substack{i_1+i_2=i-1\\i_1\neq0,i_2\neq0,1}}
(g_p^{i_1}+g_e^{i_1+1})\partial_{Y}u_e^{i_2}\notag\\
&-g_e^{i+1}\partial_{y}u_p^0-h_e^{i-1}\partial_{x}u_p^1-h_p^1 \partial_{x}u_e^{i-1}-g_p^1\partial_{y}u_p^{i-1}
-g_p^0\partial_{Y}u_e^{i-1}-g_p^{i-2}\partial_{Y}u_e^1,\notag\\
E^{i}_{ r_3}:= & (u_e^0-u_e) \partial_x g_p^{i}+(v_e^1-\overline{v_e^1}) \partial_y g_p^{i}-(h_e^0-h_e) \partial_x v_p^{i}
-(g_e^1-\overline{g_e^1}) \partial_y v_p^{i}+v_e^{i+1}\partial_{y}g_p^0 +u_e^{i-1}\partial_{x}g_p^1\notag\\
&+u_p^1 \partial_{x}g_e^{i}+v_p^1\partial_{y}g_p^{i-1}
+(4-n)\frac{E_4^0}{\varepsilon}+v_p^0\partial_{Y}g_e^{i}
+v_p^{i-2}\partial_{Y}g_e^2-g_e^{i+1}\partial_{y}v_p^0-h_e^{i-1}\partial_{x}v_p^1\nonumber\\
& +\sum_{\substack{i_1+i_2=i\\i_1,i_2\neq 0,1}}(u_e^{i_1}+u_p^{i_1})\partial_{x}(g_e^{i_2+1}+g_p^{i_2})+\sum_{\substack{i_1+i_2=i\\i_1,i_2 \neq0,1}}(v_p^{i_1}+v_e^{i_1+1})\partial_{y}g_p^{i_2}+\sum_{\substack{i_1+i_2=i-1\\i_1\neq0,i_2\neq0,1}}
(v_p^{i_1}+v_e^{i_1+1})\partial_{Y}g_e^{i_2+1}\notag\\
& -\sum_{\substack{i_1+i_2=i\\i_1,i_2\neq 0,1}}(h_e^{i_1}+h_p^{i_1})\partial_{x}(v_e^{i_2+1}+v_p^{i_2})-\sum_{\substack{i_1+i_2=i\\i_1,i_2 \neq0,1}}(g_p^{i_1}+g_e^{i_1+1})\partial_{y}v_p^{i_2}-\sum_{\substack{i_1+i_2=i-1\\i_1\neq0,i_2\neq0,1}}
(g_p^{i_1}+g_e^{i_1+1})\partial_{Y}v_e^{i_2+1}\notag\\
&-h_p^1 \partial_{x}v_e^{i}-g_p^1\partial_{y}v_p^{i-1}
-g_p^0\partial_{Y}v_e^{i}-g_p^{i-2}\partial_{Y}v_e^2.\tag{A.19}
\end{align}
We note that in the system $(\ref{6.1.1})$,  the third equality can be deduced from the second equality and the boundary conditions.


\subsection*{A.4 The $\varepsilon^{\frac{n}{2}}$-order ideal inner MHD profile and MHD boundary layer profile}


In this subsection, we consider $(u_e^n, v_e^n, h_e^n, g_e^n, p_e^n)$ and $(u_p^n, v_p^n, h_p^n, g_p^n, p_p^n)$. First, we study the $n$-th order ideal MHD flow.


\subsubsection*{A.4.1 The $n$-th order ideal inner MHD profile}


Let us define $R^u_n ~\text{and}~ R^h_n$ by
\begin{align}\label{7.1}
R^{u}_n= & \sum_{\substack{i_1+i_2=n\\i_2\neq0}}[(u_e^{i_1}+u_p^{i_1}) \partial_x+(v_p^{i_1}+v_e^{i_1+1}) \partial_y](u_e^{i_2}+u_p^{i_2})
 +[(u_e^{n}+u_p^{n})\partial_{x}+v_p^{n}\partial_{y}](u_e^{0}+u_p^{0})\notag\\
&- \sum_{\substack{i_1+i_2=n\\i_2\neq0}}[(h_e^{i_1}+h_p^{i_1}) \partial_x+(g_p^{i_1}+g_e^{i_1+1}) \partial_y](h_e^{i_2}+h_p^{i_2})
 -[(h_e^{n}+h_p^{n})\partial_{x}+g_p^{n}\partial_{y}](h_e^{0}+h_p^{0})\notag\\
&+\sum_{i_1+i_2=n-1}[(v_p^{i_1}+v_e^{i_1+1})\partial_{Y}u_e^{i_2}
-(g_p^{i_1}+g_e^{i_1+1})\partial_{Y}h_e^{i_2}]
+\partial_x(p_e^n+p_p^n)
- \nu_{1}\partial_y^2(u_e^{n}+u_p^{n})\notag\\
&-\nu_{1}\partial_{x}^{2}u_p^{n-2}+ \triangle_{n-2}^{2}
+ \varepsilon^{-\frac{1}{2}}E_{r_1}^{n-1}+ \triangle_{n-1}^{1}-\nu_{1}\Delta u_e^{n-2},\notag\\
R^{h}_n= & \sum_{\substack{i_1+i_2=n\\i_2\neq0}}[(u_e^{i_1}+u_p^{i_1}) \partial_x+(v_p^{i_1}+v_e^{i_1+1}) \partial_y](h_e^{i_2}+h_p^{i_2})
 +[(u_e^{n}+u_p^{n})\partial_{x}+v_p^{n}\partial_{y}](h_e^{0}+h_p^{0}) \tag{A.20}\\
&- \sum_{\substack{i_1+i_2=n\\i_2\neq0}}[(h_e^{i_1}+h_p^{i_1}) \partial_x+(g_p^{i_1}+g_e^{i_1+1}) \partial_y](u_e^{i_2}+u_p^{i_2})
 -[(h_e^{n}+h_p^{n})\partial_{x}+g_p^{n}\partial_{y}](u_e^{0}+u_p^{0})\notag\\
&+\sum_{i_1+i_2=n-1}[(v_p^{i_1}+v_e^{i_1+1})\partial_{Y}h_e^{i_2}
-(g_p^{i_1}+g_e^{i_1+1})\partial_{Y}u_e^{i_2}]
- \nu_{3}\partial_y^2(h_e^{n}+h_p^{n})\notag\\
&-\nu_{3}\partial_{x}^{2}h_p^{n-2}+ \triangle_{n-2}^{4}
+ \varepsilon^{-\frac{1}{2}}E_{r_2}^{n-1}+ \triangle_{n-1}^{3}-\nu_{3}\Delta h_e^{n-2}.\notag
\end{align}
As the inner correctors are always calculated at $(x, \sqrt{\varepsilon} y)$, we have the following high order terms
\begin{align*}
& (v_p^{i_1}+v_e^{i_1+1}) \partial_y u_e^{i_2}=\sqrt{\varepsilon}(v_p^{i_1}+v_e^{i_1+1}) \partial_Y u_e^{i_2}, \quad v_p^n\partial_y (u_e^0+u_p^0)=\sqrt{\varepsilon}v_p^n \partial_Y (u_e^0+u_p^0), \quad \partial_y^2 u_e^n=\varepsilon \partial_Y^2 u_e^n.
\end{align*}
The remaining high order terms of $(\ref{7.1})$ are of the same form. Hence, they will be omitted here.
By matching the terms of order  $\varepsilon^{\frac{n}{2}}$, we obtain the following $n$-th order ideal MHD system
\begin{align}\label{7.2}\tag{A.21}
\left\{\begin{array}{l}
u_e^0 \partial_x u_e^n+v_e^n \partial_Y u_e^0-h_e^0 \partial_x h_e^n-g_e^n \partial_Y h_e^0+\partial_x p_e^n=f_1^{n}, \\
u_e^0 \partial_x h_e^n+v_e^n \partial_Y h_e^0-h_e^0 \partial_x u_e^n-g_e^n \partial_Y u_e^0=f_3^n,
\end{array}\right.
\end{align}
where
\begin{align*}
-f_1^n=&u_e^1 u_{e x}^{n-1}+u_e^{n-1} u_{e x}^1+v_e^1 u_{e Y}^{n-1}+v_e^{n-1} u_{e Y}^{1}\\
& -h_e^1 h_{e x}^{n-1}-h_e^{n-1} h_{e x}^1-g_e^1 h_{e Y}^{n-1}-g_e^{n-1} h_{e Y}^{1}-\nu_{1} \Delta u_e^{n-2},\\
-f_3^n=&u_e^1 h_{e x}^{n-1}+u_e^{n-1} h_{e x}^1+v_e^1 h_{e Y}^{n-1}+v_e^{n-1} h_{e Y}^{1}\\
& -h_e^1 u_{e x}^{n-1}-h_e^{n-1} u_{e x}^1-g_e^1 u_{e Y}^{n-1}-g_e^{n-1} u_{e Y}^{1}-\nu_{3} \Delta h_e^{n-2},
\end{align*}
and  the $n$-th order MHD boundary layer system
\begin{align}\label{7.3}
&\sum_{i_1+i_2=n}(u_e^{i_1}+u_p^{i_1}) \partial_x u_p^{i_2}+\sum_{i_1+i_2=n-1}u_p^{i_1} \partial_x u_e^{i_2+1}
+\sum_{i_1+i_2=n-1}(v_p^{i_1}+v_e^{i_1+1})\partial_y u_p^{i_2+1}
 +v_p^n \partial_y(u_e^0+u_p^0) \notag\\
&\quad-\sum_{i_1+i_2=n}(h_e^{i_1}+h_p^{i_1}) \partial_x h_p^{i_2}-\sum_{i_1+i_2=n-1}h_p^{i_1} \partial_x h_e^{i_2+1}
-\sum_{i_1+i_2=n-1}(g_p^{i_1}+g_e^{i_1+1})\partial_y h_p^{i_2+1}\notag\\
&\quad+\sum_{i_1+i_2=n-1}[v_p^{i_1}\partial_{Y}u_e^{i_2}
-g_p^{i_1}\partial_{Y}h_e^{i_2}]+\sum_{\substack{i_1+i_2=n\\i_1,i_2\neq0,1}}
[u_e^{i_1}\partial_{x}u_e^{i_2}+v_e^{i_1}\partial_{Y}u_e^{i_2}
-h_e^{i_1}\partial_{x}h_e^{i_2}-g_e^{i_1}\partial_{Y}h_e^{i_2}]\notag\\
&\quad-g_p^n\partial_{y}(h_e^0+h_p^0)+\partial_x p_p^n - \nu_{1}\partial_y^2 u_p^n-\nu_{1}\partial_{x}^{2}u_p^{n-2}+\triangle_{n-2}^{2}
 +\triangle_{n-1}^{1} + \varepsilon^{-\frac{1}{2}}E_{r_1}^{n-1}=0 ,\notag\\
&\sum_{i_1+i_2=n}(u_e^{i_1}+u_p^{i_1}) \partial_x h_p^{i_2}+\sum_{i_1+i_2=n-1}u_p^{i_1} \partial_x h_e^{i_2+1}
+\sum_{i_1+i_2=n-1}(v_p^{i_1}+v_e^{i_1+1})\partial_y h_p^{i_2+1}
 +v_p^n \partial_y(h_e^0+h_p^0) \notag\\
&\quad-\sum_{i_1+i_2=n}(h_e^{i_1}+h_p^{i_1}) \partial_x u_p^{i_2}-\sum_{i_1+i_2=n-1}h_p^{i_1} \partial_x u_e^{i_2+1}
-\sum_{i_1+i_2=n-1}(g_p^{i_1}+g_e^{i_1+1})\partial_y u_p^{i_2+1}\notag\\
&\quad+\sum_{i_1+i_2=n-1}[v_p^{i_1}\partial_{Y}h_e^{i_2}
-g_p^{i_1}\partial_{Y}u_e^{i_2}]+\sum_{\substack{i_1+i_2=n\\i_1,i_2\neq0,1}}
[u_e^{i_1}\partial_{x}h_e^{i_2}+v_e^{i_1}\partial_{Y}h_e^{i_2}
-h_e^{i_1}\partial_{x}u_e^{i_2}-g_e^{i_1}\partial_{Y}u_e^{i_2}]\notag\\
&\quad-g_p^n\partial_{y}(u_e^0+u_p^0) - \nu_{3}\partial_y^2 h_p^n-\nu_{3}\partial_{x}^{2}h_p^{n-2}+\triangle_{n-2}^{4}
 +\triangle_{n-1}^{3} + \varepsilon^{-\frac{1}{2}}E_{r_2}^{n-1}=0. \tag{A.22}
\end{align}
Hence,  $R^{u}_n$ and $R^{h}_n$ equal
\begin{align}\label{7.4}\tag{A.23}
\left\{\begin{aligned}
R^{u}_n=&\sqrt{\varepsilon}\sum_{i_1+i_2=n-1}[(v_p^{i_1}+v_e^{i_1+1}) \partial_Y u_e^{i_2+1}-(g_p^{i_1}+g_e^{i_1+1}) \partial_Y h_e^{i_2+1}]-\nu_{1}\varepsilon \partial_Y^2 u_e^n,\\
R^{h}_n=&\sqrt{\varepsilon}\sum_{i_1+i_2=n-1}[(v_p^{i_1}+v_e^{i_1+1}) \partial_Y h_e^{i_2+1}-(g_p^{i_1}+g_e^{i_1+1}) \partial_Y u_e^{i_2+1}]-\nu_{3}\varepsilon \partial_Y^2 h_e^n.
\end{aligned}\right.
\end{align}
Next, we consider the $(n-1)$-th order terms of $R^{2}_{app} ~\text{and}~  R^{4}_{app}$. $R^{g}_{n-1}$ has been given in $(\ref{5.4})$, thus, we have
\begin{align}\label{7.5}
 R^{v}_{n-1}= & \sum_{i_1+i_2=n-1}[(u_e^{i_1}+u_p^{i_1}) \partial_x+(v_p^{i_1}+v_e^{i_1+1}) \partial_y](v_p^{i_2}+v_e^{i_2+1})+\partial_Y p_e^n
 +\partial_y p_p^{n+1}-\nu_{2}\partial_y^2(v_p^{n-1}+v_e^{n}) \notag\\
& -\sum_{i_1+i_2=n-1}[(h_e^{i_1}+h_p^{i_1}) \partial_x+(g_p^{i_1}+g_e^{i_1+1}) \partial_y](g_p^{i_2}+g_e^{i_2+1})-\nu_2\partial_{x}^{2}v_p^{n-3}\notag\\
&+\sum_{i_1+i_2=n-2}[(v_p^{i_1}+v_e^{i_1+1})\partial_{Y}v_e^{i_2+1}
-(g_p^{i_1}+g_e^{i_1+1})\partial_{Y}g_e^{i_2+1} ] -\nu_{2}\Delta v_e^{n-2},\notag\\
R^{g}_{n-1}=&\sqrt{\varepsilon}\sum_{i_1+i_2=n-1}[(v_p^{i_1}+v_e^{i_1+1}) \partial_Y g_e^{i_2+1}-(g_p^{i_1}+g_e^{i_1+1}) \partial_Y v_e^{i_2+1}]-\nu_{4}\varepsilon \partial_Y^2 g_e^{n}\notag\\
&+u_e^{n-1} \partial_{x}g_e^1+u_e^1 \partial_{x}g_e^{n-1}+u_e^0 \partial_{x}g_e^{n}-h_e^{n-1} \partial_{x}v_e^1-h_e^1 \partial_{x}v_e^{n-1}-h_e^0 \partial_{x}v_e^{n}\notag\\
&+v_e^{n-1} \partial_{Y}g_e^1+v_e^1 \partial_{Y}g_e^{n-1}-g_e^{n-1} \partial_{Y}v_e^1-g_e^1 \partial_{Y}v_e^{n-1}-\nu_{4} \Delta g_e^{n-2}.\tag{A.24}
\end{align}
Writing $R^{v}_{n-1}=R^{g}_{n-1}=0$ and matching the various terms of order $\varepsilon^{\frac{n-1}{2}}$, the $n$-th order ideal MHD profile $(u_e^n, v_e^n, h_e^n, g_e^n, p_e^n)$ satisfies
\begin{align}\label{7.6}\tag{A.25}
\left\{\begin{array}{l}
u_e^0 \partial_x v_e^n-h_e^0 \partial_x g_e^n+\partial_Y p_e^n=f_2^n,\\
u_e^0 \partial_x g_e^n-h_e^0 \partial_x v_e^n=f_4^n,
\end{array}\right.
\end{align}
where
\begin{align*}
-f_2^n=&u_e^1\partial_{x}v_e^{n-1}+u_e^{n-1}\partial_{x}v_e^1
       +v_e^1\partial_{Y}v_e^{n-1}+v_e^{n-1}\partial_{Y}v_e^1 \\
       &-h_e^1\partial_{x}g_e^{n-1}-h_e^{n-1}\partial_{x}g_e^1
       -g_e^1\partial_{Y}g_e^{n-1}-g_e^{n-1}\partial_{Y}g_e^1
       -\nu_{2}\Delta v_e^{n-2},\\
-f_4^n=&u_e^1\partial_{x}g_e^{n-1}+u_e^{n-1}\partial_{x}g_e^1
       +v_e^1\partial_{Y}g_e^{n-1}+v_e^{n-1}\partial_{Y}g_e^1\\
       &-h_e^1\partial_{x}v_e^{n-1}-h_e^{n-1}\partial_{x}v_e^1
       -g_e^1\partial_{Y}v_e^{n-1}-g_e^{n-1}\partial_{Y}v_e^1
       -\nu_{4}\Delta g_e^{n-2},
\end{align*}
and $p_p^{n+1}$ is obtained through the first equation of $(\ref{7.5})$
\begin{align}\label{7.7}
p_p^{n+1}(x, y)= & \int_y^{1/\sqrt{\varepsilon}}\left\{\sum_{i_1+i_2=n-1}\left[(u_e^{i_1}+u_p^{i_1}) \partial_x v_p^{i_2}-(h_e^{i_1}+h_p^{i_1}) \partial_x g_p^{i_2}\right]+\sum_{i_1+i_2=n-1}\left[u_p^{i_1} \partial_x v_e^{i_2+1}-h_p^{i_1} \partial_x g_e^{i_2+1}\right]\right.\notag\\
&+\sum_{i_1+i_2=n-1}\left[(v_p^{i_1}+v_e^{i_1+1}) \partial_y v_p^{i_2}-(g_p^{i_1}+g_e^{i_1+1}) \partial_y g_p^{i_2}\right]+\sum_{i_1+i_2=n-2}\left[v_p^{i_1} \partial_Y v_e^{i_2+1}-g_p^{i_1} \partial_Y g_e^{i_2+1}\right] \notag\\
&\left.+\sum_{\substack{i_1+i_2=n,\\ i_1, i_2 \neq 0,1}}(u_e^{i_1} \partial_x v_e^{i_2}+v_e^{i_1} \partial_{Y}v_e^{i_2}-h_e^{i_1} \partial_x g_e^{i_2}-g_e^{i_1} \partial_{Y}g_e^{i_2})\right.\notag\\
& \left.-\nu_{2}\partial_{y}^2v_p^{n-1}
-\nu_2 \partial_x^2 v_p^{n-3}
\right\}(x,\theta) \mathrm{d} \theta. \tag{A.26}
\end{align}
Then, $R^{v}_{n-1}$ and $R^{g}_{n-1}$ equal
\begin{align}\label{7.8}\tag{A.27}
\begin{aligned}
R^{v}_{n-1}=&\sqrt{\varepsilon}\sum_{i_1+i_2=n-1}\left[(v_p^{i_1}+v_e^{i_1+1}) \partial_Y v_e^{i_2+1}-(g_p^{i_1}+g_e^{i_1+1}) \partial_Y g_e^{i_2+1}\right]
- \nu_{2}\varepsilon \partial_Y^2 v_e^n, \\
R^{g}_{n-1}=&\sqrt{\varepsilon}\sum_{i_1+i_2=n-1}\left[(v_p^{i_1}+v_e^{i_1+1}) \partial_Y g_e^{i_2+1}-(g_p^{i_1}+g_e^{i_1+1}) \partial_Y v_e^{i_2+1}\right]
- \nu_{4}\varepsilon \partial_Y^2 g_e^n.
\end{aligned}
\end{align}


\subsubsection*{A.4.2 The $n$-th order MHD boundary layer profile}


Now, we discuss the $n$-th order  boundary layer profile $(u_p^n, v_p^n, h_p^n, g_p^n, p_p^n)$, which solve $(\ref{7.3})$.
Then the system $(\ref{7.3})$ is written as
\begin{align}\label{8.1.1}
&u^0 \partial_x u_p^{n}+u_p^{n} \partial_x u^0+v_p^{n} \partial_y u^0
 +(v_p^0+\overline{v_e^1}) \partial_y u_p^{n}
 -\nu_{1}\partial_y^2 u_p^{n} + \partial_{x}p_p^n \notag\\
& \quad -h^0 \partial_x h_p^{n}-h_p^{n} \partial_x h^0
 -g_p^{n} \partial_y h^0
 -(g_p^0+\overline{g_e^1}) \partial_y h_p^{n}=F_{p_{1}}^n,\notag\\
&u^0 \partial_x h_p^{n}+u_p^{n} \partial_x h^0+v_p^{n} \partial_y h^0
 +(v_p^0+\overline{v_e^1}) \partial_y h_p^{n}
 -\nu_{3}\partial_y^2 h_p^{n} \notag\\
&\quad-h^0 \partial_x u_p^{n}-h_p^{n} \partial_x u^0
 -g_p^{n} \partial_y u^0
 -(g_p^0+\overline{g_e^1}) \partial_y u_p^{n} =F_{p_2}^n,\tag{A.28}
\end{align}
where $F_{p_1}^n$ and $F_{p_2}^n$ are as follows
\begin{align*}
F_{p_1}^n=&-\left[\varepsilon^{-\frac{1}{2}}E_{r_1}^{n-1}+u_p^{n-1}\partial_{x}
(u_p^1+u_e^1)+ (u_e^1+u_p^1)\partial_{x}u_p^{n-1}
+ v_p^{n-1}(\partial_{Y}u_e^0+\partial_{y}u_p^1) + v_e^2\partial_{y}u_p^{n-1} + v_e^n \partial_{y}u_p^1\right.\\
&\quad+\partial_{x} u_e^n u_p^0+ u_e^n \partial_{x}u_p^0-h_p^{n-1}\partial_{x}
(h_p^1+h_e^1)- (h_e^1+h_p^1)\partial_{x}h_p^{n-1}
 -g_p^{n-1}(\partial_{Y}h_e^0+\partial_{y}h_p^1)- g_e^2\partial_{y}h_p^{n-1}\\
&\left.\quad - g_e^n \partial_{y}h_p^1-\partial_{x} h_e^n h_p^0-h_e^n \partial_{x}h_p^0
- \nu_{1}\partial_{x}^{2} u_p^{n-2}\right],\\
F_{p_2}^n=&-\left[\varepsilon^{-\frac{1}{2}}E_{r_2}^{n-1}+u_p^{n-1}\partial_{x}
(h_p^1+h_e^1)+ (u_e^1+u_p^1)\partial_{x}h_p^{n-1}
+ v_p^{n-1}(\partial_{Y}h_e^0+\partial_{y}h_p^1) + v_e^2\partial_{y}h_p^{n-1} + v_e^n \partial_{y}h_p^1\right.\\
&\quad+\partial_{x} h_e^n u_p^0+ u_e^n \partial_{x}h_p^0-h_p^{n-1}\partial_{x}
(u_p^1+u_e^1)- (h_e^1+h_p^1)\partial_{x}u_p^{n-1}
 -g_p^{n-1}(\partial_{Y}u_e^0+\partial_{y}u_p^1)- g_e^2\partial_{y}u_p^{n-1}\\
&\left.\quad - g_e^n \partial_{y}u_p^1-\partial_{x} u_e^n h_p^0-h_e^n \partial_{x}u_p^0
- \nu_{3}\partial_{x}^{2} h_p^{n-2}\right].
\end{align*}
Therefore, the new error terms  are
\begin{align}\label{8.3}
E^n_{ r_1}:= &\sum_{\substack{i_1+i_2=n\\i_1,i_2\neq0,1}}(v_p^{i_1}+v_e^{i_1+1})
\partial_{y}u_p^{i_2}
+\sum_{\substack{i_1+i_2=n\\i_1,i_2\neq0,1}}(u_e^{i_1}+u_p^{i_1})
\partial_{x}(u_e^{i_2}+u_p^{i_2})+\sum_{\substack{i_1+i_2=n-1\\i_1\neq0,i_2\neq0,1}}(v_p^{i_1}+v_e^{i_1+1})\partial_{Y} u_e^{i_2}\notag\\
&-\sum_{\substack{i_1+i_2=n\\i_1,i_2\neq0,1}}(g_p^{i_1}+g_e^{i_1+1})
\partial_{y}h_p^{i_2}
-\sum_{\substack{i_1+i_2=n\\i_1,i_2\neq0,1}}(h_e^{i_1}+h_p^{i_1})
\partial_{x}(h_e^{i_2}+h_p^{i_2})-\sum_{\substack{i_1+i_2=n-1\\i_1\neq0,i_2\neq0,1}}(g_p^{i_1}+g_e^{i_1+1})\partial_{Y} h_e^{i_2}\notag\\
&+(u_e^0-u_e) \partial_x u_p^{n}+(v_e^1-\overline{v_e^1})\partial_{y}u_p^n+v_p^n\partial_y u_e^0
+u_e^{n-1}\partial_{x} u_p^{1}+u_p^1\partial_{x} u_e^{n-1}+v_p^1\partial_{y}u_p^{n-1}\notag\\
&+v_p^0\partial_{Y} u_e^{n-1}
+v_p^{n-2}\partial_{Y}u_e^1+\triangle_{n-1}^1+\triangle_{n-2}^{2}-(h_e^0-h_e) \partial_x h_p^{n}-(g_e^1-\overline{g_e^1})\partial_{y}h_p^n\notag\\
&-g_p^n\partial_y h_e^0
-h_e^{n-1}\partial_{x} h_p^{1}-h_p^1\partial_{x} h_e^{n-1}-g_p^1\partial_{y}h_p^{n-1}-g_p^0\partial_{Y} h_e^{n-1}
-g_p^{n-2}\partial_{Y}h_e^1,\notag\\
E^n_{ r_2}:= &\sum_{\substack{i_1+i_2=n\\i_1,i_2\neq0,1}}(v_p^{i_1}+v_e^{i_1+1})
\partial_{y}h_p^{i_2}
+\sum_{\substack{i_1+i_2=n\\i_1,i_2\neq0,1}}(u_e^{i_1}+u_p^{i_1})
\partial_{x}(h_e^{i_2}+h_p^{i_2})+\sum_{\substack{i_1+i_2=n-1\\i_1\neq0,i_2\neq0,1}}(v_p^{i_1}+v_e^{i_1+1})\partial_{Y} h_e^{i_2}\notag\\
&-\sum_{\substack{i_1+i_2=n\\i_1,i_2\neq0,1}}(g_p^{i_1}+g_e^{i_1+1})
\partial_{y}u_p^{i_2}
-\sum_{\substack{i_1+i_2=n\\i_1,i_2\neq0,1}}(h_e^{i_1}+h_p^{i_1})
\partial_{x}(u_e^{i_2}+u_p^{i_2})-\sum_{\substack{i_1+i_2=n-1\\i_1\neq0,i_2\neq0,1}}(g_p^{i_1}+g_e^{i_1+1})\partial_{Y} u_e^{i_2}\notag\\
&+(u_e^0-u_e) \partial_x h_p^{n}+(v_e^1-\overline{v_e^1})\partial_{y}h_p^n+v_p^n\partial_y h_e^0
+u_e^{n-1}\partial_{x} h_p^{1}+u_p^1\partial_{x} h_e^{n-1}+v_p^1\partial_{y}h_p^{n-1}\notag\\
&+v_p^{n-2}\partial_{Y}h_e^1+\triangle_{n-1}^3+\triangle_{n-2}^{4}-(h_e^0-h_e) \partial_x u_p^{n}-(g_e^1-\overline{g_e^1})\partial_{y}u_p^n
-g_p^n\partial_y u_e^0\notag\\
&-h_e^{n-1}\partial_{x} u_p^{1}-h_p^1\partial_{x} u_e^{n-1}-g_p^1\partial_{y}u_p^{n-1}-g_p^0\partial_{Y} u_e^{n-1}
-g_p^{n-2}\partial_{Y}u_e^1.\tag{A.29}
\end{align}
This concludes the derivation of any order ideal MHD profiles and MHD boundary layer profiles.

\bigskip
\noindent {\bf {Acknowledgments  }}\\
This work is supported by NSF of China under the Grant 12271032.
\small

\end{document}